\documentclass{article}

\usepackage{amsmath}
\usepackage{amsthm}
\usepackage{amssymb}
\usepackage{amscd}
\usepackage{xspace}
\usepackage{verbatim}
\pagestyle{plain}

\setlength{\topmargin}{-0.6in} \setlength{\textwidth}{6.in}

\setlength{\textheight}{1.20\textheight}

\setlength{\oddsidemargin}{-0.in}

\setlength{\evensidemargin}{-0.25in}

\newtheorem{theorem}{Theorem}[section]
\newtheorem{corollary}{Corollary}[section]
\newtheorem{lemma}{Lemma}[section]
\newtheorem{proposition}{Proposition}[section]
\theoremstyle{definition}
\newtheorem{definition}{Definition}[section]

\theoremstyle{remark}
\newtheorem{remark}{Remark}[section]
\numberwithin{equation}{section}
%%%%%%%%%%%%%%%%%%%%%%%%%

%%%%%%%%%%

\newcommand{\e}{\varepsilon}

\renewcommand{\O}{\Omega}

%%%%%%
\renewcommand{\liminf}{\varliminf}
\renewcommand{\limsup}{\varlimsup}

\newcommand{\field}[1]{\mathbb{#1}}

\newcommand{\R}{\field{R}}

\newcommand{\er}{\eqref}

\DeclareMathOperator{\Div}{div}

\DeclareMathOperator{\supp}{supp}

\renewcommand{\O}{\Omega}

%%%%%%

\date{}

\begin{document}
\title{Variational resolution for some general classes of nonlinear evolutions. Part I}
\maketitle
\begin{center}
\textsc{Arkady Poliakovsky \footnote{E-mail:
poliakov@math.bgu.ac.il}
}\\[3mm]
Department of Mathematics, Ben Gurion University of the Negev,\\
P.O.B. 653, Be'er Sheva 84105, Israel
\\[2mm]
%\today
%\date{}
\end{center}
%\begin{center}
%\textsc{Arkady Poliakovsky \footnote{E-mail:
%poliakov@mat.uniroma2.it}
%}\\[3mm]
%Universit\`{a} di Roma 'Tor Vergata', Dipartimento di Matematica,\\
%Via della Ricerca Scientifica, 1, 00133 Rome, Italy
%\\[2mm]
%\today
%\date{}
%\end{center}
\begin{abstract}
We develop a variational technique for some wide classes of
nonlinear evolutions. The novelty here is that we derive the main
information directly from the corresponding Euler-Lagrange
equations. In particular, we  prove that not only the minimizer of
the appropriate energy functional but also any critical point must
be a solution of the corresponding evolutional system.
\end{abstract}
\section{Introduction}
%Let $\O\subset\R^N$ be a domain.
Let $X$ be a reflexive Banach space. Consider the following
evolutional initial value problem:
\begin{equation}\label{abstpr}
\begin{cases}
\frac{d}{dt}\big\{I\cdot
u(t)\big\}+\Lambda_t\big(u(t)\big)=0\quad\quad\text{in}\;\;(0,T_0),
\\
I\cdot u(0)=v_0.
\end{cases}
\end{equation}
Here $I:X\to X^*$ ($X^*$ is the space dual to $X$) is a fixed
bounded linear inclusion operator, which we assume to be
self-adjoint and strictly positive, $u(t)\in L^q\big((0,T_0);X\big)$
is an unknown function, such that $I\cdot u(t)\in
W^{1,p}\big((0,T_0);X^*\big)$ (where $I\cdot h\in X^*$ is the value
of the operator $I$ at the point $h\in X$), $\Lambda_t(x):X\to X^*$
is a fixed nonlinear mapping, considered for every fixed
$t\in(0,T_0)$, and $v_0\in X^*$ is a fixed initial value. The most
trivial variational principle related to \er{abstpr} is the
following one. Consider some convex function
$\Gamma(y):X^*\to[0,+\infty)$, such that
%$\Gamma(x)\geq 0$ and
$\Gamma(y)=0$ if and only if $y=0$. Next define the following energy
functional
\begin{multline}\label{abstprhfen}
E_0\big(u(\cdot)\big):=\int_0^{T_0}\Gamma\bigg(\frac{d}{dt}\big\{I\cdot
u(t)\big\}+\Lambda_t\big(u(t)\big)\bigg)dt\\ \forall\, u(t)\in
L^q\big((0,T_0);X\big)\;\;\text{s.t.}\;\;I\cdot u(t)\in
W^{1,p}\big((0,T_0);X^*\big)\;\;\text{and}\;\;I\cdot u(0)=v_0\,.
\end{multline}
Then it is obvious that $u(t)$ will be a solution to \er{abstpr} if
and only if $E_0\big( u(\cdot)\big)=0$. Moreover, the solution to
\er{abstpr} will exist if and only if there exists a minimizer
$u_0(t)$ of the energy $E_0(\cdot)$, which satisfies $E_0\big(
u_0(\cdot)\big)=0$.

 We have the following generalization of this variational principle.
Let $\Psi_t(x):X\to[0,+\infty)$ be some convex G\^{a}teaux
differentiable function, considered for every fixed $t\in(0,T_0)$
and such that $\Psi_t(0)=0$. Next define the Legendre transform of
$\Psi_t$ by
\begin{equation}\label{vjhgkjghkjghjk}
\Psi^*_t(y):=\sup\Big\{\big<z,y\big>_{X\times X^*}-\Psi_t(z):\;z\in
X\Big\}\quad\quad\forall y\in X^*\,.
\end{equation}
It is well known that $\Psi^*_t(y):X^*\to\R$ is a convex function
and
\begin{equation}\label{vhjfgvhjgjkgjkh}
\Psi_t(x)+\Psi^*_t(y)\;\geq\; \big<x,y\big>_{X\times
X^*}\quad\quad\forall\, x\in X,\,y\in X^*\,,
\end{equation}
with equality if and only if $y=D\Psi_t(x)$. Next for
$\lambda\in\{0,1\}$ define the energy
\begin{multline}\label{abstprhfenbjvfj}
E_\lambda\big(u\big):=\int\limits_0^{T_0}\Bigg\{\Psi_t\Big(\lambda
u(t)\Big)+\Psi^*_t\bigg(-\frac{d}{dt}\big\{I\cdot
u(t)\big\}-\Lambda_t\big(u(t)\big)\bigg)+\lambda\bigg<u(t),\frac{d}{dt}\big\{I\cdot
u(t)\big\}+\Lambda_t\big(u(t)\big)\bigg>_{X\times X^*}\Bigg\}dt\\
\forall\, u(t)\in L^q\big((0,T_0);X\big)\;\;\text{s.t.}\;\;I\cdot
u(t)\in W^{1,p}\big((0,T_0);X^*\big)\;\;\text{and}\;\;I\cdot
u(0)=v_0.
\end{multline}
Then, by \er{vhjfgvhjgjkgjkh} we have $E_\lambda\big(\cdot\big)\geq
0$ and moreover, $E_\lambda\big(u(\cdot)\big)=0$ if and only if
$u(t)$ is a solution to
\begin{equation}\label{abstprrrrcn}
\begin{cases}
\frac{d}{dt}\big\{I\cdot
u(t)\big\}+\Lambda_t\big(u(t)\big)+D\Psi_t\big(\lambda
u(t)\big)=0\quad\quad\text{in}\;\;(0,T_0),
\\
I\cdot u(0)=v_0
\end{cases}
\end{equation}
(note here that since $\Psi_t(0)=0$, in the case $\lambda=0$
\er{abstprrrrcn} coincides with \er{abstpr}. Moreover, if
$\lambda=0$ then the energy defined in \er{abstprhfen} is a
particular case of the energy in \er{abstprhfenbjvfj}, where we take
$\Gamma(x):=\Psi^*(-x)\,$). So, as before, a solution to
\er{abstprrrrcn} exists if and only if there exists a minimizer
$u_0(t)$ of the energy $E_\lambda(\cdot)$, which satisfies
$E_\lambda\big( u_0(\cdot)\big)=0$. Consequently, in order to
establish the existence of solution to \er{abstprrrrcn} we need to
answer the following questions:
\begin{itemize}
\item [{\bf (a)}] Does a minimizer to the energy
in \er{abstprhfenbjvfj} exist?
\item [{\bf (b)}] Does the minimizer $u_0(t)$ of the corresponding
energy $E_\lambda(\cdot)$ satisfies
$E_\lambda\big(u_0(\cdot)\big)=0$?
\end{itemize}

 To the best of our knowledge, the energy in \er{abstprhfenbjvfj} with
$\lambda=1$, related to \er{abstprrrrcn}, was first considered for
the heat equation and other types of evolutions by Brezis and
Ekeland in \cite{Brez}. In that work they also first asked question
{\bf (b)}: If we don't know a priori that a solution of the equation
\er{abstprrrrcn} exists, how to prove that the minimum of the
corresponding energy is zero. This question was asked even for very
simple PDE's like the heat equation. A detailed investigation of the
energy of type \er{abstprhfenbjvfj}, with $\lambda=1$, was done in a
series of works of N. Ghoussoub and his coauthors, see the book
\cite{NG}  and also \cite{Gho}, \cite{GosM}, \cite{GosM1},
\cite{GosTz}. In these works they considered a similar variational
principle, not only for evolutions but also for some other classes
of equations. They proved  some theoretical results about  general
self-dual variational principles, which in many cases, can provide
with the existence of a zero energy state (answering questions {\bf
(a)}+{\bf (b)} together) and, consequently, with the existence of
solution for the related equations (see \cite{NG} for details).

 In this work we provide  an alternative approach to the
questions {\bf (a)} and {\bf (b)}. We treat them separately and in
particular, for question {\bf (b)}, we derive the main information
by studying the Euler-Lagrange equations for the corresponding
energy. To our knowledge, such an approach was first considered in
\cite{P4} and provided there an alternative proof of existence of
solution for initial value problems for some parabolic systems.
Generalizing these results, we provide here the answer to  questions
{\bf (a)} and {\bf (b)} for some wide classes of evolutions. In
particular, regarding question {\bf (b)}, we are able to prove that
in some general cases not only the minimizer but also any critical
point $u_0(t)$ (i.e. any solution of corresponding Euler-Lagrange
equation) satisfies $E_\lambda\big(u_0(\cdot)\big)=0$, i.e. is a
solution to \er{abstprrrrcn}.

The approach of Ghoussoub in \cite{NG} is more general than ours as
he considered a more abstract setting. The main advantages of our
method
%(which is applicable, in particular to most of the evolutional
%PDS's treated by Ghoussoub)
are:
\begin{itemize}
\item
%[{\bf (i)}]
We prove that under some growth and coercivity conditions
\underline{every critical point} of the energy \er{abstprhfenbjvfj}
is actually a minimizer and a solution of \er{abstprrrrcn}.

\item
%[{\bf (ii)}]
Our result, giving the answer for question {\bf (b)}, doesn't
require any assumption of compactness or weak continuity of
$\Lambda_t$ (these assumptions are needed only for the proof of
existence of minimizer, i.e., in connection with question {\bf
(a)}).

\item
%[{\bf (iii)}]
Our method for answering question {\bf (b)} uses only elementary
arguments.
\end{itemize}

We can rewrite the definition of $E_\lambda$ in \er{abstprhfenbjvfj}
as follows. Since $I$ is a self-adjoint and strictly positive
operator,
%i.e.
%\begin{equation}\label{tildethlhjhghjfvvjhjhjdviguyfguidfjh}
%\big<x,I\cdot z\big>_{X\times X^*}=\big<z,I\cdot x\big>_{X\times
%X^*}\quad\quad\text{for every}\; x,z\in X\,,
%\end{equation}
%and
%\begin{equation}\label{tildethlhjhghjffgfhfhhffkghhfhfhdfhdf}
%\big<x,I\cdot x\big>_{X\times X^*}>0\quad\quad\forall x\neq 0\in
%X\,.
%\end{equation}
there exists a Hilbert space $H$ and an injective
%inclusion (i.e. $\ker T=\{0\}$)
bounded linear operator $T:X\to H$, whose image is
dense in $H$, such that if we consider the linear operator
$\widetilde{T}:H\to X^*$, defined by the formula
%\er{tildet},
\begin{equation}\label{tildetjbghgjgklhgjkgkgkjjkjkl}
\big<x,\widetilde{T}\cdot y\big>_{X\times X^*}:=\big<T\cdot
x,y\big>_{H\times H}\quad\quad\text{for every}\; y\in
H\;\text{and}\;x\in X\,,
\end{equation}
then we will have
%\begin{equation}\label{tildethlhjhghjffgfhfhhffkghbjhjkhjjkhfhfhfdhf}
%(\widetilde{T}\circ T)\cdot x=I\cdot x\quad\quad\forall x\in X\,,
%\end{equation}
$\widetilde{T}\circ T\equiv I$, see Lemma \ref{hdfghdiogdiofg} for
details. We call $\{X,H,X^*\}$ an evolution triple with the
corresponding inclusion operator $T:X\to H$ and $\widetilde{T}:H\to
X^*$.
%\er{hdfghdiogdiofg} \er{7bdf}
Thus, if $v_0=\widetilde{T}\cdot w_0$, for some $w_0\in H$ and
$p=q^*:=q/(q-1)$, where $q>1$, then we have
$$\int_0^{T_0}\bigg<u(t),\frac{d}{dt}\big\{I\cdot
u(t)\big\}\bigg>_{X\times X^*}dt=\frac{1}{2}\big\|T\cdot
u(T_0)\big\|^2_H-\frac{1}{2}\big\|w_0\big\|^2_H$$ (see Lemma
\ref{lem2} for details) and therefore,
\begin{multline}\label{abstprhfenbjvfjghgighjkjg}
E_\lambda\big(u\big)=J\big(u\big):=\\
\int\limits_0^{T_0}\Bigg\{\Psi_t\Big(\lambda
u(t)\Big)+\Psi^*_t\bigg(-\frac{d}{dt}\big\{I\cdot
u(t)\big\}-\Lambda_t\big(u(t)\big)\bigg)+\lambda\Big<u(t),\Lambda_t\big(u(t)\big)\Big>_{X\times
X^*}\Bigg\}dt+\frac{\lambda}{2}\big\|T\cdot
u(T_0)\big\|^2_H-\frac{\lambda}{2}\big\|w_0\big\|^2_H\\
\forall\, u(t)\in L^q\big((0,T_0);X\big)\;\;\text{s.t.}\;\;I\cdot
u(t)\in W^{1,q^*}\big((0,T_0);X^*\big)\;\;\text{and}\;\;I\cdot
u(0)=\widetilde{T}\cdot w_0
\end{multline}

Our first main result provides the answer for question {\bf
(b)}, under some coercivity and growth conditions on $\Psi_t$ and
$\Lambda_t$ (see an equivalent formulation in Theorem
\ref{EulerLagrange} and Proposition \ref{gftufuybhjkojutydrdkk}):
\begin{theorem}\label{EulerLagrangeInt}
Let $\{X,H,X^*\}$ be an evolution triple with the corresponding
inclusion linear operators $T:X\to H$, which we assume to be
bounded, injective and having dense image in $H$,
$\widetilde{T}:H\to X^*$ be defined by
\er{tildetjbghgjgklhgjkgkgkjjkjkl} and $I:=\widetilde{T}\circ T:X\to
X^*$. Next let $\lambda\in\{0,1\}$, $q\geq 2$, $p=q^*:=q/(q-1)$ and
$w_0\in H$. Furthermore, for every $t\in[0,T_0]$ let
$\Psi_t(x):X\to[0,+\infty)$ be a strictly convex function which is
G\^{a}teaux differentiable at every $x\in X$, satisfying
$\Psi_t(0)=0$ and the condition
\begin{equation}\label{roststrrr}
(1/C_0)\,\|x\|_X^q-C_0\leq \Psi_t(x)\leq
C_0\,\|x\|_X^q+C_0\quad\forall x\in X,\;\forall t\in[0,T_0]\,,
\end{equation}
for some $C_0>0$. We also assume that $\Psi_t(x)$ is a Borel
function of its variables $(x,t)$.
%continuous on $t\in[a,b]$ uniformly by $x$.
Next, for every $t\in[0,T_0]$ let $\Lambda_t(x):X\to X^*$ be a
function which is G\^{a}teaux differentiable at every $x\in X$, s.t.
$\Lambda_t(0)\in L^{q^*}\big((0,T_0);X^*\big)$ and the derivative of
$\Lambda_t$ satisfies the growth condition
%\begin{equation}\label{roststlambd}
%\|D\Lambda_t(x)\|_{\mathcal{L}(X;X^*)}\leq
%C\,\|x\|_X^{q-2}+C\quad\forall x\in X,\;\forall t\in[a,b]\,,
%\end{equation}
%for some $C>0$.
\begin{equation}\label{roststlambdrrr}
\|D\Lambda_t(x)\|_{\mathcal{L}(X;X^*)}\leq g\big(\|T\cdot
x\|_H\big)\,\Big(\|x\|_X^{q-2}+\mu^{\frac{q-2}{q}}(t)\Big)\quad\forall
x\in X,\;\forall t\in[0,T_0]\,,
\end{equation}
for some non-decreasing function $g(s):[0+\infty)\to (0,+\infty)$
and some nonnegative function $\mu(t)\in L^1\big((0,T_0);\R\big)$.
%such that $g(0)>0$.
We also assume that $\Lambda_t(x)$ is strongly Borel on the pair of
variables $(x,t)$ (see Definition
\ref{fdfjlkjjkkkkkllllkkkjjjhhhkkk}).
%continuous on $t\in[a,b]$ uniformly by $x$.
Assume also that $\Psi_t$ and $\Lambda_t$ satisfy the following
monotonicity condition
\begin{multline}\label{Monotonerrr}
\bigg<h,\lambda \Big\{D\Psi_t\big(\lambda x+h\big)-D\Psi_t(\lambda
x)\Big\}+D\Lambda_t(x)\cdot h\bigg>_{X\times X^*}\geq
%\frac{k_0\|h\|^2_X}{\hat g(\|T\cdot h\|_H)}
-\hat g\big(\|T\cdot x\|_H\big)\Big(\|x\|_X^q+\hat
\mu(t)\Big)\,\|T\cdot h\|^{2}_H
%\|h\|^p_X\cdot\|T\cdot
%h\|^{(2-p)}_H
\\ \forall x,h\in X,\;\forall t\in[0,T_0]\,,
\end{multline}
for some non-decreasing function $\hat g(s):[0+\infty)\to
(0,+\infty)$ and some nonnegative function $\hat\mu(t)\in
L^1\big((0,T_0);\R\big)$.
%and $\lambda\in\{0,1\}$.
%, where $p\in[0,2)$, $k_0\geq 0$ are $\lambda\in\{0,1\}$
%are some constants and $p=0$ in the case $k_0=0$.
Consider the set
\begin{equation}\label{hgffckaaq1newrrrvhjhjhm}
\mathcal{R}_{q}:=\Big\{u(t)\in L^q\big((0,T_0);X\big):\;I\cdot
u(t)\in W^{1,q^*}\big((0,T_0);X^*\big)\Big\}\,,
\end{equation}
and the minimization problem
\begin{equation}\label{hgffckaaq1newrrr}
\inf\Big\{J(u):\,u(t)\in \mathcal{R}_{q}\;\;\text{s.t}\;\;I\cdot
u(0)=\widetilde{T}\cdot w_0\Big\}\,,
\end{equation}
where $J(u)$ is defined by \er{abstprhfenbjvfjghgighjkjg}. Then for
every $u\in\mathcal{R}_{q}$ such that $I\cdot
u(0)=\widetilde{T}\cdot w_0$ and for arbitrary function
$h(t)\in\mathcal{R}_{q}$, such that $I\cdot h(0)=0$, the finite
limit $\lim\limits_{s\to 0}\big(J(u+s h)-J(u)\big)/s$ exists.
%\begin{equation*}
%\label{nolkjrrrrgdrf}
%\lim\limits_{s\to 0}\frac{J(u+s h)-J(u)}{s}\,.
%\end{equation}
Moreover, for every such $u$ the following four statements
%{\bf(a)}, {\bf(b)}, {\bf(c)}, {\bf(d)}
are equivalent:
\begin{itemize}
\item[{\bf (1)}]
$u$ is a critical point of \er{hgffckaaq1newrrr}, i.e., for any
function $h(t)\in\mathcal{R}_{q}$, such that $I\cdot h(0)=0$ we have
\begin{equation}\label{nolkjrrr}
\lim\limits_{s\to 0}\frac{J(u+s h)-J(u)}{s}=0\,.
\end{equation}
\item[{\bf(2)}]
$u$ is a minimizer to \er{hgffckaaq1newrrr}.
\item[{\bf (3)}] $J(u)=0$.
\item[{\bf (4)}]
$u$ is a solution to
%\begin{equation}\label{uravnllllnewrrr}
%\frac{d}{dt}\big\{I\cdot
%u(t)\big\}+\Lambda_t\big(u(t)\big)+D\Psi_t\big(\lambda
%u(t)\big)=0\quad\text{for a.e.}\; t\in(0,T_0)\,.
%\end{equation}
\begin{equation}\label{abstprrrrcnjkghjkh}
\begin{cases}
\frac{d}{dt}\big\{I\cdot
u(t)\big\}+\Lambda_t\big(u(t)\big)+D\Psi_t\big(\lambda
u(t)\big)=0\quad\quad\text{in}\;\;(0,T_0),
\\
I\cdot u(0)=\widetilde{T}\cdot w_0.
\end{cases}
\end{equation}
\end{itemize}
%If $u$ is a minimizer to \er{hgffckaaq1new} then we must have
%$J(u)=0$. Moreover $u$ is a solution of \er{uravnllllnew}
%\begin{equation}\label{uravnllllnew}
%\frac{dv}{dt}(t)+\Lambda_t\big(u(t)\big)+D\Psi_t\big(\lambda u(t)\big)=0\quad\text{for a.e.}\; t\in(a,b)\,,
%\end{equation}
%where
%$w(t):=T\cdot(u(t))$,
%$v(t):=I\cdot \big(u(t)\big)=\widetilde T\cdot \big(w(t)\big)$ with
%$I:=\widetilde T\circ T:\,X\to X^*$.
Finally, there exists at most one function $u\in\mathcal{R}_{q}$
which satisfies \er{abstprrrrcnjkghjkh}.
\end{theorem}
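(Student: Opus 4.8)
The plan is to prove the cycle $(1)\Rightarrow(4)\Rightarrow(3)\Rightarrow(2)\Rightarrow(1)$ together with the differentiability claim, and then to extract uniqueness from the same energy identity. Most of the links are soft. The differentiability of $s\mapsto J(u+sh)$ at $s=0$ is obtained by differentiating under the integral: the term $\frac\lambda2\|T\cdot(u+sh)(T_0)\|_H^2$ is a quadratic polynomial in $s$, while for the three integrands one uses that $\Psi_t$ is G\^ateaux differentiable and that, by strict convexity together with the $q$-growth \er{roststrrr}, $D\Psi_t:X\to X^*$ is a bijection, $D\Psi^*_t=(D\Psi_t)^{-1}$, and $\Psi^*_t$ is G\^ateaux differentiable with $\|D\Psi^*_t(y)\|_X\le C(\|y\|_{X^*}^{q^*-1}+1)$; the bounds \er{roststrrr}, \er{roststlambdrrr} and $\Lambda_t(0)\in L^{q^*}$ then furnish, uniformly for $|s|\le1$, an $L^1(0,T_0)$ majorant of the difference quotients, so dominated convergence applies and simultaneously yields an explicit formula for $DJ(u)\cdot h$. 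Granting this, $(3)\Leftrightarrow(4)$ and $(3)\Rightarrow(2)$ are precisely the observations made after \er{vhjfgvhjgjkgjkh} (namely $J=E_\lambda\ge0$ with equality iff \er{abstprrrrcnjkghjkh} holds), and $(2)\Rightarrow(1)$ is immediate once the directional derivatives are known to exist. Hence the whole theorem rests on $(1)\Rightarrow(4)$ and on uniqueness.

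For $(1)\Rightarrow(4)$ set $\xi(t):=D\Psi^*_t\big(-\tfrac{d}{dt}\{I\cdot u\}-\Lambda_t(u)\big)$, so that by definition $D\Psi_t(\xi)=-\tfrac{d}{dt}\{I\cdot u\}-\Lambda_t(u)$; then $u$ solves \er{abstprrrrcnjkghjkh} exactly when $w:=\xi-\lambda u$ vanishes, and the $q^*$-growth of $\Psi^*_t$ gives $\xi\in L^q((0,T_0);X)$. Insert the formula for $DJ(u)\cdot h$ into \er{nolkjrrr}. The variation of the $\Psi^*_t$-term contributes $\big\langle\xi,-\tfrac{d}{dt}\{I\cdot h\}-D\Lambda_t(u)\cdot h\big\rangle$; writing $\xi=w+\lambda u$ and integrating by parts (Lemma~\ref{lem2}) in the $\lambda u$-part only --- legitimate since $I\cdot u\in W^{1,q^*}$, whereas $I\cdot\xi$ is not yet known to be --- using $T\cdot h(0)=0$, and cancelling the boundary terms at $T_0$ against $\frac\lambda2\|T\cdot u(T_0)\|_H^2$ and the two opposite copies of $\lambda\langle u,D\Lambda_t(u)\cdot h\rangle$, the critical point condition collapses to
\begin{equation*}
\int_0^{T_0}\Big[\lambda\big\langle h,\,D\Psi_t(\lambda u)-D\Psi_t(\xi)\big\rangle-\big\langle\tfrac{d}{dt}\{I\cdot h\},\,w\big\rangle-\big\langle D\Lambda_t(u)\cdot h,\,w\big\rangle\Big]\,dt=0
\end{equation*}
for every admissible $h$. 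Testing with $h(t)=\f(t)v$, $v\in X$ --- first $\f\in C_c^\infty((0,T_0))$, then $\f\in W^{1,q^*}(0,T_0)$ with $\f(0)=0$ and $\f(T_0)$ free --- shows that $t\mapsto\langle I\cdot v,w(t)\rangle$ is absolutely continuous with $\tfrac{d}{dt}\langle I\cdot v,w(t)\rangle=\lambda\langle v,D\Psi_t(\xi)-D\Psi_t(\lambda u)\rangle+\langle D\Lambda_t(u)\cdot v,w\rangle$, and that $\langle I\cdot v,w(T_0)\rangle=0$. Letting $v$ vary and noting that this right-hand side, as an $X^*$-valued function, lies in $L^{q^*}((0,T_0);X^*)$ (again by \er{roststrrr}, \er{roststlambdrrr} and $w\in L^q$), one concludes $I\cdot w\in W^{1,q^*}((0,T_0);X^*)$, $T\cdot w(T_0)=0$, and that $\tfrac{d}{dt}\{I\cdot w\}$ equals, tested against any $v\in X$, the expression $\lambda\langle v,D\Psi_t(\lambda u+w)-D\Psi_t(\lambda u)\rangle+\langle D\Lambda_t(u)\cdot v,w\rangle$ a.e.

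Since $w\in L^q((0,T_0);X)$ and $I\cdot w\in W^{1,q^*}((0,T_0);X^*)$, Lemma~\ref{lem2} gives $\tfrac{d}{dt}\big(\tfrac12\|T\cdot w(t)\|_H^2\big)=\langle w,\tfrac{d}{dt}\{I\cdot w\}\rangle$ a.e.; plugging $v=w(t)$ into the tested identity above (legitimate via a countable dense set and continuity in $v$) gives
\begin{equation*}
\tfrac{d}{dt}\Big(\tfrac12\|T\cdot w(t)\|_H^2\Big)=\Big\langle w,\ \lambda\big(D\Psi_t(\lambda u+w)-D\Psi_t(\lambda u)\big)+D\Lambda_t(u)\cdot w\Big\rangle,
\end{equation*}
which is exactly the left-hand side of the monotonicity condition \er{Monotonerrr} with $x=u$ and $h=w$ --- the single place where \er{Monotonerrr} is used. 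Hence $\tfrac{d}{dt}\big(\tfrac12\|T\cdot w(t)\|_H^2\big)\ge-\hat g(\|T\cdot u(t)\|_H)\big(\|u(t)\|_X^q+\hat\mu(t)\big)\|T\cdot w(t)\|_H^2$, with coefficient in $L^1(0,T_0)$ because $T\cdot u\in C([0,T_0];H)$ is bounded. Crucially the estimate has $\|T\cdot w\|_H^2$ on the right (not $\|w\|_X$), so, since $T\cdot w(T_0)=0$, Gronwall's inequality run backwards from $t=T_0$ forces $T\cdot w\equiv0$; injectivity of $T$ gives $w\equiv0$, i.e. \er{abstprrrrcnjkghjkh}, closing the cycle.

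For uniqueness let $u_1,u_2\in\mathcal R_q$ both solve \er{abstprrrrcnjkghjkh}; then $w:=u_1-u_2$ has $I\cdot w\in W^{1,q^*}((0,T_0);X^*)$, $T\cdot w(0)=0$, and subtracting the two equations and pairing with $w$ via Lemma~\ref{lem2} gives $\tfrac{d}{dt}\big(\tfrac12\|T\cdot w\|_H^2\big)=-\big\langle w,[\Lambda_t(u_1)+D\Psi_t(\lambda u_1)]-[\Lambda_t(u_2)+D\Psi_t(\lambda u_2)]\big\rangle$ a.e. Integrating \er{Monotonerrr} along the segment $x(\s)=u_2+\s w$, $\s\in[0,1]$ (and using monotonicity of $D\Psi_t$ to dominate the singular part of the non-decreasing map $\s\mapsto\langle w,D\Psi_t(\lambda x(\s))\rangle$) upgrades the infinitesimal monotonicity to the integrated bound $\big\langle w,[\Lambda_t(u_1)+D\Psi_t(\lambda u_1)]-[\Lambda_t(u_2)+D\Psi_t(\lambda u_2)]\big\rangle\ge-\widetilde b(t)\|T\cdot w(t)\|_H^2$ with $\widetilde b\in L^1(0,T_0)$ depending only on $\|u_1\|_X,\|u_2\|_X$ (through \er{roststrrr}) and the a priori bounds on $\|T\cdot u_j\|_H$. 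Thus $\tfrac{d}{dt}\big(\tfrac12\|T\cdot w\|_H^2\big)\le\widetilde b(t)\|T\cdot w\|_H^2$ with $T\cdot w(0)=0$, and Gronwall forces $w\equiv0$. The \emph{main obstacle} throughout is not the energy identity but the two weak-to-strong passages --- legitimizing the differentiation under the integral in $DJ(u)\cdot h$, and deducing from the scalar-tested Euler--Lagrange identity that $I\cdot w$ genuinely belongs to $W^{1,q^*}((0,T_0);X^*)$ so that Lemma~\ref{lem2} applies --- and it is precisely for these that the (strongly) Borel measurability hypotheses and the sharp matching of the exponents $q$ and $q^*=q/(q-1)$ are required.
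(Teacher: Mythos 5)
Your proposal is correct and follows essentially the same route as the paper: the directional derivative of $J$ is computed by dominated convergence using the growth bounds on $\Psi_t$, $\Psi_t^*$, $D\Lambda_t$ (the paper's Lemma~\ref{EulerLagrange1}); the implication from critical point to solution is obtained by introducing the defect $w=D\Psi_t^*\big(H_u\big)-\lambda u$ (the negative of the paper's $W_u$ in \eqref{Wuopredd778899jjk}), showing $I\cdot w\in W^{1,q^*}$ with $T\cdot w(T_0)=0$, and combining the energy identity of Lemma~\ref{lem2} with \eqref{Monotonerrr} at $(x,h)=(u,w)$ and a backward Gronwall argument; uniqueness is the same forward Gronwall argument as Proposition~\ref{gftufuybhjkojutydrdkk}. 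The only cosmetic difference is your choice of test functions $h=\varphi(t)v$ versus the paper's $h_0(t)=(t-a)x$ and $C^1_c$ variations, and your (correct, if slightly overcautious) remark on the segment integration in the uniqueness step, where the map $\sigma\mapsto\langle w,D\Psi_t(\lambda x(\sigma))\rangle$ is in fact continuous, being the derivative of a differentiable convex function of one real variable.
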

\begin{remark}\label{gdfgdghfjkllkhoiklj}
Assume that, instead of \er{Monotonerrr}, one requires that $\Psi_t$
and $\Lambda_t$ satisfy the following inequality
\begin{multline}\label{Monotone1111gkjglghjgj}
\bigg<h,\lambda \Big\{D\Psi_t\big(\lambda x+h\big)-D\Psi_t(\lambda
x)\Big\}+D\Lambda_t(x)\cdot h\bigg>_{X\times X^*}\geq
\\ \frac{\big|f(h,t)\big|^2}{\tilde g(\|T\cdot
x\|_H)}-\tilde g\big(\|T\cdot
x\|_H\big)\Big(\|x\|_X^q+\hat\mu(t)\Big)^{(2-r)/2}\big|f(h,t)\big|^r\,\|T\cdot
h\|^{(2-r)}_H\quad\forall x,h\in X,\;\forall t\in[0,T_0],
\end{multline}
for some non-decreasing function $\tilde g(s):[0+\infty)\to
(0,+\infty)$, some function $\hat\mu(t)\in L^1\big((0,T_0);\R\big)$,
some function $f(x,t):X\times[0,T_0]\to\R$ and some constant
$r\in(0,2)$.
%$k_0>0$ and $\lambda\in\{0,1\}$
%are some constants.
%and $p=0$ in the case $k_0=0$.
Then \er{Monotonerrr}
%with some new choice of non-decreasing
%function $\hat g(s):[0+\infty)\to (0,+\infty)$,
%using the trivial inequality $(p/2)\,a^2+\big((2-p)/2\big)\,
%b^2\geq a^p \,b^{2-p}$,
%valid for every two nonnegative real numbers
%$a$ and $b$, we deduce that \er{Monotone}
follows by the trivial inequality $(r/2)\,a^2+\big((2-r)/2\big)\,
b^2\geq a^r \,b^{2-r}$.
\end{remark}

Our first result about the existence of minimizer for $J(u)$ is the
following Proposition (see Proposition \ref{premainnew} for an
equivalent formulation):
\begin{proposition}\label{premainnewEx}
Assume that $\{X,H,X^*\}$, $T,\widetilde{T},I$, $\lambda,q,p$,
$\Psi_t$ and $\Lambda_t$ satisfy all the conditions of Theorem
\ref{EulerLagrangeInt} together with the assumption $\lambda=1$.
Moreover, assume that
%for every $t\in(0,T_0)$ we have
$\Psi_t$ and $\Lambda_t$ satisfy the following positivity condition
\begin{multline}\label{MonotonegnewhghghghghEx}
\Psi_t(x)+\Big<x,\Lambda_t(x)\Big>_{X\times X^*}\geq \frac{1}{\tilde
C}\,\|x\|^q_X -
%\tilde C
\bar\mu(t)\Big(\|T\cdot x\|^{2}_H+1\Big)
%\tilde C\Big(\|x\|^{r}_X+1\Big)\Big(\|T\cdot x\|^{(2-r)}_H+1\Big)-\bar\mu(t)
\quad\forall x\in X,\;\forall
t\in[0,T_0],
\end{multline}
where
%$r\in[0,2)$ and
$\tilde C>0$ is some constant and $\bar\mu(t)\in
L^1\big((0,T_0);\R\big)$ is some nonnegative function.
%(\er{MonotonegnewhghghghghEx} will follow if we require
%\er{Monotone1111gkjglghjgj} with a positive constant instead of the
%function $\tilde g(s)$).
Furthermore, assume that
\begin{equation}\label{jgkjgklhklhj}
\Lambda_t(x)=A_t\big(S\cdot x\big)+\Theta_t(x)\quad\quad\forall\,
x\in X,\;\forall\, t\in[0,T_0],
\end{equation}
where $Z$ is a Banach space, $S:X\to Z$ is a compact operator and
for every $t\in[0,T_0]$ $A_t(z):Z\to X^*$ is a function which is
strongly Borel on the pair of variables $(z,t)$ and G\^{a}teaux
differentiable at every $z\in Z$, $\Theta_t(x):X\to X^*$ is strongly
Borel on the pair of variables $(x,t)$ and G\^{a}teaux
differentiable at every $x\in X$, $\,\Theta_t(0),A_t(0)\in
L^{q^*}\big((0,T_0);X^*\big)$ and the derivatives of $A_t$ and
$\Theta_t$ satisfy the growth condition
\begin{equation}\label{roststlambdgnewjjjjjEx}
\|D\Theta_t(x)\|_{\mathcal{L}(X;X^*)}+\|D A_t(S\cdot
x)\|_{\mathcal{L}(Z;X^*)}\leq g\big(\|T\cdot
x\|\big)\,\Big(\|x\|_X^{q-2}+\mu^{\frac{q-2}{q}}(t)\Big)\quad\forall
x\in X,\;\forall t\in[0,T_0]
\end{equation}
for some nondecreasing function $g(s):[0,+\infty)\to (0+\infty)$ and
some nonnegative function $\mu(t)\in L^1\big((0,T_0);\R\big)$. Next
assume that for every sequence
$\big\{x_n(t)\big\}_{n=1}^{+\infty}\subset L^q\big((0,T_0);X\big)$
such that the sequence $\big\{I\cdot x_n(t)\big\}$ is bounded in
$W^{1,q^*}\big((0,T_0);X^*\big)$ and $x_n(t)\rightharpoonup x(t)$
weakly in $L^q\big((0,T_0);X\big)$ we have
\begin{itemize}
\item
$\Theta_t\big(x_n(t)\big)\rightharpoonup \Theta_t\big(x(t)\big)$
weakly in $L^{q^*}\big((0,T_0);X^*\big)$,
\item
$\liminf_{n\to+\infty}\int_{0}^{T_0}
\Big<x_n(t),\Theta_t\big(x_n(t)\big)\Big>_{X\times X^*}dt\geq
\int_{0}^{T_0}\Big<x(t),\Theta_t\big(x(t)\big)\Big>_{X\times
X^*}dt$.
\end{itemize}
Finally, let $w_0\in H$ be such that $w_0=T\cdot u_0$ for some
$u_0\in X$, or more generally, $w_0\in H$ be such that
$\mathcal{A}_{w_0}:=\big\{u\in\mathcal{R}_{q}:\,I\cdot
u(0)=\widetilde{T}\cdot w_0\big\}\neq\emptyset$. Then there exists a
minimizer to \er{hgffckaaq1newrrr}.
\end{proposition}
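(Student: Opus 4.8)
The plan is to use the direct method of the calculus of variations. First I would take a minimizing sequence $\{u_n\}\subset\mathcal{A}_{w_0}$ for $J$, i.e. $J(u_n)\to\inf_{\mathcal{A}_{w_0}}J$, which is finite since $\mathcal{A}_{w_0}\neq\emptyset$ and $J\geq 0$ by \er{vhjfgvhjgjkgjkh}; in particular $J(u_n)\leq M$ for some $M$ and all large $n$. The first key step is to obtain a priori bounds. I would combine the three nonnegative-after-Legendre terms in $J$ with the positivity hypothesis \er{MonotonegnewhghghghghEx}: using $\Psi_t^*(y)\geq\langle z,y\rangle-\Psi_t(z)$ cleverly, or more directly observing that $J(u_n)\geq\int_0^{T_0}\big(\Psi_t(u_n)+\langle u_n,\Lambda_t(u_n)\rangle\big)dt+\tfrac12\|T\cdot u_n(T_0)\|_H^2-\tfrac12\|w_0\|_H^2$ once one re-expands $\Psi_t^*\big(-\tfrac{d}{dt}\{I\cdot u_n\}-\Lambda_t(u_n)\big)+\langle u_n,\tfrac{d}{dt}\{I\cdot u_n\}+\Lambda_t(u_n)\rangle\geq -\Psi_t(u_n)+2\Psi_t(u_n)$ — more carefully, the Fenchel inequality gives that the integrand is $\geq \Psi_t(u_n)+\langle u_n,\Lambda_t(u_n)\rangle + \tfrac{d}{dt}\tfrac12\|T u_n\|_H^2$ is not literally right, so instead I would argue: $\Psi^*_t(w)+\langle u_n, -w\rangle\geq -\Psi_t(u_n)$ applied with $w=-\tfrac{d}{dt}\{I\cdot u_n\}-\Lambda_t(u_n)$, hence the bracket in $E_1$ is $\geq 0$, which only recovers $J\geq 0$. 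To get coercivity I would instead use that the bracket equals $\Psi_t(u_n)+\Psi_t^*(w)-\langle u_n,-w\rangle\geq 0$ together with $\Psi_t^*(w)\geq \langle u_n,w\rangle - \Psi_t(u_n)$ being sharp; the correct route is via \er{MonotonegnewhghghghghEx} directly bounding $\int(\Psi_t(u_n)+\langle u_n,\Lambda_t(u_n)\rangle)$, which appears in $J(u_n)$ after using Lemma~\ref{lem2} to rewrite $\int\langle u_n,\tfrac{d}{dt}\{I u_n\}\rangle = \tfrac12\|T u_n(T_0)\|_H^2-\tfrac12\|w_0\|_H^2$. Then from $J(u_n)\leq M$ and \er{MonotonegnewhghghghghEx} one gets $\tfrac1{\tilde C}\|u_n\|_{L^q((0,T_0);X)}^q\leq M+\tfrac12\|w_0\|_H^2+\int_0^{T_0}\bar\mu(t)(\|T u_n(t)\|_H^2+1)dt$, and since $\|T u_n(t)\|_H^2$ is controlled by $\|I u_n\|_{W^{1,q^*}((0,T_0);X^*)}$ via Lemma~\ref{lem2}-type estimates and the remaining $\Psi_t^*$ term controls $\tfrac{d}{dt}\{I u_n\}$ in $L^{q^*}$ through \er{roststrrr} (which forces $\Psi_t^*(y)\geq (1/C)\|y\|_{X^*}^{q^*}-C$), a Grönwall / bootstrap argument closes the bound: $\{u_n\}$ bounded in $L^q((0,T_0);X)$ and $\{I u_n\}$ bounded in $W^{1,q^*}((0,T_0);X^*)$.

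The second step is compactness and passage to the limit. By reflexivity of $X$ (hence of $L^q((0,T_0);X)$) and of $X^*$, after extracting a subsequence $u_n\weakly u$ weakly in $L^q((0,T_0);X)$ and $I u_n\weakly I u$ weakly in $W^{1,q^*}((0,T_0);X^*)$, with $I u(0)=\widetilde T\cdot w_0$ preserved because the trace at $t=0$ is weakly continuous on $W^{1,q^*}((0,T_0);X^*)$; thus $u\in\mathcal{A}_{w_0}$. Using the decomposition \er{jgkjgklhklhj}, the compactness of $S:X\to Z$ combined with the Aubin–Lions–type situation ($u_n$ bounded in $L^q(X)$, $\tfrac{d}{dt}I u_n$ bounded in $L^{q^*}(X^*)$) yields $S\cdot u_n\to S\cdot u$ strongly in $L^q((0,T_0);Z)$ along a further subsequence, so by the growth bound \er{roststlambdgnewjjjjjEx} and continuity of $A_t$, $A_t(S\cdot u_n)\to A_t(S\cdot u)$ strongly in $L^{q^*}((0,T_0);X^*)$ and $\langle u_n, A_t(S u_n)\rangle\to\langle u,A_t(S u)\rangle$; meanwhile the two weak-continuity hypotheses on $\Theta_t$ handle the $\Theta$ part. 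Hence $\Lambda_t(u_n)\weakly\Lambda_t(u)$ in $L^{q^*}((0,T_0);X^*)$ and $\liminf\int\langle u_n,\Lambda_t(u_n)\rangle\geq\int\langle u,\Lambda_t(u)\rangle$. Also $T u_n(T_0)\weakly T u(T_0)$ in $H$ (weak continuity of the trace at $T_0$), so $\liminf\|T u_n(T_0)\|_H^2\geq\|T u(T_0)\|_H^2$.

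The third step is lower semicontinuity of the remaining terms. The map $x\mapsto\int_0^{T_0}\Psi_t(x(t))dt$ is convex (pointwise convexity of $\Psi_t$) and, by \er{roststrrr}, finite and continuous on $L^q((0,T_0);X)$, hence weakly lower semicontinuous. For the Legendre term, write $w_n:=-\tfrac{d}{dt}\{I u_n\}-\Lambda_t(u_n)$; since $\tfrac{d}{dt}\{I u_n\}\weakly\tfrac{d}{dt}\{I u\}$ in $L^{q^*}((0,T_0);X^*)$ and $\Lambda_t(u_n)\weakly\Lambda_t(u)$ there, we get $w_n\weakly w:=-\tfrac{d}{dt}\{I u\}-\Lambda_t(u)$ weakly in $L^{q^*}((0,T_0);X^*)$, and the convex functional $y\mapsto\int_0^{T_0}\Psi_t^*(y(t))dt$ — convex by convexity of each $\Psi_t^*$, and with the coercive lower bound $(1/C)\|y\|_{X^*}^{q^*}-C$ noted above — is weakly lower semicontinuous on $L^{q^*}((0,T_0);X^*)$ (e.g. by Mazur's lemma, or Ioffe's theorem once one checks the Carathéodory/normal-integrand structure, which follows from the Borel hypotheses on $\Psi_t$). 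Combining: $J(u)\leq\liminf J(u_n)=\inf_{\mathcal{A}_{w_0}}J$, so $u$ is a minimizer.

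The step I expect to be the main obstacle is the first one — extracting genuine a priori bounds on the minimizing sequence. The subtlety is that $J(u_n)$ is a \emph{sum} of a nonnegative Fenchel-type bracket and the signed boundary term $-\tfrac\lambda2\|w_0\|_H^2$, and one must not throw away the $\Psi_t^*$ term (which is what controls $\tfrac{d}{dt}\{I u_n\}$) while using \er{MonotonegnewhghghghghEx} (which needs $\|T u_n(t)\|_H^2$ already under control pointwise in $t$). Decoupling this requires writing $\|T u_n(t)\|_H^2 = \|w_0\|_H^2 + 2\int_0^t\langle u_n(\tau),\tfrac{d}{d\tau}\{I u_n(\tau)\}\rangle d\tau$ (Lemma~\ref{lem2}), estimating the right side by $\|w_0\|_H^2 + \epsilon\|u_n\|_{L^q(0,t;X)}^q + C_\epsilon\|\tfrac{d}{d\tau}\{I u_n\}\|_{L^{q^*}(0,t;X^*)}^{q^*}$, feeding that into \er{MonotonegnewhghghghghEx}, and then running a Grönwall argument in the variable $t$ to absorb the $\int_0^{T_0}\bar\mu(t)\|T u_n(t)\|_H^2 dt$ term — with the $L^{q^*}$-norm of the time-derivative simultaneously bounded by the $\Psi_t^*$-term in $J(u_n)$ through \er{roststrrr}. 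Once this coercivity bookkeeping is done correctly, the rest is the standard direct-method machinery outlined above.
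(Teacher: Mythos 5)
Your overall strategy (direct method: minimizing sequence, a priori bounds, compactness via the compact operator $S$, weak lower semicontinuity of the convex integral functionals) is the same as in the paper's proof of the equivalent Proposition \ref{premainnew}, and your Steps 2 and 3 essentially reproduce that argument. The genuine gap is in Step 1, which you yourself flag as the main obstacle: the mechanism you propose for the a priori bounds does not close. You want to control $\sup_t\|T\cdot u_n(t)\|_H^2$ by writing $\|T\cdot u_n(t)\|_H^2=\|w_0\|_H^2+2\int_0^t\langle u_n,\tfrac{d}{d\tau}\{I\cdot u_n\}\rangle\,d\tau$ and estimating the integral by $\epsilon\|u_n\|_{L^q}^q+C_\epsilon\|\tfrac{d}{dt}\{I\cdot u_n\}\|_{L^{q^*}}^{q^*}$, with the derivative controlled through the $\Psi_t^*$-term of $J(u_n)$. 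But $\Psi_t^*$ is evaluated at $-\tfrac{d}{dt}\{I\cdot u_n\}-\Lambda_t(u_n)$, so extracting a bound on the derivative first requires an $L^{q^*}$ bound on $\Lambda_t(u_n)$, and by the growth condition \er{roststlambdgnewjjjjjEx} (equivalently \er{roststlambdrrr}) that bound carries the factor $g\big(\|T\cdot u_n(t)\|_H\big)$ with $g$ an arbitrary nondecreasing function --- i.e.\ it presupposes exactly the $L^\infty(H)$ bound you are trying to establish. Even ignoring $g$, the resulting self-referential inequality has the form $A\le C_1+C_2A+(\text{superlinear in }A)$ with $C_2$ not small, which yields no bound on $A$.

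The missing idea is that the Fenchel integrand $\Upsilon_n(t):=\Psi_t(u_n)+\Psi_t^*\big(-\tfrac{d}{dt}\{I\cdot u_n\}-\Lambda_t(u_n)\big)+\big<u_n,\tfrac{d}{dt}\{I\cdot u_n\}+\Lambda_t(u_n)\big>_{X\times X^*}$ is nonnegative for a.e.\ $t$ by \er{vhjfgvhjgjkgjkh}, so $J(u_n)\le M$ gives not only a bound on $\int_0^{T_0}\Upsilon_n$ but the uniform partial-integral bound $\int_0^t\Upsilon_n(s)\,ds\le M'$ for \emph{every} $t\in[0,T_0]$. Applying Lemma \ref{lem2} on $[0,t]$ and the positivity condition \er{MonotonegnewhghghghghEx} to this partial integral yields directly, for every $t$,
\[
\frac{1}{\hat C}\int_0^t\|u_n(s)\|_X^q\,ds+\int_0^t\Psi_s^*(\cdot)\,ds+\frac12\|T\cdot u_n(t)\|_H^2\;\le\;\frac12\|w_0\|_H^2+M'+\int_0^t\bar\mu(s)\Big(\|T\cdot u_n(s)\|_H^2+1\Big)ds\,,
\]
to which Gronwall applies after discarding the first two (nonnegative) terms; only once $\{T\cdot u_n\}$ is bounded in $L^\infty\big((0,T_0);H\big)$ do you recover the $L^q$ bound on $u_n$, then the $L^{q^*}$ bound on $\Lambda_t(u_n)$, and only then the $L^{q^*}$ bound on $\tfrac{d}{dt}\{I\cdot u_n\}$ from the $\Psi_t^*$-term. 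This is the order of operations in the paper's proof (cf.\ \er{Monotonegghfhtgunew}--\er{boundnes77889999new}), and it cannot be rearranged in the way your sketch suggests.
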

As a consequence of Theorem \ref{EulerLagrangeInt} and Proposition
\ref{premainnewEx} we have the following Corollary.
\begin{corollary}\label{CorCorCor}
Assume that we are in the settings of Proposition
\ref{premainnewEx}. Then there exists a unique solution
$u(t)\in\mathcal{R}_{q}$ to
%\er{abstprrrrcnjkghjkh}.
\begin{equation}\label{abstprrrrcnjkghjkhggg}
\begin{cases}
\frac{d}{dt}\big\{I\cdot
u(t)\big\}+\Lambda_t\big(u(t)\big)+D\Psi_t\big(
u(t)\big)=0\quad\quad\text{in}\;\;(0,T_0),
\\
I\cdot u(0)=\widetilde{T}\cdot w_0.
\end{cases}
\end{equation}
\end{corollary}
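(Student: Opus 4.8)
The plan is simply to combine the two previous results. By Proposition \ref{premainnewEx}, all the stated hypotheses (in particular the positivity condition \er{MonotonegnewhghghghghEx}, the decomposition \er{jgkjgklhklhj} of $\Lambda_t$ into a compact part $A_t\circ S$ plus a weakly-continuous part $\Theta_t$, the growth condition \er{roststlambdgnewjjjjjEx}, and the weak-continuity assumptions on $\Theta_t$) guarantee that the infimum in \er{hgffckaaq1newrrr} is attained: there exists a minimizer $u\in\mathcal{R}_{q}$ with $I\cdot u(0)=\widetilde{T}\cdot w_0$.

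Next I would verify that the hypotheses of Proposition \ref{premainnewEx} imply those of Theorem \ref{EulerLagrangeInt} (with $\lambda=1$). This is stated at the very beginning of Proposition \ref{premainnewEx} --- ``$\{X,H,X^*\}$, $T,\widetilde{T},I$, $\lambda,q,p$, $\Psi_t$ and $\Lambda_t$ satisfy all the conditions of Theorem \ref{EulerLagrangeInt}'' --- so the only point needing a word is that the function $\Lambda_t=A_t\circ S+\Theta_t$ inherits the required properties: strong Borel measurability of $\Lambda_t$ on $(x,t)$ follows from that of $A_t,\Theta_t$ together with continuity of the linear operator $S$; G\^{a}teaux differentiability of $\Lambda_t$ with $D\Lambda_t(x)=DA_t(S\cdot x)\circ S+D\Theta_t(x)$ follows from the chain rule and linearity of $S$; the growth bound \er{roststlambdrrr} on $\|D\Lambda_t(x)\|_{\mathcal{L}(X;X^*)}$ follows from \er{roststlambdgnewjjjjjEx} and $\|S\|_{\mathcal{L}(X;Z)}<\infty$; and $\Lambda_t(0)=A_t(0)+\Theta_t(0)\in L^{q^*}((0,T_0);X^*)$ is assumed. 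The monotonicity condition \er{Monotonerrr} is part of the hypotheses of Theorem \ref{EulerLagrangeInt} that Proposition \ref{premainnewEx} explicitly keeps.

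Now apply Theorem \ref{EulerLagrangeInt} to the minimizer $u$ produced above. Since $u$ is a minimizer of \er{hgffckaaq1newrrr}, statement {\bf (2)} of that theorem holds, hence by the equivalence {\bf (2)}$\Leftrightarrow${\bf (4)} the function $u$ solves \er{abstprrrrcnjkghjkh} with $\lambda=1$, which is exactly \er{abstprrrrcnjkghjkhggg} (recall that for $\lambda=1$ the term $D\Psi_t(\lambda u(t))$ is $D\Psi_t(u(t))$). Finally, uniqueness is immediate: the last assertion of Theorem \ref{EulerLagrangeInt} states that there is at most one $u\in\mathcal{R}_{q}$ satisfying \er{abstprrrrcnjkghjkh}, so the solution of \er{abstprrrrcnjkghjkhggg} is unique. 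There is essentially no obstacle here --- the only mild bookkeeping is checking that the split form \er{jgkjgklhklhj} of $\Lambda_t$ does not disturb the hypotheses of Theorem \ref{EulerLagrangeInt}, which is routine. $\qed$
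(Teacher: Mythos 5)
Your proof is correct and follows exactly the route the paper intends: the paper presents this corollary as an immediate consequence of combining Proposition \ref{premainnewEx} (existence of a minimizer) with the equivalence {\bf (2)}$\Leftrightarrow${\bf (4)} and the uniqueness assertion of Theorem \ref{EulerLagrangeInt}, whose hypotheses are explicitly included among those of the Proposition. Your extra bookkeeping about the split form of $\Lambda_t$ is harmless but not needed, since the Proposition's hypotheses already assume $\Lambda_t$ itself satisfies all conditions of the Theorem.
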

As an important particular case of Corollary \ref{CorCorCor} we have
following statement:
\begin{theorem}\label{THSldInt}
Let $\{X,H,X^*\}$ be an evolution triple with the corresponding
inclusion linear operators $T:X\to H$, which we assume to be
bounded, injective and having dense image in $H$,
$\widetilde{T}:H\to X^*$ be defined by
\er{tildetjbghgjgklhgjkgkgkjjkjkl} and $I:=\widetilde{T}\circ T:X\to
X^*$. Next let $q\geq 2$. Furthermore, for every $t\in[0,T_0]$ let
$\Psi_t(x):X\to[0,+\infty)$ be a strictly convex function which is
G\^{a}teaux differentiable at every $x\in X$, satisfies
$\Psi_t(0)=0$ and satisfies the growth condition
\begin{equation}\label{roststSLDInt}
(1/C_0)\,\|x\|_X^q-C_0\leq \Psi_t(x)\leq
C_0\,\|x\|_X^q+C_0\quad\forall x\in X,\;\forall t\in[0,T_0]\,,
\end{equation}
and the following uniform convexity condition
\begin{equation}\label{roststghhh77889lkagagSLDInt}
\Big<h,D\Psi_t(x+h)-D\Psi_t(x)\Big>_{X\times X^*}\geq
\frac{1}{C_0}\Big(\big\|x\big\|^{q-2}_X+1
%\mu^{\frac{q-2}{q}}(t)
\Big)\cdot\|h\|_X^2\quad\forall
x,h\in X,\;\,\forall t\in[0,T_0],
\end{equation}
for some $C_0>0$.
%and some nonnegative function $\mu(t)\in L^1\big((0,T_0);\R\big)$.
We also assume that $\Psi_t(x)$ is Borel on the pair of variables
$(x,t)$ Next let $Z$ be a Banach space, $S:X\to Z$ be a compact
operator and for every $t\in[0,T_0]$ let $F_t(z):Z\to X^*$ be a
function, such that $F_t$ is strongly Borel on the pair of variables
$(z,t)$ and G\^{a}teaux differentiable at every $z\in Z$, $F_t(0)\in
L^{q^*}\big((0,T_0);X^*\big)$ and the derivatives of $F_t$ satisfies
the growth conditions
\begin{equation}\label{roststlambdgnewSLDInt}
\big\|D F_t(S\cdot x)\big\|_{\mathcal{L}(Z;X^*)}\leq g\big(\|T\cdot
x\|\big)\,\Big(\|x\|_X^{q-2}+1
%\mu^{\frac{q-2}{q}}(t)
\Big)\quad\forall
x\in X,\;\forall t\in[0,T_0]\,,
\end{equation}
for some non-decreasing function $g(s):[0+\infty)\to (0,+\infty)$.
%and the function $\mu(t)$, which is the same as in \er{roststghhh77889lkagagSLDInt}.
Moreover, assume that
%for every $t\in(0,T_0)$ we have
$\Psi_t$ and $F_t$ satisfy the following positivity condition:
\begin{multline}\label{MonotonegnewhghghghghSLDInt}
\Psi_t(x)+\Big<x,F_t(S\cdot x)\Big>_{X\times X^*}\geq \frac{1}{\bar
C}\,\|x\|^q_X -\bar C
%\Big(\|x\|^r_X+1\Big)
\|S\cdot x\|^{2}_Z-
%\bar C
\bar\mu(t)
%\Big(\|x\|^r_X+1\Big)
\Big(\|T\cdot x\|^{2}_H+1\Big)
%-\bar\mu(t)
\quad\forall x\in X,\;\forall t\in[0,T_0],
\end{multline}
%where $r\in[0,2)$ and
where $\bar C>0$ is some constant and $\bar\mu(t)\in
L^1\big((0,T_0);\R\big)$ is a nonnegative function. Furthermore, let
$w_0\in H$ be such that $w_0=T\cdot u_0$ for some $u_0\in X$, or
more generally, $w_0\in H$ be such that
$\mathcal{A}_{w_0}:=\big\{u\in\mathcal{R}_{q}:\,I\cdot
u(0)=\widetilde{T}\cdot w_0\big\}\neq\emptyset$. Then there exists a
unique
%a unique minimizer to \er{hgffckaaq1gnewSLD}, which i
solution $u(t)\in\mathcal{R}_{q}$ to the following equation
\begin{equation}\label{uravnllllgsnachnewSLDInt}
\begin{cases}\frac{d}{dt}\big\{I\cdot u(t)\big\}+F_t\big(S\cdot u(t)\big)+D\Psi_t\big(u(t)\big)=0\quad\text{for
a.e.}\; t\in(0,T_0)\,,\\ I\cdot u(0)=\widetilde T\cdot
w_0\,.\end{cases}
\end{equation}
\end{theorem}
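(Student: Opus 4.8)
The statement is an instance of Corollary~\ref{CorCorCor}, so the plan is simply to check that the hypotheses of Proposition~\ref{premainnewEx} (hence of Theorem~\ref{EulerLagrangeInt}) hold when one takes $\lambda=1$, $\Lambda_t(x):=F_t(S\cdot x)$, and uses the splitting \er{jgkjgklhklhj} with $A_t:=F_t$ and $\Theta_t\equiv 0$; with this choice \er{abstprrrrcnjkghjkhggg} becomes exactly \er{uravnllllgsnachnewSLDInt}. Most hypotheses transfer at once: strict convexity, G\^{a}teaux differentiability, $\Psi_t(0)=0$, the bound \er{roststSLDInt} (which is \er{roststrrr}), and Borel measurability of $\Psi_t$ are assumed verbatim; $\Lambda_t$ is G\^{a}teaux differentiable with $D\Lambda_t(x)=DF_t(S\cdot x)\circ S$, it is strongly Borel in $(x,t)$ because $S$ is a bounded linear (hence strongly Borel) map and composition with such a map preserves strong Borel measurability, $\Lambda_t(0)=F_t(0)\in L^{q^*}$, $\Theta_t(0)=0$, and \er{roststlambdgnewSLDInt} yields both \er{roststlambdrrr} and \er{roststlambdgnewjjjjjEx} after replacing the factor $g$ by $\|S\|_{\mathcal{L}(X;Z)}\,g$ and taking $\mu\equiv 1$ (so that $\mu^{(q-2)/q}\equiv 1$). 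The two weak-continuity requirements on $\Theta_t$ in Proposition~\ref{premainnewEx} are vacuous for $\Theta_t\equiv 0$, and nonemptiness of $\mathcal{A}_{w_0}$ is assumed. Thus the real work is to derive the positivity condition \er{MonotonegnewhghghghghEx} from \er{MonotonegnewhghghghghSLDInt}, and the monotonicity condition \er{Monotonerrr} from the uniform convexity \er{roststghhh77889lkagagSLDInt} together with the growth bound \er{roststlambdgnewSLDInt}.

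The common ingredient is an Ehrling-type interpolation estimate: since $S:X\to Z$ is compact, $T:X\to H$ is bounded and injective, and $X$ is reflexive, for every $\delta>0$ there is a constant $C_\delta>0$, which may be chosen non-increasing in $\delta$, such that $\|S\cdot x\|_Z\le\delta\|x\|_X+C_\delta\|T\cdot x\|_H$ for all $x\in X$. (If this failed one would get $x_n$ with $\|x_n\|_X=1$, $\|T\cdot x_n\|_H\to 0$ and $\|S\cdot x_n\|_Z\ge\delta_0$; passing to a subsequence with $x_n\rightharpoonup x$ weakly in $X$ gives $T\cdot x_n\rightharpoonup T\cdot x$ weakly in $H$, hence $x=0$ by injectivity of $T$, while $S\cdot x_n\to 0$ strongly by compactness, a contradiction.) Using $\|x\|_X^2\le\|x\|_X^q+1$ for $q\ge 2$, this bounds the extra term in \er{MonotonegnewhghghghghSLDInt} by $2\bar C\delta^2(\|x\|_X^q+1)+2\bar C C_\delta^2\|T\cdot x\|_H^2$; choosing $\delta$ so small that $2\bar C\delta^2\le 1/(2\bar C)$ and absorbing the remaining constants into the $L^1$ weight, we obtain \er{MonotonegnewhghghghghEx} with $\tilde C:=2\bar C$ and a new nonnegative weight in $L^1\big((0,T_0);\R\big)$.

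For \er{Monotonerrr} (with $\lambda=1$), combine $\big<h,D\Psi_t(x+h)-D\Psi_t(x)\big>_{X\times X^*}\ge(1/C_0)\big(\|x\|_X^{q-2}+1\big)\|h\|_X^2$ from \er{roststghhh77889lkagagSLDInt} with $\big<h,D\Lambda_t(x)\cdot h\big>_{X\times X^*}=\big<h,DF_t(S\cdot x)\cdot(S\cdot h)\big>_{X\times X^*}\ge-g(\|T\cdot x\|_H)\big(\|x\|_X^{q-2}+1\big)\,\|S\cdot h\|_Z\,\|h\|_X$ from \er{roststlambdgnewSLDInt}, so that the left-hand side of \er{Monotonerrr} is at least $\big(\|x\|_X^{q-2}+1\big)\big[(1/C_0)\|h\|_X^2-g(\|T\cdot x\|_H)\|S\cdot h\|_Z\|h\|_X\big]$. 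Apply the interpolation estimate to $\|S\cdot h\|_Z$ with $\delta:=1/\big(2C_0\,g(\|T\cdot x\|_H)\big)$; the resulting $g\,\delta\,\|h\|_X^2$ term is absorbed into half of $(1/C_0)\|h\|_X^2$, and Young's inequality applied to the surviving $g(\|T\cdot x\|_H)\,C_\delta\,\|T\cdot h\|_H\|h\|_X$ term leaves, after dropping a nonnegative multiple of $\|h\|_X^2$, a lower bound $-C(\|T\cdot x\|_H)\,\big(\|x\|_X^{q-2}+1\big)\,\|T\cdot h\|_H^2$; since $\|x\|_X^{q-2}+1\le 2\big(\|x\|_X^q+1\big)$ for $q\ge 2$, this has the form required in \er{Monotonerrr} with $\hat\mu\equiv 1$ and a function $\hat g$ built from $g$ and $C_{\delta(\cdot)}$, which is non-decreasing and strictly positive because $g$ is non-decreasing and positive and $C_\delta$ is positive and non-increasing in $\delta$. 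Granting \er{MonotonegnewhghghghghEx} and \er{Monotonerrr}, Corollary~\ref{CorCorCor} applies and yields the desired existence and uniqueness of $u\in\mathcal{R}_{q}$ solving \er{uravnllllgsnachnewSLDInt}. The one genuinely delicate point is this last estimate: the term coming from $D\Lambda_t$ is controlled a priori only by $\|S\cdot h\|_Z\|h\|_X$, which is stronger than the $\|T\cdot h\|_H^2$ permitted on the right of \er{Monotonerrr}, so one must use both the compactness of $S$ (through the interpolation estimate) and the full strength of the uniform convexity of $\Psi_t$, taking care to keep all $\|x\|_X$-dependence inside the factor $\big(\|x\|_X^q+\hat\mu(t)\big)$ and only $\|T\cdot x\|_H$-dependence inside $\hat g$.
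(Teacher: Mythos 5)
Your proposal is correct and follows essentially the same route as the paper: the paper's proof of Theorem \ref{THSld} likewise reduces to Proposition \ref{premainnew} (with $A_t=F_t$, $\Theta_t\equiv 0$) and derives both the monotonicity condition \er{Monotone} and the positivity condition \er{Monotonegnewhghghghgh} from the very same Ehrling-type estimate $\|S\cdot h\|_Z\le\e\|h\|_X+c_\e\|T\cdot h\|_H$ (Lemma \ref{Aplem1} in the Appendix, which you re-prove by the identical compactness/injectivity contradiction), choosing $\e=1/(2C_0 g(\|T\cdot x\|_H))$ and absorbing exactly as you do. The only cosmetic difference is that you invoke the result via Corollary \ref{CorCorCor}/Proposition \ref{premainnewEx} rather than directly via Proposition \ref{premainnew}, which are equivalent formulations.
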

As a consequence, we have the following easily formulated Corollary:
\begin{corollary}\label{THSldIntgkgiu}
Let $X$ be a reflexive Banach space and $X^*$ be the space dual to
$X$. Next let $I:X\to X^*$ be a self-adjoint and strictly positive
bounded linear operator. Furthermore, for every $t\in[0,T_0]$ let
$\Psi_t(x):X\to[0,+\infty)$ be a convex function which is
G\^{a}teaux differentiable at every $x\in X$, satisfies
$\Psi_t(0)=0$ and satisfies the growth condition
\begin{equation}\label{roststSLDIntcor}
(1/C_0)\,\|x\|_X^2-C_0\leq \Psi_t(x)\leq
C_0\,\|x\|_X^2+C_0\quad\forall x\in X,\;\forall t\in[0,T_0]\,,
\end{equation}
and the uniform convexity condition
\begin{equation}\label{roststghhh77889lkagagSLDIntcor}
\Big<h,D\Psi_t(x+h)-D\Psi_t(x)\Big>_{X\times X^*}\geq
\frac{1}{C_0}\,\|h\|_X^2\quad\forall x,h\in X,\;\,\forall
t\in[0,T_0],
\end{equation}
for some $C_0>0$.
%and some nonnegative function $\mu(t)\in L^1\big((0,T_0);\R\big)$.
We also assume that $\Psi_t(x)$ is Borel on the pair of variables
$(x,t)$ Next let $Z$ be a Banach space, $S:X\to Z$ be a compact
operator and for every $t\in[0,T_0]$ let $F_t(z):Z\to X^*$ be a
function, such that $F_t$ is strongly Borel on the pair of variables
$(z,t)$ and G\^{a}teaux differentiable at every $z\in Z$, $F_t(0)\in
L^{2}\big((0,T_0);X^*\big)$ and the derivative of $F_t$ satisfies
the Lipschitz's condition
\begin{equation}\label{roststlambdgnewSLDIntcor}
\big\|D F_t(z)\big\|_{\mathcal{L}(Z;X^*)}\leq C\quad\forall z\in
Z,\;\forall t\in[0,T_0]\,,
\end{equation}
for some constant $C>0$. Then for every $u_0\in X$ there exists a
unique $u(t)\in L^2\big((0,T_0);X\big)$ such that $I\cdot u(t)\in
W^{1,2}\big((0,T_0);X^*\big)$ and $u(t)$ is a
%a unique minimizer to \er{hgffckaaq1gnewSLD}, which i
solution to
\begin{equation}\label{uravnllllgsnachnewSLDIntcor}
\begin{cases}\frac{d}{dt}\big\{I\cdot u(t)\big\}+F_t\big(S\cdot u(t)\big)+D\Psi_t\big(u(t)\big)=0\quad\text{for
a.e.}\; t\in(0,T_0)\,,\\ I\cdot u(0)=I\cdot u_0\,.\end{cases}
\end{equation}
\end{corollary}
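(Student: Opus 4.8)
The plan is to derive Corollary~\ref{THSldIntgkgiu} as a direct specialization of Theorem~\ref{THSldInt}. First I would construct the evolution triple $\{X,H,X^*\}$ associated with the operator $I$: invoking Lemma~\ref{hdfghdiogdiofg} (referenced in the introduction), since $I:X\to X^*$ is self-adjoint and strictly positive there exist a Hilbert space $H$ and a bounded injective linear operator $T:X\to H$ with dense image such that, with $\widetilde{T}:H\to X^*$ defined by \er{tildetjbghgjgklhgjkgkgkjjkjkl}, one has $\widetilde{T}\circ T\equiv I$. Concretely $H$ is the completion of $X$ under the inner product $\langle x,y\rangle_H:=\langle x, I\cdot y\rangle_{X\times X^*}$ and $T$ is the canonical embedding. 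This reduces the abstract ``self-adjoint strictly positive $I$'' hypothesis of the Corollary to the ``evolution triple'' hypothesis of Theorem~\ref{THSldInt}.

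Next I would check the remaining hypotheses of Theorem~\ref{THSldInt} with $q=2$, so that $q^*=2$ as well. The growth condition \er{roststSLDIntcor} is exactly \er{roststSLDInt} with $q=2$. For the uniform convexity: \er{roststghhh77889lkagagSLDIntcor} gives $\langle h,D\Psi_t(x+h)-D\Psi_t(x)\rangle\geq (1/C_0)\|h\|_X^2$, which trivially implies \er{roststghhh77889lkagagSLDInt} with $q=2$ since $\|x\|_X^{q-2}+1=2\geq 1$ and we may enlarge $C_0$ by a factor $2$. For the operator term, set $F_t$ as given; the Lipschitz bound \er{roststlambdgnewSLDIntcor}, $\|DF_t(z)\|_{\mathcal L(Z;X^*)}\leq C$, immediately yields \er{roststlambdgnewSLDInt} with $q=2$ (where $\|x\|_X^{q-2}+1=2$) upon choosing the constant function $g\equiv 2C$. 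Also $F_t(0)\in L^2((0,T_0);X^*)$ as assumed, and Borel/strong-Borel measurability is inherited directly.

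The one genuinely substantive point is verifying the positivity condition \er{MonotonegnewhghghghghSLDInt}. Here I would use the Lipschitz bound to estimate $\langle x, F_t(S\cdot x)\rangle_{X\times X^*}$ from below: writing $F_t(S\cdot x)=F_t(0)+\int_0^1 DF_t(\tau S\cdot x)\cdot(S\cdot x)\,d\tau$, we get $|\langle x,F_t(S\cdot x)\rangle|\leq \|x\|_X\|F_t(0)\|_{X^*}+C\|x\|_X\|S\cdot x\|_Z$. Then, using $\Psi_t(x)\geq (1/C_0)\|x\|_X^2-C_0$ and Young's inequality $\|x\|_X\|S\cdot x\|_Z\leq \varepsilon\|x\|_X^2+C_\varepsilon\|S\cdot x\|_Z^2$ and $\|x\|_X\|F_t(0)\|_{X^*}\leq \varepsilon\|x\|_X^2+C_\varepsilon\|F_t(0)\|_{X^*}^2$ with $\varepsilon$ small, one obtains $\Psi_t(x)+\langle x,F_t(S\cdot x)\rangle_{X\times X^*}\geq (1/\bar C)\|x\|_X^2-\bar C\|S\cdot x\|_Z^2-\bar\mu(t)$ for a suitable constant $\bar C$ and $\bar\mu(t):=C_\varepsilon\|F_t(0)\|_{X^*}^2+C_0$, which lies in $L^1((0,T_0);\R)$ since $F_t(0)\in L^2((0,T_0);X^*)$. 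This is precisely \er{MonotonegnewhghghghghSLDInt} (the extra $\|T\cdot x\|_H^2$ term there only makes the condition easier to satisfy, so it is not needed). Since $q=2\geq 2$, all hypotheses of Theorem~\ref{THSldInt} hold.

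Finally, for the initial data: given $u_0\in X$ we have $w_0:=T\cdot u_0\in H$, and then $\widetilde{T}\cdot w_0=\widetilde{T}\cdot T\cdot u_0=I\cdot u_0$, so the admissible set $\mathcal{A}_{w_0}$ is nonempty (the constant function $u_0$ belongs to it). Applying Theorem~\ref{THSldInt} produces a unique $u(t)\in\mathcal{R}_2=\{u\in L^2((0,T_0);X):I\cdot u(t)\in W^{1,2}((0,T_0);X^*)\}$ solving \er{uravnllllgsnachnewSLDInt} with this $w_0$, which is exactly \er{uravnllllgsnachnewSLDIntcor} since $\widetilde{T}\cdot w_0=I\cdot u_0$. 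I expect the main obstacle to be nothing deep but rather bookkeeping: one must be careful that the Corollary only assumes $\Psi_t$ convex (not strictly convex) while Theorem~\ref{THSldInt} asks for strict convexity — however \er{roststghhh77889lkagagSLDIntcor} forces strict monotonicity of $D\Psi_t$ and hence strict convexity of $\Psi_t$, so this is automatic. The only other care needed is to confirm that the constant function $g\equiv 2C$ is non-decreasing (it is) and that $\mu\equiv 1$ (the constant used implicitly when we replaced $\mu^{(q-2)/q}(t)=\mu^0(t)=1$) is in $L^1$, which it is on the bounded interval $(0,T_0)$.
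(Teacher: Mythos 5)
Your proposal is correct and is exactly the argument the paper intends: the Corollary is stated there as an immediate consequence of Theorem~\ref{THSldInt} (with the evolution triple supplied by Lemma~\ref{hdfghdiogdiofg}), and your verification of the hypotheses with $q=2$ --- including the derivation of the positivity condition \eqref{MonotonegnewhghghghghSLDInt} from the Lipschitz bound via the mean-value estimate and Young's inequality, and the observation that \eqref{roststghhh77889lkagagSLDIntcor} forces strict convexity --- fills in precisely the specialization the paper leaves to the reader. No gaps.
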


In \cite{PII}, using Theorem \ref{THSldInt} as a basis, by the
appropriate approximation, we obtain further existence Theorems,
under much weaker assumption on coercivity and compactness.
Moreover, applying these general Theorems, we provide with the
existence results for various classes of time dependent partial
differential equations including parabolic, hyperbolic,
Shr\"{o}dinger and Navier-Stokes systems.

In order  to demonstrate the basic idea of the proof of the key
Theorem \ref{EulerLagrangeInt} consider the simple example of the
scalar parabolic equation of the following form:
\begin{equation}\label{cyfuggbjhghgjh}
\begin{cases}
\partial_t u+\Div_x F(u)-\Delta_x u=0\quad\quad\forall
x\in\O\subset\subset\R^N,\;\forall
t\in(0,T_0)\\
u(x,t)=0\quad\quad\forall x\in\partial\O,\;\quad\forall t\in(0,T_0)
\\
u(x,0)=v_0(x)\quad\quad\forall x\in\O,
\end{cases}
\end{equation}
where we assume $F:\R\to\R^N$ to be smooth and globally Lipschitz
function. In this case we take $X:=W^{1,2}_0(\O)$, $H:=L^2(\O)$ and
$T$ to be a trivial embedding. Then $X^*=W^{-1,2}(\O)$ and the
energy-functional takes the form
\begin{equation}\label{energht}
\bar E(u):=\frac{1}{2}\int_0^{T_0}\int_\O\Bigg(|\nabla_x
u|^2+\bigg|\nabla_x\Big\{\Delta_x^{-1}\big(\partial_t u+\Div_x F(u)
\big)\Big\}\bigg|^2\Bigg)\,dxdt+\frac{1}{2}\int_\O\Big(\big|u(x,T_0)\big|^2-\big|u(x,0)\big|^2\Big)dx,
\end{equation}
where $\Delta^{-1}f$ is the solution of
\begin{equation*}
%\label{stokesht}
\begin{cases}\Delta y=f\quad\quad x\in\O\,,\\ y=0\quad\quad\forall
x\in\partial\O\,.\end{cases}
\end{equation*}
Let us investigate the Euler-Lagrange equation for \er{energht}. If
$u$ satisfies $u(x,t)=0$ for every $(x,t)\in\partial\O\times(0,T_0)$
and $u(x,0)=v_0(x)$, then,
\begin{equation}\label{fhgrtfhifhj}
%\label{energhtsk}
\bar E(u):=\frac{1}{2}\int_0^{T_0}\int_\O\Bigg(\bigg|\nabla_x\Big\{
u-\Delta_x^{-1}\big(\partial_t u+\Div_x
F(u)\big)\Big\}\bigg|^2\Bigg)\,dxdt.
%+\frac{1}{2}\int_\O v_0^2(x)\,dx\,,
\end{equation}
Set $W_u:=u-\Delta_x^{-1}\big(\partial_t u+\Div_x F(u)\big)$. Thus,
for every minimizer $u$ of the energy \er{fhgrtfhifhj} and for every
smooth test function $\delta(x,t)$, satisfying $\delta(x,t)=0$ for
every $(x,t)\in\partial\O\times(0,T_0)$ and $\delta(x,0)\equiv 0$,
we obtain
\begin{multline*}
%\label{nolkj3mtt}
0=\frac{d \bar E(u+s\delta)}{ds}\Big|_{(s=0)}=\lim\limits_{s\to
0}\frac{1}{2s}\int_0^{T_0}\int_\O\Big(|\nabla_x
W_{(u+s\delta)}|^2-|\nabla_x W_{u}|^2\Big)=\\
-\lim\limits_{s\to 0}\frac{1}{2s}\int_0^{T_0}\int_\O\big(\Delta_x
W_{(u+s\delta)}-\Delta_x W_{u}\big)\cdot\big(
W_{(u+s\delta)}+W_{u}\big)=\\
\lim\limits_{s\to 0}\frac{1}{2}\int_0^{T_0}\int_\O\Big(-\Delta_x
\delta+\partial_t\delta+\Div_x\big(F(u+s\delta)-F(u)\big)/s\Big)\cdot\big(
W_{(u+s\delta)}+W_{u}\big)=\\ \int_0^{T_0}\int_\O\Big(\nabla
W_{u}\cdot\nabla_x
\delta+W_{u}\cdot\partial_t\delta-\big(F'(u)\cdot\nabla_x
W_{u}\big)\delta\Big)\,.
\end{multline*}
Since $\delta$ was arbitrary (in particular $\delta(x,T_0)$ is free)
we deduce that $\Delta_x W_u+\partial_t W_u+F'(u)\cdot\nabla_x
W_u=0$, $W_u(x,T_0)=0$ and $W_u=0$ if $x\in\partial\O$. Changing
variables $\tau:=T_0-t$ gives that $W_u$ is a solution of the
following linear parabolic equation with the trivial initial and
boundary conditions:
\begin{equation*}
\begin{cases}
\partial_\tau W_u-F'(u)\cdot\nabla_x
W_u=\Delta_x W_u\quad\quad\forall (x,\tau)\in\O\times(0,T_0)\,,\\
W_u(x,0)=0\quad\quad\quad\forall x\in\O\,,\\
W_u(x,\tau)=0\quad\quad\quad\forall
(x,\tau)\in\partial\O\times(0,T_0)\,.
\end{cases}
\end{equation*}
Therefore $W_u=0$ and then $\Delta_x u=\partial_t u+\Div_x F(u)$,
i.e., $u$ is the solution of \er{cyfuggbjhghgjh}.

 In section \ref{dkgfkghfhkljl} we give one more example, applying
our result to a general parabolic system.

\section{Notations and preliminaries}
Throughout the paper by linear space we mean a real linear space.
\begin{itemize}
\item For given normed space $X$ we denote by $X^*$ the dual space (the space of continuous (bounded) linear functionals from $X$ to $\R$).
\item For given $h\in X$ and $x^*\in X^*$ we denote by $\big<h,x^*\big>_{X\times X^*}$ the value in $\R$ of the functional $x^*$ on the vector $h$.
\item For given two normed linear spaces $X$ and $Y$ we denote by $\mathcal{L}(X;Y)$ the linear space of continuous (bounded) linear operators from $X$ to $Y$.
\item For given $A\in\mathcal{L}(X;Y)$  and $h\in X$ we denote by $A\cdot h\in Y$ the value of the operator $A$ at the point $h$.
\item We set
$\|A\|_{\mathcal{L}(X;Y)}=\sup\{\|A\cdot h\|_Y:\;h\in
X,\;\|h\|_X\leq 1\}$. Then it is well known that $\mathcal{L}(X;Y)$
will be a normed linear space. Moreover $\mathcal{L}(X;Y)$ will be a
Banach space if $Y$ is a Banach space.
\end{itemize}
\begin{definition}\label{2bdf}
Let $X$ and $Y$ be two normed linear spaces. We say that a function
$F:X\to Y$ is G\^{a}teaux differentiable at the point $x\in X$ if
there exists $A\in\mathcal{L}(X;Y)$ such that the following limit
exists in $Y$ and satisfy,
$$\lim\limits_{s\to 0}\frac{1}{s}\Big(F(x+sh)-F(x)\Big)=A\cdot h\quad\forall h\in X\,.$$
In this case we denote the operator $A$ by $DF(x)$ and the value $A\cdot h$ by $DF(x)\cdot h$.
\end{definition}

\begin{definition}\label{fdfjlkjjkkkkkllllkkkjjjhhhkkk}
Let $X$ and $Y$ be two normed linear spaces and $U\subset X$ be a
Borel subset. We say that the mapping $F(x):U\to Y$ is strongly
Borel if the following two conditions are satisfied.
\begin{itemize}
\item
$F$ is a Borel mapping i.e. for every Borel set $W\subset Y$, the
set $\{x\in U:\,F(x)\in W\}$ is also Borel.
\item For every separable subspace $X'\subset X$, the set $\{y\in Y:\,y=F(x),\; x\in U\cap
X'\}$ is also contained in some separable subspace of $Y$.
\end{itemize}
\end{definition}

\begin{definition}\label{3bdf}
For a given Banach space $X$ with the associated norm $\|\cdot\|_X$
and a real interval $(a,b)$ we denote by $L^q(a,b;X)$ the linear
space of (equivalence classes of) strongly measurable (i.e
equivalent to some strongly Borel mapping)
%(in the strong sense)
functions $f:(a,b)\to
X$ such that the functional
\begin{equation*}
\|f\|_{L^q(a,b;X)}:=
\begin{cases}
\Big(\int_a^b\|f(t)\|^q_X dt\Big)^{1/q}\quad\text{if }\;1\leq
q<\infty\\
{\text{es$\,$sup}}_{t\in (a,b)}\|f(t)\|_X\quad\text{if }\; q=\infty
\end{cases}
\end{equation*}
is finite. It is known that this functional defines a norm with
respect to which $L^q(a,b;X)$ becomes a Banach space. Moreover, if
$X$ is reflexive and $1<q<\infty$ then $L^q(a,b;X)$ will be a reflexive
space with the corresponding dual space $L^{q^*}(a,b;X^*)$,
where $q^*=q/(q-1)$.
It is also well known that the subspace of continuous functions $C^0([a,b];X)\subset L^q(a,b;X)$ is dense i.e. for every $f(t)\in L^q(a,b;X)$ there exists a sequence
$\{f_n(t)\}\subset C^0([a,b];X)$ such that $f_n(t)\to f(t)$ in the strong topology of $L^q(a,b;X)$.
\end{definition}
We will need the following simple
%%%Lemma.
well known fact:
\begin{lemma}\label{lebesguepoints}
Let $X$ be a Banach space, $(a,b)$ be a bounded real interval and $f(t)\in L^q(a,b;X)$ for some $1\leq q<+\infty$. Then
if we denote
\begin{equation}\label{fbarddd77788}
\bar f(t):=\begin{cases}
f(t)\quad\text{if}\;\;t\in(a,b)\,,\\
0\quad\text{if}\;\;t\notin(a,b)\,,
\end{cases}
\end{equation}
then
\begin{equation}\label{fbardddjjjkk77788}
\lim\limits_{h\to 0}\int_\R\big\|\bar f(t+h)-\bar f(t)\big\|_X^q dt=0\,,
\end{equation}
and
\begin{equation}\label{fbardddjjjkkfg777888sddf88888hgthtyj9999}
\lim\limits_{h\to 0}\int_\R\bigg(\frac{1}{h}\int_{-h}^{h}\big\|\bar f(t+\tau)-\bar f(t)\big\|_X^q d\tau\bigg) dt
=0\,.
\end{equation}
Moreover, for every sequence $\e_n\to 0^+$ as $n\to +\infty$, up to a subsequence, still denoted by $\e_n$ we have
\begin{equation}\label{fbardddjjjkkfg777888}
\lim\limits_{n\to +\infty}\frac{1}{\e_n}\int_{-\e_n}^{\e_n}\big\|\bar f(t+\tau)-\bar f(t)\big\|_X^q d\tau=
\lim\limits_{n\to +\infty}\int_{-1}^{1}\big\|\bar f(t+\e_n s)-\bar f(t)\big\|_X^q ds=0\quad\text{for a.e.}\;\;t\in\R\,.
\end{equation}
\end{lemma}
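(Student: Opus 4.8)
The plan is to reduce everything to the well-known density of continuous (in fact compactly supported continuous) functions in $L^q$ and to the Lebesgue differentiation theorem, applied in a way that tolerates the Banach-space valued setting. First I would establish \er{fbardddjjjkk77788}: given $\varepsilon>0$, pick $g\in C^0_c(\R;X)$ with $\|\bar f-g\|_{L^q(\R;X)}<\varepsilon$ (this is possible since $\bar f$ is supported on the bounded interval $[a,b]$ and $C^0([a,b];X)$ is dense in $L^q(a,b;X)$ by Definition \ref{3bdf}, after extending by zero and mollifying near the endpoints to kill the jump). For such a $g$, uniform continuity and compact support give $\int_\R\|g(t+h)-g(t)\|_X^q\,dt\to 0$ as $h\to 0$ by dominated convergence. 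Then split $\bar f(t+h)-\bar f(t)$ as $(\bar f(t+h)-g(t+h))+(g(t+h)-g(t))+(g(t)-\bar f(t))$ and apply the triangle inequality in $L^q$ together with translation invariance of Lebesgue measure to bound $\big(\int_\R\|\bar f(t+h)-\bar f(t)\|_X^q\,dt\big)^{1/q}\le 2\varepsilon+\big(\int_\R\|g(t+h)-g(t)\|_X^q\,dt\big)^{1/q}$; letting $h\to 0$ and then $\varepsilon\to 0$ yields \er{fbardddjjjkk77788}.

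Next I would deduce \er{fbardddjjjkkfg777888sddf88888hgthtyj9999} from \er{fbardddjjjkk77788}. Define $\phi(\tau):=\int_\R\|\bar f(t+\tau)-\bar f(t)\|_X^q\,dt$; by the first part $\phi$ is bounded (by $2^q\|\bar f\|_{L^q}^q$ for all $\tau$, and in fact supported essentially near $\tau=0$ for the purpose of the limit) and $\phi(\tau)\to 0$ as $\tau\to 0$. By Tonelli's theorem the inner average commutes with the $t$-integration, so $\int_\R\big(\frac1h\int_{-h}^h\|\bar f(t+\tau)-\bar f(t)\|_X^q\,d\tau\big)dt=\frac1h\int_{-h}^h\phi(\tau)\,d\tau$, which tends to $\phi(0^+)=0$ as $h\to 0$ because $\phi$ is continuous at $0$ with value $0$. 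The change of variables $\tau=hs$ rewrites $\frac1h\int_{-h}^h(\cdots)d\tau=\int_{-1}^1(\cdots)ds$, giving the stated reformulation.

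Finally, for the pointwise statement \er{fbardddjjjkkfg777888} I would invoke the Lebesgue differentiation theorem for the real-valued locally integrable function $t\mapsto\|\bar f(t)\|_X^q$ — but more carefully, what is needed is that for a.e.\ $t$ the function $\tau\mapsto\|\bar f(t+\tau)-\bar f(t)\|_X^q$ has average over $[-h,h]$ tending to $0$, i.e.\ $t$ is a Lebesgue point in the strong ($X$-valued) sense. The standard route: since $X$ need not be separable but $\bar f$ is strongly measurable, $\bar f$ takes values (a.e.) in a separable closed subspace $X_0\subset X$; choosing a countable dense set $\{x_k\}\subset X_0$, apply the scalar Lebesgue differentiation theorem to each $t\mapsto\|\bar f(t)-x_k\|_X^q$ to get a full-measure set of common Lebesgue points, and a routine $3\varepsilon$-approximation argument shows every such $t$ satisfies $\lim_{h\to0}\frac1{2h}\int_{-h}^h\|\bar f(t+\tau)-\bar f(t)\|_X^q\,d\tau=0$. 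This gives the limit along the continuum $h\to0^+$; restricting to the sequence $\varepsilon_n$ is then immediate and the stated passage to a subsequence is not even needed for this version — alternatively, one extracts it directly from \er{fbardddjjjkkfg777888sddf88888hgthtyj9999}, which asserts $L^1_t$-convergence to $0$ of $\frac1{\varepsilon_n}\int_{-\varepsilon_n}^{\varepsilon_n}\|\bar f(t+\tau)-\bar f(t)\|_X^q\,d\tau$, and $L^1$ convergence yields a.e.\ convergence along a subsequence. The change of variables $\tau=\varepsilon_n s$ again produces the second equality in \er{fbardddjjjkkfg777888}. The main obstacle is the non-separability of $X$: it forces the detour through strong measurability and a separable subspace before the scalar Lebesgue differentiation theorem can be applied, so I would set up that reduction carefully at the start of the pointwise part.
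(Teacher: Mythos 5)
Your proposal is correct and follows essentially the same path as the paper: approximation by compactly supported continuous functions plus a triangle-inequality splitting for \er{fbardddjjjkk77788}, and the change of variables $\tau=hs$ with Tonelli and dominated convergence for \er{fbardddjjjkkfg777888sddf88888hgthtyj9999}. For the pointwise claim \er{fbardddjjjkkfg777888} the paper uses exactly the route you mention as the alternative -- extracting an a.e.-convergent subsequence from the $L^1_t$ convergence established in \er{fbardddjjjkkfg777888sddf88888hgthtyj9999} -- so your primary Lebesgue-differentiation argument (via reduction to a separable subspace and a countable dense set) is a valid strengthening that would avoid the subsequence, but it is not needed for the statement as given.
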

%%%See the proof of this Lemma in the end of the Appendix.

\begin{definition}\label{4bdf}
Let $X$ be a reflexive Banach space and let $(a,b)$ be a finite real
interval. We say that $v(t)\in L^q(a,b;X)$ belongs to
$W^{1,q}(a,b;X)$ if there exists $f(t)\in L^q(a,b;X)$ such that for
every $\delta(t)\in C^1\big((a,b);X^*\big)$ satisfying $\supp
\delta\subset\subset (a,b)$ we have
$$\int\limits_a^b\big<f(t),\delta(t)\big>_{X\times X^*}dt=-\int\limits_a^b\Big<v(t),\frac{d\delta}{dt}(t)\Big>_{X\times X^*}dt\,.$$
In this case we denote $f(t)$ by $v'(t)$ or by $\frac{d v}{dt}(t)$.
It is well known that if $v(t)\in W^{1,1}(a,b;X)$ then $v(t)$ is a
bounded and continuous function on $[a,b]$ (up to a redefining of
$v(t)$ on a subset of $[a,b]$ of Lebesgue measure zero), i.e.
$v(t)\in C^0\big([a,b];X\big)$ and for every $\delta(t)\in
C^1\big([a,b];X^*\big)$ and every subinterval
$[\alpha,\beta]\subset[a,b]$ we have
\begin{equation}\label{sobltr}
\int\limits_\alpha^\beta\bigg\{\Big<\frac{dv}{dt}(t),\delta(t)\Big>_{X\times
X^*}+\Big<v(t),\frac{d\delta}{dt}(t)\Big>_{X\times X^*}\bigg\}dt=
\big<v(\beta),\delta(\beta)\big>_{X\times
X^*}-\big<v(\alpha),\delta(\alpha)\big>_{X\times X^*}\,.
\end{equation}
\end{definition}
We have the following obvious result:
\begin{lemma}\label{vlozhenie}
Let $X$ and $Y$ be two reflexive Banach spaces,
$S\in\mathcal{L}(X,Y)$ be an injective operator (i.e. it satisfies
$\ker S=0$) and $(a,b)$ be a finite real interval. Then if $u(t)\in
L^q(a,b;X)$ is such that $v(t):=S\cdot u(t)\in W^{1,q}(a,b;Y)$ and
there exists $f(t)\in L^q(a,b;X)$ such that $\frac{d
v}{dt}(t)=S\cdot f(t)$ then $u(t)\in W^{1,q}(a,b;X)$ and $\frac{d
u}{dt}(t)=f(t)$.
\end{lemma}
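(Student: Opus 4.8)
The plan is to reduce the statement to the fundamental theorem of calculus for Bochner integrals, avoiding the test-function formulation almost entirely. First I would set $g(t):=\int_a^t f(\tau)\,d\tau$ for $t\in[a,b]$; since $(a,b)$ is bounded we have $f\in L^q(a,b;X)\subset L^1(a,b;X)$, so $g$ is well defined, lies in $C^0([a,b];X)\subset L^q(a,b;X)$, and I claim $g\in W^{1,q}(a,b;X)$ with $\frac{dg}{dt}=f$. To see this I would plug $g$ into the defining identity of \rdef{4bdf}: for any $\delta(t)\in C^1((a,b);X^*)$ with $\supp\delta\subset\subset(a,b)$, interchanging the order of integration (legitimate because $\|f(\cdot)\|_X\in L^1(a,b)$ and $\frac{d\delta}{dt}$ is bounded) gives $\int_a^b\langle g(t),\frac{d\delta}{dt}(t)\rangle_{X\times X^*}\,dt=\int_a^b\langle f(\tau),\int_\tau^b\frac{d\delta}{dt}(t)\,dt\rangle_{X\times X^*}\,d\tau=\int_a^b\langle f(\tau),\delta(b)-\delta(\tau)\rangle_{X\times X^*}\,d\tau=-\int_a^b\langle f(\tau),\delta(\tau)\rangle_{X\times X^*}\,d\tau$, using $\delta(b)=0$; this is precisely the condition $\frac{dg}{dt}=f$. (Here $g\in L^q$ since it is continuous and $(a,b)$ is bounded, and $g'=f\in L^q$ by hypothesis.)

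Next I would transfer this through $S$. Since $S$ is bounded and linear it commutes with the Bochner integral, so $S\cdot g(t)=\int_a^t S\cdot f(\tau)\,d\tau=\int_a^t\frac{dv}{dt}(\tau)\,d\tau$ for every $t\in[a,b]$. On the other hand $v\in W^{1,q}(a,b;Y)\subset W^{1,1}(a,b;Y)$, so after modification on a null set $v\in C^0([a,b];Y)$, and choosing the constant function $\delta\equiv y^*\in Y^*$ in \er{sobltr} (with $\alpha=a$, $\beta=t$) yields $\langle v(t)-v(a),y^*\rangle_{Y\times Y^*}=\langle\int_a^t\frac{dv}{dt}(\tau)\,d\tau,y^*\rangle_{Y\times Y^*}$ for all $y^*\in Y^*$, hence $v(t)=v(a)+\int_a^t\frac{dv}{dt}(\tau)\,d\tau$ for every $t$. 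Comparing the two computations, $S\cdot g(t)=v(t)-v(a)=S\cdot u(t)-v(a)$ for a.e. $t\in(a,b)$, i.e. $v(a)=S\cdot\big(u(t)-g(t)\big)$ for a.e. $t\in(a,b)$.

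Finally I would invoke injectivity of $S$. If $t_1$ and $t_2$ are two points of $(a,b)$ at which the last identity holds, then $S\cdot\big((u(t_1)-g(t_1))-(u(t_2)-g(t_2))\big)=0$, so $u(t_1)-g(t_1)=u(t_2)-g(t_2)$; consequently there is a fixed $c\in X$ — necessarily $c=S^{-1}(v(a))$, which makes sense because the identity above exhibits $v(a)$ as an element of the range of $S$ — with $u(t)=g(t)+c$ for a.e. $t\in(a,b)$. Thus $u$ and $g+c$ represent the same element of $L^q(a,b;X)$; a constant function has zero weak derivative, and $g\in W^{1,q}(a,b;X)$ with $\frac{dg}{dt}=f$ by the first step, so $g+c\in W^{1,q}(a,b;X)$ with weak derivative $f$, and therefore $u\in W^{1,q}(a,b;X)$ with $\frac{du}{dt}=f$, which is the assertion.

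There is no obstacle of real depth here — the authors rightly call this obvious — but the one point deserving care is the combination in the last step: the a.e. identity $v(a)=S\cdot(u(t)-g(t))$ does double duty, forcing $u-g$ to be a.e.\ constant (because $\ker S=0$) and simultaneously placing $v(a)$ in $\mathrm{Range}(S)$ so that the constant $c$ is a genuine element of $X$. One could instead argue directly from the test-function definition, using that $\mathrm{Range}(S^*)$ is dense in $X^*$ (which holds since $X$ is reflexive and $\ker S=0$, so that $\overline{\mathrm{Range}(S^*)}=(\ker S)^\perp=X^*$) together with $(S^*\eta)'=S^*(\eta')$; but this would require approximating an arbitrary $C^1$ test curve in $X^*$ by curves of the form $S^*\eta$ with $\eta\in C^1((a,b);Y^*)$ of compact support, which is considerably more cumbersome than the route above.
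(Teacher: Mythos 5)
Your proof is correct, and it takes a genuinely different route from the one in the paper. The paper's argument is the dual one you dismiss at the end as cumbersome: it passes to the adjoint $S^*\in\mathcal{L}(Y^*,X^*)$, notes that $\ker S=\{0\}$ together with reflexivity of $X$ forces $S^*$ to have dense range in $X^*$, and then approximates an arbitrary test curve $\varphi\in C^1\big((a,b);X^*\big)$ of compact support by curves $S^*\cdot\delta_n$ with $\delta_n\in C^1\big((a,b);Y^*\big)$, so that the weak-derivative identity for $u$ is obtained in the limit from the one for $S\cdot u$. Your route instead integrates: you build the antiderivative $g(t)=\int_a^t f(\tau)\,d\tau$, verify $g\in W^{1,q}(a,b;X)$ with $g'=f$ by Fubini, identify $S\cdot g(t)$ with $v(t)-v(a)$ via \er{sobltr}, and then use injectivity of $S$ \emph{pointwise} to conclude that $u-g$ is a.e.\ equal to a single constant $c$ (with $v(a)=S\cdot c$ coming for free, as you rightly emphasize). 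What your approach buys is that it is entirely elementary — no adjoint, no density-of-range argument, no uniform approximation of test curves (which is the genuinely fiddly step in the paper's version, since one must control $S^*\cdot\delta_n$ and $(S^*\cdot\delta_n)'$ simultaneously) — and it does not actually use reflexivity of $X$ at all, whereas the paper's proof needs it to identify the norm closure of $\mathrm{Range}(S^*)$ with $(\ker S)^\perp=X^*$. What the paper's approach buys is brevity once those two functional-analytic facts are granted. Both are complete proofs.
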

\begin{definition}\label{5bdf}
Let $X$ be a Banach space. We say that a function $\Psi(x):X\to\R$ is convex (strictly convex) if for every $\lambda\in(0,1)$ and for every $x,y\in X$ s.t. $x\neq y$ we have
\begin{equation*}
\Psi\Big(\lambda x+(1-\lambda)y\Big)\;\;\leq\;\big(\,<\,\big)\;\;\;\;\lambda\Psi(x)+(1-\lambda)\Psi(y)\,.
\end{equation*}
It is well known that if $\Psi(x):X\to\R$ is a convex (strictly
convex) function which is G\^{a}teaux differentiable at every $x\in
X$ then for every $x,y\in X$ s.t. $x\neq y$ we have
\begin{equation}\label{conprmin}
\Psi(y)\;\;\geq\;\big(\,>\,\big)\;\;\;\;\Psi(x)+\Big<y-x,D\Psi(x)\Big>_{X\times X^*}\,,
\end{equation}
and
\begin{equation}\label{conprmonsv}
\Big<y-x,D\Psi(y)-D\Psi(x)\Big>_{X\times X^*}\;\;\geq\;\big(\,>\,\big)\;\;\;\;0\,,
\end{equation}
(remember that $D\Psi(x)\in X^*$). Furthermore, $\Psi$ is weakly
lower semicontinuous on $X$. Moreover, if some function
$\Psi(x):X\to\R$ is G\^{a}teaux differentiable at every $x\in X$ and
satisfy either \er{conprmin} or \er{conprmonsv} for every $x,y\in X$
s.t. $x\neq y$, then $\Psi(y)$ is convex (strictly convex).
\end{definition}
\begin{definition}\label{6bdf}
Let $X$ be a reflexive Banach space and let $\Psi(x):X\to\R$ be a
convex function. For every $y\in X^*$ set the Legendre transform of
$\Psi$ by
$$\Psi^*(y):=\sup\Big\{\big<z,y\big>_{X\times X^*}-\Psi(z):\;z\in X\Big\}\,.$$
\end{definition}
\begin{lemma}\label{Legendre}
Let $X$ be a reflexive Banach space and let
$\Psi(x):X\to[0,+\infty)$ be a strictly convex function which is
G\^{a}teaux differentiable at every $x\in X$ and satisfies
$\Psi(0)=0$ and
\begin{equation}\label{rostlll}
%h\big(\|x\|_X\big)\leq
\lim_{\|x\|_X\to+\infty}\frac{1}{\|x\|_X}\Psi(x)=+\infty\,.
%\leq H\big(\|x\|_X\big)
%\quad\forall x\in X\,.
\end{equation}
%where $H(t)\in C([0,+\infty),\R)$ and $h(t)\in C([0,+\infty),\R)$,
%%are increasing functions,
%such that
%$\lim_{t\to+\infty}H(t)/t=+\infty$ and
%$\lim_{t\to+\infty}h(t)/t=+\infty$.
Then $\Psi^*(y)$ is a strictly convex function from $X^*$ to
$[0,+\infty)$ and satisfies $\Psi^*(0)=0$.
%and
%\begin{equation}\label{rost*lll}
%H^*\big(\|y\|_{X^*}\big)\leq \Psi^*(y)\leq
%h^*\big(\|y\|_{X^*}\big)\quad\forall y\in X^*\,,
%\end{equation}
%for some $H^*(t)\in C([0,+\infty),\R)$ and $h^*(t)\in
%C([0,+\infty),\R)$, depending only on $H$ and $h$, such that
%$\lim_{t\to+\infty}H^*(t)/t=+\infty$ and
%$\lim_{t\to+\infty}h^*(t)/t=+\infty$.
Furthermore, $\Psi^*(y)$ is G\^{a}teaux differentiable at every
$y\in X^*$. Moreover $x\in X$ satisfies $x=D\Psi^*(y)$ (remember
that $D\Psi^*(y)\in X^{**}=X$) if and only if $y\in X^*$ satisfies
$y=D\Psi(x)$ (remember that $D\Psi(x)\in X^*$). Finally, if in
addition $\Psi$ satisfies
\begin{equation}\label{rost}
(1/C_0)\,\|x\|_X^q-C_0\leq \Psi(x)\leq
C_0\,\|x\|_X^q+C_0\quad\forall x\in X\,,
\end{equation}
for some $q>1$ and $C_0>0$, then
\begin{equation}\label{rost*}
(1/C)\,\|y\|_{X^*}^{q^*}-C\leq \Psi^*(y)\leq
C\,\|y\|_{X^*}^{q^*}+C\quad\forall y\in X^*\,,
\end{equation}for some $C>0$ depending only on $C_0$ and $q$, where
$q^*:=q/(q-1)$. Moreover, for some $\bar C_0,\bar C>0$, that depend
only on $C$ and $q$ from \er{rost}, we have
\begin{equation}\label{rostgrad}
\|D\Psi(x)\|_{X^*}\leq \bar C_0\|x\|_X^{q-1}+\bar C_0\quad\forall
x\in X\,,
\end{equation}
and
\begin{equation}\label{rostgrad*}
\|D\Psi^*(y)\|_{X}\leq \bar C\|y\|_{X^*}^{q^*-1}+\bar C\quad\forall
y\in X^*\,.
\end{equation}
\end{lemma}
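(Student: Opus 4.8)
The plan is to prove the assertions in the order stated, using throughout that $X$ is reflexive (so $X^{**}=X$ and bounded sequences in $X$ and in $X^*$ have weakly convergent subsequences) and that $\Psi^*$, being the pointwise supremum of the affine functionals $y\mapsto\langle z,y\rangle-\Psi(z)$ over $z\in X$ (all brackets denoting the $X\times X^*$ pairing), is automatically convex and lower semicontinuous on $X^*$. Since $\Psi\geq 0$ and $\Psi(0)=0$, taking $z=0$ gives $\Psi^*(y)\geq-\Psi(0)=0$ for all $y$, and $\Psi^*(0)=\sup_z\big(-\Psi(z)\big)=0$. The key preliminary fact is that for each $y\in X^*$ the supremum defining $\Psi^*(y)$ is attained at a \emph{unique} point $z_0(y)\in X$: by \er{rostlll} the map $z\mapsto\langle z,y\rangle-\Psi(z)$ is bounded above and tends to $-\infty$ as $\|z\|_X\to+\infty$, so its supremum over $X$ equals that over a large closed ball $\overline{B_R}$; on $\overline{B_R}$ this functional is weakly upper semicontinuous (the linear part is weakly continuous and $\Psi$ is weakly lower semicontinuous by \rdef{5bdf}), $\overline{B_R}$ is weakly sequentially compact, hence a maximizer exists; two distinct maximizers are impossible since strict convexity of $\Psi$ would make their midpoint strictly better. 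Differentiating $s\mapsto\langle z_0(y)+sh,y\rangle-\Psi(z_0(y)+sh)$ at $s=0$ for arbitrary $h\in X$ gives $y=D\Psi\big(z_0(y)\big)$; conversely, if $y=D\Psi(x)$ then \er{conprmin} yields $\langle z,y\rangle-\Psi(z)\leq\langle x,y\rangle-\Psi(x)$ for all $z$, so $x$ is the unique maximizer, i.e. $x=z_0(y)$.

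Next I would show that $y\mapsto z_0(y)$ is continuous from the norm topology of $X^*$ to the weak topology of $X$. If $y_n\to y$ in $X^*$, then $0\leq\Psi^*(y_n)=\langle z_0(y_n),y_n\rangle-\Psi(z_0(y_n))$ gives $\Psi(z_0(y_n))\leq\|z_0(y_n)\|_X\|y_n\|_{X^*}$, which together with \er{rostlll} forces $\{z_0(y_n)\}$ to be bounded; passing to a weak limit $\bar z$ of a subsequence in $\langle z_0(y_n),y_n\rangle-\Psi(z_0(y_n))\geq\langle w,y_n\rangle-\Psi(w)$ (linear terms converge, $-\Psi$ is weakly upper semicontinuous) yields $\langle\bar z,y\rangle-\Psi(\bar z)\geq\langle w,y\rangle-\Psi(w)$ for every $w$, so $\bar z=z_0(y)$ by uniqueness and $z_0(y_n)\weakly z_0(y)$. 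The Gâteaux differentiability of $\Psi^*$ with $D\Psi^*(y)=z_0(y)\in X$ then follows by squeezing the difference quotient: for $k\in X^*$ and $s>0$, testing the supremum defining $\Psi^*(y+sk)$ against $z_0(y)$ and using $\Psi^*(y)\geq\langle z_0(y+sk),y\rangle-\Psi(z_0(y+sk))$ gives
\[
\langle z_0(y),k\rangle\;\leq\;\frac{\Psi^*(y+sk)-\Psi^*(y)}{s}\;\leq\;\langle z_0(y+sk),k\rangle\,,
\]
and letting $s\to0^+$ the outer terms converge to $\langle z_0(y),k\rangle$ by the continuity just proved (the case $s\to0^-$ is symmetric). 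Hence $x=D\Psi^*(y)$ iff $x=z_0(y)$ iff $y=D\Psi(x)$, which is the claimed equivalence. Strict convexity of $\Psi^*$ follows by contradiction: if $\Psi^*$ were affine on the segment joining distinct $y_0,y_1$, then $\bar x:=D\Psi^*\big((y_0+y_1)/2\big)$ would be a maximizer in the definitions of both $\Psi^*(y_0)$ and $\Psi^*(y_1)$, forcing $y_0=D\Psi(\bar x)=y_1$.

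For the quantitative part, assume \er{rost}. Inserting $\Psi(z)\leq C_0\|z\|_X^q+C_0$ into the definition gives $\Psi^*(y)\geq\sup_{r\geq0}\big(r\,\|y\|_{X^*}-C_0r^q-C_0\big)$, and an elementary one-variable maximization (optimal $r\sim\|y\|_{X^*}^{1/(q-1)}$) produces the lower bound $(1/C)\|y\|_{X^*}^{q^*}-C$; using instead $\langle z,y\rangle\leq\|z\|_X\|y\|_{X^*}$ together with $\Psi(z)\geq(1/C_0)\|z\|_X^q-C_0$ gives $\Psi^*(y)\leq\sup_{r\geq0}\big(r\,\|y\|_{X^*}-(1/C_0)r^q+C_0\big)\leq C\|y\|_{X^*}^{q^*}+C$, which together is \er{rost*}. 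For \er{rostgrad}, fix $x\in X$ and $h\in X$ with $\|h\|_X=1$; convexity gives, for every $t>0$, $\langle h,D\Psi(x)\rangle\leq\big(\Psi(x+th)-\Psi(x)\big)/t\leq\big(C_0(\|x\|_X+t)^q-(1/C_0)\|x\|_X^q+2C_0\big)/t$, and choosing $t=\max\{\|x\|_X,1\}$ and taking the supremum over such $h$ yields a bound $\|D\Psi(x)\|_{X^*}\leq\bar C_0\|x\|_X^{q-1}+\bar C_0$. Finally \er{rostgrad*} follows by repeating this last computation verbatim for $\Psi^*$, which is convex, Gâteaux differentiable, satisfies the two-sided $q^*$-growth \er{rost*}, and whose derivative lies in $X^{**}=X$ (here $(q^*)^*=q$ and $q^*-1=1/(q-1)$).

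I expect the only genuine difficulty to be the Gâteaux differentiability of $\Psi^*$, and more precisely the norm-to-weak continuity of the maximizer map $y\mapsto z_0(y)$: this is the step that simultaneously uses reflexivity, the superlinear growth \er{rostlll}, and—through uniqueness of maximizers—the strict convexity of $\Psi$. Everything else either is a direct consequence of the definition of the Legendre transform or reduces to elementary scalar calculus.
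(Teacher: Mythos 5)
Your proposal is correct and follows essentially the same route as the paper: attainment and uniqueness of the maximizer $z_y$ via reflexivity, weak lower semicontinuity and strict convexity, the identification $y=D\Psi(z_y)$, the squeeze on the difference quotient combined with weak convergence of the maximizers to get $D\Psi^*(y)=z_y$, scalar optimization for \er{rost*}, and the convexity inequality \er{conprmin} with a suitable choice of increment for \er{rostgrad} and \er{rostgrad*}. The only cosmetic differences are that you package the weak-convergence step as norm-to-weak continuity of $y\mapsto z_y$ rather than arguing along the specific sequences $y_0+s_nh$, and you derive strict convexity of $\Psi^*$ by an affine-on-a-segment contradiction where the paper invokes strict monotonicity of $D\Psi^*$; both rest on the same uniqueness of the maximizer.
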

\begin{proof}
First since $\Psi(0)=0$ it is clear
%by the definition
that  for
every $y\in X^*$ we have $\Psi^*(y)\geq 0$. Next since for every
$x\in X$ we have $\Psi(x)\geq 0$ then $\Psi^*(0)\leq 0$ and so
$\Psi^*(0)=0$. Next by the growth condition \er{rostlll} we deduce
that $\Psi^*(y)<+\infty$ for every $y\in X^*$. So $\Psi^*(y):X^*\to
[0,+\infty)$. Moreover, it easy follows from the definition of
Legendre transform, that $\Psi^*(y)$ is a convex function on $X^*$.
%Moreover, by \er{rostlll} we deduce \er{rost*lll} with
%$H^*(t):=\sup_{s\geq 0}\big(-st-\big)$

 Next since $\Psi$ is weakly lower semicontinuous on $X$ and satisfies growth condition \er{rostlll} then for every $y\in X^*$ there exists $z_y\in X$ such that
\begin{equation}\label{minsv}
\Psi(z_y)-\big<z_y,y\big>_{X\times X^*}=\inf\big\{\Psi(z)-<z,y>_{X\times X^*}:\;z\in X\big\}\,,
\end{equation}
i.e.
\begin{equation}\label{Lcny}
\Psi^*(y):=\big<z_y,y\big>_{X\times X^*}-\Psi(z_y)\,.
\end{equation}
Moreover we have
\begin{equation}\label{grdyl8}
D\Psi(z_y)=y\,,
\end{equation}
However, since $\Psi$ is a strictly convex function, by
\er{conprmonsv} for every $z\in X$ s.t. $z\neq z_y$ we must have
\begin{equation}\label{grdylrrr}
\big<z-z_{y},D\Psi(z)-D\Psi(z_y)\big>_{X\times X^*}>0\,.
\end{equation}
Therefore, in particular for every $y\in X^*$ $z=z_y$ is a unique solution of the equation $D\Psi(z)=y$ and
\begin{equation}\label{grdyl}
\big<z_{y_1}-z_{y_2},y_1-y_2\big>_{X\times X^*}>0\,,
\end{equation}
for every $y_1,y_2\in X^*$ s.t. $y_1\neq y_2$.
Next let $y_0,h\in X^*$. Then by the definition of $\Psi^*$ for every $s\in\R$ we have
\begin{equation}\label{grdprivch}
s\,\big<z_{y_0},h\big>_{X\times X^*}\leq\Psi^*(y_0+sh)-\Psi^*(y_0)\leq s\,\big<z_{(y_0+sh)},h\big>_{X\times X^*}\,.
\end{equation}
On the other hand by \er{minsv} we have
$\Psi(z_{(y_0+sh)})\leq\big<z_{y_0+sh},y_0+sh\big>_{X\times X^*}$.
Therefore, using growth condition \er{rostlll} we deduce that there
exists $\bar C>0$ such that $\|z_{y_0+sh}\|_X\leq\bar C$ for every
$s\in(-1,1)$. Thus using the fact that $X$ is reflexive we deduce
that for any sequence $\{s_n\}_{n=1}^{+\infty}\subset(-1,1)$ such
that $\lim_{n\to+\infty}s_n=0$, up to a subsequence, we must have
$z_{y_0+s_nh}\rightharpoonup \tilde z$ weakly in $X$. However, by
\er{minsv} we have
\begin{equation}\label{minsvsh}
\Psi\big(z_{(y_0+s_nh)}\big)-\big<z_{(y_0+s_nh)},y_0+s_n h\big>_{X\times X^*}\leq\Psi(z)-<z,y_0+s_nh>_{X\times X^*}\quad\forall \,z\in X\,.
\end{equation}
Then tending $n\to+\infty$ in \er{minsvsh}, using the fact that, up to a subsequence,
$z_{y_0+s_nh}\rightharpoonup \tilde z$ and that $\Psi$ is weakly lower semicontinuous function we deduce that
\begin{equation}\label{minsvshlim}
\Psi(\tilde z)-<\tilde z,y_0>_{X\times X^*}\leq\Psi(z)-<z,y_0>_{X\times X^*}\quad\forall \,z\in X\,.
\end{equation}
So $\tilde z$ is a minimizer to \er{minsv} with $y=y_0$ and
therefore, $D\Psi(\tilde z)=y_0$. On the other hand $z=z_{y_0}$ is a
unique solution of the equation $D\Psi(z)=y_0$. Therefore, $\tilde
z=z_{y_0}$. So by \er{grdprivch}, up to a subsequence, we have
\begin{equation}\label{grdprivchlim}
\frac{1}{s_n}\Big(\Psi^*(y_0+s_n h)-\Psi^*(y_0)\Big)\to\big<z_{y_0},h\big>_{X\times X^*}\,.
\end{equation}
Since the sequence $s_n$ was chosen arbitrary we deduce that
\begin{equation}\label{grdprivchlimpoln}
\lim\limits_{s\to 0}\frac{1}{s}\Big(\Psi^*(y_0+s h)-\Psi^*(y_0)\Big)\to\big<z_{y_0},h\big>_{X\times X^*}\,.
\end{equation}
Finally, $y_0,h\in X$ also were chosen arbitrary and therefore we
deduce that $\Psi^*(y)$ is G\^{a}teaux differentiable at every $y\in
X^*$ and $D\Psi^*(y)=z_y$. Thus since $z=z_{y}$ is a unique solution
of the equation $D\Psi(z)=y$ we deduce that $D\Psi^*(y)=z$ if and
only if $D\Psi(z)=y$. Moreover by \er{grdyl} we deduce that
\begin{equation}\label{grdylmon}
\big<D\Psi^*(y_1)-D\Psi^*(y_2),y_1-y_2\big>_{X\times X^*}>0\,,
\end{equation}
for every $y_1,y_2\in X^*$ such that $y_1\neq y_2$. So $\Psi^*$ is a
strictly convex on $X^*$ function.

Next if we consider function $\zeta(y):X^*\to\R$ defined by
$$\zeta(y):=\sup\big\{<z,y>_{X\times X^*}-k\|z\|_X^q:\;z\in X\big\}\,,$$
for some $k>0$, then
\begin{multline*}
\zeta(y)=\sup\big\{t<z,y>_{X\times X^*}-k|t|^q:\;t\in\R,\;z\in X,\,\|z\|_X=1\big\}=\\
\sup\Big\{K|<z,y>_{X\times X^*}|^{q^*}:\;z\in
X,\,\|z\|_X=1\Big\}=K\|y\|^{q^*}_{X^*}\,,
\end{multline*}
for some $K>0$ depending only on $k$ and $q$. Thus using growth
condition \er{rost} and the definition of $\Psi^*$ we easily deduce
growth condition \er{rost*}. So it remains to prove that growth
condition \er{rostgrad} follows from growth condition \er{rost}  and
\er{rostgrad*} follows from  \er{rost*}.
 Indeed since $\Psi$ is convex, from \er{conprmin}, for every $x,h\in X$ we have
\begin{equation}\label{conprminsled}
\big<h,D\Psi(x)\big>_{X\times X^*}\leq \Psi(x+h)-\Psi(x)\,.
\end{equation}
Therefore, for every $x,h\in X$ such that $\|h\|_X\leq 1$ and $\|x\|_X\geq 1$ we have
\begin{equation}\label{conprminsledggg}
\big<h,D\Psi(x)\big>_{X\times X^*}\leq \frac{1}{\|x\|_X}\Big(\Psi\big(x+\|x\|_Xh\big)-\Psi(x)\Big)\,.
\end{equation}
Thus using  growth condition \er{rost} we deduce that for every $x,h\in X$ such that $\|h\|_X\leq 1$ and $\|x\|_X\geq 1$ we have
\begin{equation}\label{conprminsledgggddd8}
\big<h,D\Psi(x)\big>_{X\times X^*}\leq \tilde C\|x\|^{q-1}_X\,,
\end{equation}
where $\tilde C>0$ is a constant, depending on $C_0$ and $q$ only,
and so
\begin{equation}\label{conprminsledgggddd}
\|D\Psi(x)\|_{X^*}\leq \tilde C\|x\|^{q-1}_X\,,
\end{equation}
for every $x$ which satisfy $\|x\|_X\geq 1$. However, by
\er{conprminsled} and \er{rost} we have
\begin{equation}\label{conprminsleddddfergrh}
\big<h,D\Psi(x)\big>_{X\times X^*}\leq \hat C\,,
\end{equation}
for every $x,h\in X$ such that $\|x\|_X\leq 1$ and $\|h\|_X\leq 1$,
where $\hat C>0$ is a constant depending on $C_0$ and $q$ only. So
$\|D\Psi(x)\|_{X^*}\leq \hat C$ for every $x$ which satisfy
$\|x\|_X\leq 1$. This together with \er{conprminsledgggddd} gives
the desired result \er{rostgrad}. Finally, $\Psi^*$ is a convex on
$X^*$ and satisfy \er{rost*}. Therefore, \er{rostgrad*} follows
exactly by the same way.
\end{proof}

\begin{definition}\label{hfguigiugyuyhkjjhlkklkk}
Let $Z$ be a
%reflexive
Banach space and $Z^*$ be a corresponding dual space. We say that
the mapping $\Lambda(z):Z\to Z^*$ is monotone (strictly monotone) if
we have
\begin{equation}
\label{ftguhhhhihggjgjkjggkjgj}
\Big<y-z,\Lambda(y)-\Lambda(z)\Big>_{Z\times Z^*}\,\geq\,(>)\,
0\quad\forall\, y\neq z\in Z\,.
\end{equation}
\end{definition}
%
%
%
%
%
%
%
%\begin{comment*}
\begin{definition}\label{hfguigiugyuyhkjjh}
Let $Z$ be a
%reflexive
Banach space and $Z^*$ be a corresponding dual space. We say that
the mapping $\Lambda(z):Z\to Z^*$ is pseudo-monotone if for every
sequence $\{z_n\}_{n=1}^{+\infty}\subset Z$, satisfying
\begin{equation}
\label{ftguhhhhikk} z_n\rightharpoonup z\;\;\text{weakly
in}\;\;Z\quad\quad\text{and}\quad\quad\limsup_{n\to+\infty}\Big<z_n-z,\Lambda(z_n)\Big>_{Z\times
Z^*}\leq 0
\end{equation}
we have
\begin{equation}
\label{ftguhhhhihggjgjk}
\liminf_{n\to+\infty}\Big<z_n-y,\Lambda(z_n)\Big>_{Z\times
Z^*}\geq\Big<z-y,\Lambda(z)\Big>_{Z\times Z^*}\quad\forall y\in Z\,.
\end{equation}
\end{definition}
\begin{lemma}\label{hhhhhhhhhhhhhhhhhhiogfydtdtyd}
Let $Z$ be a
%reflexive
Banach space and $Z^*$ be a corresponding dual space. Then the
mapping $\Lambda(z):Z\to Z^*$ is pseudo-monotone if and only if it
satisfies the following conditions:
\begin{itemize}
\item[{\bf(i)}] For every sequence $\{z_n\}_{n=1}^{+\infty}\subset Z$, such
that $z_n\rightharpoonup z$ weakly in $Z$ we have
\begin{equation}
\label{ftguhhhhikkjhjhjkjkkkkkkk}
\liminf_{n\to+\infty}\Big<z_n-z,\Lambda(z_n)\Big>_{Z\times Z^*}\geq
0\,.
\end{equation}
\item[{\bf(ii)}] If for some
sequence $\{z_n\}_{n=1}^{+\infty}\subset Z$, such that
$z_n\rightharpoonup z$ weakly in $Z$ we have
\begin{equation}
\label{ftguhhhhikkjhjhjkjkkkkkkkjjjjhgghhh}
\lim_{n\to+\infty}\Big<z_n-z,\Lambda(z_n)\Big>_{Z\times Z^*}= 0\,,
\end{equation}
then $\Lambda(z_n)\rightharpoonup \Lambda(z)$ weakly$^*$ in $Z^*$.
\end{itemize}
\end{lemma}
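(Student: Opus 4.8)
The plan is to derive everything from two elementary observations: the decomposition of the pairing $\big<z_n-y,\Lambda(z_n)\big>_{Z\times Z^*}=\big<z_n-z,\Lambda(z_n)\big>_{Z\times Z^*}+\big<z-y,\Lambda(z_n)\big>_{Z\times Z^*}$, and the fact that $\liminf(a_n+b_n)=\lim a_n+\liminf b_n$ whenever $\{a_n\}$ converges. No deep input is required; the statement is essentially a reformulation of the definition.

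\emph{Pseudo-monotonicity $\Rightarrow$ \textbf{(i)}, \textbf{(ii)}.} For \textbf{(i)} I would argue by contradiction. If $z_n\rightharpoonup z$ weakly but $\liminf_{n\to\infty}\big<z_n-z,\Lambda(z_n)\big>_{Z\times Z^*}=:\ell<0$ (possibly $\ell=-\infty$), pass to a subsequence along which the pairing converges to $\ell$; along this subsequence $z_n\rightharpoonup z$ and $\limsup\big<z_n-z,\Lambda(z_n)\big>_{Z\times Z^*}=\ell\le 0$, so \er{ftguhhhhikk} holds and \er{ftguhhhhihggjgjk} applied with $y=z$ forces $\ell=\liminf\big<z_n-z,\Lambda(z_n)\big>_{Z\times Z^*}\ge 0$, a contradiction. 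For \textbf{(ii)}, if $z_n\rightharpoonup z$ weakly and $\big<z_n-z,\Lambda(z_n)\big>_{Z\times Z^*}\to 0$, then \er{ftguhhhhikk} holds, so \er{ftguhhhhihggjgjk} applies; subtracting the vanishing term $\big<z_n-z,\Lambda(z_n)\big>_{Z\times Z^*}$ from $\big<z_n-y,\Lambda(z_n)\big>_{Z\times Z^*}$ gives $\liminf\big<z-y,\Lambda(z_n)\big>_{Z\times Z^*}\ge\big<z-y,\Lambda(z)\big>_{Z\times Z^*}$ for every $y\in Z$. Since $w:=z-y$ ranges over all of $Z$, applying this with $w$ and with $-w$ yields $\big<w,\Lambda(z_n)\big>_{Z\times Z^*}\to\big<w,\Lambda(z)\big>_{Z\times Z^*}$ for every $w\in Z$, i.e. $\Lambda(z_n)\rightharpoonup\Lambda(z)$ weakly$^*$ in $Z^*$.

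\emph{\textbf{(i)}, \textbf{(ii)} $\Rightarrow$ pseudo-monotonicity.} Assume \er{ftguhhhhikk}. By \textbf{(i)} we have $\liminf\big<z_n-z,\Lambda(z_n)\big>_{Z\times Z^*}\ge 0$, which together with the hypothesis $\limsup\le 0$ gives $\big<z_n-z,\Lambda(z_n)\big>_{Z\times Z^*}\to 0$; then \textbf{(ii)} gives $\Lambda(z_n)\rightharpoonup\Lambda(z)$ weakly$^*$ in $Z^*$. Finally, for any $y\in Z$, $\big<z_n-y,\Lambda(z_n)\big>_{Z\times Z^*}=\big<z_n-z,\Lambda(z_n)\big>_{Z\times Z^*}+\big<z-y,\Lambda(z_n)\big>_{Z\times Z^*}\to 0+\big<z-y,\Lambda(z)\big>_{Z\times Z^*}$, so $\liminf\big<z_n-y,\Lambda(z_n)\big>_{Z\times Z^*}=\big<z-y,\Lambda(z)\big>_{Z\times Z^*}$, which is exactly \er{ftguhhhhihggjgjk}. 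This closes the equivalence.

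There is no genuine obstacle here. The only two points deserving attention are the passage to a subsequence in the proof of \textbf{(i)} — necessary because the hypothesis in the definition of pseudo-monotonicity is one-sided, so one cannot feed the whole sequence directly into \er{ftguhhhhihggjgjk} — and the routine bookkeeping of $\liminf$/$\limsup$ under the addition of a convergent sequence.
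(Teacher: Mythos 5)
Your proposal is correct and follows essentially the same route as the paper's proof: the same passage to a subsequence realizing the $\liminf$ and application of the defining inequality with $y=z$ for \textbf{(i)}, the same $\pm h$ trick for \textbf{(ii)}, and the same decomposition $\big<z_n-y,\Lambda(z_n)\big>_{Z\times Z^*}=\big<z_n-z,\Lambda(z_n)\big>_{Z\times Z^*}+\big<z-y,\Lambda(z_n)\big>_{Z\times Z^*}$ for the converse. No gaps.
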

\begin{proof}
Assume that the mapping $\Lambda(z):Z\to Z^*$ is pseudo-monotone.
Choose arbitrary sequence $z_n\rightharpoonup z$ weakly in $Z$ and
denote
$$K=\liminf_{n\to+\infty}\Big<z_n-z,\Lambda(z_n)\Big>_{Z\times Z^*}\,.$$
Then, up to a subsequence, still denoted by $z_n$ we have
$$K=\lim_{n\to+\infty}\Big<z_n-z,\Lambda(z_n)\Big>_{Z\times Z^*}\,.$$
Thus if we assume that $K\leq 0$, by \er{ftguhhhhihggjgjk} with
$y=z$ we deduce $K\geq 0$. Therefore, since the sequence
$z_n\rightharpoonup z$ was chosen arbitrary we deduce that for every
sequence $\{z_n\}_{n=1}^{+\infty}\subset Z$, such that
$z_n\rightharpoonup z$ weakly in $Z$ we have
\er{ftguhhhhikkjhjhjkjkkkkkkk}.
%\begin{equation*}
%\label{ftguhhhhikkjhjhjkjkkkkkkk}
%\liminf_{n\to+\infty}\Big<z_n-z,\Lambda(z_n)\Big>_{Z\times Z^*}\geq
%0\,.
%\end{equation*}
Next assume that for some sequence $\{z_n\}_{n=1}^{+\infty}\subset
Z$, such that $z_n\rightharpoonup z$ weakly in $Z$ we have
\er{ftguhhhhikkjhjhjkjkkkkkkkjjjjhgghhh}. Then, by
\er{ftguhhhhihggjgjk} for this sequence we must have
\begin{equation}
\label{ftguhhhhihggjgjkbghghfg}
\lim_{n\to+\infty}\Big<z_n-z,\Lambda(z_n)\Big>_{Z\times Z^*}=
0\quad\text{and}\quad\liminf_{n\to+\infty}\Big<z_n-y,\Lambda(z_n)\Big>_{X\times
X^*}\geq\Big<z-y,\Lambda(z)\Big>_{Z\times Z^*}\;\;\forall y\in Z\,.
\end{equation}
Therefore, plugging the first equality in
\er{ftguhhhhihggjgjkbghghfg} into the second inequality we obtain
\begin{equation}
\label{ftguhhhhihggjgjkbghghfggggfhffff}
\liminf_{n\to+\infty}\Big<z-y,\Lambda(z_n)\Big>_{Z\times
Z^*}\geq\Big<z-y,\Lambda(z)\Big>_{Z\times Z^*}\quad\forall y\in Z\,.
\end{equation}
We can rewrite \er{ftguhhhhihggjgjkbghghfggggfhffff} as
\begin{equation}
\label{ftguhhhhihggjgjkbghghfggggfhffffghjjhfjfg}
\liminf_{n\to+\infty}\big<h,\Lambda(z_n)\big>_{Z\times
Z^*}\geq\big<h,\Lambda(z)\big>_{Z\times Z^*}\quad\forall h\in Z\,.
\end{equation}
Thus, interchanging between $h$ and $-h$ in
\er{ftguhhhhihggjgjkbghghfggggfhffffghjjhfjfg} we obtain
\begin{equation}
\label{ftguhhhhihggjgjkbghghfggggfhffffghjjhfjfgghddd}
\limsup_{n\to+\infty}\big<h,\Lambda(z_n)\big>_{Z\times
Z^*}\leq\big<h,\Lambda(z)\big>_{Z\times Z^*}\quad\forall h\in Z\,.
\end{equation}
So, by plugging \er{ftguhhhhihggjgjkbghghfggggfhffffghjjhfjfg} and
\er{ftguhhhhihggjgjkbghghfggggfhffffghjjhfjfgghddd} we finally
deduce
\begin{equation}
\label{ftguhhhhihggjgjkbghghfggggfhffffghjjhfjfgghdddgjkgghfjjklhklhhjh}
\lim_{n\to+\infty}\big<h,\Lambda(z_n)\big>_{Z\times
Z^*}=\big<h,\Lambda(z)\big>_{Z\times Z^*}\quad\forall h\in Z\,.
\end{equation}
I.e. $\Lambda(z_n)\rightharpoonup \Lambda(z)$ weakly$^*$ in $Z^*$.

 Next assume that the mapping $\Lambda(z):Z\to Z^*$ satisfies the
conditions {\bf(i)} and {\bf(ii)}. Consider the sequence
$\{z_n\}_{n=1}^{+\infty}\subset Z$, satisfying
\begin{equation*}
%\label{ftguhhhhikk}
z_n\rightharpoonup z\;\;\text{weakly
in}\;\;Z\quad\quad\text{and}\quad\quad\limsup_{n\to+\infty}\Big<z_n-z,\Lambda(z_n)\Big>_{Z\times
Z^*}\leq 0
\end{equation*}
Then by condition {\bf(i)} we must have
\begin{equation}
\label{ftguhhhhikkjhjhjkjkkkkkkkjjbgjkgjkgh}
\lim_{n\to+\infty}\Big<z_n-z,\Lambda(z_n)\Big>_{Z\times Z^*}= 0\,.
\end{equation}
Thus by condition {\bf(ii)} we must have
\begin{equation}
\label{ftguhhhhihggjgjkbghghfggggfhffffhhjhgigyguygy}
\lim_{n\to+\infty}\Big<z-y,\Lambda(z_n)\Big>_{Z\times
Z^*}=\Big<z-y,\Lambda(z)\Big>_{Z\times Z^*}\quad\forall y\in Z\,.
\end{equation}
Thus by \er{ftguhhhhikkjhjhjkjkkkkkkkjjbgjkgjkgh} and
\er{ftguhhhhihggjgjkbghghfggggfhffffhhjhgigyguygy} we finally deduce
\begin{multline}
\label{ftguhhhhihggjgjkvcfcfgcfxd}
\lim_{n\to+\infty}\Big<z_n-y,\Lambda(z_n)\Big>_{Z\times
Z^*}=\lim_{n\to+\infty}\Big<z_n-z,\Lambda(z_n)\Big>_{Z\times
Z^*}+\lim_{n\to+\infty}\Big<z-y,\Lambda(z_n)\Big>_{Z\times Z^*}
\\=0+\Big<z-y,\Lambda(z)\Big>_{Z\times Z^*}\quad\quad\quad\quad\quad\quad\quad\quad\forall y\in Z\,.
\end{multline}
Thus, the mapping $\Lambda(z):Z\to Z^*$ is pseudo-monotone.
\end{proof}
\begin{lemma}\label{hhhhhhhhhhhhhhhhhhiogfydtdtydjkgkgk}
Let $Z$ be a
%reflexive
Banach space and $Z^*$ be a corresponding dual space. Assume that
the mapping $\Lambda(z):Z\to Z^*$ is monotone. Moreover assume that
$\Lambda(z):Z\to Z^*$ is continuous for every $z\in Z$ or more
generally the function $\zeta_{z,h}(t):\R\to\R$, defined by
\begin{equation}\label{fguyfuyfugyuguhkg}
\zeta_{z,h}(t):=\Big<h,\Lambda\big(z-t h\big)\Big>_{Z\times
Z^*}\quad\forall z,h\in Z\,,\quad\forall t\in\R\,,
\end{equation}
is continuous on $t$ for every $z,h\in Z$. Then the mapping
$\Lambda(z)$ is pseudo-monotone.
\end{lemma}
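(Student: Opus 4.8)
The plan is to reduce the statement to \rlemma{hhhhhhhhhhhhhhhhhhiogfydtdtyd} by verifying its two characterizing conditions {\bf(i)} and {\bf(ii)}. Observe first that if $\Lambda$ is continuous on $Z$, then $\zeta_{z,h}$ is automatically continuous, since $t\mapsto z-th$ is continuous from $\R$ into $Z$ and $x^*\mapsto\big<h,x^*\big>_{Z\times Z^*}$ is continuous on $Z^*$; hence it suffices to treat the case in which $\zeta_{z,h}(t)$ is continuous in $t$ for all $z,h\in Z$. Condition {\bf(i)} is then immediate from monotonicity: given $z_n\rightharpoonup z$ weakly in $Z$, the inequality $\big<z_n-z,\Lambda(z_n)-\Lambda(z)\big>_{Z\times Z^*}\geq 0$ yields $\big<z_n-z,\Lambda(z_n)\big>_{Z\times Z^*}\geq\big<z_n-z,\Lambda(z)\big>_{Z\times Z^*}$, and the right-hand side tends to $0$ because $\Lambda(z)$ is a fixed element of $Z^*$ while $z_n-z\rightharpoonup 0$; thus $\liminf_{n\to+\infty}\big<z_n-z,\Lambda(z_n)\big>_{Z\times Z^*}\geq 0$.

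For condition {\bf(ii)} I would run the classical Minty device. Suppose $z_n\rightharpoonup z$ weakly in $Z$ with $\lim_{n\to+\infty}\big<z_n-z,\Lambda(z_n)\big>_{Z\times Z^*}=0$. For an arbitrary $y\in Z$, monotonicity gives $\big<z_n-y,\Lambda(z_n)\big>_{Z\times Z^*}\geq\big<z_n-y,\Lambda(y)\big>_{Z\times Z^*}$, and the latter converges to $\big<z-y,\Lambda(y)\big>_{Z\times Z^*}$, so $\liminf_{n\to+\infty}\big<z_n-y,\Lambda(z_n)\big>_{Z\times Z^*}\geq\big<z-y,\Lambda(y)\big>_{Z\times Z^*}$. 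Now fix $h\in Z$ and $t>0$ and take $y:=z-th$; writing $\big<z_n-y,\Lambda(z_n)\big>_{Z\times Z^*}=\big<z_n-z,\Lambda(z_n)\big>_{Z\times Z^*}+t\big<h,\Lambda(z_n)\big>_{Z\times Z^*}$ and using that the first summand tends to $0$, one obtains $t\,\liminf_{n\to+\infty}\big<h,\Lambda(z_n)\big>_{Z\times Z^*}\geq t\,\big<h,\Lambda(z-th)\big>_{Z\times Z^*}$, hence, dividing by $t>0$, $\liminf_{n\to+\infty}\big<h,\Lambda(z_n)\big>_{Z\times Z^*}\geq\zeta_{z,h}(t)$ for every $t>0$.

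It remains to let $t\to 0^+$: by the assumed continuity of $\zeta_{z,h}$ at $t=0$ we get $\liminf_{n\to+\infty}\big<h,\Lambda(z_n)\big>_{Z\times Z^*}\geq\big<h,\Lambda(z)\big>_{Z\times Z^*}$ for every $h\in Z$; replacing $h$ by $-h$ turns this into $\limsup_{n\to+\infty}\big<h,\Lambda(z_n)\big>_{Z\times Z^*}\leq\big<h,\Lambda(z)\big>_{Z\times Z^*}$, and combining the two gives $\lim_{n\to+\infty}\big<h,\Lambda(z_n)\big>_{Z\times Z^*}=\big<h,\Lambda(z)\big>_{Z\times Z^*}$ for all $h\in Z$, i.e. $\Lambda(z_n)\rightharpoonup\Lambda(z)$ weakly$^*$ in $Z^*$. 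This is condition {\bf(ii)}, so by \rlemma{hhhhhhhhhhhhhhhhhhiogfydtdtyd} the map $\Lambda$ is pseudo-monotone. The only step requiring genuine care is the passage $t\to 0^+$, where the hemicontinuity hypothesis (continuity of $\zeta_{z,h}$), rather than mere monotonicity, is actually used; everything else is bookkeeping with $\liminf$/$\limsup$ and the defining inequality of monotonicity.
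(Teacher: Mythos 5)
Your proposal is correct and follows essentially the same route as the paper: both verify conditions {\bf(i)} and {\bf(ii)} of Lemma \ref{hhhhhhhhhhhhhhhhhhiogfydtdtyd}, with {\bf(i)} coming directly from monotonicity and {\bf(ii)} from the Minty device with the test point $y=z-th$ followed by $t\to 0^+$ via the hemicontinuity of $\zeta_{z,h}$ and the $h\mapsto -h$ symmetrization. The only cosmetic difference is that you split $\big<z_n-y,\Lambda(z_n)\big>$ into its two summands before passing to the $\liminf$, while the paper absorbs the vanishing term first; the content is identical.
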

\begin{proof}
Assume that the mapping $\Lambda(z):Z\to Z^*$ is monotone. I.e.
\begin{equation}
\label{ftguhhhhihggjgjkjggkjgjfjfgdgjhghgh}
\Big<y-z,\Lambda(y)-\Lambda(z)\Big>_{Z\times Z^*}\geq
0\quad\forall\, y,z\in Z\,.
\end{equation}
Then in particular for every sequence
$\{z_n\}_{n=1}^{+\infty}\subset Z$, such that $z_n\rightharpoonup z$
weakly in $Z$, we obtain
\begin{equation}\label{ftguhhhhikkjhjhjkjkkkkkkkjjbgjkgjkghhghff}
\liminf_{n\to+\infty}\Big<z_n-z,\Lambda(z_n)\Big>_{Z\times
Z^*}\geq\liminf_{n\to+\infty}\Big<z_n-z,\Lambda(z)\Big>_{Z\times
Z^*}=0\,.
\end{equation}
So the condition {\bf(i)} of Lemma
\ref{hhhhhhhhhhhhhhhhhhiogfydtdtyd} is satisfied. Next assume that
the sequence $\{z_n\}_{n=1}^{+\infty}\subset Z$ satisfies
\begin{equation}
\label{ftguhhhhikkjhjhjkjkkkkkkkjjjjhgghhhghghhghgfjfg}
z_n\rightharpoonup z\;\;\text{weakly
in}\;\;Z\quad\text{and}\quad\lim_{n\to+\infty}\Big<z_n-z,\Lambda(z_n)\Big>_{Z\times
Z^*}= 0\,.
\end{equation}
We will prove now that we must have $\Lambda(z_n)\rightharpoonup
\Lambda(z)$ weakly$^*$ in $Z^*$. Indeed, by
\er{ftguhhhhihggjgjkjggkjgjfjfgdgjhghgh} we obtain
\begin{equation}
\label{ftguhhhhihggjgjkjggkjgjfjfgdgjhghghgfffff}
\liminf_{n\to+\infty}\Big<z_n-y,\Lambda(z_n)-\Lambda(y)\Big>_{Z\times
Z^*}\geq 0\quad\forall\,y\in Z\,.
\end{equation}
Thus plugging \er{ftguhhhhikkjhjhjkjkkkkkkkjjjjhgghhhghghhghgfjfg}
into \er{ftguhhhhihggjgjkjggkjgjfjfgdgjhghghgfffff} we deduce
\begin{multline}
\label{ftguhhhhihggjgjkjggkjgjfjfgdgjhghghgfffffgfjfgfgfjfffdgh}
\liminf_{n\to+\infty}\Big<z-y,\Lambda(z_n)\Big>_{Z\times
Z^*}=\liminf_{n\to+\infty}\Big<z_n-y,\Lambda(z_n)\Big>_{Z\times
Z^*}\geq\\ \liminf_{n\to+\infty}\Big<z_n-y,\Lambda(y)\Big>_{Z\times
Z^*}=\Big<z-y,\Lambda(y)\Big>_{Z\times Z^*}\quad\forall\,y\in Z\,.
\end{multline}
Then choosing $y:=z-th$ in
\er{ftguhhhhihggjgjkjggkjgjfjfgdgjhghghgfffffgfjfgfgfjfffdgh} for
arbitrary $h\in Z$ and $t>0$ we obtain
\begin{equation}
\label{ftguhhhhihggjgjkjggkjgjfjfgdgjhghghgfffffgfjfgfgfjfffdghccgccgh}
\liminf_{n\to+\infty}\Big<h,\Lambda(z_n)\Big>_{Z\times Z^*}\geq
\Big<h,\Lambda(z-th)\Big>_{Z\times Z^*}\quad\forall\,h\in
Z\,,\;\forall t>0\,.
\end{equation}
Therefore, tending $t\to 0^+$ in
\er{ftguhhhhihggjgjkjggkjgjfjfgdgjhghghgfffffgfjfgfgfjfffdghccgccgh}
and using the continuity of the function in the r.h.s. of
\er{ftguhhhhihggjgjkjggkjgjfjfgdgjhghghgfffffgfjfgfgfjfffdghccgccgh}
we infer
\begin{equation}
\label{ftguhhhhihggjgjkjggkjgjfjfgdgjhghghgfffffgfjfgfgfjfffdghccgccghdgghcgvn}
\liminf_{n\to+\infty}\big<h,\Lambda(z_n)\big>_{Z\times Z^*}\geq
\big<h,\Lambda(z)\big>_{Z\times Z^*}\quad\forall\,h\in Z\,.
\end{equation}
Thus, as before, interchanging between $h$ and $-h$ in
\er{ftguhhhhihggjgjkjggkjgjfjfgdgjhghghgfffffgfjfgfgfjfffdghccgccghdgghcgvn}
we obtain
\begin{equation}
\label{ftguhhhhihggjgjkbghghfggggfhffffghjjhfjfgghdddkkkhh}
\limsup_{n\to+\infty}\big<h,\Lambda(z_n)\big>_{Z\times
Z^*}\leq\big<h,\Lambda(z)\big>_{Z\times Z^*}\quad\forall h\in Z\,.
\end{equation}
So, by plugging
\er{ftguhhhhihggjgjkjggkjgjfjfgdgjhghghgfffffgfjfgfgfjfffdghccgccghdgghcgvn}
and \er{ftguhhhhihggjgjkbghghfggggfhffffghjjhfjfgghdddkkkhh} we
finally deduce
\begin{equation}
\label{ftguhhhhihggjgjkbghghfggggfhffffghjjhfjfgghdddgjkgghfjjklhklhhjhbvvhhjjg}
\lim_{n\to+\infty}\big<h,\Lambda(z_n)\big>_{Z\times
Z^*}=\big<h,\Lambda(z)\big>_{Z\times Z^*}\quad\forall h\in Z\,.
\end{equation}
I.e. $\Lambda(z_n)\rightharpoonup \Lambda(z)$ weakly$^*$ in $Z^*$.
So the condition {\bf(ii)} of Lemma
\ref{hhhhhhhhhhhhhhhhhhiogfydtdtyd} is satisfied. Therefore, by this
Lemma the mapping $\Lambda(z)$ is pseudo-monotone.
\end{proof}

Next we have the following well known Lemma:
\begin{lemma}\label{vbnhjjm}
Let $Y$ and $Z$ be two reflexive Banach spaces. Furthermore, let
$S\in \mathcal{L}(Y;Z)$  be an injective operator (i.e. it satisfies
$\ker S=\{0\}$) and  let $S^*\in \mathcal{L}(Z^*;Y^*)$ be the
corresponding adjoint operator, which satisfies
\begin{equation}\label{tildetdall}
\big<y,S^*\cdot z^*\big>_{Y\times Y^*}:=\big<S\cdot
y,z^*\big>_{Z\times Z^*}\quad\quad\text{for every}\; z^*\in
Z^*\;\text{and}\;y\in Y\,.
\end{equation}
%with the dense image (i.e. it satisfies $\ov{S(Z)}=Y$)
Next assume that $a,b\in\R$ s.t. $a<b$. Let $w(t)\in
L^\infty(a,b;Y)$ be such that the function $v:[a,b]\to Z$ defined by
$v(t):=S\cdot \big(w(t)\big)$ belongs to $W^{1,q}(a,b;Z)$ for some
$q\geq 1$. Then we can redefine $w$ on a subset of $[a,b]$ of
Lebesgue measure zero, so that $w(t)$ will be $Y$-weakly
%($=$ weakly$^*$)
continuous in $t$ on $[a,b]$ ( i.e. $w\in C_w^0(a,b;Y)$ ). Moreover,
for every $a\leq \alpha<\beta\leq b$ and for every $\delta(t)\in
C^1\big([a,b];Z^*\big)$ we will have
\begin{equation}\label{eqmult}
\int\limits_\alpha^\beta\bigg\{\Big<\frac{dv}{dt}(t),\delta(t)\Big>_{Z\times
Z^*}+\Big<v(t),\frac{d\delta}{dt}(t)\Big>_{Z\times Z^*}\bigg\}dt=
\big<w(\beta),S^*\cdot\delta(\beta)\big>_{Y\times
Y^*}-\big<w(\alpha),S^*\cdot\delta(\alpha)\big>_{Y\times Y^*}\,.
\end{equation}
\end{lemma}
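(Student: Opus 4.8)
The plan is to deduce the whole statement from the integration--by--parts identity \eqref{sobltr}, already recorded in \rdef{4bdf} for $W^{1,1}$--functions valued in a single reflexive space, by transporting the information between $w$ and $v=S\cdot w$ through the adjoint $S^*$. Two preliminaries are needed. First, since $S\in\mathcal L(Y;Z)$ is injective and $Y$ is reflexive, the range $S^*(Z^*)$ is dense in $Y^*$: by \eqref{tildetdall}, any element of $Y^{**}=Y$ that annihilates $S^*(Z^*)$ is mapped by $S$ to an element of $Z$ killed by all of $Z^*$, hence to $0$, hence (by injectivity of $S$) equals $0$, so Hahn--Banach forces $\overline{S^*(Z^*)}=Y^*$. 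Second, since $(a,b)$ is bounded, $W^{1,q}(a,b;Z)\subset W^{1,1}(a,b;Z)$, so by the facts recalled after \rdef{4bdf} we may assume $v\in C^0([a,b];Z)$ and that \eqref{sobltr}, with $X:=Z$, holds for every $\delta(t)\in C^1([a,b];Z^*)$ and every subinterval $[\alpha,\beta]\subset[a,b]$.

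The core step is the construction of a good representative of $w$. Put $M:=\|w\|_{L^\infty(a,b;Y)}$, and for $l\in[a,b]$ and $s\to 0$ (one--sided averages at the endpoints) consider the Bochner means $\varphi^l_s:=\frac1s\int_l^{l+s}w(\tau)\,d\tau\in Y$, which satisfy $\|\varphi^l_s\|_Y\le M$. For every $h\in Z^*$, \eqref{tildetdall} and the continuity of $v$ give $\big<\varphi^l_s,S^*\cdot h\big>_{Y\times Y^*}=\frac1s\int_l^{l+s}\big<v(\tau),h\big>_{Z\times Z^*}\,d\tau\to\big<v(l),h\big>_{Z\times Z^*}$ as $s\to 0$. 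Since $Y$ is reflexive and $\{\varphi^l_s\}$ is bounded, along a subsequence $\varphi^l_s\weakly\tilde w(l)$ weakly in $Y$ with $\|\tilde w(l)\|_Y\le M$ and $\big<\tilde w(l),S^*\cdot h\big>_{Y\times Y^*}=\big<v(l),h\big>_{Z\times Z^*}$ for all $h\in Z^*$; by the density of $S^*(Z^*)$ in $Y^*$ such $\tilde w(l)$ is uniquely determined, so in fact $\varphi^l_s\weakly\tilde w(l)$ weakly in $Y$ as $s\to 0$ (not merely along a subsequence), and $S\cdot\tilde w(l)=v(l)$ for every $l\in[a,b]$. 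By the Lebesgue differentiation theorem for Bochner integrable functions (compare \rlemma{lebesguepoints}), $\varphi^l_s\to w(l)$ strongly in $Y$ for a.e.\ $l$, hence $\tilde w=w$ a.e., and we redefine $w:=\tilde w$ on the exceptional null set. For the weak continuity, fix $l_0\in[a,b]$: then $\big<w(l),S^*\cdot h\big>_{Y\times Y^*}=\big<v(l),h\big>_{Z\times Z^*}\to\big<v(l_0),h\big>_{Z\times Z^*}=\big<w(l_0),S^*\cdot h\big>_{Y\times Y^*}$ as $l\to l_0$ for each $h\in Z^*$, and since $\|w(l)\|_Y\le M$ for all $l$ and $S^*(Z^*)$ is dense in $Y^*$, an $\varepsilon/3$--argument upgrades this to $\big<w(l),\psi\big>_{Y\times Y^*}\to\big<w(l_0),\psi\big>_{Y\times Y^*}$ for every $\psi\in Y^*$, i.e.\ $w\in C^0_w(a,b;Y)$.

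Finally, \eqref{eqmult} is obtained by substituting $v(t)=S\cdot w(t)$ --- valid at every $t\in[a,b]$ after the redefinition --- into \eqref{sobltr} with $X:=Z$ and using $\big<v(t),\delta(t)\big>_{Z\times Z^*}=\big<w(t),S^*\cdot\delta(t)\big>_{Y\times Y^*}$ at the endpoints $t=\alpha,\beta$. I expect the only genuinely delicate part to be the construction of the everywhere--defined $Y$--valued representative together with the proof of its weak continuity; the heart of the matter is the uniqueness of the weak limit $\tilde w(l)$, which is precisely where the injectivity of $S$ (equivalently, the density of $S^*(Z^*)$ in $Y^*$) enters, while everything else is routine bookkeeping.
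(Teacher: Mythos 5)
Your proposal is correct and follows essentially the same route as the paper's own argument: both rest on the density of $S^*(Z^*)$ in $Y^*$ (coming from the injectivity of $S$ and reflexivity), the continuity of $v$ as a $W^{1,1}$--function, weak compactness of the bounded family in the reflexive space $Y$ to build the everywhere--defined representative $\tilde w$ with $S\cdot\tilde w=v$, and finally substitution into \eqref{sobltr}. The only (cosmetic) difference is that you identify $\tilde w(l)$ as the weak limit of Bochner averages $\frac1s\int_l^{l+s}w$, whereas the paper uses weak limits of $w(t_n)$ along sequences $t_n\to l$ taken from the full--measure set where $v=S\cdot w$ holds exactly; both verifications are sound.
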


\begin{definition}\label{7bdf}
Let $X$ be a reflexive Banach space and $X^*$ the corresponding dual
space. Furthermore let $H$ be a Hilbert space and $T\in
\mathcal{L}(X,H)$ be an injective (i.e. it satisfies $\ker T=\{0\}$)
inclusion operator such that its image is dense on $H$. Then we call
the triple $\{X,H,X^*\}$ an evolution triple with the corresponding
inclusion operator $T$. Throughout this paper we assume the space
$H^*$ be equal to $H$ (remember that $H$ is a Hilbert space) but in
general we don't associate $X^*$ with $X$ even in the case where $X$
is a Hilbert space (and thus $X^*$ will be isomorphic to $X$).
Furthermore, we define the bounded linear operator $\widetilde{T}\in
\mathcal{L}(H;X^*)$ by the formula
\begin{equation}\label{tildet}
\big<x,\widetilde{T}\cdot y\big>_{X\times X^*}:=\big<T\cdot x,y\big>_{H\times H}\quad\quad\text{for every}\; y\in H\;\text{and}\;x\in X\,.
\end{equation}
In particular
$\|\widetilde{T}\|_{\mathcal{L}(H;X^*)}=\|T\|_{\mathcal{L}(X;H)}$
and since we assumed that the image of $T$ is dense in $H$ we deduce
that $\ker \widetilde{T}=\{0\}$ and so $\widetilde{T}$ is an
injective operator. So $\widetilde{T}$ is an inclusion of $H$ to
$X^*$ and the operator $I:=\widetilde{T}\circ T$ is an injective
inclusion of $X$ to $X^*$. Furthermore, clearly
\begin{equation}\label{tildethlhjhghjf}
\big<x,I\cdot z\big>_{X\times X^*}=\big<T\cdot x,T\cdot
z\big>_{H\times H}=\big<z,I\cdot x\big>_{X\times
X^*}\quad\quad\text{for every}\; x,z\in X\,.
\end{equation}
So $I\in \mathcal{L}(X,X^*)$ is self-adjoint operator. Moreover, $I$
is strictly positive, since
\begin{equation}\label{tildethlhjhghjffgfhfh}
\big<x,I\cdot x\big>_{X\times X^*}=\|T\cdot
x\|^2_H>0\quad\quad\forall x\neq 0\in X\,.
\end{equation}
\end{definition}
\begin{lemma}\label{hdfghdiogdiofg}
Let $X$ be a reflexive Banach space and $X^*$ the corresponding dual
space. Furthermore, let $I\in \mathcal{L}(X,X^*)$ be a self-adjoint
and strictly positive operator. i.e.
\begin{equation}\label{tildethlhjhghjfvvjhjhj}
\big<x,I\cdot z\big>_{X\times X^*}=\big<z,I\cdot x\big>_{X\times
X^*}\quad\quad\text{for every}\; x,z\in X\,,
\end{equation}
and
\begin{equation}\label{tildethlhjhghjffgfhfhhffkgh}
\big<x,I\cdot x\big>_{X\times X^*}>0\quad\quad\forall x\neq 0\in
X\,.
\end{equation}
Then there exists a Hilbert space $H$ and an injective operator
$T\in \mathcal{L}(X,H)$ (i.e. $\ker T=\{0\}$), whose image is dense
in $H$, and such that if we consider the operator $\widetilde{T}\in
\mathcal{L}(H;X^*)$, defined by the formula \er{tildet},
%\begin{equation}\label{tildetjbghgjgkl}
%\big<x,\widetilde{T}\cdot y\big>_{X\times X^*}:=\big<T\cdot
%x,y\big>_{H\times H}\quad\quad\text{for every}\; y\in
%H\;\text{and}\;x\in X\,,
%\end{equation}
then we will have
\begin{equation}\label{tildethlhjhghjffgfhfhhffkghbjhjkhjjk}
(\widetilde{T}\circ T)\cdot x=I\cdot x\quad\quad\forall x\in X\,.
\end{equation}
%$\widetilde{T}\circ T\equiv I$.
I.e. $\{X,H,X^*\}$ is an evolution triple with the corresponding
inclusion operator $T\in \mathcal{L}(X;H)$, as it was defined in
Definition \ref{7bdf}, together with the corresponding operator
$\widetilde{T}\in \mathcal{L}(H;X^*)$, defined as in \er{tildet},
and $I\equiv \widetilde{T}\circ T$.
\end{lemma}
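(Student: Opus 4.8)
The plan is to realize $H$ as the completion of $X$ with respect to the inner product induced by $I$, and to take $T$ to be the canonical embedding of $X$ into that completion.

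First I would introduce the bilinear form $b:X\times X\to\R$ defined by $b(x,z):=\langle x,I\cdot z\rangle_{X\times X^*}$. By the self-adjointness hypothesis \er{tildethlhjhghjfvvjhjhj} the form $b$ is symmetric, and by the strict positivity hypothesis \er{tildethlhjhghjffgfhfhhffkgh} it satisfies $b(x,x)>0$ for every $x\neq 0$; hence $b$ is a genuine inner product on $X$. Moreover, since $I$ is bounded we have $b(x,x)=\langle x,I\cdot x\rangle_{X\times X^*}\leq\|I\|_{\mathcal{L}(X;X^*)}\,\|x\|_X^2$, so the Hilbertian norm $\|x\|_b:=b(x,x)^{1/2}$ is dominated by $\|I\|_{\mathcal{L}(X;X^*)}^{1/2}\|x\|_X$. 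Let $H$ denote the completion of the pre-Hilbert space $(X,b)$; it is a Hilbert space, and as in \rdef{7bdf} we regard $H^*=H$.

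Next I would let $T:X\to H$ be the canonical map sending $x\in X$ to its image in the completion $H$. Then $T$ is linear; $\|T\cdot x\|_H=\|x\|_b\leq\|I\|_{\mathcal{L}(X;X^*)}^{1/2}\|x\|_X$, so $T\in\mathcal{L}(X;H)$; $T$ is injective, because $\|x\|_b=0$ forces $b(x,x)=0$ and hence $x=0$ by strict positivity; and the image of $T$ is dense in $H$ by the very definition of the completion. Thus $\{X,H,X^*\}$ together with $T$ is an evolution triple in the sense of \rdef{7bdf}, and the formula \er{tildet} defines a bounded operator $\widetilde{T}\in\mathcal{L}(H;X^*)$: for fixed $y\in H$ the map $x\mapsto\langle T\cdot x,y\rangle_{H\times H}$ is linear and satisfies $|\langle T\cdot x,y\rangle_{H\times H}|\leq\|T\cdot x\|_H\|y\|_H\leq\|I\|_{\mathcal{L}(X;X^*)}^{1/2}\|y\|_H\,\|x\|_X$, so it is an element of $X^*$, which we call $\widetilde{T}\cdot y$, with $\|\widetilde{T}\cdot y\|_{X^*}\leq\|I\|_{\mathcal{L}(X;X^*)}^{1/2}\|y\|_H$.

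Finally I would verify \er{tildethlhjhghjffgfhfhhffkghbjhjkhjjk}. For all $x,z\in X$, applying the definition \er{tildet} of $\widetilde{T}$ with $y=T\cdot z$ and then the definition of $b$, we get $\langle x,(\widetilde{T}\circ T)\cdot z\rangle_{X\times X^*}=\langle T\cdot x,T\cdot z\rangle_{H\times H}=b(x,z)=\langle x,I\cdot z\rangle_{X\times X^*}$. Since $x\in X$ is arbitrary this gives $(\widetilde{T}\circ T)\cdot z=I\cdot z$ for every $z\in X$, which is exactly \er{tildethlhjhghjffgfhfhhffkghbjhjkhjjk}. Most of this is routine bookkeeping; the only points that require a little care are that $H$ is constructed as an abstract completion, so one must check separately that $T$ is injective (this is precisely strict positivity) and that the formula \er{tildet} really produces a bounded operator on all of $H$, not merely on the dense subspace $T(X)$ (which follows from the domination $\|\cdot\|_b\leq\|I\|_{\mathcal{L}(X;X^*)}^{1/2}\|\cdot\|_X$). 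I do not expect any genuine obstacle beyond this.
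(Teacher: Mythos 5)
Your proposal is correct and follows essentially the same route as the paper: the paper also defines the scalar product $\langle\langle x,z\rangle\rangle:=\langle x,I\cdot z\rangle_{X\times X^*}$, takes $H$ to be the completion of $X$ in the induced norm with $T$ the canonical embedding, and verifies $\widetilde{T}\circ T=I$ by unwinding the definitions. Your write-up merely makes explicit a few routine checks (boundedness of $T$ and of $\widetilde{T}$ on all of $H$) that the paper leaves implicit.
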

\begin{proof}
Since $I$ is a self-adjoint and strictly positive operator, the
identity
\begin{equation}\label{tildethlhjhghjfvvjhjhjkggkhgfjfg}
\big<\big<x,z\big>\big>:=\big<x,I\cdot z\big>_{X\times
X^*}=\big<z,I\cdot x\big>_{X\times X^*}\quad\quad\text{for every}\;
x,z\in X\,,
\end{equation}
defines a scalar product in $X$ and the corresponding Euclidian norm
$\big|\big|x\big|\big|:=\sqrt{<<x,x>>}$. Denote the closure of the
space $X$ with respect to this Euclidian norm by $H$ and the trivial
embedding of $X$ into $H$ by $T$. Thus $H$ will be a Hilbert space
and $T\in\mathcal{L}(X,H)$ will be an injective bounded linear
operator whose image is dense in $H$. Moreover,
\begin{equation}\label{tildethlhjhghjfggyhfyufyuygh}
%\big<x,I\cdot z\big>_{X\times X^*}
\big<T\cdot x,T\cdot z\big>_{H\times
H}=\big<\big<x,z\big>\big>=\big<x,I\cdot z\big>_{X\times
X^*}\quad\quad\text{for every}\; x,z\in X\,.
\end{equation}
Thus if we consider the operator $\widetilde{T}\in
\mathcal{L}(H;X^*)$, defined as in \er{tildet}, by the formula
\begin{equation}\label{tildetjbghgjgklhkhj}
\big<x,\widetilde{T}\cdot y\big>_{X\times X^*}:=\big<T\cdot
x,y\big>_{H\times H}\quad\quad\text{for every}\; y\in
H\;\text{and}\;x\in X\,,
\end{equation}
then plugging \er{tildetjbghgjgklhkhj} into
\er{tildethlhjhghjfggyhfyufyuygh} we deduce
\begin{equation}\label{tildethlhjhghjfggyhfyufyuyghghfhgchgdgjg}
%\big<x,I\cdot z\big>_{X\times X^*}
\big<x,(\widetilde{T}\circ T)\cdot z\big>_{X\times X^*}=\big<T\cdot
x,T\cdot z\big>_{H\times H}=\big<x,I\cdot z\big>_{X\times
X^*}\quad\quad\text{for every}\; x,z\in X\,.
\end{equation}
I.e. $\widetilde{T}\circ T\equiv I$.
\end{proof}
Next as a particular case of Lemma \ref{vbnhjjm} we have the
following Corollary.
\begin{corollary}\label{vbnhjjmcor}
Let $\{X,H,X^*\}$ be an evolution triple with the corresponding
inclusion operator $T\in \mathcal{L}(X;H)$, as it was defined in
Definition \ref{7bdf}, together with the corresponding operator
$\widetilde{T}\in \mathcal{L}(H;X^*)$, defined as in \er{tildet},
and let $a,b\in\R$ be s.t. $a<b$. Let $w(t)\in L^\infty(a,b;H)$ be
such that the function $v:[a,b]\to X^*$ defined by $v(t):=\widetilde
T\cdot \big(w(t)\big)$ belongs to $W^{1,q}(a,b;X^*)$ for some $q\geq
1$. Then we can redefine $w$ on a subset of $[a,b]$ of Lebesgue
measure zero, so that $w(t)$ will be $H$-weakly continuous in $t$ on
$[a,b]$ ( i.e. $w\in C_w^0(a,b;H)$ ). Moreover, for every $a\leq
\alpha<\beta\leq b$ and for every $\delta(t)\in
C^1\big([a,b];X\big)$ we will have
\begin{equation}\label{eqmultcor}
\int\limits_\alpha^\beta\bigg\{\Big<\delta(t), \frac{dv}{dt}(t)\Big>_{X\times X^*}+\Big<\frac{d\delta}{dt}(t), v(t)\Big>_{X\times X^*}\bigg\}dt=
\big<T\cdot\delta(\beta),w(\beta)\big>_{H\times H}-\big<T\cdot\delta(\alpha),w(\alpha)\big>_{H\times H}\,.
\end{equation}
\end{corollary}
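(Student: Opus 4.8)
The plan is to obtain Corollary~\ref{vbnhjjmcor} as the instance of Lemma~\ref{vbnhjjm} corresponding to the choices $Y:=H$, $Z:=X^*$ and $S:=\widetilde{T}\in\mathcal{L}(H;X^*)$. First I would verify that these choices satisfy the hypotheses of Lemma~\ref{vbnhjjm}: the space $Y=H$ is reflexive since it is a Hilbert space; the space $Z=X^*$ is reflexive since $X$ is reflexive; and $S=\widetilde{T}$ is injective because, as recorded in Definition~\ref{7bdf}, the density of the image of $T$ in $H$ forces $\ker\widetilde{T}=\{0\}$. Moreover the datum $w(t)\in L^\infty(a,b;H)=L^\infty(a,b;Y)$ of the corollary has, by assumption, $v(t):=\widetilde{T}\cdot w(t)=S\cdot w(t)\in W^{1,q}(a,b;X^*)=W^{1,q}(a,b;Z)$, which is precisely the standing hypothesis of Lemma~\ref{vbnhjjm}.

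Next I would identify the adjoint operator $S^*$ occurring in Lemma~\ref{vbnhjjm}. Using reflexivity of $X$ to write $Z^*=(X^*)^*=X$ and the standing convention $H^*=H$, we have $S^*\in\mathcal{L}(X;H)$, and from the defining relation of the adjoint (as stated in Lemma~\ref{vbnhjjm}) together with \er{tildet}, for every $x\in X$ and every $y\in H$ one computes
\[
\big<y,S^*\cdot x\big>_{H\times H}=\big<\widetilde{T}\cdot y,x\big>_{X^*\times X}=\big<x,\widetilde{T}\cdot y\big>_{X\times X^*}=\big<T\cdot x,y\big>_{H\times H},
\]
so that $S^*\cdot x=T\cdot x$ for every $x\in X$; that is, under the above identifications $S^*\equiv T$. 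This elementary identification is the only point requiring any computation.

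Finally I would transcribe the conclusion of Lemma~\ref{vbnhjjm} into the notation of the corollary. The redefinition statement becomes exactly: $w$ can be modified on a Lebesgue-null subset of $[a,b]$ so that $w\in C_w^0(a,b;H)$. For the integral identity, given $\delta(t)\in C^1\big([a,b];X\big)=C^1\big([a,b];Z^*\big)$, the terms $\big<\frac{dv}{dt}(t),\delta(t)\big>_{Z\times Z^*}$ and $\big<v(t),\frac{d\delta}{dt}(t)\big>_{Z\times Z^*}$ coincide, under $X^{**}=X$, with $\big<\delta(t),\frac{dv}{dt}(t)\big>_{X\times X^*}$ and $\big<\frac{d\delta}{dt}(t),v(t)\big>_{X\times X^*}$ respectively, while the boundary terms $\big<w(\beta),S^*\cdot\delta(\beta)\big>_{Y\times Y^*}$ become $\big<T\cdot\delta(\beta),w(\beta)\big>_{H\times H}$ upon using $S^*\equiv T$, the identification $H^*=H$, and the symmetry of the Hilbert pairing; likewise at $\alpha$. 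This gives precisely \er{eqmultcor}. I do not expect any genuine obstacle here: the entire content of the statement is contained in Lemma~\ref{vbnhjjm}, and the only care needed is in keeping track of the canonical identifications $X^{**}=X$, $H^*=H$, and the resulting swaps in the order of the duality pairings.
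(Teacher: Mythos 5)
Your proposal is correct and is exactly the paper's intended derivation: the paper introduces the corollary with the phrase ``as a particular case of Lemma \ref{vbnhjjm}'', and the specialization you carry out ($Y=H$, $Z=X^*$, $S=\widetilde{T}$, with the identification $S^*\equiv T$ via \er{tildet} and the canonical isomorphisms $Z^*=X^{**}=X$, $H^*=H$) is precisely that particular case. The verification of the hypotheses (reflexivity of $H$ and $X^*$, injectivity of $\widetilde{T}$ from the density of the image of $T$) and the transcription of the boundary terms are all handled correctly.
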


The following result is well known in the study of evolutional
equations:
\begin{lemma}\label{lem2}
Let $\{X,H,X^*\}$ be an evolution triple with the corresponding
inclusion operator $T\in \mathcal{L}(X;H)$, as it was defined in
Definition \ref{7bdf}, together with the corresponding operator
$\widetilde{T}\in \mathcal{L}(H;X^*)$, defined as in \er{tildet},
and let $a,b\in\R$ be s.t. $a<b$. Let $u(t)\in L^q(a,b;X)$ for some
$q>1$ such that the function $v(t):[a,b]\to X^*$ defined by
$v(t):=I\cdot \big(u(t)\big)$ belongs to $W^{1,q^*}(a,b;X^*)$ for
$q^*:=q/(q-1)$, where we denote $I:=\widetilde T\circ T:\,X\to X^*$.
Then the function $w(t):[a,b]\to H$ defined by $w(t):=T\cdot
\big(u(t)\big)$ belongs to $L^\infty(a,b;H)$ and for every
subinterval $[\alpha,\beta]\subset[a,b]$ we have
\begin{equation}\label{energyravenstvo}
\int_\alpha^\beta\Big<u(t),\frac{dv}{dt}(t)\Big>_{X\times X^*}dt=\frac{1}{2}\Big(\|w(\beta)\|_H^2
-\|w(\alpha)\|_H^2\Big)\,,
\end{equation}
up to a redefinition of $w(t)$ on a subset of $[a,b]$ of Lebesgue measure zero, such that $w$ is $H$-weakly continuous, as it was stated in
Corollary \ref{vbnhjjmcor}.
\end{lemma}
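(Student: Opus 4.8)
The plan is to reduce the identity to the elementary chain rule for smooth curves by mollifying $u$ in the time variable, and then to pass to the limit. Since $v(t):=I\cdot u(t)$ belongs to $W^{1,q^*}(a,b;X^*)$ it has, after modification on a null set, a representative in $C^0([a,b];X^*)$; I would use this to extend $u$, and hence $v$, by reflection across the endpoints $a$ and $b$ to a slightly larger interval $(a-\sigma,b+\sigma)$, obtaining $\hat u\in L^q(a-\sigma,b+\sigma;X)$ with $I\cdot\hat u=\hat v\in W^{1,q^*}(a-\sigma,b+\sigma;X^*)$ — the reflected function carries the reflected derivative, exactly as for scalar Sobolev functions, and the continuity of $v$ at $a,b$ is what makes the one-sided pieces match. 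This extension is the device that lets one control everything uniformly up to the endpoints.

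Fix a standard mollifier $\rho_\e$ supported in $(-\e,\e)$ and set $u_\e:=\rho_\e\ast\hat u$ for $\e<\sigma$, restricted to $[a,b]$; then $u_\e$ is a $C^1$ (indeed smooth) curve in $X$, and since $I$ is a bounded linear operator, $I\cdot u_\e=\rho_\e\ast\hat v$ and $\frac{d}{dt}\{I\cdot u_\e(t)\}=I\cdot u_\e'(t)=(\rho_\e\ast\hat v')(t)$ on $[a,b]$. Hence $u_\e\to u$ in $L^q(a,b;X)$, $\frac{d}{dt}\{I\cdot u_\e\}\to v'$ in $L^{q^*}(a,b;X^*)$, and $u_\e(t_0)\to u(t_0)$ in $X$ for a.e.\ $t_0$ (Lebesgue points, cf.\ \rlemma{lebesguepoints}). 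For the smooth curve $w_\e(t):=T\cdot u_\e(t)\in C^1([a,b];H)$ the product rule together with the defining relation \er{tildet} gives
\[
\frac{d}{dt}\,\|w_\e(t)\|_H^2=2\big<T\cdot u_\e(t),\,T\cdot u_\e'(t)\big>_{H\times H}=2\big<u_\e(t),\,I\cdot u_\e'(t)\big>_{X\times X^*},
\]
and integrating over $[\alpha,\beta]\subset[a,b]$,
\[
\|w_\e(\beta)\|_H^2-\|w_\e(\alpha)\|_H^2=2\int_\alpha^\beta\big<u_\e(t),\,\tfrac{d}{dt}\{I\cdot u_\e(t)\}\big>_{X\times X^*}\,dt.
\]

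Applying this last identity to the difference $u_\e-u_\eta$ (which again has the required smoothness, with $\frac{d}{dt}\{I\cdot(u_\e-u_\eta)\}=\rho_\e\ast\hat v'-\rho_\eta\ast\hat v'$) and using H\"older's inequality, I would bound $\sup_{t\in[a,b]}\|w_\e(t)-w_\eta(t)\|_H$ by $\|w_\e(t_0)-w_\eta(t_0)\|_H$ at a fixed interior Lebesgue point $t_0$ plus a H\"older product of the $L^q(a,b;X)$- and $L^{q^*}(a,b;X^*)$-norms of $u_\e-u_\eta$ and $\frac{d}{dt}\{I\cdot(u_\e-u_\eta)\}$; this bound tends to $0$ as $\e,\eta\to0$, so $\{w_\e\}$ is Cauchy in $C^0([a,b];H)$. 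Its uniform limit $\tilde w$ agrees a.e.\ with $w(t)=T\cdot u(t)$, because $w_\e=T\cdot u_\e\to T\cdot u=w$ in $L^q(a,b;H)$, so it is a strongly continuous representative of $w$; in particular $w\in L^\infty(a,b;H)$. Passing to the limit $\e\to0$ in the integrated identity (uniform convergence on the left, a H\"older splitting on the right) yields $\|\tilde w(\beta)\|_H^2-\|\tilde w(\alpha)\|_H^2=2\int_\alpha^\beta\big<u(t),v'(t)\big>_{X\times X^*}\,dt$ for every $[\alpha,\beta]\subset[a,b]$. Finally, $\tilde w$ is strongly, hence $H$-weakly, continuous on $[a,b]$ and coincides a.e.\ with the $H$-weakly continuous representative furnished by \rcor{vbnhjjmcor} (applied with $q^*$ in place of $q$ there, since $\widetilde T\cdot w=I\cdot u=v\in W^{1,q^*}$); two $H$-weakly continuous maps on $[a,b]$ that are equal a.e.\ are equal everywhere, so the identity holds for that representative, which is exactly \er{energyravenstvo}.

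The main obstacle is the behaviour at the endpoints $t=a,b$. Extending $u$ by zero (rather than by reflection) would destroy the Sobolev regularity of $v$ there, making $\frac{d}{dt}\{I\cdot u_\e\}$ fail to converge to $v'$ near $a,b$, so one would only recover \er{energyravenstvo} on compact subintervals of $(a,b)$; moreover the $H$-weakly continuous representative cannot be pinned down at the endpoints by an interior estimate alone, since weak continuity does not by itself transfer the norm identity across a single point. The reflection extension repairs this by delivering the chain-rule identity and the Cauchy estimate uniformly on the closed interval $[a,b]$. The remaining points are routine: that reflection preserves $W^{1,q^*}$ for $X^*$-valued functions (check against the weak formulation in Definition \ref{4bdf}), and that the mollifier commutes with $I$ and with $\frac{d}{dt}$ (immediate from the linearity and boundedness of $I$).
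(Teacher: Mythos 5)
Your argument is correct, but it takes a genuinely different route from the paper's. The paper mollifies $u$ only inside the interval $[\alpha,\beta]$ with an even kernel $\eta$ and exploits the antisymmetry of $\eta'$ to show that $\int_\alpha^\beta\big<\frac{du_\e}{dt}(t),v(t)\big>_{X\times X^*}dt=0$ identically; the boundary terms $\big<u_\e(\alpha),v(\alpha)\big>_{X\times X^*}$ then converge, at Lebesgue points, to $\frac12\big<u(\alpha),v(\alpha)\big>_{X\times X^*}=\frac12\|w(\alpha)\|_H^2$ (the factor $\frac12$ coming from the one-sided mollification at the endpoint), which yields \er{energyravenstvo} first only for a.e.\ $\alpha,\beta$, hence $w\in L^\infty(a,b;H)$, and only afterwards for all $\alpha,\beta$ via the $H$-weakly continuous representative of Corollary \ref{vbnhjjmcor}. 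You instead extend by reflection past $a$ and $b$, mollify globally, apply the elementary chain rule to the smooth curves $w_\e$, and run a Cauchy-in-$C^0([a,b];H)$ estimate on the differences $w_\e-w_\eta$. Your route buys strictly more: it shows that $w$ has a strongly $H$-continuous representative (the classical continuity theorem for evolution triples), from which the identity for every $[\alpha,\beta]$ and the agreement with the weakly continuous representative of Corollary \ref{vbnhjjmcor} follow at once; the price is the reflection construction and the verification, which you correctly flag and which does go through against Definition \ref{4bdf} using the continuity of $v$ at the endpoints, that even reflection preserves $W^{1,q^*}$ for $X^*$-valued functions. One small point: Lemma \ref{lebesguepoints} as stated gives Lebesgue-point convergence only along a subsequence of $\e$'s, whereas your Cauchy estimate needs $u_\e(t_0)\to u(t_0)$ along the full family at some fixed interior $t_0$; this is supplied by the standard Bochner--Lebesgue differentiation theorem, valid at a.e.\ $t_0$, so the gap is only in the citation, not in the mathematics.
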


We will need in the sequel the following compactness results.
%%%%%%%%%%%%%%%%%%%%%%%%%%%%%%%%%%%%%%%%%%%%%%%%%%%%%%%%%%%%%%%%%%%%%%%%%%%%%%%%%%%%%%%%%%%%%%%%%%%%%%%%%%%%%%%%%%%%%%%%%%%%%%%%%%%%%%%%%%%%%%%%%%%%%%%%%%%%%%%%%%%%%%%%%%%%%%%%%%%%%%%%%%%%%%%%%%%%%%%%%%%%%%%%%%%%%%%%%%%%%%%%%%%%%%%%%%%%%%%%%%%%%%%%%%%%%%%%%%%%%%%%%%%%%%%%%%%%%%%%%%%%
%
%
%
%
%
\begin{comment}
\begin{lemma}\label{ComTem1PP}
Let $X$, $Y$ $Z$ and $U$ be four Banach spaces, such that $X$
%and $Z$ are reflexive.
is a reflexive space. Furthermore, let $T\in \mathcal{L}(X;Y)$,
$S\in \mathcal{L}(X;Z)$ and $L\in \mathcal{L}(Z;U)$ be bounded
linear operators. Moreover assume that $S$ and $L$ are injective
inclusions (i.e. they satisfy $\ker S=\{0\}$ and $\ker L=\{0\}$) and
$T$ and $L$ are compact operators. Assume that $a,b\in\R$ such that
$a<b$, $1\leq q<+\infty$ and $\{u_n(t)\}\subset L^q(a,b;X)$ is a
bounded in $L^q(a,b;X)$ sequence of functions, such that the
functions $v_n(t):(a,b)\to Z$, defined by
$v_n(t):=S\cdot\big(u_n(t)\big)$, belongs to $L^\infty(a,b;Z)$, the
sequence $\{v_n(t)\}$ is bounded in $L^\infty(a,b;Z)$ and for a.e.
$t\in(a,b)$ we have
%and the sequence
%$\big\{\frac{dv_n}{dt}(t)\big\}$ is bounded in $L^1(a,b;Z)$.
\begin{equation}\label{fghffhdpppplllkkkll}
v_n(t)\rightharpoonup v(t)\quad\text{weakly in}\;
Z\;\text{as}\;n\to+\infty\,.
%,\quad\text{for a.e.}\;t\in(a,b)\,.
\end{equation}
Then,
\begin{equation}\label{staeae1glllukjkjkojkl}
\big\{T\cdot\big(u_n(t)\big)\big\}\quad\text{converges strongly in }
L^q(a,b;Y)\,.
\end{equation}
\end{lemma}
\end{comment}
%
%
%
%
%
\begin{lemma}\label{ComTem1PP}
Let $X$, $Y$ $Z$
%and $U$
be three Banach spaces, such that $X$
%and $Z$ are reflexive.
is a reflexive space. Furthermore, let $T\in \mathcal{L}(X;Y)$ and
$S\in \mathcal{L}(X;Z)$
%and $L\in \mathcal{L}(Z;U)$
be bounded
linear operators. Moreover assume that $S$
%and $L$ are
is an injective inclusion (i.e. it satisfies $\ker S=\{0\}$)
%and $\ker L=\{0\}$)
and $T$ is a
%and $L$ are
compact operator. Assume that $a,b\in\R$ such that $a<b$, $1\leq
q<+\infty$ and $\{u_n(t)\}\subset L^q(a,b;X)$ is a bounded in
$L^q(a,b;X)$ sequence of functions, such that the functions
$v_n(t):(a,b)\to Z$, defined by $v_n(t):=S\cdot\big(u_n(t)\big)$,
belongs to $L^\infty(a,b;Z)$, the sequence $\{v_n(t)\}$ is bounded
in $L^\infty(a,b;Z)$ and for a.e. $t\in(a,b)$ we have
%and the sequence
%$\big\{\frac{dv_n}{dt}(t)\big\}$ is bounded in $L^1(a,b;Z)$.
\begin{equation}\label{fghffhdpppplllkkkll}
v_n(t)\rightharpoonup v(t)\quad\text{weakly in}\;
Z\;\text{as}\;n\to+\infty\,.
%,\quad\text{for a.e.}\;t\in(a,b)\,.
\end{equation}
Then,
\begin{equation}\label{staeae1glllukjkjkojkl}
\big\{T\cdot\big(u_n(t)\big)\big\}\quad\text{converges strongly in }
L^q(a,b;Y)\,.
\end{equation}
\end{lemma}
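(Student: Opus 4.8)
The plan is to deduce the strong convergence from an Ehrling-type interpolation inequality, a truncation at a large level, and the dominated convergence theorem; note that no estimate on a time derivative is available, so the usual Aubin--Lions/Fr\'echet--Kolmogorov mechanism (uniform equicontinuity of time translations, which in fact may fail here) must be replaced. Put $C:=\sup_n\|u_n\|_{L^q(a,b;X)}<+\infty$ and $K:=\sup_n\|v_n\|_{L^\infty(a,b;Z)}<+\infty$. \emph{Step 1 (an Ehrling-type inequality).} First I would prove that for every $\e>0$ there is $C_\e>0$ with
$$\|T\cdot x\|_Y\;\le\;\e\,\|x\|_X+C_\e\,\|S\cdot x\|_Z\qquad\forall\,x\in X\,.$$
This is a routine contradiction argument: if it fails for some $\e_0>0$ one obtains $x_n\in X$ with $\|x_n\|_X=1$, $\|T\cdot x_n\|_Y>\e_0$ and $\|S\cdot x_n\|_Z\to 0$; by reflexivity of $X$, up to a subsequence $x_n\weakly x$ in $X$, so $T\cdot x_n\to T\cdot x$ strongly in $Y$ by compactness of $T$, whence $\|T\cdot x\|_Y\ge\e_0>0$, while $S\cdot x_n\weakly S\cdot x$ together with $S\cdot x_n\to 0$ forces $S\cdot x=0$ and hence $x=0$ by injectivity of $S$, a contradiction.

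\emph{Step 2 (the candidate limit).} By Fatou's lemma $\int_a^b\big(\liminf_n\|u_n(t)\|_X\big)^q\,dt\le C^q$, so for a.e. $t$ the sequence $\{u_n(t)\}$ admits a subsequence bounded in $X$, which by reflexivity has a weakly convergent further subsequence; since $S$ is bounded and $S\cdot u_n(t)=v_n(t)\weakly v(t)$ in $Z$, any such weak limit $\tilde w$ satisfies $S\cdot\tilde w=v(t)$, hence is the \emph{unique} $w(t)\in X$ with $S\cdot w(t)=v(t)$ (injectivity of $S$); consequently every bounded subsequence of $\{u_n(t)\}$ converges weakly to $w(t)$. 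Weak lower semicontinuity of the norm together with Fatou gives $\|w\|_{L^q(a,b;X)}\le C$, so $g:=T\cdot w\in L^q(a,b;Y)$, and this $g$ will be shown to be the limit. (One may alternatively bypass $g$ and prove directly that $\{T\cdot u_n\}$ is Cauchy in the complete space $L^q(a,b;Y)$, with the same estimates.)

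\emph{Step 3 (truncation and conclusion).} Fix $\e>0$, take $C_\e$ from Step 1, and choose $M>0$ so large that $C_\e K/M\le\e$ and (by absolute continuity of the integral of the fixed function $\|g(\cdot)\|_Y^q$) also $\int_E\|g(t)\|_Y^q\,dt\le\e^q$ whenever $|E|\le 2C^q/M^q$. Decompose $(a,b)=G_n\cup B_n$ with $B_n:=\{t:\|u_n(t)\|_X>M\}\cup\{t:\|w(t)\|_X>M\}$, so that $|B_n|\le 2C^q/M^q$ by Chebyshev. Where $\|u_n(t)\|_X>M$, applying Step 1 to $u_n(t)/\|u_n(t)\|_X$ and using $\|v_n(t)\|_Z\le K$ yields $\|T\cdot u_n(t)\|_Y\le 2\e\,\|u_n(t)\|_X$; similarly $\|g(t)\|_Y\le 2\e\,\|w(t)\|_X$ where $\|w(t)\|_X>M$; and on the fixed bad set $\{\|w\|_X>M\}$ one has the crude bound $\|T\cdot u_n(t)\|_Y\le\e\|u_n(t)\|_X+C_\e K$ from Step 1 again. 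Integrating these over $B_n$ and using the choice of $M$ gives $\|(T\cdot u_n-g)\,\mathbf 1_{B_n}\|_{L^q(a,b;Y)}\le C'\e$ for all $n$, with $C'$ depending only on $C$. On $G_n$ one has $\|u_n(t)\|_X\le M$ and $\|w(t)\|_X\le M$, so $t\mapsto\|T\cdot u_n(t)-T\cdot w(t)\|_Y^q\,\mathbf 1_{G_n}(t)$ is bounded by the constant $(2M\|T\|_{\mathcal{L}(X;Y)})^q$, and it tends to $0$ for a.e. $t$: if $\{n:\|u_n(t)\|_X\le M\}$ is finite the function is eventually $0$, while if it is infinite then along this index set $u_n(t)\weakly w(t)$ by Step 2, hence $T\cdot u_n(t)\to T\cdot w(t)$ in $Y$ by compactness of $T$, and along the complementary indices the function vanishes. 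Dominated convergence then gives $\|(T\cdot u_n-g)\,\mathbf 1_{G_n}\|_{L^q(a,b;Y)}\to 0$, so $\varlimsup_{n\to\infty}\|T\cdot u_n-g\|_{L^q(a,b;Y)}\le C'\e$; letting $\e\to 0$ proves $T\cdot u_n\to g$ strongly in $L^q(a,b;Y)$.

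\emph{Main obstacle.} The delicate point is exactly the absence of a time-derivative bound, which rules out the classical compactness lemmas (indeed the time translations of $v_n$ need not be equicontinuous in $L^q(a,b;Z)$, even though $v_n(t)\weakly v(t)$ a.e.). The resolution is the level-$M$ splitting of Step 3: the $L^q$-bound on $u_n$, combined with the $L^\infty$-bound on $v_n$ through the Ehrling inequality of Step 1, forces the part of $T\cdot u_n$ supported where $\|u_n(t)\|_X$ is large to be uniformly small, after which the compactness of $T$ and the \emph{pointwise} weak convergence $v_n(t)\weakly v(t)$ are enough — via dominated convergence on a fixed finite interval — to control the remaining, pointwise bounded, part. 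Everything uses only $1\le q<+\infty$ and the finiteness of $(a,b)$, with no reflexivity or separability required of $Y$ or $Z$.
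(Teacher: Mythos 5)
Your proof is correct, but it follows a genuinely different route from the paper's. The paper first reduces to separable spaces and then invokes Lemma \ref{hilbcombanback} to post-compose $S$ with a compact injective operator $L$ from $Z$ into a separable Hilbert space $U$; compactness of $L$ upgrades the a.e.\ weak convergence $v_n(t)\rightharpoonup v(t)$ to a.e.\ strong convergence of $L\cdot v_n(t)$, the $L^\infty$ bound plus dominated convergence gives $L\cdot v_n\to L\cdot v$ strongly in $L^q(a,b;U)$, and then the Ehrling inequality of Lemma \ref{Aplem1} applied with the injective composition $L\circ S$ yields the Cauchy property of $\{T\cdot u_n\}$ in $L^q(a,b;Y)$ by a one-line integration, with no truncation. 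You instead apply the Ehrling inequality with $S$ itself and compensate for the absence of any strong $L^q$ convergence of $v_n$ by the level-$M$ splitting: the term $c_\e\|v_n(t)\|_Z\le c_\e K$ is absorbed into $\e\|u_n(t)\|_X$ where $\|u_n(t)\|_X>M$, while on the complementary set the pointwise weak convergence of bounded subsequences of $u_n(t)$, compactness of $T$, and dominated convergence finish the job. What your approach buys is self-containedness: it dispenses with the separability reduction and with the Hilbert-space embedding lemma (which rests on Lindenstrauss's theorem); what the paper's approach buys is a shorter main argument once the appendix lemmas are in place. Both proofs hinge on the same two ingredients, Lemma \ref{Aplem1} and the fact that a compact operator maps weakly convergent sequences to strongly convergent ones.

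One caveat: your main line of argument defines the limit as $g=T\cdot w$ with $w(t)$ the pointwise weak subsequential limit, and treating $g$ as an element of $L^q(a,b;Y)$ presupposes strong measurability of $w$, which is not immediate here because the full sequence $u_n(t)$ need not converge weakly for a.e.\ $t$ (only its bounded subsequences do). Your parenthetical fallback --- proving directly that $\{T\cdot u_n\}$ is Cauchy in $L^q(a,b;Y)$, with the bad set $B_{n,m}$ depending on both indices and the same estimates --- does go through and removes this issue entirely (and is also how the paper concludes, by completeness of $L^q(a,b;Y)$); I would make that the primary formulation rather than an aside.
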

\begin{proof}
First of all we would like to observe that without loss of
generality we may assume that the spaces $X$, $Y$ and $Z$ are
separable. Indeed, in the general case since $u_n(t)\in L^q(a,b;X)$
then $u_n$ is strictly measurable. Thus in particular $u_n$ is
separately valued, i.e. for every $n$ there exists a separable
subspace $X_n\subset X$ such that $u_n(t)\in X_n$ for a.e.
$t\in(a,b)$. Define $\bar X$ be the closure in $X$ of the linear
span of $\bigcup_{n=1}^{+\infty} X_n$. Then $\bar X$ is a separable
subspace of $X$ by itself and by the construction for a.e.
$t\in(a,b)$ for every $n$ we have $u_n(t)\in \bar X$. Then we also
can define $\bar Y$ and $\bar Z$ as the closures of the images of
the subspace $\bar X$ under the transformations $T$ and $S$
respectively. So $\bar Y$ and $\bar Z$ are separable subspaces of
$Y$ and $Z$ respectively. The new spaces $\bar X$, $\bar Y$ and
$\bar Z$ together with the operators $T\llcorner\bar X$ and
$S\llcorner\bar X$ and the functions $u_n(t)\in L^q(a,b;\bar X)$
satisfy all the conditions of the present lemma.

 Therefore, from now we assume the spaces $X$, $Y$ and $Z$ to be
separable. Thus by Lemma \ref{hilbcombanback} from the Appendix
there exists a separable Hilbert space $U$ and an operator $L\in
\mathcal{L}(Z;U)$ such that $L$ is injective (i.e. $\ker L=\{0\}$)
and compact.
%%%%%%%%%%%%%%%%%%%%%%%%%%%%%%%%%%%%%%%%%%%%%%%%%%%%%%%%%%%%%%%%%%%%%%%%%%%%%%%%%%%%%%%%%%%%%%%%%%%%%%%%%%%%%%%%%%%%%%%%%%%%%%%%%%%%%%%%%%%%%%%%%%%%%%%%%%%%%%%%%%%%%%%%%%%%%%%%%%%%%%%%%%%%%%%%%%%%%%%%%%%%%%%%%%%%%%%%%%%%%%%%%%
Thus since $L$ is a compact operator, by \er{fghffhdpppplllkkkll},
for a.e. $t\in(a,b)$ we have
\begin{equation}\label{fghffhdpppplllkkkllkkklll}
L\cdot v_n(t)\to L\cdot v(t)\quad\text{strongly in}\;
U\;\text{as}\;n\to+\infty\,.
%,\quad\text{for a.e.}\;t\in(a,b)\,.
\end{equation}
Moreover, the sequence $\{L\cdot v_n(t)\}$ is bounded in
$L^\infty(a,b;U)$. Therefore, by the Dominated Convergence Theorem
we deduce that
\begin{equation}\label{fghffhdpppplllkkkllkkklllkkklmmm}
\big(L\circ S\big)\cdot \big(u_n(t)\big)=L\cdot \big(v_n(t)\big)\to
L\cdot \big(v(t)\big)\quad\text{strongly in}\;
L^q(a,b;U)\;\text{as}\;n\to+\infty\,.
%,\quad\text{for a.e.}\;t\in(a,b)\,.
\end{equation}
Next since $S$ and $L$ are injective inclusions, we deduce that
$L\circ S\in \mathcal{L}(X,U)$ is an injective inclusion. Therefore,
using Lemma \ref{Aplem1} from the Appendix we deduce that for every
$\e>0$ there exists $c_\e>0$ such that for all $n,m\in\mathbb{N}$ we
must have
\begin{multline}\label{Eqetaenerwithtgjg}
\Big\|T\cdot u_{n+m}(t)- T\cdot
u_n(t)\Big\|_Y\leq\e\Big\|u_{n+m}(t)-
u_n(t)\Big\|_X+c_\e\Big\|(L\circ S)\cdot u_{n+m}(t)-(L\circ S)\cdot
u_n(t)\Big\|_U\quad\forall t\in(a,b)\,.
\end{multline}
Therefore, for every $\e>0$, for all $n,m\in\mathbb{N}$ we obtain
\begin{multline}\label{Eqetaenerwithtglobggjkl}
\Big\|T\cdot \big(u_{n+m}(t)\big)-T\cdot\big(
u_n(t)\big)\Big\|_{L^q(a,b;Y)}\\ \leq\e\Big\|u_{n+m}(t)-
u_n(t)\Big\|_{L^q(a,b;X)}+c_\e\Big\|L\cdot
\big(v_{n+m}(t)\big)-L\cdot\big(v_n(t)\big)\Big\|_{L^q(a,b;U)}\,.
\end{multline}
However, since the sequence $\{u_n\}$ is bounded in $L^q(a,b;X)$,
using \er{Eqetaenerwithtglobggjkl} we deduce that there exists a
constant $C_0>0$ independent on $\e$ such that
\begin{multline}\label{Eqetaenerwithtglobggjklkklklkl}
\Big\|T\cdot \big(u_{n+m}(t)\big)-T\cdot\big(
u_n(t)\big)\Big\|_{L^q(a,b;Y)}\leq C_0\e+c_\e\Big\|L\cdot
\big(v_{n+m}(t)\big)-L\cdot\big(v_n(t)\big)\Big\|_{L^q(a,b;U)}\,.
\end{multline}
On the other hand, by \er{fghffhdpppplllkkkllkkklllkkklmmm}, there
exists $n_0=n_0(\e)\in\mathbb{N}$ such that for every $n>n_0$ and
every $m\in\mathbb{N}$ we have
\begin{equation}\label{Eqetaenerwithtglobggjklkklklklyky}
\Big\|L\cdot
\big(v_{n+m}(t)\big)-L\cdot\big(v_n(t)\big)\Big\|_{L^q(a,b;U)}\leq\frac{\e}{c_\e}\,.
\end{equation}
Thus, plugging \er{Eqetaenerwithtglobggjklkklklklyky} into
\er{Eqetaenerwithtglobggjklkklklkl}, we obtain that for every
$n>n_0$ and every $m\in\mathbb{N}$ we must have
\begin{equation}\label{Eqetaenerwithtglobggjklkklklklfbfkk}
\Big\|T\cdot \big(u_{n+m}(t)\big)-T\cdot\big(
u_n(t)\big)\Big\|_{L^q(a,b;Y)}\leq (C_0+1)\e\,.
\end{equation}
Therefore, since $\e>0$ in \er{Eqetaenerwithtglobggjklkklklklfbfkk}
is arbitrary and since $L^q(a,b;Y)$ is a Banach space we finally
deduce \er{staeae1glllukjkjkojkl}.
\end{proof}
%
%
%
%
%
\begin{comment}
\begin{corollary}\label{ComTem1PPcor}
Let $X$, $Y$ $Z$ be tree Banach spaces, such that $X$ and $Z$ are
reflexive.
%is a reflexive space.
Furthermore, let $T\in \mathcal{L}(X;Y)$ and $S\in \mathcal{L}(X;Z)$
be bounded linear operators. Moreover assume that $S$ is an
injective inclusion, and $T$ and $S$ are compact operators. Assume
that $a,b\in\R$ such that $a<b$, $1\leq q<+\infty$ and
$\{u_n(t)\}\subset L^q(a,b;X)$ is a bounded in $L^q(a,b;X)$ sequence
of functions, such that the functions $v_n(t):(a,b)\to Z$, defined
by $v_n(t):=S\cdot\big(u_n(t)\big)$, belongs to $L^\infty(a,b;Z)$,
the sequence $\{v_n(t)\}$ is bounded in $L^\infty(a,b;Z)$ and for
a.e. $t\in(a,b)$ we have
%and the sequence
%$\big\{\frac{dv_n}{dt}(t)\big\}$ is bounded in $L^1(a,b;Z)$.
\begin{equation}\label{fghffhdpppplllkkkllkkklljjj}
v_n(t)\rightharpoonup v(t)\quad\text{weakly in}\;
Z\;\text{as}\;n\to+\infty\,.
%,\quad\text{for a.e.}\;t\in(a,b)\,.
\end{equation}
Then,
\begin{equation}\label{staeae1glllukjkjkojklkkklllkjjjj}
\big\{T\cdot\big(u_n(t)\big)\big\}\quad\text{converges strongly in }
L^q(a,b;Y)\,.
\end{equation}
\end{corollary}
\begin{proof}
Since $Z$ is a reflexive space
\end{proof}
\end{comment}
%
%
%
%
%

\begin{lemma}\label{helplemlll}
Let $Z$ be a reflexive Banach space and let
$\big\{v_n(t)\big\}_{n=1}^{+\infty}\subset W^{1,1}(a,b;Z)$ be a
sequence of functions, bounded in $W^{1,1}(a,b;Z)$.
%, such that the sequence
%$\big\{\frac{dv_n}{dt}(t)\big\}$ is bounded in $L^1(a,b;Z)$
Then, $\big\{v_n(t)\big\}_{n=1}^{+\infty}$ is bounded in
$L^\infty(a,b;Z)$ and, up to a subsequence, we have
\begin{equation}\label{fghffhdpppplllkkkllkklkl}
v_n(t)\rightharpoonup v(t)\quad\text{weakly in}\;
Z\;\text{as}\;\;n\to+\infty\,,\quad\text{for a.e}\;\,t\in(a,b)\,.
\end{equation}
\end{lemma}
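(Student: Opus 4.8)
The plan is to obtain the $L^\infty$-bound from the fundamental theorem of calculus for Bochner--Sobolev functions, and the almost-everywhere weak convergence by reducing to scalar functions obtained by testing against a countable dense subset of $Z^*$, to which Helly's selection theorem applies. For the bound: each $v_n\in W^{1,1}(a,b;Z)$, so by the properties recalled after Definition~\ref{4bdf} we have $v_n\in C^0([a,b];Z)$ and, taking $\delta(t)\equiv z^*$ constant in \er{sobltr}, $v_n(\beta)=v_n(\alpha)+\int_\alpha^\beta v_n'(\tau)\,d\tau$ for all $\alpha,\beta\in[a,b]$. Integrating this identity in $\alpha$ over $(a,b)$ and using the triangle inequality gives $\|v_n(t)\|_Z\le\frac1{b-a}\|v_n\|_{L^1(a,b;Z)}+\|v_n'\|_{L^1(a,b;Z)}$ for every $t\in[a,b]$, whence $\sup_n\|v_n\|_{L^\infty(a,b;Z)}\le C\,\sup_n\|v_n\|_{W^{1,1}(a,b;Z)}<+\infty$.

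For the weak convergence I would first reduce to the separable case exactly as in the proof of Lemma~\ref{ComTem1PP}: since each $v_n$ and each $v_n'$ is strongly measurable, there is a closed separable subspace $Z_0\subseteq Z$ with $v_n(t),v_n'(t)\in Z_0$ for a.e.\ $t$ and all $n$; then $Z_0$ is reflexive, $v_n\in W^{1,1}(a,b;Z_0)$, and weak convergence in $Z_0$ implies weak convergence in $Z$. Since $Z_0$ is separable and reflexive, $Z_0^*$ is separable; fix a countable set $\{x_k^*\}_{k=1}^{+\infty}$ dense in $Z_0^*$. For each fixed $k$, the scalar function $f_{n,k}(t):=\langle v_n(t),x_k^*\rangle_{Z_0\times Z_0^*}$ belongs to $W^{1,1}(a,b;\R)$ with $\|f_{n,k}\|_{W^{1,1}(a,b;\R)}\le\|x_k^*\|_{Z_0^*}\,\|v_n\|_{W^{1,1}(a,b;Z_0)}$; hence $\{f_{n,k}\}_n$ is uniformly bounded in $L^\infty(a,b)$ and has uniformly bounded total variation (namely $\le\int_a^b|f_{n,k}'(t)|\,dt$). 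By Helly's selection theorem together with a diagonal argument over $k$, I can extract one subsequence of $\{v_n\}$, not relabeled, along which $f_{n,k}(t)\to g_k(t)$ as $n\to+\infty$ for every $k$ and every $t\in[a,b]$, for some functions $g_k:[a,b]\to\R$.

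To conclude, let $N\subset(a,b)$ be the (null) set of $t$ for which $\|v_n(t)\|_{Z_0}>C$ for some $n$; it is a countable union of null sets. If $t\notin N$, then $\{v_n(t)\}$ is bounded in the reflexive space $Z_0$, so the closed ball $\{\|z\|_{Z_0}\le C\}$ is weakly compact, and since $Z_0^*$ is separable the weak topology on it is metrizable, hence this ball is weakly sequentially compact. Every weak subsequential limit $w$ of $\{v_n(t)\}$ satisfies $\langle w,x_k^*\rangle_{Z_0\times Z_0^*}=g_k(t)$ for all $k$, hence is uniquely determined by the density of $\{x_k^*\}$ in $Z_0^*$; therefore the extracted subsequence satisfies $v_n(t)\rightharpoonup v(t)$ weakly in $Z_0$, and so in $Z$, for every $t\notin N$, where $v(t)$ denotes this unique limit. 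Finally $\|v(t)\|_Z\le\liminf_n\|v_n(t)\|_Z\le C$ for a.e.\ $t$, and $v$ is strongly measurable by Pettis' theorem, so in fact $v\in L^\infty(a,b;Z)$ as well.

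The main obstacle is that neither $W^{1,1}$ nor $L^1$ is reflexive, so one cannot extract a weakly convergent subsequence directly at the level of $W^{1,1}(a,b;Z)$; the scalar reduction combined with Helly's theorem and the diagonal extraction is precisely what replaces this missing compactness, and for it to work one must first pass to a separable subspace of $Z$, so as to dispose of a countable dense family of test functionals in $Z^*$.
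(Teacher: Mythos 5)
Your proof is correct and follows essentially the same route as the paper's: reduce to a separable (hence reflexive with separable dual) subspace, test against a countable dense family in the dual, extract a diagonal subsequence along which the scalar functions converge pointwise a.e., and identify the weak limit via the uniform $L^\infty$ bound and density. The only cosmetic differences are that you invoke Helly's selection theorem for the scalar compactness step where the paper uses the compact embedding $W^{1,1}(a,b;\R)\hookrightarrow L^1(a,b;\R)$ together with a.e.\ convergence of a subsequence of an $L^1$-convergent sequence, and that you prove the $L^\infty$ bound explicitly rather than citing the continuous embedding of $W^{1,1}(a,b;Z)$ into $L^\infty(a,b;Z)$.
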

\begin{proof}
As before without loss of generality we may assume that the space
$Z$ is separable. Thus since $Z$ is a reflexive we deduce that the
space $Z^*$ is also separable. Next since $W^{1,1}(a,b;Z)$ is
continuously embedded in $L^\infty(a,b;Z)$, there exists a constant
$C>0$ such that
\begin{equation}\label{fhgfhjtjuhyt}
\big\|v_n(t)\big\|_Z\leq C\quad\forall
t\in\mathfrak{R}_0\subset(a,b)\,,
\end{equation}
where $\mathcal{L}^1\big((a,b)\setminus\mathfrak{R}_0\big)=0$. On
the other hand, since $Z^*$ is separable, there exists a sequence
$\{\sigma_j\}_{j=1}^{+\infty}\subset Z^*$ which is dense in $Z^*$.
For every $j,n\in\mathbb{N}$ and every $t\in(a,b)$ set
\begin{equation}\label{ugufuhrlll}
h_{n}^{(j)}(t):=\big<v_n(t),\sigma_j\big>_{Z\times Z^*}\in
W^{1,1}(a,b;\R)\,.
\end{equation}
Then for every fixed $j$ the sequence
$\{h_{n}^{(j)}(t)\}_{n=1}^{+\infty}$ is bounded in
$W^{1,1}(a,b;\R)$. Thus since the embedding of $W^{1,1}(a,b;\R)$
into $L^1(a,b;\R)$ is compact and since every $L^1$-convergent
sequence has a subsequence which converges almost everywhere, there
exists an increasing sequence
$\{n^{(1)}_k\}_{k=1}^{+\infty}\subset\mathbb{N}$ and a set
$\mathfrak{R}_1\subset \mathfrak{R}_0$ such that
$\mathcal{L}^1\big((a,b)\setminus\mathfrak{R}_1\big)=0$ and
$\lim_{k\to +\infty}h_{n^{(1)}_k}^{(1)}(t)=\bar h^{(1)}(t)$ for
every $t\in\mathfrak{R}_1$. In the same way there exists a further
subsequence
$\{n^{(2)}_k\}_{k=1}^{+\infty}\subset\{n^{(1)}_k\}_{k=1}^{+\infty}$
and a set $\mathfrak{R}_2\subset \mathfrak{R}_1$ such that
$\mathcal{L}^1\big((a,b)\setminus\mathfrak{R}_2\big)=0$ and
$\lim_{k\to +\infty}h_{n^{(2)}_k}^{(2)}(t)=\bar h^{(2)}(t)$ for
every $t\in\mathfrak{R}_2$. Continuing this process we obtain that
for every $m\in\mathbb{N}$ there exists a subsequence
$\{n^{(m+1)}_k\}_{k=1}^{+\infty}\subset\{n^{(m)}_k\}_{k=1}^{+\infty}$
and a set $\mathfrak{R}_{m+1}\subset \mathfrak{R}_m$ such that
$\mathcal{L}^1\big((a,b)\setminus\mathfrak{R}_{m+1}\big)=0$ and
$\lim_{k\to +\infty}h_{n^{(m+1)}_k}^{(m+1)}(t)=\bar h^{(m+1)}(t)$
for every $t\in\mathfrak{R}_{m+1}$. Thus taking the diagonal
subsequence we obtain that, up to a subsequence, still denoted by
$v_n(t)$ we have
$$\lim_{n\to +\infty}\big<v_n(t),\sigma_j\big>_{Z\times Z^*}=
\bar h^{(j)}(t)\quad\forall t\in \mathfrak{\bar R},\;\;\forall
j\in\mathbb{N}\,,$$ where $\mathfrak{\bar
R}:=\cap_{j=1}^{+\infty}\mathfrak{R}_j$. Moreover, clearly we have
$\mathcal{L}^1\big((a,b)\setminus\mathfrak{\bar R}\big)=0$.
Therefore by the fact that $\{\sigma_j\}_{j=1}^{+\infty}\subset Z^*$
is dense in $Z^*$ and by \er{fhgfhjtjuhyt} we obtain that
$$v_n(t)\rightharpoonup v(t)\quad\text{weakly in}\;
Z\;\text{as}\;n\to+\infty\,,\quad\forall t\in\mathfrak{\bar R}\,,$$
i.e. almost everywhere in $(a,b)$.
\end{proof}
As a direct consequence of Lemma \ref{ComTem1PP} and Lemma
\ref{helplemlll} we have the following Lemma.
\begin{lemma}\label{ComTem1}
Let $X$, $Y$ and $Z$ be three Banach spaces, such that $X$ and $Z$
are reflexive.
%is a reflexive space.
Furthermore, let $T\in
\mathcal{L}(X;Y)$ and $S\in \mathcal{L}(X;Z)$ be bounded linear
operators. Moreover assume that $S$ is an injective inclusion (i.e.
it satisfies $\ker S=\{0\}$) and $T$
%and $S$ are compact operators.
is a compact operator. Assume that $a,b\in\R$ such that $a<b$,
$1\leq q<+\infty$ and $\{u_n(t)\}\subset L^q(a,b;X)$ is a bounded in
$L^q(a,b;X)$ sequence of functions, such that the functions
$v_n(t):(a,b)\to Z$, defined by $v_n(t):=S\cdot\big(u_n(t)\big)$,
belongs to $W^{1,1}(a,b;Z)$ and the sequence
$\big\{\frac{dv_n}{dt}(t)\big\}$ is bounded in $L^1(a,b;Z)$. Then,
up to a subsequence,
\begin{equation}\label{staeae1gllluk}
\big\{T\cdot\big(u_n(t)\big)\big\}\quad\text{converges strongly in }
L^q(a,b;Y)\,.
\end{equation}
\end{lemma}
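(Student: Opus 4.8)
The plan is to obtain this statement for free by chaining the two preceding lemmas, so essentially no new argument is required beyond checking that their hypotheses are met by the data of Lemma~\ref{ComTem1}.

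First I would note that since $\{u_n\}$ is bounded in $L^q(a,b;X)$ and $S\in\mathcal{L}(X;Z)$ is a bounded operator, the functions $v_n(t):=S\cdot\big(u_n(t)\big)$ form a bounded sequence in $L^q(a,b;Z)$. Because $(a,b)$ is a bounded interval and $q\geq 1$, H\"older's inequality then gives that $\{v_n\}$ is also bounded in $L^1(a,b;Z)$. Combining this with the assumed bound on $\big\{\frac{dv_n}{dt}(t)\big\}$ in $L^1(a,b;Z)$ shows that $\{v_n\}$ is bounded in $W^{1,1}(a,b;Z)$.

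Next, since $Z$ is reflexive, I would apply Lemma~\ref{helplemlll} to the sequence $\{v_n\}$: it yields that $\{v_n\}$ is bounded in $L^\infty(a,b;Z)$ and that, up to a subsequence (which I would not relabel), $v_n(t)\rightharpoonup v(t)$ weakly in $Z$ for a.e.\ $t\in(a,b)$. At this point the data $X$ reflexive, $T\in\mathcal{L}(X;Y)$ compact, $S\in\mathcal{L}(X;Z)$ injective, $\{u_n\}$ bounded in $L^q(a,b;X)$, $v_n=S\cdot u_n$ bounded in $L^\infty(a,b;Z)$, together with the a.e.\ weak convergence just obtained, are precisely the hypotheses of Lemma~\ref{ComTem1PP}. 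Invoking that lemma gives that $\big\{T\cdot\big(u_n(t)\big)\big\}$ converges strongly in $L^q(a,b;Y)$, which is exactly \er{staeae1gllluk}.

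I do not expect any genuine obstacle here: the only point requiring a moment of attention is the bookkeeping of the reflexivity assumptions — Lemma~\ref{helplemlll} needs $Z$ reflexive, while Lemma~\ref{ComTem1PP} needs $X$ reflexive, and both are assumed in the present statement, so the two lemmas may be composed without further hypotheses. All the substance (the Aubin–Lions type compactness and the measurable selection of weak limits) is already contained in Lemmas~\ref{ComTem1PP} and~\ref{helplemlll}, and this statement is simply their composition.
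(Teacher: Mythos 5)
Your proposal is correct and coincides with the paper's own argument: the paper presents Lemma~\ref{ComTem1} as a ``direct consequence of Lemma~\ref{ComTem1PP} and Lemma~\ref{helplemlll}'' with no further proof, and your chaining of the two (first checking that $\{v_n\}$ is bounded in $W^{1,1}(a,b;Z)$ via the embedding $L^q(a,b;Z)\subset L^1(a,b;Z)$ on the finite interval, then extracting the $L^\infty$ bound and a.e.\ weak convergence from Lemma~\ref{helplemlll}, then invoking Lemma~\ref{ComTem1PP}) is exactly the intended bookkeeping. The reflexivity accounting you mention ($Z$ for Lemma~\ref{helplemlll}, $X$ for Lemma~\ref{ComTem1PP}) is indeed why both are assumed in the statement.
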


\section{The properties of the Euler-Lagrange equations}
\begin{definition}\label{defH}
Let $\{X,H,X^*\}$ be an evolution triple with the corresponding
inclusion operator $T\in \mathcal{L}(X;H)$, as it was defined in
Definition \ref{7bdf}, together with the corresponding operator
$\widetilde{T}\in \mathcal{L}(H;X^*)$, defined as in \er{tildet},
and let $a,b\in\R$ be s.t. $a<b$. Let $u(t)\in L^q(a,b;X)$ for some
$q>1$ be such, that the function $v(t):[a,b]\to X^*$ defined by
$v(t):=I\cdot \big(u(t)\big)$ belongs to $W^{1,q^*}(a,b;X^*)$ for
$q^*:=q/(q-1)$, where $I:=\widetilde T\circ T:\,X\to X^*$. Denote
the set of all such functions $u$ by $\mathcal{R}_{q}(a,b)$. Note
that by Lemma \ref{lem2}, for every $u(t)\in\mathcal{R}_{q}(a,b)$
the function $w(t):[a,b]\to H$ defined by $w(t):=T\cdot
\big(u(t)\big)$ belongs to $L^\infty(a,b;H)$ and, up to a
redefinition of $w(t)$ on a subset of $[a,b]$ of Lebesgue measure
zero,  $w$ is $H$-weakly continuous, as it was stated in Corollary
\ref{vbnhjjmcor}.
\end{definition}
\begin{definition}\label{defHkkkk}
Let $\{X,H,X^*\}$ be an evolution triple with the corresponding
inclusion operator $T\in \mathcal{L}(X;H)$, as it was defined in
Definition \ref{7bdf}, together with the corresponding operator
$\widetilde{T}\in \mathcal{L}(H;X^*)$, defined as in \er{tildet},
and let $\lambda\in\{0,1\}$ and $a,b,q\in\R$ be s.t. $a<b$ and
$q\geq 2$. Furthermore, for every $t\in[a,b]$ let
$\Psi_t(x):X\to[0,+\infty)$ be a strictly convex function which is
G\^{a}teaux differentiable at every $x\in X$, satisfies
$\Psi_t(0)=0$ and satisfies the growth condition
\begin{equation}\label{rostst}
(1/C_0)\,\|x\|_X^q-C_0\leq \Psi_t(x)\leq C_0\,\|x\|_X^q+C_0\quad\forall x\in X,\;\forall t\in[a,b]\,,
\end{equation}
for some $C_0>0$. We also assume that $\Psi_t(x)$ is Borel on the
pair of variables $(x,t)$ (see Definition
\ref{fdfjlkjjkkkkkllllkkkjjjhhhkkk}).
%continuous on $t\in[a,b]$ uniformly by $x$.
For every $t\in[a,b]$ denote by $\Psi_t^*$ the Legendre transform of
$\Psi_t$, defined by
$$\Psi_t^*(y):=\sup\big\{<z,y>_{X\times X^*}-\Psi_t(z):\;z\in X\big\}\quad\quad\forall y\in X^*\,.$$
Next for every $t\in[a,b]$ let $\Lambda_t(x):X\to X^*$ be a function
which is G\^{a}teaux differentiable at every $x\in X$,
$\Lambda_t(0)\in L^{q^*}(a,b;X^*)$ and the derivative of $\Lambda_t$
satisfies the growth condition
%\begin{equation}\label{roststlambd}
%\|D\Lambda_t(x)\|_{\mathcal{L}(X;X^*)}\leq
%C\,\|x\|_X^{q-2}+C\quad\forall x\in X,\;\forall t\in[a,b]\,,
%\end{equation}
%for some $C>0$.
\begin{equation}\label{roststlambd}
\|D\Lambda_t(x)\|_{\mathcal{L}(X;X^*)}\leq g\big(\|T\cdot
x\|_H\big)\,\Big(\|x\|_X^{q-2}+\mu^{\frac{q-2}{q}}(t)\Big)\quad\forall
x\in X,\;\forall t\in[a,b]\,,
\end{equation}
for some non-decreasing function $g(s):[0+\infty)\to (0,+\infty)$
and some nonnegative function $\mu(t)\in L^1(a,b;\R)$.
%such that $g(0)>0$.
We also assume that $\Lambda_t(x)$ is strongly
Borel on the pair of variables $(x,t)$ (see Definition
\ref{fdfjlkjjkkkkkllllkkkjjjhhhkkk}).
%continuous on $t\in[a,b]$ uniformly by $x$.
Assume also that $\Psi_t$ and
$\Lambda_t$ satisfy the following monotonicity condition
\begin{multline}\label{Monotone}
\bigg<h,\lambda \Big\{D\Psi_t\big(\lambda x+h\big)-D\Psi_t(\lambda
x)\Big\}+D\Lambda_t(x)\cdot h\bigg>_{X\times X^*}\geq
%\frac{k_0\|h\|^2_X}{\hat g(\|T\cdot h\|_H)}
-\hat g\big(\|T\cdot
x\|_H\big)\Big(\|x\|^q_X+\hat\mu(t)\Big)\,\|T\cdot h\|^{2}_H
%\|h\|^p_X\cdot\|T\cdot
%h\|^{(2-p)}_H
\\ \forall x,h\in X,\;\forall t\in[a,b]\,,
\end{multline}
for some non-decreasing function $\hat g(s):[0+\infty)\to
(0,+\infty)$ and some nonnegative function $\hat\mu(t)\in
L^1(a,b;\R)$.
%and $\lambda\in\{0,1\}$.
%, where $p\in[0,2)$, $k_0\geq 0$ are $\lambda\in\{0,1\}$
%are some constants and $p=0$ in the case $k_0=0$.
Define the
functional $J(u):\mathcal{R}_{q}(a,b)\to\R$ (where
$\mathcal{R}_{q}(a,b)$ was defined in Definition \ref{defH}) by
\begin{multline}\label{hgffck}
J(u):=\\ \int\limits_a^b\Bigg\{\Psi_t\big(\lambda u(t)\big)+\Psi_t^*\bigg(-\frac{dv}{dt}(t)-\Lambda_t\big(u(t)\big)\bigg)+
\lambda\Big<u(t),\Lambda_t\big(u(t)\big)\Big>_{X\times X^*}\Bigg\}dt+\frac{\lambda}{2}\Big(\|w(b)\|_H^2-\|w(a)\|_H^2\Big)\,,
\end{multline}
where $w(t):=T\cdot\big(u(t)\big)$, $v(t):=I\cdot \big(u(t)\big)=\widetilde T\cdot \big(w(t)\big)$ with
$I:=\widetilde T\circ T:\,X\to X^*$ and we assume that $w(t)$ is $H$-weakly continuous on $[a,b]$, as it was stated in
Corollary \ref{vbnhjjmcor}.
Finally, for every $w_0\in H$ consider the minimization
problem
\begin{equation}\label{hgffckaaq1}
\inf\Big\{J(u):\,u\in\mathcal{R}_{q}(a,b),\,w(a)=w_0\Big\}\,.
\end{equation}
\end{definition}
\begin{remark}\label{rmmmdd}
Note that by Lemma \ref{Legendre}, for every $t\in[a,b]$
$\Psi_t^*(y)$ is a strictly convex function from $X^*$ to
$[0,+\infty)$, satisfies $\Psi_t^*(0)=0$ and
\begin{equation}\label{rostst*}
(1/C)\,\|y\|_{X^*}^{q^*}-C\leq \Psi_t^*(y)\leq
C\,\|y\|_{X^*}^{q^*}+C\quad\forall y\in X^*\;\forall t\in[a,b]\,,
\end{equation}
for some $C>0$ where $q^*:=q/(q-1)$. Moreover, $\Psi_t^*(y)$ is
G\^{a}teaux differentiable at every $y\in X^*$, and $x\in X$
satisfies $x=D\Psi_t^*(y)$ if and only if $y\in X^*$ satisfies
$y=D\Psi_t(x)$. Note also that $\Psi^*_t(y)$ is a Borel mapping
%on $X^*\times[a,b]$
on the pair of variables $(y,t)$. Finally, note that since by Lemma
\ref{lem2} we have
\begin{equation*}
%\label{fyhfypppp}
\int_a^b\Big<u(t),\frac{dv}{dt}(t)\Big>_{X\times X^*}dt=\frac{1}{2}\Big(\|w(b)\|_H^2-\|w(a)\|_H^2\Big)\,,
\end{equation*}
we can rewrite the definition of $J$ in \er{hgffck} by
\begin{multline}\label{fyhfypppp}
J(u):= \int\limits_a^b\Bigg\{\Psi_t\big(\lambda u(t)\big)+\Psi_t^*\bigg(-\frac{dv}{dt}(t)-\Lambda_t\big(u(t)\big)\bigg)+
\lambda\Big<u(t),\frac{dv}{dt}(t)+\Lambda_t\big(u(t)\big)\Big>_{X\times X^*}\Bigg\}dt\,.
\end{multline}
Then by the definition of the Legendre transform we deduce that
$J(u)\geq 0$ for every $u\in\mathcal{R}_{q}(a,b)$ and $J(u)=0$ if
and only if $u(t)$ is a solution of
\begin{equation}\label{uravnllll}
\frac{dv}{dt}(t)+\Lambda_t\big(u(t)\big)+D\Psi_t\big(\lambda u(t)\big)=0\quad\text{for a.e.}\; t\in(a,b)\,.
\end{equation}
\end{remark}
\begin{remark}\label{gdfgdghf}
Assume that, instead of \er{Monotone}, one requires that $\Psi_t$
and $\Lambda_t$ satisfy the following inequality
\begin{multline}\label{Monotone1111}
\bigg<h,\lambda \Big\{D\Psi_t\big(\lambda x+h\big)-D\Psi_t(\lambda
x)\Big\}+D\Lambda_t(x)\cdot h\bigg>_{X\times X^*}\geq
\\ \frac{\big|f(h,t)\big|^2}{\tilde g(\|T\cdot
x\|_H)}-\tilde g\big(\|T\cdot
x\|_H\big)\Big(\|x\|_X^q+\hat\mu(t)\Big)^{(2-r)/2}\big|f(h,t)\big|^r\,\|T\cdot
h\|^{(2-r)}_H\quad\forall x,h\in X,\;\forall t\in[a,b],
\end{multline}
for some non-decreasing function $\tilde g(s):[0+\infty)\to
(0,+\infty)$, some nonnegative function $\hat\mu(t)\in L^1(a,b;\R)$,
some function $f(x,t):X\times[a,b]\to\R$ and some constant
$r\in(0,2)$.
%$k_0\geq 0$ are $\lambda\in\{0,1\}$
%are some constants.
%and $p=0$ in the case $k_0=0$.
Then, \er{Monotone}
%with some new
%non-decreasing function $\hat g(s):[0+\infty)\to (0,+\infty)$
follows by the trivial inequality $(r/2)\,a^2+\big((2-r)/2\big)\,
b^2\geq a^r \,b^{2-r}$, valid for every two nonnegative real numbers
$a$ and $b$.
\end{remark}
\begin{lemma}\label{EulerLagrange1}
Let $\{X,H,X^*\}$ be an evolution triple with the corresponding
inclusion operator $T\in \mathcal{L}(X;H)$, as it was defined in
Definition \ref{7bdf}, together with the corresponding operator
$\widetilde{T}\in \mathcal{L}(H;X^*)$, defined as in \er{tildet}.
Furthermore, Let $a,b,q ,\lambda$
%,p,k_0\in\R$
be such that $a<b$, $q\geq 2$ and $\lambda\in\{0,1\}$.
%$\lambda\geq 0$,
%$0\leq p<2$ and $k_0\geq 0$.
Assume that $\Psi_t$ and $\Lambda_t$ satisfy all the conditions of
Definition \ref{defHkkkk}. Furthermore, let $\mathcal{R}_{q}(a,b)$
and $J$ be as in Definitions \ref{defH} and \ref{defHkkkk}
respectively. Then for every $u\in\mathcal{R}_{q}(a,b)$ we have
$J(u)<+\infty$. Moreover, for every $u,h\in\mathcal{R}_{q}(a,b)$ and
every $s\in\R$ we have $(u+sh)\in\mathcal{R}_{q}(a,b)$ and
%al $J(u):\mathcal{R}_{q}(a,b)\to\R$ is
%G\^{a}teaux differentiable and its derivative satisfy
\begin{multline}\label{nolkj91}
\lim\limits_{s\to 0}\frac{1}{s}\Big(J(u+s h)-J(u)\Big)=\int\limits_a^b\Bigg\{\bigg<h(t),\lambda \Big\{D\Psi_t\big(\lambda u(t)\big)-H_u(t)\Big\}\bigg>_{X\times X^*}\\+
\bigg<\Big\{\lambda u(t)-D\Psi^*_t\big(H_u(t)\big)\Big\},\Big\{\frac{d\sigma}{dt}(t)+D\Lambda_t\big(u(t)\big)\cdot \big(h(t)\big)\Big\}\bigg>_{X\times X^*}\Bigg\}\,dt\,,
%\quad\quad\forall h\in \mathcal{R}_{q}(a,b)\,,
\end{multline}
where $\sigma(t):=I\cdot(h(t))$ with $I:=\widetilde T\circ T:\,X\to X^*$ and we denote
\begin{equation}\label{nolkj91defHHH}
H_u(t):=-\frac{dv}{dt}(t)-\Lambda_t\big(u(t)\big)\in X^*\quad\forall t\in(a,b)\,.
\end{equation}
\end{lemma}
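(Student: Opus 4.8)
The plan is to prove the three claims in turn: that $\mathcal{R}_q(a,b)$ is stable under $u\mapsto u+sh$, that $J(u)<+\infty$, and the derivative formula \er{nolkj91}. Stability is immediate from linearity: if $u,h\in\mathcal{R}_q(a,b)$ then $u+sh\in L^q(a,b;X)$ and $I\cdot(u+sh)=v+s\sigma$ with $v:=I\cdot u$, $\sigma:=I\cdot h$ in $W^{1,q^*}(a,b;X^*)$, so $u+sh\in\mathcal{R}_q(a,b)$ with $\tfrac{d}{dt}\{I\cdot(u+sh)\}=\tfrac{dv}{dt}+s\tfrac{d\sigma}{dt}$. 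For finiteness of $J$ I would start from \rlemma{lem2}: $w:=T\cdot u\in L^\infty(a,b;H)$, so there is $M$ with $\|T\cdot u(t)\|_H\le M$ a.e., whence $g(\|T\cdot u(t)\|_H)\le g(M)$ as $g$ is non-decreasing. Integrating \er{roststlambd} along the segment $[0,u(t)]$ gives $\|\Lambda_t(u(t))\|_{X^*}\le\|\Lambda_t(0)\|_{X^*}+g(M)\big(\|u(t)\|_X^{q-1}+\mu^{\frac{q-2}{q}}(t)\|u(t)\|_X\big)$, which lies in $L^{q^*}(a,b;\R)$ by H\"older's inequality (the exponents $q/(q-2)$ and $q$ being conjugate to $q^*$ in the required sense); hence $\Lambda_t(u(\cdot))\in L^{q^*}(a,b;X^*)$ and $H_u=-\tfrac{dv}{dt}-\Lambda_t(u(\cdot))\in L^{q^*}(a,b;X^*)$. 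Then each integrand of $J$ in \er{hgffck} is dominated by an $L^1$ function: $\Psi_t(\lambda u(t))\le C_0\|u(t)\|_X^q+C_0$ by \er{rostst}; $\Psi^*_t(H_u(t))\le C\|H_u(t)\|_{X^*}^{q^*}+C$ by \er{rostst*}; and $|\langle u(t),\Lambda_t(u(t))\rangle_{X\times X^*}|\le\|u(t)\|_X\|\Lambda_t(u(t))\|_{X^*}$ is a product of an $L^q$- and an $L^{q^*}$-function; the boundary term is finite since $w$ is $H$-weakly continuous on $[a,b]$ (Corollary~\ref{vbnhjjmcor}). Strong measurability of the integrands follows by composing the strongly measurable $t\mapsto u(t)$ with the (strongly) Borel maps $\Psi_t,\,D\Psi_t,\,\Psi^*_t,\,D\Psi^*_t,\,\Lambda_t$ (see Definition~\ref{fdfjlkjjkkkkkllllkkkjjjhhhkkk} and Remark~\ref{rmmmdd}). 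Thus $J(u)<+\infty$.

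For the derivative I would use the equivalent form of Remark~\ref{rmmmdd}, namely $J(z)=\int_a^b\{\Psi_t(\lambda z(t))+\Psi^*_t(H_z(t))-\lambda\langle z(t),H_z(t)\rangle_{X\times X^*}\}\,dt$ valid for every $z\in\mathcal{R}_q(a,b)$, and differentiate term by term along $u_s:=u+sh$. Writing $H_{u_s}=H_u+R_s$ with $R_s(t):=-s\tfrac{d\sigma}{dt}(t)-\big(\Lambda_t(u(t)+sh(t))-\Lambda_t(u(t))\big)$, the G\^ateaux differentiability of $\Lambda_t$ at $u(t)$ gives $R_s(t)/s\to k_0(t):=-\tfrac{d\sigma}{dt}(t)-D\Lambda_t(u(t))\cdot h(t)$ strongly in $X^*$ and $R_s(t)\to0$. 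For $\int_a^b\Psi_t(\lambda u_s)$, the scalar map $s\mapsto\Psi_t(\lambda u(t)+s\lambda h(t))$ is convex and G\^ateaux-differentiable, so its difference quotient is monotone in $s$, tends to $\lambda\langle h(t),D\Psi_t(\lambda u(t))\rangle$, and for $|s|\le1$ is bounded by $|\lambda\langle h,D\Psi_t(\lambda u)\rangle|+\Psi_t(\lambda u+\lambda h)+\Psi_t(\lambda u-\lambda h)+\Psi_t(\lambda u)$, an $L^1$ function by \er{rostst} and \er{rostgrad}; dominated convergence yields $\tfrac1s\int_a^b(\Psi_t(\lambda u_s)-\Psi_t(\lambda u))\to\lambda\int_a^b\langle h,D\Psi_t(\lambda u)\rangle$. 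For the bilinear term, $\tfrac1s\big(\langle u_s,H_{u_s}\rangle-\langle u,H_u\rangle\big)=\langle u,R_s/s\rangle+\langle h,H_u\rangle+\langle h,R_s\rangle\to\langle u,k_0\rangle+\langle h,H_u\rangle$ pointwise, with an $L^1$ majorant assembled from $\|u\|_X\in L^q$, $\|H_u\|_{X^*},\|k_0\|_{X^*}\in L^{q^*}$, $\tfrac{d\sigma}{dt}\in L^{q^*}$ and \er{roststlambd} (together with the $L^\infty(a,b;H)$-bounds on $T\cdot u$ and $T\cdot h$); dominated convergence applies again.

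The remaining, and hardest, term is $\tfrac1s\int_a^b(\Psi^*_t(H_{u_s})-\Psi^*_t(H_u))$. Pointwise in $t$ I would use convexity of $\Psi^*_t$ in the form $\langle D\Psi^*_t(H_u),R_s\rangle\le\Psi^*_t(H_{u_s})-\Psi^*_t(H_u)\le\langle D\Psi^*_t(H_{u_s}),R_s\rangle$, divide by $s$ (which reverses the inequalities when $s<0$), and pass to the limit. Since $R_s/s\to k_0$ strongly in $X^*$, $H_{u_s}\to H_u$ strongly in $X^*$, and $D\Psi^*_t$ is norm-to-weak continuous on $X^*$ — a fact I would derive from \rlemma{Legendre} (the identity $D\Psi^*_t=(D\Psi_t)^{-1}$) together with the strict monotonicity and hemicontinuity of $D\Psi_t$, by a Minty-type argument — both outer terms converge to $\langle D\Psi^*_t(H_u),k_0\rangle$, hence so does the squeezed quantity; and for $|s|\le1$ both outer terms are bounded by $\big(\|D\Psi^*_t(H_u)\|_X+\bar C\|H_{u_s}\|_{X^*}^{q^*-1}+\bar C\big)\|R_s/s\|_{X^*}$, which, using \er{rostgrad*}, \er{roststlambd} and the $L^\infty(a,b;H)$-bounds on $T\cdot u,\,T\cdot h$, is an $L^1$ function uniformly in $s$. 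Dominated convergence then gives $\tfrac1s\int_a^b(\Psi^*_t(H_{u_s})-\Psi^*_t(H_u))\to\int_a^b\langle D\Psi^*_t(H_u),k_0\rangle=-\int_a^b\langle D\Psi^*_t(H_u),\tfrac{d\sigma}{dt}+D\Lambda_t(u)\cdot h\rangle$. Adding the three limits and regrouping, using $k_0=-\tfrac{d\sigma}{dt}-D\Lambda_t(u)\cdot h$ and $H_u=-\tfrac{dv}{dt}-\Lambda_t(u)$, produces exactly \er{nolkj91}. The \emph{main obstacle} is precisely this Legendre term: since the parameter $s$ enters nonlinearly through $\Lambda_t$ one cannot differentiate naively, so one must combine the convexity squeeze with the continuity of $D\Psi^*_t$ and then produce $s$-uniform integrable majorants from the growth hypotheses to justify passing to the limit under the $t$-integral.
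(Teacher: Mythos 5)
Your proposal is correct and follows essentially the same route as the paper: the same term-by-term differentiation of the rewritten functional \er{fyhfypppp}, the same two-sided convexity squeeze for the Legendre term combined with the strong convergence of $\big(H_{(u+sh)}-H_u\big)/s$, and the same $s$-uniform integrable majorants built from \er{rostst}, \er{rostst*}, \er{rostgrad}, \er{rostgrad*} and \er{roststlambd} to justify dominated convergence. The only (harmless) variation is that you obtain the norm-to-weak continuity of $D\Psi^*_t$ by a Minty-type monotonicity argument on $D\Psi_t=(D\Psi^*_t)^{-1}$, whereas the paper derives the same weak convergence $D\Psi^*_t\big(H_{(u+sh)}(t)\big)\rightharpoonup D\Psi^*_t\big(H_u(t)\big)$ directly from the variational characterization of the Legendre transform (weak lower semicontinuity of $\Psi_t$ plus uniqueness of the minimizer); these are equivalent justifications of the same step.
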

\begin{proof}
First of all by \er{roststlambd} it is easy to deduce that
\begin{equation}\label{roststlambdosn}
\big\|\Lambda_t(x)\big\|_{X^*}\leq 2g\big(\|T\cdot
x\|_H\big)\,\Big(\|x\|_X^{q-1}+\mu^{\frac{q-1}{q}}(t)\Big)+\big\|\Lambda_t(0)\big\|_{X^*}\quad\forall
x\in X,\;\forall t\in[a,b]\,,
\end{equation}
for some nondecreasing function $g(s):[0,+\infty)\to(0+\infty)$ and
some nonnegative function $\mu(t)\in L^1(a,b;\R)$. Therefore, using
\er{rostst*}, for every $u\in\mathcal{R}_{q}(a,b)$ we obtain
\begin{multline}\label{roststlambdosnkjgggggggggggggg}
\Psi_t^*\bigg(-\frac{dv}{dt}(t)-\Lambda_t\big(u(t)\big)\bigg)\leq\\
L\bigg\{\Big\|\frac{dv}{dt}(t)\Big\|_{X^*}^{q^*}+
\Big(g\big(\|w(t)\|_H\big)\Big)^{q^*}\Big(\|u(t)\|^q_X+\mu(t)+\|\Lambda_t(0)\|_{X^*}^{q^*}\Big)+1\bigg\}\;\;\forall
t\in[a,b],
\end{multline}
for some constant $L>0$.
% and  and some nonnegative function $\tilde\mu(t)\in L^1(a,b;\R)$.
Since $w(t)\in L^\infty(a,b;H)$,
using above inequality and \er{rostst} gives that $J(u)<+\infty$.

 Next clearly for every $u,h\in\mathcal{R}_{q}(a,b)$ and every $s\in\R$ we have $(u+sh)\in\mathcal{R}_{q}(a,b)$ i.e. $\mathcal{R}_{q}(a,b)$ is a
linear space. Furthermore, observe that
\begin{equation}\label{vjhfhjfjhgvhhhjh}
\big\|T\cdot u(t)+s \,T\cdot h(t)\big\|_H\leq M\quad\quad\forall
s\in[-1,1],\;\forall t\in(a,b)\,,
\end{equation}
with $M>0$ independent on $t$ and $s$. We claim that for a.e.
$t\in[a,b]$,
\begin{equation}\label{weakcngrd}
\begin{split}
%D\Psi_t\Big(\lambda\big(u(t)+sh(t)\big)\Big)\rightharpoonup D\Psi_t\big(\lambda u(t)\big)\quad\text{weakly in}\;\;X^*\quad\text{as}\;\;s\to 0\,,\\
D\Psi^*_t\big(H_{(u+sh)}(t)\big)\rightharpoonup D\Psi^*_t\big(H_{u}(t)\big)\quad\text{weakly in}\;\;X\quad\text{as}\;\;s\to 0\,.
\end{split}
\end{equation}
Indeed by
\er{rostgrad} and
\er{rostgrad*} in Lemma \ref{Legendre}, for some
$\bar C_0,\bar C>0$
we have
\begin{equation}\label{roststt}
\|D\Psi_t(x)\|_{X^*}\leq \bar C_0\|x\|^{q-1}+\bar C_0\quad\forall x\in X,\;\forall t\in[a,b]\,,
\end{equation}
and
\begin{equation}\label{roststt*}
\|D\Psi_t^*(y)\|_{X}\leq \bar C\|y\|^{q^*-1}+\bar C\quad\forall y\in X^*,\;\forall t\in[a,b]\,.
\end{equation}
However, since every bounded sequence of elements of a reflexive
Banach space has a subsequence which converges weakly,  by
%\er{roststt} and
\er{roststlambdosnkjgggggggggggggg}, \er{rostst*} and \er{roststt*},
for a.e. fixed $t$ and for every sequence of real numbers $s_n\to
0$, up to a subsequence, we have
\begin{equation}\label{weakcngrdlll}
\begin{split}
%D\Psi_t\Big(\lambda\big(u(t)+s_nh(t)\big)\Big)\rightharpoonup y_0\quad\text{weakly in}\;\;X^*\quad\text{as}\;\;s_n\to 0\,,\\
D\Psi^*_t\big(H_{(u+s_nh)}(t)\big)\rightharpoonup x_0\quad\text{weakly in}\;\;X\quad\text{as}\;\;s_n\to 0\,.
\end{split}
\end{equation}
On the other hand, since
%$\Psi_t$ and
$\Psi^*_t$ is a convex function, by \er{conprmin}
we have
\begin{equation}\label{conprminergjfjhfhijjhbghgj}
\begin{split}
%\Psi_t(x)\geq\Psi_t\Big(\lambda\big(u(t)+s_nh(t)\big)\Big)+\bigg<x-\lambda\big(u(t)+s_nh(t)\big),D\Psi_t\Big(\lambda\big(u(t)+s_nh(t)\big)
%\Big)\bigg>_{X\times X^*}\quad\forall x\in X\,,\\
\Psi^*_t(y)\geq\Psi^*_t\big(H_{(u+s_n h)}(t)\big)+\bigg<D\Psi^*_t\big(H_{(u+s_n h)}(t)\big),y-H_{(u+s_nh)}(t)\bigg>_{X\times X^*}\quad\forall y\in X^*\,.
\end{split}
\end{equation}
Thus letting $s_n\to 0$ in \er{conprminergjfjhfhijjhbghgj} and using \er{weakcngrdlll} we deduce,
\begin{equation}\label{conprminergjfjhfhijjhbghgjpredlm}
\begin{split}
%\Psi_t(x)\geq\Psi_t\big(\lambda u(t)\big)+\big<x-\lambda u(t),y_0\big>_{X\times X^*}\quad\forall x\in X\,,\\
\Psi^*_t(y)\geq\Psi^*_t\big(H_{u}(t)\big)+\big<x_0,y-H_{u}(t)\big>_{X\times X^*}\quad\forall y\in X^*\,.
\end{split}
\end{equation}
Therefore,
\begin{equation}\label{conprminergjfjhfhijjhbghgjpredlmhfh}
\begin{split}
%\Psi_t\big(\lambda u(t)\big)-\big<\lambda u(t),y_0\big>_{X\times X^*}=\inf\Big\{\Psi_t(x)-\big<x,y_0\big>_{X\times X^*}:\;x\in X\Big\}\,,\\
\Psi^*_t\big(H_{u}(t)\big)-\big<x_0,H_{u}(t)\big>_{X\times X^*}=\inf\Big\{\Psi^*_t(y)-\big<x_0,y\big>_{X\times X^*}:\;y\in X^*\Big\}\,.
\end{split}
\end{equation}
So
%$\Psi_t\big(\lambda u(t)\big)$ and
$\Psi^*_t\big(H_{u}(t)\big)$ is a minimizer to the problem in the r.h.s. of \er{conprminergjfjhfhijjhbghgjpredlmhfh} and thus satisfies
the corresponding Euler-Lagrange equation
%$D\Psi_t\big(\lambda u(t)\big)=y_0$ and
$D\Psi^*_t\big(H_{u}(t)\big)=x_0$. Therefore, using \er{weakcngrdlll}, since $s_n\to 0$
was arbitrary sequence we deduce \er{weakcngrd}.

 Next clearly for a.e. fixed $t\in[a,b]$ we have
\begin{equation}\label{nolkj91hhhkkkk}
\lim\limits_{s\to 0}\frac{1}{s}\bigg\{\Psi_t\Big(\lambda\big(u(t)+sh(t)\big)\Big)-\Psi_t\big(\lambda u(t)\big)\bigg\}=\Big<h(t),\lambda D\Psi_t\big(\lambda u(t)\big)\Big>_{X\times X^*}\,,
%\quad\quad\forall h\in \mathcal{R}_{q}(a,b)\,,
\end{equation}
and
\begin{equation}\label{nolkj91hhhkkkklll***}
\lim\limits_{s\to 0}\frac{1}{s}\Big(H_{(u+s h)}(t)-H_u(t)\Big)=-\Big\{\frac{d\sigma}{dt}(t)+D\Lambda_t\big(u(t)\big)\cdot \big(h(t)\big)\Big\}\,,
%\quad\quad\forall h\in \mathcal{R}_{q}(a,b)\,,
\end{equation}
where the last limit is taken in the $X^*$-strong topology. Then in particular
\begin{multline}\label{nolkj91hhhkkkklllnnnnkkkhhkhkllppp***}
\lim\limits_{s\to 0}\frac{1}{s}\bigg\{\Big<u(t)+sh(t), H_{(u+s h)}(t)\Big>_{X\times X^*}-\Big<u(t),H_u(t)\Big>_{X\times X^*}\bigg\}\\=\Big<h(t), H_{u}(t)\Big>_{X\times X^*}
-\bigg<u(t),\Big\{\frac{d\sigma}{dt}(t)+D\Lambda_t\big(u(t)\big)\cdot \big(h(t)\big)\Big\}\bigg>_{X\times X^*}\,.
\end{multline}
Next
since
$\Psi^*_t$ is a convex functions, as before,
%by \er{conprmin}
we have
\begin{equation}\label{conprminergjfjhfhijjhbghgjasskkk}
\begin{split}
%\Psi_t(x)\geq\Psi_t\Big(\lambda\big(u(t)+s_nh(t)\big)\Big)+\bigg<x-\lambda\big(u(t)+s_nh(t)\big),D\Psi_t\Big(\lambda\big(u(t)+s_nh(t)\big)
%\Big)\bigg>_{X\times X^*}\quad\forall x\in X\,,\\
\Psi^*_t\big(H_{(u+s h)}(t)\big)-\Psi^*_t\big(H_u(t)\big)\leq\bigg<D\Psi^*_t\big(H_{(u+sh)}(t)\big),H_{(u+sh)}(t)-H_u(t)\bigg>_{X\times X^*}\quad\forall y\in X^*\,,\\
\Psi^*_t\big(H_{(u+s h)}(t)\big)-\Psi^*_t\big(H_u(t)\big)\geq\bigg<D\Psi^*_t\big(H_{u}(t)\big),H_{(u+sh)}(t)-H_u(t)\bigg>_{X\times X^*}\quad\forall y\in X^*\,.
\end{split}
\end{equation}
Therefore, by \er{conprminergjfjhfhijjhbghgjasskkk}, \er{nolkj91hhhkkkklll***} and \er{weakcngrd} we deduce
\begin{multline}\label{nolkj91hhhkkkk***}
\lim\limits_{s\to 0}\frac{1}{s}\bigg\{\Psi^*_t\big(H_{(u+s h)}(t)\big)-\Psi^*_t\big(H_u(t)\big)\bigg\}=
-\bigg<D\Psi^*_t\big(H_u(t)\big),\Big\{\frac{d\sigma}{dt}(t)+D\Lambda_t\big(u(t)\big)\cdot \big(h(t)\big)\Big\}\bigg>_{X\times X^*}\,.
%\quad\quad\forall h\in \mathcal{R}_{q}(a,b)\,,
\end{multline}
On the other hand, using \er{roststt}, by \er{nolkj91hhhkkkk} and
the Lagrange Theorem from the elementary Calculus we deduce that for
every $t\in[a,b]$ and every $s\in[-1,1]$, there exists $\tau$ in the
interval with the endpoints $0$ and $s$, such that
\begin{multline}\label{nolkj91hhhkkkksled}
\bigg|\frac{1}{s}
\bigg\{\Psi_t\Big(\lambda\big(u(t)+sh(t)\big)\Big)-\Psi_t\big(\lambda
u(t)\big)\bigg\}\bigg|\leq\\ \|h(t)\|_X\cdot\bigg\|\lambda
D\Psi_t\Big(\lambda\big(u(t)+\tau h(t)\big)\Big)\bigg\|_{X^*} \leq
C\Big(\|h(t)\|_X^q+\|u(t)\|_X^q+1\Big)\,,
\end{multline}
for some constant $C>0$ independent on $t$ and $s$. Similarly, using
\er{roststlambd}, \er{roststlambdosn} and \er{vjhfhjfjhgvhhhjh},
from \er{nolkj91hhhkkkklllnnnnkkkhhkhkllppp***} we deduce
\begin{multline}\label{nolkj91hhhkkkklllnnnnkkkhhkhkllpppsled***}
\bigg|\frac{1}{s}\bigg\{\Big<u(t)+sh(t), H_{(u+s h)}(t)\Big>_{X\times X^*}-\Big<u(t),H_u(t)\Big>_{X\times X^*}\bigg\}\bigg|\leq\\
\|h(t)\|_X\cdot\big\|H_{u+\tau h}(t)\big\|_{X^*}+ \big\|u(t)+\tau
h(t)\big\|_X\cdot\bigg\|\frac{d\sigma}{dt}(t)+D\Lambda_t\Big(u(t)+\tau
h(t)\Big)\cdot \big(h(t)\big)\bigg\|_{X^*}\leq\\  \bar
C\Big(\|u(t)\|_X+\|h(t)\|_X\Big)\cdot\bigg(\big\|u(t)\big\|^{q-1}_{X}+\big\|h(t)\big\|^{q-1}_{X}+\Big\|\frac{dv}{dt}(t)\Big\|_{X^*}+
\Big\|\frac{d\sigma}{dt}(t)\Big\|_{X^*}+\big\|\Lambda_t(0)\big\|_{X^*}+\mu^{\frac{q-1}{q}}(t)\bigg)\\
\leq C_0
\bigg(\big\|u(t)\big\|^{q}_{X}+\big\|h(t)\big\|^{q}_{X}+\Big\|\frac{dv}{dt}(t)\Big\|^{q^*}_{X^*}+
\Big\|\frac{d\sigma}{dt}(t)\Big\|^{q^*}_{X^*}+\big\|\Lambda_t(0)\big\|_{X^*}^{q^*}+\mu(t)\bigg)\,,
\end{multline}
where the constants $\bar C,C_0>0$ are independent on $t$ and $s$.
Finally, in the same way, using \er{roststt*}, \er{roststlambd},
\er{roststlambdosnkjgggggggggggggg} and the fact that
$\Lambda_t(0)\in L^{q^*}(a,b;X^*)$, from \er{nolkj91hhhkkkk***} we
infer
\begin{multline}\label{nolkj91hhhkkkksled***}
\bigg|\frac{1}{s}\bigg\{\Psi^*_t\big(H_{(u+s h)}(t)\big)-\Psi^*_t\big(H_u(t)\big)\bigg\}\bigg|\leq
\big\|D\Psi^*_t\big(H_{u+\tau h}(t)\big)\big\|_X\cdot\Big\|\frac{d\sigma}{dt}(t)+D\Lambda_t\Big(u(t)+\tau h(t)\Big)\cdot \big(h(t)\big)\Big\|_{X^*}\leq\\
C\bigg(\big\|u(t)\big\|^{q-1}_{X}+\big\|h(t)\big\|^{q-1}_{X}+\Big\|\frac{dv}{dt}(t)\Big\|_{X^*}+
\Big\|\frac{d\sigma}{dt}(t)\Big\|_{X^*}+\big\|\Lambda_t(0)\big\|_X+\mu^{\frac{q-1}{q}}(t)+1\bigg)^{q^*-1}\times\\
\bigg(\big\|u(t)\big\|^{q-1}_{X}+\big\|h(t)\big\|^{q-1}_{X}+
\Big\|\frac{d\sigma}{dt}(t)\Big\|_{X^*}+\mu^{\frac{q-1}{q}}(t)\bigg)
\\ \leq C_1
\bigg(\big\|u(t)\big\|^{q}_{X}+\big\|h(t)\big\|^{q}_{X}+\Big\|\frac{dv}{dt}(t)\Big\|^{q^*}_{X^*}+
\Big\|\frac{d\sigma}{dt}(t)\Big\|^{q^*}_{X^*}+\tilde\mu(t)\bigg),
\end{multline}
where again the constant $C_1>0$ is independent on $t$ and $s$ and
$\tilde\mu(t)\in L^1(a,b;\R)$. However, by the definition of
$\mathcal{R}_{q}(a,b)$ we have $u,h\in L^q(a,b;X)$ and
$\frac{dv}{dt},\frac{d\sigma}{dt}\in L^{q^*}(a,b;X^*)$. Therefore,
using the Dominated Convergence Theorem, by \er{fyhfypppp},
\er{nolkj91hhhkkkk} and \er{nolkj91hhhkkkksled},
\er{nolkj91hhhkkkklllnnnnkkkhhkhkllppp***} and
\er{nolkj91hhhkkkklllnnnnkkkhhkhkllpppsled***},
\er{nolkj91hhhkkkk***} and \er{nolkj91hhhkkkksled***}, we deduce
\er{nolkj91}.
\end{proof}

\begin{theorem}\label{EulerLagrange}
Let $\{X,H,X^*\}$ be an evolution triple with the corresponding
inclusion operator $T\in \mathcal{L}(X;H)$, as it was defined in
Definition \ref{7bdf}, together with the corresponding operator
$\widetilde{T}\in \mathcal{L}(H;X^*)$, defined as in \er{tildet}.
Furthermore, let $a,b,q,\lambda\in\R$ be such that $a<b$, $q\geq 2$
and $\lambda\in\{0,1\}$.
%$\lambda\geq 0$,
%$0\leq p<2$, $k_0\geq 0$ and
%$p=0$ if $k_0=0$.
Assume that $\Psi_t$ and $\Lambda_t$ satisfy all the conditions of
Definition \ref{defHkkkk}.
Furthermore let $w_0\in H$ and $\mathcal{R}_{q}(a,b)$ and $J$ be as in Definitions \ref{defH} and \ref{defHkkkk} respectively.
Consider the minimization
problem
\begin{equation}\label{hgffckaaq1new}
\inf\{J(u):\,u\in\mathcal{R}_{q}(a,b),\,w(a)=w_0\}\,,
\end{equation}
where $w(t):=T\cdot\big(u(t)\big)$. Then for
$u\in\mathcal{R}_{q}(a,b)$ such that $w(a)=w_0$ the following
 four statements
 %{\bf(a)}, {\bf(b)}, {\bf(c)}, {\bf(d)}
 are equivalent:
\begin{itemize}
\item[{\bf(a)}]
$u$ is a minimizer to \er{hgffckaaq1new}.
\item[{\bf (b)}]
$u$ is a critical point of \er{hgffckaaq1new} i.e. for an arbitrary
function $h(t)\in\mathcal{R}_{q}(a,b)$, such that $T\cdot h(a)=0$ we
have
\begin{equation}\label{nolkj}
\lim\limits_{s\to 0}\frac{J(u+s h)-J(u)}{s}=0\,.
\end{equation}
\item[{\bf (c)}]
$u$ is a solution to
\begin{equation}\label{uravnllllnew}
\frac{dv}{dt}(t)+\Lambda_t\big(u(t)\big)+D\Psi_t\big(\lambda
u(t)\big)=0\quad\text{for a.e.}\; t\in(a,b)\,,
\end{equation}
where
%$w(t):=T\cdot(u(t))$,
$v(t):=I\cdot \big(u(t)\big)=\widetilde T\cdot \big(w(t)\big)$ with
$I:=\widetilde T\circ T:\,X\to X^*$.
\item[{\bf (d)}] $J(u)=0$.
\end{itemize}
%If $u$ is a minimizer to \er{hgffckaaq1new} then we must have
%$J(u)=0$. Moreover $u$ is a solution of \er{uravnllllnew}
%\begin{equation}\label{uravnllllnew}
%\frac{dv}{dt}(t)+\Lambda_t\big(u(t)\big)+D\Psi_t\big(\lambda u(t)\big)=0\quad\text{for a.e.}\; t\in(a,b)\,,
%\end{equation}
%where
%$w(t):=T\cdot(u(t))$,
%$v(t):=I\cdot \big(u(t)\big)=\widetilde T\cdot \big(w(t)\big)$ with
%$I:=\widetilde T\circ T:\,X\to X^*$.
\end{theorem}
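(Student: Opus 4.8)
The plan is to establish the cycle $\mathbf{(a)}\Rightarrow\mathbf{(b)}\Rightarrow\mathbf{(c)}\Rightarrow\mathbf{(d)}\Rightarrow\mathbf{(a)}$. Three of these are soft. For $\mathbf{(a)}\Rightarrow\mathbf{(b)}$: if $u$ is a minimizer and $h\in\mathcal{R}_{q}(a,b)$ satisfies $T\cdot h(a)=0$, then $u+sh\in\mathcal{R}_{q}(a,b)$ with $T\cdot(u+sh)(a)=w_0$ for every $s\in\R$, so $s\mapsto J(u+sh)$ attains an interior minimum at $s=0$; by \rlemma{EulerLagrange1} this function is differentiable at $0$, so its (two-sided) derivative there vanishes, which is exactly \er{nolkj}. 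For $\mathbf{(c)}\Leftrightarrow\mathbf{(d)}$ I would invoke Remark~\ref{rmmmdd}: by \er{fyhfypppp} and the Legendre inequality one has $J\geq 0$ on all of $\mathcal{R}_{q}(a,b)$, and $J(u)=0$ precisely when $u$ solves \er{uravnllllnew}; this gives $\mathbf{(c)}\Leftrightarrow\mathbf{(d)}$, and also $\mathbf{(d)}\Rightarrow\mathbf{(a)}$, since $J(u)=0$ is then the global minimum value of $J$, in particular the minimum over the constrained set $\{w(a)=w_0\}$. Hence the whole content of the theorem is the implication $\mathbf{(b)}\Rightarrow\mathbf{(c)}$, and this is the hard part.

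To prove $\mathbf{(b)}\Rightarrow\mathbf{(c)}$ I would argue through the Euler--Lagrange equation, following the heuristic carried out in the Introduction for the scalar parabolic model. Write $H_u(t):=-\frac{dv}{dt}(t)-\Lambda_t(u(t))$ and set $W_u(t):=\lambda u(t)-D\Psi^*_t\big(H_u(t)\big)$; the goal is to show $W_u\equiv 0$, for then the Legendre duality of Remark~\ref{rmmmdd} ($x=D\Psi^*_t(y)\iff y=D\Psi_t(x)$) gives $H_u(t)=D\Psi_t(\lambda u(t))$, which is \er{uravnllllnew}. Since $D\Psi^*_t(H_u)=\lambda u-W_u$, we also have $H_u(t)=D\Psi_t\big(\lambda u(t)-W_u(t)\big)$, so the critical-point identity \er{nolkj91} (valid for every $h\in\mathcal{R}_{q}(a,b)$ with $T\cdot h(a)=0$, writing $\sigma:=I\cdot h$) reduces to
\begin{equation*}
\int_a^b\Big\{\lambda\big\langle h,D\Psi_t(\lambda u)-D\Psi_t(\lambda u-W_u)\big\rangle_{X\times X^*}+\big\langle W_u,\tfrac{d\sigma}{dt}+D\Lambda_t(u)\cdot h\big\rangle_{X\times X^*}\Big\}\,dt=0.
\end{equation*}
Introducing the adjoint $\big(D\Lambda_t(u(t))\big)^*\in\mathcal{L}(X;X^*)$ and $P_u(t):=\lambda\{D\Psi_t(\lambda u)-D\Psi_t(\lambda u-W_u)\}+\big(D\Lambda_t(u)\big)^*\cdot W_u$, the growth bounds \er{rostgrad}, \er{rostgrad*}, \er{roststlambd}, together with $H_u\in L^{q^*}(a,b;X^*)$ (from $\int_a^b\Psi^*_t(H_u)<+\infty$ and \er{rostst*}), give $W_u\in L^q(a,b;X)$ and $P_u\in L^{q^*}(a,b;X^*)$, and the displayed identity becomes $\int_a^b\langle h,P_u\rangle_{X\times X^*}\,dt+\int_a^b\langle W_u,\tfrac{d(I\cdot h)}{dt}\rangle_{X\times X^*}\,dt=0$.

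The remaining steps are then: testing with $h=\delta\in C^1\big((a,b);X\big)$ of compact support and using that $I$ is self-adjoint shows $I\cdot W_u\in W^{1,q^*}(a,b;X^*)$ with $\tfrac{d(I\cdot W_u)}{dt}=P_u$, hence $W_u\in\mathcal{R}_{q}(a,b)$, so by \rlemma{lem2} and Corollary~\ref{vbnhjjmcor} the function $L_u:=T\cdot W_u$ is $H$-weakly continuous on $[a,b]$ and obeys the integration-by-parts formula. Testing next with $h=h_0\in C^1([a,b];X)$, $h_0(a)=0$ but $h_0(b)$ free, and comparing this formula with the critical-point identity forces $\langle T\cdot h_0(b),L_u(b)\rangle_H=0$; taking $h_0(t)=(t-a)x$ and using density of the image of $T$ in $H$ yields $L_u(b)=0$. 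Applying the energy equality of \rlemma{lem2} to $W_u$ then gives $\int_\alpha^b\langle W_u,P_u\rangle_{X\times X^*}\,dt=-\tfrac12\|T\cdot W_u(\alpha)\|_H^2$ for all $\alpha\in[a,b]$; inserting the definition of $P_u$ and applying the monotonicity hypothesis \er{Monotone} with $x=u(t)$ and $h=-W_u(t)$, and using that $T\cdot u(t)\in L^\infty(a,b;H)$ so that $\hat g\big(\|T\cdot u(t)\|_H\big)$ is bounded, one obtains $\|T\cdot W_u(\alpha)\|_H^2\leq 2\int_\alpha^b\rho(t)\|T\cdot W_u(t)\|_H^2\,dt$ with a nonnegative $\rho\in L^1(a,b;\R)$. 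Since $t\mapsto\|T\cdot W_u(t)\|_H^2$ is absolutely continuous (it equals $-2\int_t^b\langle W_u,P_u\rangle$), a backward Gronwall argument gives $T\cdot W_u\equiv 0$, hence $W_u\equiv 0$ by injectivity of $T$, which proves $\mathbf{(b)}\Rightarrow\mathbf{(c)}$. I expect the main obstacle to be bookkeeping in this chain — verifying $W_u\in\mathcal{R}_{q}(a,b)$ via the growth estimates and correctly extracting the terminal condition $L_u(b)=0$ from the freedom in $h_0(b)$ — after which \er{Monotone} plus Gronwall close the argument; note that no compactness or weak continuity of $\Lambda_t$ enters anywhere.
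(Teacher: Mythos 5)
Your proposal is correct and follows essentially the same route as the paper's proof: the same equivalence cycle, with the hard implication $\mathbf{(b)}\Rightarrow\mathbf{(c)}$ handled exactly as in the paper via $W_u$, $P_u$, the identification $\frac{d(I\cdot W_u)}{dt}=P_u$, the terminal condition $L_u(b)=0$ extracted from test functions free at $t=b$, the energy identity of Lemma \ref{lem2}, the monotonicity condition \eqref{Monotone} applied with $h=-W_u(t)$, and a backward Gronwall argument with an $L^1$ weight. The only cosmetic difference is in how the Gronwall step is phrased (you note absolute continuity of $\|T\cdot W_u(\cdot)\|_H^2$, while the paper works with the weighted integrand $g(t)\|T\cdot W_u(t)\|_H^2$ and an exponential factor), which changes nothing of substance.
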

\begin{proof}
Assume that $u$ is a minimizer to \er{hgffckaaq1new}. Let
$h(t)\in\mathcal{R}_{q}(a,b)$ be an arbitrary function, such that
$T\cdot h(a)=0$. Here we assume that $T\cdot (h(t))$ is $H$-weakly
continuous on $[a,b]$. Then for every $s\in\R$
$\{u(t)+sh(t)\}\in\mathcal{R}_{q}(a,b)$ and
$T\cdot\big(u(a)+sh(a)\big)=w_0$. Therefore, since $u$ is a
minimizer we deduce that $f_h(s):=J(u+sh)\geq J(u)=f_h(0)$ for every
$s\in\R$. Thus by the elementary Calculus we must have $f'_h(0)=0$
(remember that, by Lemma \ref{EulerLagrange1}, $f_h$ is a
differentiable function). So we obtain \er{nolkj}.

Next assume that for some $u\in\mathcal{R}_{q}(a,b)$ such that
$w(a)=w_0$ we have \er{nolkj} i.e.
\begin{equation}\label{nolkjthjyt}
\lim\limits_{s\to 0}\frac{J(u+s h)-J(u)}{s}=0\,.
\end{equation}
for every $h(t)\in\mathcal{R}_{q}(a,b)$, such that $T\cdot h(a)=0$.
%\begin{equation}\label{nolkj}
%\lim\limits_{s\to 0}\frac{J(u+s h)-J(u)}{s}=0\,.
%\end{equation}
Then, by \er{nolkj91} in Lemma \ref{EulerLagrange1}, for every
$h(t)\in\mathcal{R}_{q}(a,b)$, such that $T\cdot h(a)=0$ we must
have
\begin{multline}\label{nolkj989hh}
\int\limits_a^b\Bigg\{\bigg<h(t),\lambda \Big\{D\Psi_t\big(\lambda u(t)\big)-H_u(t)\Big\}\bigg>_{X\times X^*}\\+
\bigg<\Big\{\lambda u(t)-D\Psi^*_t\big(H_u(t)\big)\Big\},\Big\{\frac{d\sigma}{dt}(t)+D\Lambda_t\big(u(t)\big)\cdot \big(h(t)\big)\Big\}\bigg>_{X\times X^*}\Bigg\}\,dt=0\,,
\end{multline}
where $\sigma(t):=I\cdot(h(t))$ with $I:=\widetilde T\circ T:\,X\to X^*$ and we, as before, denote
\begin{equation}\label{nolkj91defHHHsleddd}
H_u(t):=-\frac{dv}{dt}(t)-\Lambda_t\big(u(t)\big)\in X^*\quad\forall t\in(a,b)\,.
\end{equation}
Next as in \er{roststt} we have
\begin{equation}\label{roststtstttt}
\|D\Psi_t(x)\|_{X^*}\leq \bar C_0\|x\|^{q-1}+\bar C_0\quad\forall x\in X,\;\forall t\in[a,b]\,,
\end{equation}
and as in \er{roststt*} we have
\begin{equation}\label{roststtstttt*}
\|D\Psi_t^*(y)\|_{X}\leq \bar C\|y\|^{q^*-1}+\bar C\quad\forall y\in X^*,\;\forall t\in[a,b]\,,
\end{equation}
and since we have $\int_a^b\Psi_t^*\big(H_u(t)\big)dt<+\infty$, by \er{rostst*} we obtain
$H_u(t)\in L^{q^*}(a,b;X^*)$.
Therefore, if we set
\begin{equation}\label{Wuopredd778899jjk}
W_u(t):=\lambda u(t)-D\Psi^*_t\big(H_u(t)\big)\quad\quad\forall t\in[a,b]\,,
\end{equation}
then, since $u(t)\in L^q(a,b;X)$, we deduce
\begin{equation}\label{prinadlezhnosthhh}
W_u(t)\in L^q(a,b;X)\,.
\end{equation}
On the other hand, by Remark \ref{rmmmdd}, $x\in X$ satisfies
$x=D\Psi_t^*(y)$ if and only if $y\in X^*$ satisfies $y=D\Psi_t(x)$.
Therefore, by \er{nolkj989hh} we have
\begin{multline}\label{nolkj989}
\int\limits_a^b\Bigg(\bigg<\lambda h(t),\bigg\{D\Psi_t\Big(D\Psi^*_t\big(H_u(t)\big)+W_u(t)\Big)-D\Psi_t\Big(D\Psi^*_t\big(H_u(t)\big)\Big)\bigg\}\bigg>_{X\times X^*}+\\
\bigg<W_u(t),D\Lambda_t\big(u(t)\big)\cdot \big(h(t)\big)
\bigg>_{X\times X^*}+\bigg<W_u(t),\frac{d\sigma}{dt}(t)\bigg>_{X\times X^*}\Bigg)\,dt=0\,.
\end{multline}
Next for every $t\in[a,b]$
let $\big(D\Lambda_t\big(u(t)\big)\big)^*\in\mathcal{L}(X,X^*)$ be the adjoint to $D\Lambda_t\big(u(t)\big)\in\mathcal{L}(X,X^*)$ operator defined by
\begin{equation}\label{conjDlam}
\bigg<x_2,\big(D\Lambda_t\big(u(t)\big)\big)^*\cdot x_1
\bigg>_{X\times X^*}:=\bigg<x_1,D\Lambda_t\big(u(t)\big)\cdot x_2
\bigg>_{X\times X^*}\quad\forall\, x_1,x_2\in X\,.
\end{equation}
%For every $t\in[a,b]$
Define the strictly measurable function $P_u(t):(a,b)\to X^*$ by
\begin{equation}\label{sferytuj898}
P_u(t):=\lambda\bigg\{D\Psi_t\Big(D\Psi^*_t\big(H_u(t)\big)+W_u(t)\Big)-D\Psi_t\Big(D\Psi^*_t\big(H_u(t)\big)\Big)\bigg\}+\Big(D\Lambda_t\big(u(t)\big)\Big)^*\cdot \big(W_u(t)\big)\,.
\end{equation}
Then, since by \er{conjDlam} we have
\begin{multline}\label{lfnguggu9999}
\Big<x,P_u(t)\Big>_{X\times X^*}= \bigg<\lambda
x,\bigg\{D\Psi_t\Big(D\Psi^*_t\big(H_u(t)\big)+W_u(t)\Big)-D\Psi_t\Big(D\Psi^*_t\big(H_u(t)\big)\Big)\bigg\}\bigg>_{X\times
X^*}\\+ \bigg<W_u(t),D\Lambda_t\big(u(t)\big)\cdot x \bigg>_{X\times
X^*}\quad\quad\forall x\in X\,,
\end{multline}
 we deduce from \er{nolkj989},
\begin{equation}\label{nolkj989kdfsil}
\int\limits_a^b\Big<h(t),P_u(t)\Big>_{X\times X^*}dt+\int\limits_a^b\bigg<W_u(t),\frac{d\sigma}{dt}(t)\bigg>_{X\times X^*}dt=0\,.
\end{equation}
On the other hand, using the fact $T\cdot u(t)\in L^\infty(a,b;H)$,
by \er{lfnguggu9999}, \er{roststtstttt}, \er{roststtstttt*} and
\er{roststlambd} we deduce
$$\big\|P_u(t)\big\|_{X^*}\leq C\Big\{\big\|H_u(t)\big\|_{X^*}+\big\|W_u(t)\big\|^{q-1}_X+\big\|u(t)\big\|^{q-1}_X+\mu^{\frac{q-1}{q}}(t)\Big\}\,,$$
where the constant $C>0$ doesn't depend on $t$ and $\mu(t)\in
L^1(a,b;\R)$. Thus since $H_u(t)\in L^{q^*}(a,b;X^*)$, $u(t)\in
L^q(a,b;X)$ and $W_u(t)\in L^q(a,b;X)$ we infer
\begin{equation}\label{nolkj989kdfsilkukukuku}
P_u(t)\in L^{q^*}(a,b;X^*)\,.
\end{equation}
Next remember that we established \er{nolkj989kdfsil} for every
$h(t)\in\mathcal{R}_{q}(a,b)$, such that
$T\cdot h(a)=0$. In particular \er{nolkj989kdfsil} holds for $h(t)=\delta(t)$ where $\delta(t)\in C^1\big((a,b);X\big)$ satisfying $\supp \delta\subset\subset (a,b)$.
For such $\delta(t)$ we have
\begin{equation}\label{nolkj989kdfsilfgfh}
\int\limits_a^b\Big<\delta(t),P_u(t)\Big>_{X\times X^*}dt=-\int\limits_a^b\bigg<W_u(t),I\cdot\Big(\frac{d\delta}{dt}(t)\Big)\bigg>_{X\times X^*}dt
=-\int\limits_a^b\bigg<\frac{d\delta}{dt}(t), I\cdot\big(W_u(t)\big)\bigg>_{X\times X^*}dt\,.
\end{equation}
Thus, by \er{nolkj989kdfsilfgfh}, \er{nolkj989kdfsilkukukuku} and by Definition \ref{4bdf}, $I\cdot\big(W_u(t)\big)$ belongs to $W^{1,q^*}(a,b;X^*)$ and
\begin{equation}\label{proizvuggu}
\frac{d (I\cdot W_u)}{dt}(t)=P_u(t)\,.
\end{equation}
Thus since $W_u(t)\in L^q(a,b;X)$ and $P_u(t)\in L^{q^*}(a,b;X^*)$ we have
$W_u\in\mathcal{R}_{q}(a,b)$.
Then by Lemma \ref{lem2}, the function $L_u(t):[a,b]\to H$ defined by $L_u(t):=T\cdot \big(W_u(t)\big)$ belongs to $L^\infty(a,b;H)$ and,
up to a redefinition of $L_u(t)$ on a subset of $[a,b]$ of Lebesgue measure zero,  $L_u$ is $H$-weakly continuous, as it was stated in
Corollary \ref{vbnhjjmcor}.
Moreover, by the same Corollary for every $a\leq \alpha<\beta\leq b$ and for every
$\delta(t)\in C^1\big([a,b];X\big)$ we will have
\begin{equation}\label{eqmultcorcor}
\int\limits_\alpha^\beta\bigg\{\Big<\delta(t), P_u(t)\Big>_{X\times X^*}+\Big<\frac{d\delta}{dt}(t), I\cdot\big(W_u(t)\big)\Big>_{X\times X^*}\bigg\}dt=
\big<T\cdot\delta(\beta),L_u(\beta)\big>_{H\times H}-\big<T\cdot\delta(\alpha),L_u(\alpha)\big>_{H\times H}\,.
 \end{equation}
Thus in particular for every $h_0(t)\in C^1\big([a,b];X\big)$ such that $h_0(a)=0$ we have
\begin{equation}\label{eqmultcorcorcor}
\int\limits_a^b\Big<h_0(t), P_u(t)\Big>_{X\times X^*}dt=-\int\limits_a^b\Big<\frac{dh_0}{dt}(t), I\cdot\big(W_u(t)\big)\Big>_{X\times X^*}dt+
\big<T\cdot h_0(b),L_u(b)\big>_{H\times H}\,.
\end{equation}
However, inserting $h:=h_0$ into \er{nolkj989kdfsil} gives
\begin{equation}\label{nolkj989kdfsiln}
\int\limits_a^b\Big<h_0(t),P_u(t)\Big>_{X\times X^*}dt=-\int\limits_a^b\bigg<W_u(t),\frac{d(I\cdot h_0)}{dt}(t)\bigg>_{X\times X^*}dt=
-\int\limits_a^b\bigg<\frac{dh_0}{dt}(t),I\cdot \big(W_u(t)\big)\bigg>_{X\times X^*}dt\,.
\end{equation}
Comparing \er{eqmultcorcorcor} with \er{nolkj989kdfsiln} we obtain
\begin{equation}\label{eqnmmxx}
\big<T\cdot h_0(b),L_u(b)\big>_{H\times H}=0\,.
\end{equation}
In particular we can test \er{eqnmmxx} with $h_0(t):=(t-a)x$ for arbitrary $x\in X$. Then we obtain
\begin{equation}\label{eqnmmxx999}
\big<Tx,L_u(b)\big>_{H\times H}=0\quad\forall x\in X\,,
\end{equation}
and since $T$ has dense image in $H$ we finally get
\begin{equation}\label{eqnmmxx9998899}
L_u(b)=0\,.
\end{equation}
Thus, since $L_u(t):=T\cdot \big(W_u(t)\big)$, using \er{proizvuggu}, \er{eqnmmxx9998899} and
the fact that $W_u\in\mathcal{R}_{q}(a,b)$, by Lemma \ref{lem2} we deduce
\begin{equation}\label{eqbbbkl}
\int_\alpha^b\Big<W_u(t),P_u(t)\Big>_{X\times X^*}dt=
-\frac{1}{2}\|L_u(\alpha)\|_H^2\quad\forall \alpha\in[a,b]\,.
\end{equation}
Then plugging \er{lfnguggu9999} into \er{eqbbbkl}, by \er{Wuopredd778899jjk} we infer
\begin{multline}\label{eqbbbklhtgjy}
-\frac{1}{2}\Big\|T\cdot \big(W_u(\alpha)\big)\Big\|_H^2=\\ \int\limits_\alpha^b\Bigg(\bigg<W_u(t),\lambda \bigg\{D\Psi_t
\Big(D\Psi^*_t\big(H_u(t)\big)+W_u(t)\Big)-D\Psi_t\Big(D\Psi^*_t\big(H_u(t)\big)\Big)\bigg\}+
D\Lambda_t\big(u(t)\big)\cdot \big(W_u(t)\big)\bigg>_{X\times X^*}\Bigg)dt
\\=\int\limits_\alpha^b\Bigg(\bigg<W_u(t),\lambda \bigg\{D\Psi_t
\big(\lambda u(t)\big)-D\Psi_t\Big(\lambda u(t)-W_u(t)\Big)\bigg\}+
D\Lambda_t\big(u(t)\big)\cdot \big(W_u(t)\big)\bigg>_{X\times X^*}\Bigg)dt\quad\forall \alpha\in[a,b]\,.
\end{multline}
Next, by the condition \er{Monotone} in the Definition \ref{defHkkkk} we have
\begin{multline}\label{Monotone1}
\bigg<h,\lambda \Big\{D\Psi_t\big(\lambda x\big)-D\Psi_t(\lambda
x-h)\Big\}+D\Lambda_t(x)\cdot h\bigg>_{X\times X^*}\geq -\hat
g\big(\|T\cdot x\|_H\big)\Big(\|x\|^q_X+\hat\mu(t)\Big)\,\|T\cdot h\|^{2}_H\\
\forall x,h\in X,\;\forall t\in[a,b]\,,
\end{multline}
for some non-decreasing function $\hat g(s):[0+\infty)\to
(0,+\infty)$ and some nonnegative function $\hat\mu(t)\in
L^1(a,b;\R)$.
%
%
%
%
%
\begin{comment}
Therefore, using \er{eqbbbklhtgjy}, we deduce
\begin{multline}\label{Monotone12889}
\frac{1}{2}\Big\|T\cdot \big(W_u(\alpha)\big)\Big\|_H^2+
k_0\int\limits_\alpha^b\big\|W_u(t)\big\|^2_Xdt\leq\hat C\int\limits_\alpha^b\big\|W_u(t)\big\|^p_X\cdot
\Big\|T\cdot \big(W_u(t)\big)\Big\|^{(2-p)}_Hdt\quad\forall \alpha\in[a,b]\,.
\end{multline}
Then by H\"{o}lder inequality we have
\begin{multline}\label{Monotone1288988999}
\frac{1}{2}\Big\|T\cdot \big(W_u(\alpha)\big)\Big\|_H^2+
k_0\int\limits_\alpha^b\big\|W_u(t)\big\|^2_Xdt\leq\\ \hat C\bigg(\int_\alpha^b\big\|W_u(t)\big\|^2_Xdt\bigg)^{\frac{p}{2}}\cdot\bigg(\int_\alpha^b
\Big\|T\cdot \big(W_u(t)\big)\Big\|^{2}_Hdt\bigg)^{\frac{2-p}{2}}\quad\forall \alpha\in[a,b]\,.
\end{multline}
In particular in the case $k_0>0$ we deduce
$$\int_\alpha^b\big\|W_u(t)\big\|^2_Xdt\leq \bigg(\frac{\hat C}{k_0}\bigg)^{\frac{2}{2-p}}\int_\alpha^b
\Big\|T\cdot \big(W_u(t)\big)\Big\|^{2}_Hdt \quad\forall \alpha\in[a,b]\,.$$
Inserting it into \er{Monotone1288988999} gives
\end{comment}
%
%
%
%
%
Therefore, using \er{eqbbbklhtgjy}, we deduce
\begin{equation}\label{Monotone1288988999ll88899}
\frac{1}{2}\Big\|T\cdot \big(W_u(\alpha)\big)\Big\|_H^2
%k_0\int\limits_\alpha^b\big\|W_u(t)\big\|^2_Xdt
\leq \int_\alpha^b \hat
g\Big(\big\|T\cdot\big(u(t)\big)\big\|_H\Big)\Big(\big\|u(t)\big\|_X^q+\hat\mu(t)\Big)\Big\|T\cdot
\big(W_u(t)\big)\Big\|^{2}_Hdt\quad\forall \alpha\in[a,b]\,.
\end{equation}
%We deduced \er{Monotone1288988999ll88899} for $k_0>0$, but if $k_0=0$ then $p=0$ by the conditions of the Theorem. Thus in this case by
%\er{Monotone1288988999} we also have \er{Monotone1288988999ll88899}.
In particular, since $T\cdot u\in L^\infty(a,b;H)$ we clearly obtain
\begin{equation}\label{Monotone1288988999ll88899uuu89}
\frac{1}{2}\Big\|T\cdot \big(W_u(t)\big)\Big\|_H^2 \leq
K\int_t^b\Big(\big\|u(s)\big\|_X^q+\hat\mu(s)\Big) \Big\|T\cdot
\big(W_u(s)\big)\Big\|^{2}_Hds\quad\forall t\in[a,b]\,,
\end{equation}
for some $K>0$ independent on $t$.
%
%
%
%
%\begin{comment*}
Next define the functions
$g(t):=\Big(\big\|u(t)\big\|_X^q+\hat\mu(t)\Big)$ and
$h(t):=g(t)\Big\|T\cdot \big(W_u(t)\big)\Big\|_H^2$. Then since
$u(t)\in L^q(a,b;X)$ and $T\cdot \big(W_u(t)\big)\in
L^\infty(a,b;H)$ we clearly have $g(t),h(t)\in L^1(a,b;\R)$.
%by \er{eqbbbkl}, $h(t)$ is
%continuous in $[a,b]$ and,
Moreover, by \er{Monotone1288988999ll88899uuu89}, we infer
\begin{equation}\label{hjkydddy99}
h(t)\leq 2Kg(t)\int_t^b h(s)\,ds\quad\quad\text{for
a.e.}\;\,t\in[a,b]\,.
\end{equation}
On the other hand, the function
$\gamma(t):=e^{-2K\int_t^bg(s)ds}\cdot\int_t^b h(s)\,ds\in
W^{1,1}(a,b;\R)$ and by \er{hjkydddy99}, we obtain
$$\frac{d}{dt}\bigg(e^{-2K\int_t^bg(s)ds}\cdot\int_t^b h(s)\,ds\bigg)\geq 0\quad\quad\text{for
a.e.}\;\,t\in[a,b]\,.$$ Therefore,
$$e^{-2K\int_a^bg(s)ds}\cdot\int_a^b h(s)\,ds\leq e^{-2K\int_b^bg(s)ds}\cdot\int_b^b h(s)\,ds=0\,.$$
Therefore, since $h(t)\geq 0$ we obtain $h(t)=0$ for a.e.
$t\in[a,b]$.
%\end{comment*}
%
%
%
%
%Then by Gronwall's Lemma
Thus $\big\|T\cdot \big(W_u(t)\big)\big\|_H^2=0$ for a.e.
$t\in[a,b]$. Therefore, since $T$ is injective, by the definition of
$W_u$ in \er{Wuopredd778899jjk} we have $\lambda
u(t)=D\Psi^*_t\big(H_u(t)\big)$ for a.e. $t\in[a,b]$. I.e.
\begin{equation}\label{Wuopredd778899jjkkfhf}
D\Psi_t\big(\lambda
u(t)\big)=H_u(t)=-\frac{dv}{dt}(t)-\Lambda_t\big(u(t)\big)\quad\quad\text{for
a.e.}\;\,t\in[a,b]\,,
\end{equation}
So $u$ is a solution to \er{uravnllllnew}.

 Finally, if $u$ is a solution to \er{uravnllllnew} then
by Remark \ref{rmmmdd} we have $J(u)=0$. Moreover, by Remark
\ref{rmmmdd} we always have $J(\cdot)\geq 0$. Thus if we have
$J(u)=0$ then trivially $u$ is a minimizer to \er{hgffckaaq1new}.
\end{proof}

The following proposition provides uniqueness of the solution.
\begin{proposition}\label{gftufuybhjkojutydrdkk}
Let $\{X,H,X^*\}$ be an evolution triple with the corresponding
inclusion operator $T\in \mathcal{L}(X;H)$, as it was defined in
Definition \ref{7bdf}, together with the corresponding operator
$\widetilde{T}\in \mathcal{L}(H;X^*)$, defined as in \er{tildet}.
%Furthermore, let $a,b,q,\lambda,p,k_0\in\R$ such that $a<b$, $q\geq
%2$, $\lambda\geq 0$, $0\leq p<2$, $k_0\geq 0$ and $p=0$ if $k_0=0$.
%Assume that $\Psi_t$ and $\Lambda_t$ satisfy all conditions of
%Definition \ref{defHkkkk}.
Furthermore, let $a,b,q\in\R$ be such that $a<b$, $q\geq 2$ and
$\lambda\in\{0,1\}$.
%$\lambda\geq 0$,
%$0\leq p<2$, $k_0\geq 0$ and $p=0$ if $k_0=0$.
Assume that $\Psi_t$ and $\Lambda_t$ satisfy all the conditions of
Definition \ref{defHkkkk}.
%
%
%
%
\begin{comment}
Furthermore, for every $t\in[a,b]$ let $\Psi_t(x):X\to[0,+\infty)$
be a strictly convex function which is G\^{a}teaux differentiable at
every $x\in X$, satisfy $\Psi_t(0)=0$ and satisfy the condition
\begin{equation}\label{roststjjukk}
(1/C_0)\,\|x\|_X^q-C_0\leq \Psi_t(x)\leq
C_0\,\|x\|_X^q+C_0\quad\forall x\in X,\;\forall t\in[a,b]\,,
\end{equation}
for some $C_0>0$. We also assume that $\Psi_t(x)$ is Borel on the
pair of variables $(x,t)$.
%continuous on $t\in[a,b]$ uniformly by $x$.
%For every $t\in[a,b]$ denote by $\Psi_t^*$ the Legendre dual of
%$\Psi_t$, defined by
%$$\Psi_t^*(y):=\sup\big\{<z,y>_{X\times X^*}-\Psi_t(z):\;z\in X\big\}\quad\quad\forall y\in X^*\,.$$
Next for every $t\in[a,b]$ let $\Lambda_t(x):X\to X^*$ be a function
which is G\^{a}teaux differentiable at every $x\in X$,
$\Lambda_t(0)\in L^{\infty}(a,b;X^*)$ and the derivative of
$\Lambda_t$ satisfies the growth condition
\begin{equation}\label{roststlambdjjukk}
\|D\Lambda_t(x)\|_{\mathcal{L}(X;X^*)}\leq
C\,\|x\|_X^{q-2}+C\quad\forall x\in X,\;\forall t\in[a,b]\,,
\end{equation}
for some $C>0$. We also assume that $\Lambda_t(x)$ is strongly Borel
on the pair of variables $(x,t)$ (see Definition
\ref{fdfjlkjjkkkkkllllkkkjjjhhhkkk}).
%continuous on $t\in[a,b]$ uniformly by $x$.
Assume also that $\Psi_t$ and $\Lambda_t$ satisfy the following
monotonicity condition
\begin{multline}\label{Monotonekkkjjkkk}
\bigg<h,\lambda\Big\{D\Psi_t\big(\lambda x+h\big)-D\Psi_t(\lambda
x)\Big\}+D\Lambda_t(x)\cdot h\bigg>_{X\times X^*}\geq
\\ k_0\|h\|^2_X-\hat C\|h\|^p_X\cdot\|T\cdot h\|^{(2-p)}_H\quad\quad\forall x,h\in X,\;\forall t\in[a,b]\,,
\end{multline}
where $p\in[0,2)$, $\lambda\in\{0,1\}$,
%$\lambda\geq 0$,
$k_0\geq 0$, and $\hat C>0$ are some constants and $p=0$ in the case
$k_0=0$.
\end{comment}
%
%
%
%
%
Then for every $w_0\in H$ there exists at most one function $u(t)\in
L^q(a,b;X)$, such that $w(t):=T\cdot\big(u(t)\big)\in
L^\infty(a,b;H)$, $v(t):=\widetilde T\cdot\big(w(t)\big)=\widetilde
T\circ T\big(u(t)\big)\in W^{1,q^*}(a,b; X^*)$ ,where
$q^*:=q/(q-1)$, and $u(t)$ is a solution to
\begin{equation}\label{uravnllllgsnachlklimjjjfffjjfff}
\begin{cases}\frac{d v}{dt}(t)+\Lambda_t\big(u(t)\big)+D\Psi_t\big(\lambda u(t)\big)=0\quad\text{for a.e.}\; t\in(a,b)\,,\\
w(a)=w_0\,.
\end{cases}
\end{equation}
\end{proposition}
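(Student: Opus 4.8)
The plan is to suppose that $u_1, u_2 \in \mathcal{R}_q(a,b)$ are two solutions of \er{uravnllllgsnachlklimjjjfffjjfff} with the same initial data $w_0$, set $h(t) := u_1(t) - u_2(t)$, and show $T\cdot h(t) \equiv 0$; since $T$ is injective this forces $u_1 = u_2$ a.e. First I would denote $v_i(t) := I\cdot(u_i(t))$, $w_i(t) := T\cdot(u_i(t))$, and $\sigma(t) := I\cdot h(t) = v_1(t) - v_2(t)$, noting that $\sigma \in W^{1,q^*}(a,b;X^*)$ and that, by Lemma \ref{lem2}, $w_1 - w_2 = T\cdot h$ is $H$-weakly continuous with $(w_1-w_2)(a) = 0$. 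Subtracting the two equations gives, for a.e.\ $t$,
\begin{equation*}
\frac{d\sigma}{dt}(t) + \Big\{\Lambda_t\big(u_1(t)\big)-\Lambda_t\big(u_2(t)\big)\Big\} + \Big\{D\Psi_t\big(\lambda u_1(t)\big) - D\Psi_t\big(\lambda u_2(t)\big)\Big\} = 0 \,.
\end{equation*}

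The key step is to pair this identity with $h(t)$ in the $X\times X^*$ duality and integrate in time over $[a,t_0]$. Using Lemma \ref{lem2} (the integration-by-parts / energy identity applied to $\sigma = I\cdot h$, together with $(T\cdot h)(a)=0$), the term $\int_a^{t_0}\langle h(t), \tfrac{d\sigma}{dt}(t)\rangle\,dt$ equals $\tfrac12\|(T\cdot h)(t_0)\|_H^2$. For the remaining two terms I would write the difference quotients as integrals of the G\^ateaux derivatives along the segment joining $u_2$ and $u_1$: for a.e.\ $t$,
\begin{equation*}
\Big\langle h, \big\{D\Psi_t(\lambda u_1)-D\Psi_t(\lambda u_2)\big\}\Big\rangle + \Big\langle h, D\Lambda_t(u_2)\cdot h + \big(\text{correction}\big)\Big\rangle,
\end{equation*}
but more efficiently one applies the monotonicity hypothesis \er{Monotone} directly with the substitution $x = u_2(t)$, $h = h(t)$: by \er{conprmin}/convexity of $\Psi_t$ one first passes from the secant increment $D\Psi_t(\lambda u_1)-D\Psi_t(\lambda u_2) = D\Psi_t(\lambda u_2 + \lambda h) - D\Psi_t(\lambda u_2)$ (noting $\lambda u_1 = \lambda u_2 + \lambda h$), and for the $\Lambda$ part one uses that $\Lambda_t(u_1)-\Lambda_t(u_2) = \int_0^1 D\Lambda_t(u_2 + s h)\cdot h\,ds$. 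A cleaner route, matching the pattern of the proof of Theorem \ref{EulerLagrange}, is to differentiate along the segment and invoke \er{Monotone} at each intermediate point; combining, one obtains
\begin{equation*}
\frac12\big\|(T\cdot h)(t_0)\big\|_H^2 \;\leq\; \int_a^{t_0} \hat g\Big(\big\|T\cdot u_2(t)\big\|_H + \|T\cdot h(t)\|_H\Big)\Big(\big\|u(t)\big\|_X^q + \hat\mu(t)\Big)\,\big\|T\cdot h(t)\big\|_H^2\, dt \,,
\end{equation*}
i.e.\ a Gronwall-type inequality $\rho(t_0) \leq \int_a^{t_0} C\, m(t)\, \rho(t)\, dt$ where $\rho(t) := \|T\cdot h(t)\|_H^2$ and $m \in L^1(a,b;\R)$ (using $u_i \in L^q(a,b;X)$, $\hat\mu \in L^1$, and $T\cdot u_i \in L^\infty(a,b;H)$ so the argument of $\hat g$ is bounded).

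Finally I would close the argument exactly as in Theorem \ref{EulerLagrange}: the function $\gamma(t) := e^{-C\int_a^t m(s)\,ds}\int_a^t \rho(s)\,ds$ lies in $W^{1,1}(a,b;\R)$, is nonincreasing by the differential inequality, vanishes at $t=a$, hence is identically $\leq 0$; since $\rho \geq 0$ this gives $\int_a^{t_0}\rho(s)\,ds = 0$ for all $t_0$, so $\rho \equiv 0$ a.e., and then the displayed estimate gives $\|(T\cdot h)(t_0)\|_H = 0$ for every $t_0$. Thus $T\cdot h \equiv 0$, and injectivity of $T$ yields $u_1 = u_2$ in $L^q(a,b;X)$. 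The main obstacle I anticipate is the careful justification that the difference-quotient / segment-integration manipulations are legitimate in this weak setting — i.e.\ that $\langle h(t), \Lambda_t(u_1(t)) - \Lambda_t(u_2(t))\rangle$ and the $D\Psi_t$ increment are measurable and integrable in $t$ and genuinely bounded below by the right-hand side of \er{Monotone} after integrating along the segment — but this is handled by the same growth bounds \er{roststlambd}, \er{roststt}, \er{roststt*} and Dominated Convergence arguments already developed in Lemma \ref{EulerLagrange1}, so no genuinely new estimate is needed.
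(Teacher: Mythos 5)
Your proposal follows essentially the same route as the paper's own proof: subtract the two equations, pair the difference with $h=u_1-u_2$, use Lemma \ref{lem2} together with $(T\cdot h)(a)=0$ to convert the time-derivative term into $\tfrac12\|(T\cdot h)(t_0)\|_H^2$, bound the nonlinear increments from below via the monotonicity condition \er{Monotone}, and close with the same Gronwall argument used at the end of Theorem \ref{EulerLagrange} before invoking injectivity of $T$. The one technical point you flag --- that \er{Monotone} involves $D\Lambda_t(x)\cdot h$ rather than the secant $\Lambda_t(x+h)-\Lambda_t(x)$, so a segment-integration (or small-increment limiting) argument is needed --- is treated at least as explicitly in your plan as in the paper, which simply says ``plugging \er{Monotone} into'' the difference equation.
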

\begin{proof}
%Assume that $\Phi_t$ satisfies \er{roststghhh77889lkhjhfvff}.
Let $w_0\in H$ and let $\hat u(t),\tilde u(t)\in L^q(a,b;X)$ be such
that $\hat w(t):=T\cdot\big(\hat u(t)\big)\in L^\infty(a,b;H)$,
$\tilde w(t):=T\cdot\big(\tilde u(t)\big)\in L^\infty(a,b;H)$, $\hat
v(t):=\widetilde T\cdot\big(\hat w(t)\big)=\widetilde T\circ
T\big(\hat u(t)\big)\in W^{1,q^*}(a,b; X^*)$, $\tilde
v(t):=\widetilde T\cdot\big(\tilde w(t)\big)=\widetilde T\circ
T\big(\tilde u(t)\big)\in W^{1,q^*}(a,b; X^*)$, where
$q^*:=q/(q-1)$, and $\hat u(t), \tilde u(t)$ are both solutions to
\er{uravnllllgsnachlklimjjjfffjjfff}. Then the function
$\underline{u}(t):=\big(\hat u(t)-\tilde u(t)\big)\in L^q(a,b;X)$ is
such that $\underline{w}(t):=T\cdot\big(\underline{u}(t)\big)\in
L^\infty(a,b;H)$, $\underline{v}(t):=\widetilde
T\cdot\big(\underline{w}(t)\big)=\widetilde T\circ
T\big(\underline{u}(t)\big)\in W^{1,q^*}(a,b; X^*)$,
$\underline{w}(a)=0$ and by \er{uravnllllgsnachlklimjjjfffjjfff},
and the facts that $\lambda\in\{0,1\}$ and $\Psi_t(0)=0$, we obtain
\begin{multline}\label{uravnllllgsnachlklimjjjghtguffssff}
\frac{d\underline{v}}{dt}(t)+ \Big\{\Lambda_t\big(\hat
u(t)\big)-\Lambda_t\big(\tilde
u(t)\big)\Big\}+
%\lambda
\Big\{D\Psi_t\big(\lambda\hat u(t)\big)-D\Psi_t\big(\lambda\tilde
u(t)\big)\Big\}=0\quad\quad \text{for a.e.}\; t\in(a,b)\,.
\end{multline}
Thus plugging \er{Monotone}
%\er{Monotonekkkjjkkk}
into
\er{uravnllllgsnachlklimjjjghtguffssff} we deduce
\begin{equation}\label{uravnllllgsnachlklimjjjghtguyjukffssff}
\int\limits_a^\tau\Bigg\{\bigg<\underline{u}(t),\frac{d\underline{v}}{dt}(t)\bigg>_{X\times
X^*}-
%%\hat g\big(\|\tilde w(t)\|^{2}_H\big)\big(\|\tilde
%%u(t)\|^q_X+1\big)\,
%\hat C\|\underline{u}(t)\|^p_X\cdot
\bar\gamma(t)\|\underline{w}(t)\|^{2}_H
%+k_0\big\|\underline{u}(t)\big\|_X^2
\Bigg\}dt\leq 0\quad \text{for every}\; \tau\in[a,b]\,,
\end{equation}
where $\bar\gamma(t)\in L^1\big(a,b;[0,+\infty)\big)$. On the other
hand, since $\underline{w}(a)=0$, by Lemma \ref{lem2} we have
\begin{equation*}
\int_a^\tau\Big<\underline{u}(t),\frac{d
\underline{v}}{dt}(t)\Big>_{X\times
X^*}dt=\frac{1}{2}\big\|\underline{w}(\tau)\big\|_H^2\quad \text{for
every}\; \tau\in[a,b]\,.
\end{equation*}
Thus, inserting it into \er{uravnllllgsnachlklimjjjghtguyjukffssff},
%and using the fact that $\tilde w\in L^\infty(a,b;H)$,
we deduce
%
%
%
%
\begin{comment}
\begin{multline}\label{uravnllllgsnachlklimjjjghtguyjukfhhhhffssff}
\frac{1}{2}\big\|\underline{w}(\tau)\big\|_H^2+k_0\int_a^\tau\big\|\underline{u}(t)\big\|_X^2dt\leq
\hat
C\int_a^\tau\big\|\underline{u}(t)\big\|^p_X\cdot\big\|\underline{w}(t)\big\|^{(2-p)}_H
dt\\ \leq \hat C\bigg(\int_a^\tau\big\|\underline{u}(t)\big\|^2_X
dt\bigg)^{p/2}\cdot\bigg(\int_a^\tau\big\|\underline{w}(t)\big\|^2_H
dt\bigg)^{(2-p)/2}\quad \text{for every}\; \tau\in[a,b]\,.
\end{multline}
Then in particular
$$\int_a^\tau\big\|\underline{u}(t)\big\|_X^2 dt\leq \Big(\frac{\hat C}{k_0}\Big)^{2/(2-p)}\int_a^\tau\big\|\underline{w}(t)\big\|^2_H
dt\quad \text{for every}\; \tau\in[a,b]\,,$$ and plugging it into
\er{uravnllllgsnachlklimjjjghtguyjukfhhhhffssff} we obtain
\end{comment}
%
%
%
%
\begin{equation}\label{uravnllllgsnachlklimjjjghtguyjukfhhhhjkghkfjfjfjkffssff}
\big\|\underline{w}(\tau)\big\|_H^2\leq
%2\hat C\cdot\Big(\frac{\hat C}{k_0}\Big)^{p/(2-p)}
C\int_a^\tau\bar\gamma(t)\big\|\underline{w}(t)\big\|^2_H dt\quad
\text{for every}\; \tau\in[a,b]\,.
\end{equation}
Therefore, exactly as before in the end of the proof of Theorem
\ref{EulerLagrange}, by
\er{uravnllllgsnachlklimjjjghtguyjukfhhhhjkghkfjfjfjkffssff} we
deduce $\underline{w}(t)=0$ for a.e. $t\in[a,b]$ and since $T$ is an
injective operator we have $\hat u(t)=\tilde u(t)$ for a.e.
$t\in[a,b]$. This completes the proof.
\end{proof}

%%%%%%%%%%%%%%%%%%%%%%%%%%%%%%%%%%%%%%%%%%%%%%%%%%%%%%%%%%%%%%%%%%%%%%%%%%%%%%%%%%%%%%%%%%%%%%%%%%%%%%%%%%%%%%%%%%%%%%%%%%%%%%%%%%%%%%%%%%%%%%%%%%%%%%%%%%%%%%%%%%%%%%%%%%%%%%%%%%%%%%%%%%%%%%%%%%%%%%%%%%%%%%%%%%%%%%%%%%%%%%%%%%%%%%%%%%%%%%%%%%%%%%%%%%%%%%%%%%%%%%%%%%%%%%%%%%%%%%%%%%%%%%%%%%%%%%%%%%%%%%%%%%%%%%%%%%%%%%%%%%%%%%%%%%%%%%%%%%%%%%%%%%%%%%%%
\begin{definition}\label{defHkkkkgnew}
Let $\{X,H,X^*\}$ be an evolution triple with the corresponding
inclusion operator $T\in \mathcal{L}(X;H)$, as it was defined in
Definition \ref{7bdf}, together with the corresponding operator
$\widetilde{T}\in \mathcal{L}(H;X^*)$, defined as in \er{tildet}.
Furthermore, let $a,b,q\in\R$ be s.t. $a<b$ and $q\geq 2$. Next
assume that $\Psi_t$ and $\Lambda_t$ satisfy all the conditions of
Definition \ref{defHkkkk} together with the assumption $\lambda=1$.
Moreover, assume that
%for every $t\in(0,T_0)$ we have
$\Psi_t$ and $\Lambda_t$ satisfy the following positivity condition:
%
%
\begin{comment}
\begin{multline}\label{Monotonegnewhghghghgh}
\Psi_t(x)+\Big<x,\Lambda_t(x)\Big>_{X\times X^*}\geq \frac{1}{\bar
C}\,\|x\|^q_X -\bar C\Big(\|x\|^r_X+1\Big)\Big(\|T\cdot
x\|^{(2-r)}_H+1\Big)-\bar\mu(t)\quad\forall x\in X,\;\forall
t\in[a,b],
\end{multline}
where $r\in[0,2)$ and $\bar C>0$ are some constants and
$\bar\mu(t)\in L^1(a,b;\R)$ is some nonnegative function.
\end{comment}
%
%
\begin{multline}\label{Monotonegnewhghghghgh}
\Psi_t(x)+\Big<x,\Lambda_t(x)\Big>_{X\times X^*}\geq \frac{1}{\bar
C}\,\|x\|^q_X -\bar\mu(t)
%\Big(\|x\|^r_X+1\Big)
\Big(\|T\cdot x\|^{2}_H+1\Big)
%-\bar\mu(t)
\quad\quad\forall x\in X,\;\forall t\in[a,b],
\end{multline}
%where $r\in[0,2)$ and
where $\bar C>0$ is some constant and $\bar\mu(t)\in L^1(a,b;\R)$ is
a fixed nonnegative function.
%(\er{MonotonegnewhghghghghEx} will follow if we require
%\er{Monotone1111gkjglghjgj} with a positive constant instead of the
%function $\tilde g(s)$).
Furthermore, assume that
\begin{equation}\label{jgkjgklhklhjhiuhyh}
\Lambda_t(x)=A_t\big(S\cdot x\big)+\Theta_t(x)\quad\quad\forall\,
x\in X,\;\forall\, t\in[a,b],
\end{equation}
where $Z$ is a Banach space, $S:X\to Z$ is a compact operator, for
every $t\in[a,b]$ $A_t(z):Z\to X^*$ and $\Theta_t(x):X\to X^*$ are
functions, such that $A_t$ is strongly Borel on the pair of
variables $(z,t)$ and G\^{a}teaux differentiable at every $z\in Z$,
$\Theta_t$ is strongly Borel on the pair of variables $(x,t)$ and
G\^{a}teaux differentiable at every $x\in X$,
$\,\Theta_t(0),A_t(0)\in L^{q^*}\big(a,b;X^*\big)$ and the
derivatives of $A_t$ and $\Theta_t$ satisfy the growth conditions
\begin{equation}\label{roststlambdgnew}
\|D A_t(S\cdot x)\|_{\mathcal{L}(Z;X^*)}\leq g\big(\|T\cdot
x\|\big)\,\Big(\|x\|_X^{q-2}+\mu^{\frac{q-2}{q}}(t)\Big)\quad\forall
x\in X,\;\forall t\in[a,b]\,,
\end{equation}
\begin{equation}\label{roststlambdgnewjjjjj}
\|D\Theta_t(x)\|_{\mathcal{L}(X;X^*)}\leq g\big(\|T\cdot
x\|\big)\,\Big(\|x\|_X^{q-2}+\mu^{\frac{q-2}{q}}(t)\Big)\quad\forall
x\in X,\;\forall t\in[a,b]\,,
\end{equation}
%\begin{equation}\label{roststlambdgnewjjjjjEx}
%\|D\Theta_t(x)\|_{\mathcal{L}(X;X^*)}+\|D A_t(S\cdot
%x)\|_{\mathcal{L}(Z;X^*)}\leq g\big(\|T\cdot
%x\|\big)\,\big(\|x\|_X^{q-2}+1\big)\quad\forall x\in X,\;\forall
%t\in[0,T_0]
%\end{equation}
for some nondecreasing function $g(s):[0,+\infty)\to (0+\infty)$ and
some nonnegative function $\mu(t)\in L^1(a,b;\R)$. Finally, assume
that for every sequence $\big\{x_n(t)\big\}_{n=1}^{+\infty}\subset
L^q(a,b;X)$ such that the sequence $\big\{(\widetilde{T}\circ
T)\cdot x_n(t)\big\}$ is bounded in $W^{1,q^*}(a,b;X^*)$ and
$x_n(t)\rightharpoonup x(t)$ weakly in $L^q(a,b;X)$ we have
\begin{itemize}
\item
$\Theta_t\big(x_n(t)\big)\rightharpoonup \Theta_t\big(x(t)\big)$
weakly in $L^{q^*}(a,b;X^*)$,
\item
$\liminf_{n\to+\infty}\int_a^b
\Big<x_n(t),\Theta_t\big(x_n(t)\big)\Big>_{X\times X^*}dt\geq
\int_a^b \Big<x(t),\Theta_t\big(x(t)\big)\Big>_{X\times X^*}dt$.
\end{itemize}
%$\Phi_t(y)$ is continuous on $t\in[a,b]$ uniformly by $y$.
%Next
%for every $t\in[a,b]$
%let $A_t\in L^\infty\big(a,b;\mathcal{L}(X;X^*)\big)$.
%, such that $A_t$ is
%continuous on $t\in[a,b]$ as a function with values in
%$\mathcal{L}(Y;Y^*)$ (and thus in particular
%$\|A_t\|_{\mathcal{L}(Y;Y^*)}\leq C$ where $C>0$ doesn't depend on
%$t$).
%
%
%
%\begin{multline}\label{Monotonegnew}
%\bigg<h,\Big\{D\Psi_t(x+h)-D\Psi_t(x)\Big\}+D\Theta_t(x)\cdot h+D
%A_t\big(S\cdot x\big)\cdot\big(S\cdot h\big)\bigg>_{X\times
%X^*}\geq\\ \frac{k_0\|h\|^2_X}{\hat g(\|T\cdot x\|_H)} -\hat
%g(\|T\cdot x\|_H)\cdot\|h\|^r_X\cdot\|T\cdot
%h\|^{(2-r)}_H\quad\quad\forall x\in X,\,\forall h\in X\;\forall
%t\in[a,b]\,,
%\end{multline}
%and
%
%
%
%and $\hat C>0$ are some constants, satisfying $r=0$ if $k_0=0$, and
%$\hat g(s):[0,+\infty)\to (0+\infty)$ is some nondecreasing
%function.
%and uniform
%convexity condition
%\begin{equation}\label{roststghhh77889new}
%\Big<x_1-x_2,D\Phi_t(x_1)-D\Phi_t(x_2)\Big>_{X\times X^*}\geq
%(1/C_0)\,\|x_1-x_2\|_X^2\quad\forall x_1,x_2\in X,\;\forall
%t\in[a,b]\,.
%\end{equation}
%and $p=0$ in the case $k_0=0$.
Next, as in \er{hgffck} with $\lambda=1$, let
$J_0(u):\mathcal{R}_{q}(a,b)\to\R$ (where $\mathcal{R}_{q}(a,b)$ was
defined in Definition \ref{defH}) be defined by
\begin{multline}\label{hgffckgnew}
J_0(u):=\frac{1}{2}\Big(\|w(b)\|_H^2-\|w(a)\|_H^2\Big)+\\
\int\limits_a^b\Bigg\{\Psi_t\big(u(t)\big)+\Psi_t^*\bigg(-\frac{d
v}{dt}(t)-\Lambda_t\big(u(t)\big)\bigg)+
\Big<u(t),\Lambda_t\big(u(t)\big)\Big>_{X\times X^*}\Bigg\}dt\,,
\end{multline}
where $w(t):=T\cdot\big(u(t)\big)$, $v(t):=I\cdot
\big(u(t)\big)=\widetilde T\cdot \big(w(t)\big)$ with $I:=\widetilde
T\circ T:\,X\to X^*$ and we assume that $w(t)$ is $H$-weakly
continuous on $[a,b]$, as it was stated in Corollary
\ref{vbnhjjmcor}. Moreover, for every $w_0\in H$ consider the
minimization problem
\begin{equation}\label{hgffckaaq1gnew}
\inf\Big\{J_0(u):\, u\in\mathcal{R}_{q}(a,b),\,w(a)=w_0\Big\}\,.
\end{equation}
\end{definition}
\begin{remark}\label{biughiuhiyhiuh}
As before, we can rewrite the definition of $J_0$ in \er{hgffckgnew}
by
\begin{multline}\label{fyhfyppppgnew}
J_0(u):=
\int\limits_a^b\Bigg\{\Psi_t\big(u(t)\big)+\Psi_t^*\bigg(-\frac{d
v}{dt}(t)-\Lambda_t\big(u(t)\big)\bigg)+ \bigg<u(t),\frac{d
v}{dt}(t)+\Lambda_t\big(u(t)\big)\bigg>_{X\times X^*}\Bigg\}dt\,.
\end{multline}
\end{remark}
\begin{proposition}\label{premainnew}
Let $J_0(u)$ be as in Definition \ref{defHkkkkgnew} and all the
conditions of Definitions \ref{defHkkkkgnew} are satisfied. Moreover
let $w_0\in H$ be such that $w_0=T\cdot u_0$ for some $u_0\in X$, or
more generally, $w_0\in H$ be such that
$\mathcal{A}_{w_0}:=\big\{u\in\mathcal{R}_{q}(a,b):\,w(a)=w_0\big\}\neq\emptyset$.
Then there exists a minimizer to \er{hgffckaaq1gnew}. In particular
there exists a unique solution to
\begin{equation}\label{uravnllllgsnachnew}
\begin{cases}\frac{d v}{dt}(t)+\Lambda_t\big(u(t)\big)+D\Psi_t\big(u(t)\big)=0\quad\text{for
a.e.}\; t\in(a,b)\,,\\ w(a)=w_0\,,\end{cases}
\end{equation}
where $w(t):=T\cdot\big(u(t)\big)$, $v(t):=I\cdot
\big(u(t)\big)=\widetilde T\cdot \big(w(t)\big)$ with $I:=\widetilde
T\circ T:\,X\to X^*$ and we assume that $w(t)$ is $H$-weakly
continuous on $[a,b]$, as it was stated in Corollary
\ref{vbnhjjmcor}.
\end{proposition}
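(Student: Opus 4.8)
The plan is to establish the existence of a minimizer of \er{hgffckaaq1gnew} by the direct method of the calculus of variations, and then read off the rest from the machinery already proved: once a minimizer $u$ exists, Theorem \ref{EulerLagrange} (with $\lambda=1$, which applies since all the conditions of Definition \ref{defHkkkk} hold here) gives $J_0(u)=0$ and shows that $u$ solves \er{uravnllllgsnachnew}, while Proposition \ref{gftufuybhjkojutydrdkk} gives uniqueness. So the whole content is the existence of a minimizer. Since $\mathcal{A}_{w_0}\neq\emptyset$ and $J_0\geq 0$ (Young's inequality, cf. Remark \ref{biughiuhiyhiuh}), the number $m:=\inf\{J_0(u):u\in\mathcal{A}_{w_0}\}$ is finite and nonnegative; I fix a minimizing sequence $\{u_n\}\subset\mathcal{A}_{w_0}$ with $J_0(u_n)\to m$ and write $w_n:=T\cdot u_n$, $v_n:=I\cdot u_n=\widetilde T\cdot w_n$, and $h_n(t):=-\frac{dv_n}{dt}(t)-\Lambda_t(u_n(t))$, so that $J_0(u_n)=\int_a^b[\Psi_t(u_n)+\langle u_n,\Lambda_t(u_n)\rangle+\Psi_t^*(h_n)]\,dt+\tfrac12\|w_n(b)\|_H^2-\tfrac12\|w_0\|_H^2$.

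The first step, and the one I expect to be the main obstacle, is a uniform a priori bound. I would first bound $\{w_n\}$ in $L^\infty(a,b;H)$. For fixed $n$, introduce $\Phi_n(\tau):=\int_a^\tau[\Psi_s(u_n)+\langle u_n,\Lambda_s(u_n)\rangle+\Psi_s^*(h_n)]\,ds+\tfrac12\|w_n(\tau)\|_H^2$; using Lemma \ref{lem2} one checks $\Phi_n$ is absolutely continuous and, since $\frac{dv_n}{d\tau}=-\Lambda_\tau(u_n)-h_n$, its derivative is $\Phi_n'(\tau)=\Psi_\tau(u_n)+\Psi_\tau^*(h_n)-\langle u_n,h_n\rangle\geq 0$ by the Legendre inequality \er{vhjfgvhjgjkgjkh}. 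Hence $\Phi_n(\tau)\leq\Phi_n(b)=J_0(u_n)+\tfrac12\|w_0\|_H^2\leq m+1+\tfrac12\|w_0\|_H^2$ for large $n$. Subtracting the integral term and using that $\Psi_s^*\geq 0$ together with the positivity condition \er{Monotonegnewhghghghgh} (which bounds $-(\Psi_s(u_n)+\langle u_n,\Lambda_s(u_n)\rangle)$ from above by $\bar\mu(s)(\|w_n(s)\|_H^2+1)$) gives $\tfrac12\|w_n(\tau)\|_H^2\leq C+\|\bar\mu\|_{L^1}+\int_a^\tau\bar\mu(s)\|w_n(s)\|_H^2\,ds$ with $C$ independent of $n,\tau$; then Gronwall's inequality, exactly as applied at the end of the proof of Theorem \ref{EulerLagrange} (no smallness on $\|\bar\mu\|_{L^1}$ is needed), yields $\sup_n\|w_n\|_{L^\infty(a,b;H)}<\infty$. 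Feeding this back into \er{Monotonegnewhghghghgh} and into the formula for $J_0(u_n)$ above (all terms except $\int[\Psi_t(u_n)+\langle u_n,\Lambda_t(u_n)\rangle]$ being nonnegative) produces uniform bounds on $\int_a^b\|u_n\|_X^q\,dt$ and on $\int_a^b\Psi_t^*(h_n)\,dt$; by \er{rostst*} the latter bounds $\{h_n\}$ in $L^{q^*}(a,b;X^*)$, while \er{roststlambdgnew}--\er{roststlambdgnewjjjjj} together with the elementary consequence \er{roststlambdosn} bound $\{\Lambda_t(u_n)\}$ there too, so $\{\frac{dv_n}{dt}\}=\{-h_n-\Lambda_t(u_n)\}$ is bounded in $L^{q^*}(a,b;X^*)$; since $q\geq 2$, $\{v_n\}=\{I\cdot u_n\}$ is bounded in $L^{q^*}$ as well, hence $\{v_n\}$ is bounded in $W^{1,q^*}(a,b;X^*)$.

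Now I pass to a subsequence: $u_n\rightharpoonup u$ weakly in $L^q(a,b;X)$ and $v_n\rightharpoonup v$ weakly in $W^{1,q^*}(a,b;X^*)$, with necessarily $v=I\cdot u$ (uniqueness of weak limits), so $u\in\mathcal{R}_q(a,b)$; evaluating at $t=a$ (continuity of the trace on $W^{1,q^*}$) and using injectivity of $\widetilde T$ gives $w(a)=w_0$, so $u\in\mathcal{A}_{w_0}$. Next I apply Lemma \ref{ComTem1} with the compact operator $S$ playing the role of "$T$", the injective $I$ playing the role of "$S$", and the reflexive $X^*$ playing the role of "$Z$": along a further subsequence $S\cdot u_n\to S\cdot u$ strongly in $L^q(a,b;Z)$. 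Using this, the $L^\infty(a,b;H)$ bound on $\{w_n\}$, the growth bound \er{roststlambdgnew}, continuity of $A_t$ on $Z$, and a Vitali/equi-integrability argument (writing the controlling majorant as a product of the strongly convergent sequence $\|S\cdot u_n\|_Z$ and the bounded sequence $\|u_n\|_X^{q-2}$), I get $A_t(S\cdot u_n)\to A_t(S\cdot u)$ strongly in $L^{q^*}(a,b;X^*)$; combined with $\Theta_t(u_n)\rightharpoonup\Theta_t(u)$ weakly and $\frac{dv_n}{dt}\rightharpoonup\frac{dv}{dt}$ weakly, this gives $h_n\rightharpoonup h:=-\frac{dv}{dt}-\Lambda_t(u)$ weakly in $L^{q^*}(a,b;X^*)$. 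I then pass to the limit in the four groups of terms of $J_0(u_n)$: $\int\Psi_t(u_n)\,dt$ and $\int\Psi_t^*(h_n)\,dt$ are weakly lower semicontinuous (convex and strongly continuous on $L^q$, resp. $L^{q^*}$, by \er{rostst}, \er{rostst*}); $\int\langle u_n,A_t(S\cdot u_n)\rangle\,dt\to\int\langle u,A_t(S\cdot u)\rangle\,dt$ (weak $\times$ strong); $\liminf\int\langle u_n,\Theta_t(u_n)\rangle\,dt\geq\int\langle u,\Theta_t(u)\rangle\,dt$ by hypothesis; and $w_n(b)\rightharpoonup w(b)$ weakly in $H$ (as with the trace, using injectivity of $\widetilde T$ and the $L^\infty(a,b;H)$ bound), so $\|w(b)\|_H^2\leq\liminf\|w_n(b)\|_H^2$. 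Adding these up via superadditivity of $\liminf$ yields $J_0(u)\leq\liminf J_0(u_n)=m$, and since $u\in\mathcal{A}_{w_0}$ we also have $J_0(u)\geq m$, so $u$ is a minimizer of \er{hgffckaaq1gnew}.

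Finally, Theorem \ref{EulerLagrange} applied to this minimizer $u$ gives $J_0(u)=0$ and that $u$ solves \er{uravnllllgsnachnew}, and Proposition \ref{gftufuybhjkojutydrdkk} gives that such a solution is unique; hence \er{uravnllllgsnachnew} has exactly one solution. As indicated, the hard part is the a priori estimate: because of the $\|T\cdot x\|_H^2$ term on the right of \er{Monotonegnewhghghghgh} the coercivity is only conditional, and the decisive point is the monotonicity of the auxiliary quantity $\Phi_n$, which isolates the $L^\infty(a,b;H)$ control of $w_n$ as a clean Gronwall estimate before one can bootstrap to the $L^q(a,b;X)$ and $W^{1,q^*}(a,b;X^*)$ bounds; a secondary technical point is the upgrade from strong $L^q$-convergence of $S\cdot u_n$ to strong $L^{q^*}$-convergence of $A_t(S\cdot u_n)$, needed to pass to the limit in $\int\langle u_n,A_t(S\cdot u_n)\rangle\,dt$ and in $\int\Psi_t^*(h_n)\,dt$.
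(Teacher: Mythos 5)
Your proposal is correct and follows essentially the same route as the paper's proof: the direct method with the key a priori estimate coming from the pointwise nonnegativity of the integrand (your monotone auxiliary function $\Phi_n$ is exactly the paper's statement that $\int_a^t\Upsilon_n(s)\,ds$ is nondecreasing, combined with Lemma \ref{lem2}), followed by the Gronwall bound on $\|w_n\|_{L^\infty(a,b;H)}$ from \eqref{Monotonegnewhghghghgh}, the compactness of $S\cdot u_n$ via Lemma \ref{ComTem1}, weak lower semicontinuity of the convex terms together with the assumed properties of $\Theta_t$, and finally Theorem \ref{EulerLagrange} and Proposition \ref{gftufuybhjkojutydrdkk}. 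The only differences are presentational (e.g.\ identifying $w(a)=w_0$ via the weak continuity of the trace on $W^{1,q^*}$ and injectivity of $\widetilde T$, where the paper passes through the integration-by-parts identity of Corollary \ref{vbnhjjmcor}), and these are sound.
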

%\begin{proposition}\label{premain}
%Let $\O$ be a bounded domain and $F\in\mathfrak{F}$. Then for every
%$v_0(\cdot)\in V_N^k$ there exists a minimizer $u$ to
%\er{hgffckaaq1}. It satisfies $H_u=u$, i.e. $\Delta_x u=\partial_t
%u+div_x\,F(u)$ and
%\begin{equation}\label{yteq1}
%\int_0^t\int_\O |\nabla_x u|^2\,dxdt+ \int_0^t\int_\O\nabla_x
%r:\nabla_x u\,dxdt=\frac{1}{2}\bigg(\int_\O u^2(x,0)dx -\int_\O
%u^2(x,t)dx\bigg)\,.
%\frac{1}{2}\bigg(
%\frac{1}{2}\int_\O u^2(x,t)dx+\int_0^t\int_\O |\nabla_x u|^2\,dxdt
%\int_0^t\int_\O |\nabla_x u+\nabla_x r|^2\,dxdt\\
%\frac{1}{2}\bigg(
%\int_0^t\int_\O |\nabla_x r|^2\,dxdt+
%=\frac{1}{2}\int_\O v_0^2(x)dx\quad\quad\quad\forall t\in[0,T]\,.
%\end{equation}
%Moreover if $v\in \mathcal{R}$ satisfy $v(\cdot,0)=v_0(\cdot)$ and
%$H_v=v$, i.e. $\Delta_x v=\partial_t v+div_x\,F(v)$, then $v$ is a
%minimizer to \er{hgffckaaq1}.
%\end{proposition}
\begin{proof}
First of all we would like to note that in the case, $w_0=T\cdot
u_0$ for some $u_0\in X$, the set
$$\mathcal{A}_{w_0}:=\big\{u\in\mathcal{R}_{q}(a,b):\,w(a)=w_0\big\}=\big\{u\in\mathcal{R}_{q}(a,b):\,T\cdot u(a)=T\cdot u_0\big\}$$ is not empty.
In particular the function $u_0(t)\equiv u_0$ belongs to
$\mathcal{A}_{w_0}$. Thus, in any case
$\mathcal{A}_{w_0}
%:=\big\{u\in\mathcal{R}_{q}(a,b):\,w(a)=w_0\big\}
\neq\emptyset$.
Next let
\begin{equation}\label{naimensheenew}
K:=\inf\limits_{\theta\in \mathcal{A}_{w_0}}J_{0}(\theta)\,.
\end{equation}
Then $K\geq 0$. Consider a minimizing sequence $\{u_n(t)\}\subset
\mathcal{A}_{w_0}$, i.e. a sequence such that
\begin{equation}\label{naimensheeminsenew}
\lim_{n\to\infty}J_{0}(u_n)=K\,.
\end{equation}
Set $\Upsilon_n(t):(a,b)\to\R$ by
\begin{multline}\label{novfnkiiionew}
\Upsilon_n(t):= \Psi_t\big(u_n(t)\big)+\Psi_t^*\bigg(-\frac{d
v_n}{dt}(t)-\Lambda_t\big(u_n(t)\big)\bigg)+ \Big<u_n(t),\frac{d
v_n}{dt}(t)+\Lambda_t\big(u_n(t)\big)\Big>_{X\times X^*}\,,
\end{multline}
where $w_n(t):=T\cdot\big(u_n(t)\big)$ and $v_n(t):=\widetilde
T\cdot \big(w_n(t)\big)$. Then by the definition of Legendre
transform we deduce that $\Upsilon_n(t)\geq 0$ for a.e. $t\in(a,b)$.
On the other hand, by \er{fyhfyppppgnew} we obtain
\begin{equation}\label{hgffckggj999999new}
\int_a^b\Upsilon_n(t)dt=J_{0}(u_n)\to K\quad\quad\text{as}\;\;n\to
+\infty\,.
\end{equation}
Therefore, by \er{hgffckggj999999new}
%\er{fyhfyppppgnew}
and the fact that
$\Upsilon_n(t)\geq 0$ we deduce that there exists a constant $C_0>0$
such that for every $n\in\mathbb{N}$ and $t\in[a,b]$ we have
\begin{equation}\label{hgffckggj999999guy77788888999999new}
\int_a^t\Upsilon_n(s)ds\leq C_0\,.
\end{equation}
However, since by Lemma \ref{lem2} we have
\begin{equation*}
%\label{fyhfypppp}
\int_a^t\Big<u_n(s),\frac{d v_n}{dt}(s)\Big>_{X\times
X^*}ds=\frac{1}{2}\Big(\|w_n(t)\|_H^2-\|w_0\|_H^2\Big)\,,
\end{equation*}
plugging \er{novfnkiiionew} into
\er{hgffckggj999999guy77788888999999new} and using
\er{Monotonegnewhghghghgh}
%\er{Monotonegnew}
gives for every $n\in\mathbb{N}$,
\begin{multline}\label{Monotonegghfhtgunew}
\int\limits_a^t\Bigg\{\frac{1}{\hat
C}\,\big\|u_n(s)\big\|^q_X+\Psi_s^*\bigg(-\frac{d
v_n}{dt}(s)-\Lambda_s\big(u_n(s)\big)\bigg)\Bigg\}ds
+\frac{1}{2}\|w_n(t)\|_H^2\\ \leq
\frac{1}{2}\|w_0\|_H^2
%+\int_a^t\bar\mu(s)ds
%+C_0
+\int_a^t\bar\mu(s)\Big(\|w_n(s)\|^{2}_H+1\Big)
%+\hat C\int_a^t\Big(\|u_n(s)\|^p_X+1\Big)\cdot\Big(\|w_n(s)\|^{(2-p)}_H+1\Big)
ds%+\bar C\int_a^t\|u_n(s)\|_X ds
\quad\quad\forall t\in[a,b]\,,
\end{multline}
where $\hat C$ is some positive constant. Therefore, in particular,
%%%in particular,
%using \er{roststgnew}
for every $n$ we
have
%\begin{multline}
\begin{equation}
\label{Monotonegghfhtgufbhjhkjnew} \frac{1}{\hat
C}\,\int_a^t\big\|u_n(s)\big\|^q_X ds +\frac{1}{2}\|w_n(t)\|_H^2\leq
\int_a^t\bar\mu(s)\big\|w_n(s)\big\|^{2}_H ds
%\tilde C\int_a^t\big\|u_n(s)\big\|^p_X\cdot\big\|w_n(s)\big\|^{(2-p)}_H ds
+\tilde C\quad\quad\forall t\in[a,b]\,.
\end{equation}
where $\tilde C$ is a positive constant.
In particular we deduce
\begin{equation}\label{bjgjggg88888888889999new}
\|w_n(t)\|_H^2\leq C_2\int_a^t\bar\mu(s)\|w_n(s)\|^{2}_H
ds+C_2\quad\quad\forall t\in[a,b]\quad\forall n\in\mathbb{N}\,,
\end{equation}
where $C_2>0$ doesn't depend on $n$ and $t$. Then
\begin{multline}\label{bjgjggg88888888889999dgvg99new}
\frac{d}{dt}\Bigg\{\exp{\bigg(-\int_a^t
C_2\bar\mu(s)ds\bigg)}\cdot\int_a^t\bar\mu(s)\|w_n(s)\|^{2}_H
ds\Bigg\}\leq C_2\bar\mu(t)\exp{\bigg(-\int_a^t
C_2\bar\mu(s)ds\bigg)}\\ \text{for a.e}\;\,t\in[a,b]\quad\forall
n\in\mathbb{N}\,,
\end{multline}
and thus
\begin{multline}\label{bjgjggg88888888889999dgvg99mcv9999new}
\int_a^t\bar\mu(s)\|w_n(s)\|^{2}_H ds\leq C_2\exp{\bigg(\int_a^t
C_2\bar\mu(s)ds\bigg)}\cdot
\int_a^t\bar\mu(\tau)\exp{\bigg(-\int_a^\tau
C_2\bar\mu(s)ds\bigg)}d\tau\\
\leq C_2\exp{\bigg(C_2\int_a^b\bar\mu(s)ds\bigg)}\cdot
\int_a^b\bar\mu(\tau)d\tau
%\exp{\{C_2(t-a)\}}-1\leq \exp{\{C_2(b-a)\}}
\quad\quad\forall
t\in[a,b]\quad\forall n\in\mathbb{N}\,.
\end{multline}
Therefore, by \er{bjgjggg88888888889999new} the sequence
$\{w_n(t)\}$ is bounded in $L^\infty(a,b;H)$. Moreover, returning to
\er{Monotonegghfhtgufbhjhkjnew}
%\er{Monotonegghfhtgufbhjhkjhi77777778888899999rg9989999new} and
%\er{Monotonegghfhtgufbhjhkjhi777777788888ddddnew}
we deduce that the sequence $\{u_n\}$ is bounded in $L^q(a,b;X)$. On
the other hand since, by \er{roststlambdgnew} and the fact that
$\{w_n(t)\}$ is bounded in $L^\infty(a,b;H)$ we have
$$\big\|A_t(S\cdot u_n(t))\big\|_{X^*}\leq
C\Big(\|u_n(t)\|_{X}^{q-1}+\mu^{\frac{q-1}{q}}(t)\Big)+\big\|A_t(0)\big\|_{X^*},$$
we deduce that $\big\{A_t(S\cdot u_n(t))\big\}$ is bounded in
$L^{q^*}(a,b;X^*)$. Moreover, by \er{roststlambdgnewjjjjj},
$\big\{\Theta_t\big(u(t)\big)\big\}$ is bounded in
$L^{q^*}(a,b;X^*)$. Therefore, by \er{Monotonegghfhtgunew}, using
the growth conditions in \er{rostst*} we infer that the sequence
$\big\{\frac{d v_n}{dt}(t)\big\}$ is bounded in $L^{q^*}(a,b;X^*)$.
So
\begin{equation}\label{boundnes77889999new}
\begin{cases}
\big\{u_n(t)\big\}\quad\text{is bounded in}\;\;L^q(a,b;X)\,,\\
\big\{\frac{d v_n}{dt}(t)\big\}\quad\text{is bounded in}\;\;L^{q^*}(a,b;X^*)\,,\\
\{w_n(t)\}\quad\text{is bounded in}\;\;L^\infty(a,b;H)\,.
\end{cases}
\end{equation}
On the other hand by Corollary \ref{vbnhjjmcor} as in \er{eqmultcor}
for every $a\leq \alpha<\beta\leq b$ and for every $\delta(t)\in
C^1\big([a,b];X\big)$ we have
\begin{equation}\label{eqmultcorcorhjjhjhonew}
\int\limits_\alpha^\beta\bigg\{\Big<\delta(t), \frac{d
v_n}{dt}(t)\Big>_{X\times X^*}+\Big<\frac{d\delta}{dt}(t),
v_n(t)\Big>_{X\times X^*}\bigg\}dt=
\big<T\cdot\delta(\beta),w_n(\beta)\big>_{H\times
H}-\big<T\cdot\delta(\alpha),w_n(\alpha)\big>_{H\times H}\,.
\end{equation}
However, since $S$ is a compact operator, by
\er{boundnes77889999new} and Lemma \ref{ComTem1} we obtain that, up
to a subsequence,
\begin{equation}\label{boundnes77889999shodimostnew}
\begin{cases}
u_n(t)\rightharpoonup u(t)\quad\text{weakly in}\;\;L^q(a,b;X)\,,\\
\frac{d v_n}{dt}(t)\rightharpoonup\zeta(t)\quad\text{weakly in}\;\;L^{q^*}(a,b;X^*)\,,\\
w_n(t)\rightharpoonup w(t)\quad\text{weakly in}\;\;L^q(a,b;H)
%\;\;\text{and a.e. in}\;\;(a,b)
\,,\\
v_n(t)\rightharpoonup v(t)\quad\text{weakly in}\;\;L^q(a,b;X^*)\,,
\\
S\cdot u_n(t)\to S\cdot u(t)\quad\text{strongly in}\;\;L^q(a,b;Z)\,,
\end{cases}
\end{equation}
where $w(t):=T\cdot\big(u(t)\big)$ and $v(t):=\widetilde T\cdot
\big(w(t)\big)$. In particular, by \er{boundnes77889999shodimostnew}
and \er{eqmultcorcorhjjhjhonew} for every $a\leq \alpha<\beta\leq b$
and for every $\delta(t)\in C^1\big([a,b];X\big)$ we have
\begin{equation}\label{eqmultcorcorhjjhjholimnew}
\int\limits_\alpha^\beta\bigg\{\big<\delta(t),
\zeta(t)\big>_{X\times X^*}+\Big<\frac{d\delta}{dt}(t),
v(t)\Big>_{X\times X^*}\bigg\}dt=\lim\limits_{n\to +\infty}\bigg\{
\big<T\cdot\delta(\beta),w_n(\beta)\big>_{H\times
H}-\big<T\cdot\delta(\alpha),w_n(\alpha)\big>_{H\times H}\bigg\}\,.
\end{equation}
Thus in particular for every $\delta(t)\in C^1_c\big((a,b);X\big)$
we have
\begin{equation}\label{eqmultcorcorhjjhjholimparthhhnew}
\int\limits_a^b\bigg\{\big<\delta(t), \zeta(t)\big>_{X\times
X^*}+\Big<\frac{d\delta}{dt}(t), v(t)\Big>_{X\times
X^*}\bigg\}dt=0\,.
\end{equation}
Therefore, by the definition, $v(t)\in W^{1,q^*}(a,b;X^*)$ and
\begin{equation}\label{eqmultcorcorhjjhjholimpartnew}
\frac{d v}{dt}(t)=\zeta(t)\quad\quad\text{for a.e.}\;\;t\in(a,b)\,.
\end{equation}
Then, as before by Corollary \ref{vbnhjjmcor}, $w(t)$ is $H$-weakly
continuous on $[a,b]$ and for every $a\leq \alpha<\beta\leq b$ and
for every $\delta(t)\in C^1\big([a,b];X\big)$ we have
\begin{equation}\label{eqmultcorcorhjjhjholimlimnew}
\int\limits_\alpha^\beta\bigg\{\big<\delta(t),
\zeta(t)\big>_{X\times X^*}+\Big<\frac{d\delta}{dt}(t),
v(t)\Big>_{X\times X^*}\bigg\}dt=
\big<T\cdot\delta(\beta),w(\beta)\big>_{H\times
H}-\big<T\cdot\delta(\alpha),w(\alpha)\big>_{H\times H}\,.
\end{equation}
Plugging \er{eqmultcorcorhjjhjholimlimnew} into
\er{eqmultcorcorhjjhjholimnew}, for every $a\leq \alpha<\beta\leq b$
and for every $\delta(t)\in C^1\big([a,b];X\big)$ we obtain
\begin{equation}\label{eqmultcorcorhjjhjholimlimsravnew}
\lim\limits_{n\to +\infty}\bigg\{
\big<T\cdot\delta(\beta),w_n(\beta)\big>_{H\times
H}-\big<T\cdot\delta(\alpha),w_n(\alpha)\big>_{H\times H}\bigg\}=
\big<T\cdot\delta(\beta),w(\beta)\big>_{H\times
H}-\big<T\cdot\delta(\alpha),w(\alpha)\big>_{H\times H}\,.
\end{equation}
In particular for every $h\in X$
\begin{equation}\label{eqmultcorcorhjjhjholimlimsravdgdfgnew}
\lim\limits_{n\to +\infty} \big<T\cdot h,w_n(t)\big>_{H\times H}=
\big<T\cdot h,w(t)\big>_{H\times H}\quad\quad\forall t\in[a,b]\,.
\end{equation}
Therefore, since by \er{boundnes77889999new}, $\{w_n\}$ is bounded
in $L^\infty(a,b;H)$ and since the image of $T$ is dense in $H$,
using \er{eqmultcorcorhjjhjholimlimsravdgdfgnew} we deduce
\begin{equation}\label{eqmultcorcorhjjhjholimlimsravdgdfgfinnew}
w_n(t)\rightharpoonup w(t)\quad\text{weakly in}\;\;H\quad\forall
t\in[a,b]\,.
\end{equation}
In particular, since $w_n(a)=w_0$ we obtain that $w(a)=w_0$ and so
$u(t)$ belongs to
$\mathcal{A}_{w_0}=\big\{\psi\in\mathcal{R}_{q}(a,b):\,T\cdot\psi(a)=w_0\big\}$.
On the other hand by \er{boundnes77889999shodimostnew},
\er{boundnes77889999new} and \er{roststlambdgnew} we deduce that
$A_t\big(S\cdot u_n(t)\big)\to A_t\big(S\cdot u(t)\big)$ strongly in
$L^{q^*}(a,b;X^*)$.
%Moreover, for every $x(t)\in L^q(a,b;X)$ we have
%\begin{multline*}
%\int_a^b\big<x(t),A_t\cdot u_n(t)\big>_{X\times
%X^*}dt=\int_a^b\big<u_n(t),A^*_t\cdot x(t)\big>_{X\times X^*}dt\to\\
%\int_a^b\big<u(t),A^*_t\cdot x(t)\big>_{X\times
%X^*}dt=\int_a^b\big<x(t),A_t\cdot u(t)\big>_{X\times X^*}dt\,,
%\end{multline*}
%where $A^*_t$ is the adjoint to $A_t$ operator and we used the fact
%that $\|A^*_t\|_{\mathcal{L}(X,X^*)}=\|A_t\|_{\mathcal{L}(X,X^*)}\leq C$
%for every $t\in[a,b]$.
Moreover, by \er{boundnes77889999shodimostnew} and given properties
of $\Theta_t$, we deduce that
$\Theta_t\big(u_n(t)\big)\rightharpoonup\Theta_t\big(u(t)\big)$
weakly in $L^{q^*}(a,b;X^*)$ Therefore, by
\er{boundnes77889999shodimostnew},
\er{eqmultcorcorhjjhjholimpartnew} and the facts, that we
established above, we obtain
\begin{equation}\label{boundnes77889999shodimostfjklgknew}
\begin{cases}
u_n(t)\rightharpoonup u(t)\quad\text{weakly in}\;\;L^q(a,b;X)\,,\\
\frac{d v_n}{dt}(t)\rightharpoonup\frac{d v}{dt}(t)
\quad\text{weakly in}\;\;L^{q^*}(a,b;X^*)\,,\\
\Theta_t\big(u_n(t)\big)\rightharpoonup \Theta_t\big(u(t)\big)
\quad\text{weakly in}\;\;L^{q^*}(a,b;X^*)\,,\\
A_t\big(S\cdot u_n(t)\big)\to A_t\big(S\cdot u(t)\big)
\quad\text{strongly in}\;\;L^{q^*}(a,b;X^*)\,,\\
w_n(b)\rightharpoonup w(b)\quad\text{weakly in}\;\;H\,,\\
w_n(a)=w(a)=w_0\,.
\end{cases}
\end{equation}
On the other hand, by the definition of $J_0$ in \er{hgffckgnew} and
by \er{naimensheeminsenew} we obtain
\begin{multline}\label{hgffckglimggknew}
K=\lim_{n\to\infty}J_{0}(u_n)=\lim_{n\to\infty}\Bigg(\frac{1}{2}\Big(\|w_n(b)\|_H^2-\|w_0\|_H^2\Big)+\\
\int\limits_a^b\Bigg\{\Psi_t\big(u_n(t)\big)+\Psi_t^*\bigg(-\frac{dv_n}{dt}(t)-\Theta_t\big(u_n(t)\big)-A_t\big(S\cdot u_n(t)\big)\bigg)+\\
\Big<u_n(t),\Theta_t\big(u_n(t)\big)\Big>_{x\times
X^*}+\Big<u_n(t),A_t\big(S\cdot u_n(t)\big)\Big>_{X\times
X^*}\Bigg\}dt\Bigg)\,.
\end{multline}
However, the functions $\Psi_t$ and $\Psi^*_t$ are convex and
G\^{a}teaux differentiable for every $t$. Moreover, they satisfy the
growth conditions \er{rostst}, \er{rostst*} and the corresponding
conditions on their derivatives as stated in \er{rostgrad} and
\er{rostgrad*} in Lemma \ref{Legendre}. Thus
$P(x):=\int_a^b\Psi_t(x(t))dt$ and $Q(h):=\int_a^b\Psi^*_t(h(t))dt$
are convex and G\^{a}teaux differentiable functions on $L^q(a,b;X)$
and $L^{q^*}(a,b;X^*)$ respectively and therefore, $P(x)$ and $Q(h)$
are weakly lower semicontinuous functions on $L^q(a,b;X)$ and
$L^{q^*}(a,b;X^*)$ respectively. Moreover,
%since $q\geq 2$,
by \er{boundnes77889999shodimostnew} and the given properties of
$\Theta_t$, we infer
%\er{Monotoneg*new} the function $L(x):L^q(a,b;X)\to\R$ defined by
\begin{equation}\label{vhgjkhkgugkjjk}
\liminf_{n\to+\infty}\int_a^b\Big<u_n(t),\Theta_t\big(u_n(t)\big)\Big>_{X\times
X^*}dt\geq\int_a^b\Big<u(t),\Theta_t\big(u(t)\big)\Big>_{X\times
X^*}dt\,.
\end{equation}
%is convex and G\^{a}teaux differentiable on $L^q(a,b;X)$. So
%$L(x)$ is weakly lower semicontinuous function on $L^q(a,b;X)$ too.
Therefore, using \er{boundnes77889999shodimostfjklgknew},
\er{hgffckglimggknew} and \er{vhgjkhkgugkjjk} we finally obtain
\begin{multline}\label{hgffckglimggklimnew}
\inf\limits_{\theta\in
\mathcal{A}_{w_0}}J_{0}(\theta)=K\geq\frac{1}{2}\Big(\|w(b)\|_H^2-\|w_0\|_H^2\Big)+
\int\limits_a^b\Bigg\{\Psi_t\big(u(t)\big)+\Psi_t^*\bigg(-\frac{d
v}{dt}(t)-\Theta_t\big(u(t)\big)-A_t\big(S\cdot u(t)\big)\bigg)\\+
\Big<u(t),\Theta_t\big(u(t)\big)\Big>_{X\times
X^*}+\Big<u(t),A_t\big(S\cdot u(t)\big)\Big>_{X\times
X^*}\Bigg\}dt=J_0(u)\,.
\end{multline}
Thus $u$ is a minimizer to \er{hgffckaaq1gnew}. Moreover, all the
conditions of Theorem \ref{EulerLagrange} are satisfied and thus $u$
must satisfy $J_0(u)=0$ and \er{uravnllllgsnachnew}. Finally, by
Proposition \ref{gftufuybhjkojutydrdkk}, the solution to
\er{uravnllllgsnachnew} is unique.
\end{proof}
%%%%%%%%%%%%%%%%%%%%%%%%%%%%%%%%%%%%%%%%%%%%%%%%%%%%%%%%%%%%%%%%%%%%%%%%%%%%%%%%%%%%%%%%%%%%%%%%%%%%%%%%%%%%%%%%%%%%%%%%%%%%%%%%%%%%%%%%%%%%%%%%%%%%%%%%%%%%%%%%%%%%%%%%%%%%%%%%%%%%%%%%%%%%%%%%%%%%%%%%%%%%%%%%%%%%%%%%%%%%%%%%%%%%%%%%%%%%%%%%%%%%%%%%%%%%%%%%%%%%%%%%%%%%%%%%%%%%%%%%%%%%%%%%%%%%%%%%%%%%%%%%%%%%%%%%%%%%%%%%%%%%%%%%%%%%%%%%%%%%%%%%%%%%%%%%

%%%DDDD

As an important particular case of Proposition \ref{premainnew} we
have following statement:
\begin{theorem}\label{THSld}
Let $\{X,H,X^*\}$ be an evolution triple with the corresponding
inclusion operator $T\in \mathcal{L}(X;H)$, as it was defined in
Definition \ref{7bdf}, together with the corresponding operator
$\widetilde{T}\in \mathcal{L}(H;X^*)$, defined as in \er{tildet},
and let $a,b,q\in\R$ be s.t. $a<b$ and $q\geq 2$. Furthermore, for
every $t\in[a,b]$ let $\Psi_t(x):X\to[0,+\infty)$ be a strictly
convex function which is G\^{a}teaux differentiable at every $x\in
X$, satisfies $\Psi_t(0)=0$ and satisfies the growth condition
\begin{equation}\label{roststSLD}
(1/C_0)\,\|x\|_X^q-C_0\leq \Psi_t(x)\leq
C_0\,\|x\|_X^q+C_0\quad\forall x\in X,\;\forall t\in[a,b]\,,
\end{equation}
and the following uniform convexity condition
\begin{equation}\label{roststghhh77889lkagagSLD}
\Big<h,D\Psi_t(x+h)-D\Psi_t(x)\Big>_{X\times X^*}\geq
\frac{1}{C_0}\Big(\big\|x\big\|^{q-2}_X+1
%\mu^{\frac{q-2}{q}}(t)
\Big)\cdot\|h\|_X^2\quad\forall x,h\in X\;\,\forall t\in[a,b],
\end{equation}
for some $C_0>0$.
% and some nonnegative function $\mu(t)\in L^1(a,b;\R)$.
We also assume that $\Psi_t(x)$ is Borel on the pair
of variables $(x,t)$ (see Definition
\ref{fdfjlkjjkkkkkllllkkkjjjhhhkkk}).
%continuous on $t\in[a,b]$ uniformly by $x$.
Next let $Z$ be a Banach space, $S:X\to Z$ be a compact operator and
for every $t\in[a,b]$ let $F_t(z):Z\to X^*$ be a function, such that
$F_t$ is strongly Borel on the pair of variables $(z,t)$ and
G\^{a}teaux differentiable at every $z\in Z$, $F_t(0)\in
L^{q^*}\big(a,b;X^*\big)$ and the derivatives of $F_t$ satisfies the
growth conditions
\begin{equation}\label{roststlambdgnewSLD}
\big\|D F_t(S\cdot x)\big\|_{\mathcal{L}(Z;X^*)}\leq g\big(\|T\cdot
x\|\big)\,\Big(\|x\|_X^{q-2}+1
%\mu^{\frac{q-2}{q}}(t)
\Big)\quad\forall
x\in X,\;\forall t\in[a,b]\,,
\end{equation}
for some non-decreasing function $g(s):[0+\infty)\to (0,+\infty)$
%and the function $\mu(t)$, which is the same as in \er{roststghhh77889lkagagSLD}.
Moreover, assume that
%for every $t\in(0,T_0)$ we have
$\Psi_t$ and $F_t$ satisfy the following positivity condition:
\begin{multline}\label{MonotonegnewhghghghghSLD}
\Psi_t(x)+\Big<x,F_t(S\cdot x)\Big>_{X\times X^*}\geq \frac{1}{\bar
C}\,\|x\|^q_X -\bar C
%\Big(\|x\|^r_X+1\Big)
\|S\cdot x\|^{2}_Z-
%\bar C
\bar\mu(t)
%\Big(\|x\|^r_X+1\Big)
\Big(\|T\cdot x\|^{2}_H+1\Big)
%-\bar\mu(t)
\quad\forall x\in X,\;\forall t\in[a,b],
\end{multline}
%where $r\in[0,2)$ and
where $\bar C>0$ is some constants and $\bar\mu(t)\in L^1(a,b;\R)$
is a nonnegative function.
%
%
%
\begin{comment}
Furthermore, let $J(u):\mathcal{R}_{q}(a,b)\to\R$ (where
$\mathcal{R}_{q}(a,b)$ was defined in Definition \ref{defH}) be
defined by
\begin{multline}\label{hgffckgnewSLD}
J(u):=\frac{1}{2}\Big(\|w(b)\|_H^2-\|w(a)\|_H^2\Big)+\\
\int\limits_a^b\Bigg\{\Psi_t\big(u(t)\big)+\Psi_t^*\bigg(-\frac{d
v}{dt}(t)-F_t\big(S\cdot u(t)\big)\bigg)+ \Big<u(t),F_t\big(S\cdot
u(t)\big)\Big>_{X\times X^*}\Bigg\}dt\,,
\end{multline}
where $w(t):=T\cdot\big(u(t)\big)$, $v(t):=I\cdot
\big(u(t)\big)=\widetilde T\cdot \big(w(t)\big)$ with $I:=\widetilde
T\circ T:\,X\to X^*$ and we assume that $w(t)$ is $H$-weakly
continuous on $[a,b]$, as it was stated in Corollary
\ref{vbnhjjmcor}. Moreover, for every $w_0\in H$ consider the
minimization problem
\begin{equation}\label{hgffckaaq1gnewSLD}
\inf\Big\{J(u):\, u\in\mathcal{R}_{q}(a,b),\,w(a)=w_0\Big\}\,.
\end{equation}
\end{comment}
%
%
%
%
Furthermore, let $w_0\in H$ be such that $w_0=T\cdot u_0$ for some
$u_0\in X$, or more generally, $w_0\in H$ be such that
$\mathcal{A}_{w_0}:=\big\{u\in\mathcal{R}_{q}(a,b):\,w(a)=w_0\big\}\neq\emptyset$.
Then there exists a unique $u(t)\in\mathcal{R}_{q}(a,b)$, which
satisfies
%a unique minimizer to \er{hgffckaaq1gnewSLD}, which
\begin{equation}\label{uravnllllgsnachnewSLD}
\begin{cases}\frac{d v}{dt}(t)+F_t\big(S\cdot u(t)\big)+D\Psi_t\big(u(t)\big)=0\quad\text{for
a.e.}\; t\in(a,b)\,,\\ w(a)=w_0\,,\end{cases}
\end{equation}
where $w(t):=T\cdot\big(u(t)\big)$, $v(t):=I\cdot
\big(u(t)\big)=\widetilde T\cdot \big(w(t)\big)$ with $I:=\widetilde
T\circ T:\,X\to X^*$ and we assume that $w(t)$ is $H$-weakly
continuous on $[a,b]$, as it was stated in Corollary
\ref{vbnhjjmcor}.
\end{theorem}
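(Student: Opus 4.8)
The plan is to obtain the statement as the special case of Proposition \ref{premainnew} corresponding to the trivial decomposition \er{jgkjgklhklhjhiuhyh} with $A_t:=F_t$, $\Theta_t\equiv 0$ and $\mu(t)\equiv 1$, so that $\Lambda_t(x):=F_t(S\cdot x)$ and \er{uravnllllgsnachnew} becomes exactly \er{uravnllllgsnachnewSLD}. Most hypotheses of Definition \ref{defHkkkkgnew} transfer immediately: the requirements on $\Psi_t$ are those in \er{roststSLD}; the map $\Lambda_t(x)=F_t(S\cdot x)$ is G\^ateaux differentiable with $D\Lambda_t(x)=DF_t(S\cdot x)\circ S$, it is strongly Borel (composition of the strongly Borel $F_t$ with the continuous linear $S$), $\Lambda_t(0)=F_t(0)\in L^{q^*}(a,b;X^*)$, and the bound $\|D\Lambda_t(x)\|_{\mathcal L(X;X^*)}\le\|S\|_{\mathcal L(X;Z)}\,\|DF_t(S\cdot x)\|_{\mathcal L(Z;X^*)}$ combined with \er{roststlambdgnewSLD} yields \er{roststlambdgnew} (after enlarging $g$ by the factor $\|S\|_{\mathcal L(X;Z)}$ and writing $1=\mu^{(q-2)/q}(t)$); the weak-continuity conditions on $\Theta_t$ are vacuous since $\Theta_t\equiv 0$.

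The substantial point is to verify the monotonicity condition \er{Monotone} (with $\lambda=1$) and the positivity condition \er{Monotonegnewhghghghgh} for this choice, and the tool for both is the interpolation inequality for compact embeddings, Lemma \ref{Aplem1}: since $S:X\to Z$ is compact and $T:X\to H$ is injective, that lemma provides, for every $\e>0$, a constant $c_\e>0$ (which we may take non-increasing in $\e$) with
$$\|S\cdot h\|_Z\;\le\;\e\,\|h\|_X+c_\e\,\|T\cdot h\|_H\qquad\forall\,h\in X .$$
For the positivity I would apply this with a fixed $\e$ satisfying $2\bar C\e^2\le 1/(2\bar C)$, together with $\|S\cdot x\|_Z^2\le 2\e^2(\|x\|_X^q+1)+2c_\e^2\|T\cdot x\|_H^2$ (valid for $q\ge 2$ since $\|x\|_X^2\le\|x\|_X^q+1$); the term $\bar C\|S\cdot x\|_Z^2$ in \er{MonotonegnewhghghghghSLD} is then absorbed into $\tfrac1{2\bar C}\|x\|_X^q$ plus a constant multiple of $\|T\cdot x\|_H^2+1$, producing \er{Monotonegnewhghghghgh} with constant $2\bar C$ and a new $\bar\mu\in L^1(a,b;\R)$.

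For the monotonicity, the uniform convexity \er{roststghhh77889lkagagSLD} gives $\langle h,D\Psi_t(x+h)-D\Psi_t(x)\rangle\ge\tfrac1{C_0}(\|x\|_X^{q-2}+1)\|h\|_X^2$, while \er{roststlambdgnewSLD} gives $|\langle h,DF_t(S\cdot x)\cdot(S\cdot h)\rangle|\le g(\|T\cdot x\|_H)(\|x\|_X^{q-2}+1)\,\|h\|_X\,\|S\cdot h\|_Z$. Combining these by Young's inequality and then using the interpolation bound above with $\e$ chosen as a function of $\|T\cdot x\|_H$ (of order $1/g(\|T\cdot x\|_H)$), the contributions proportional to $\|h\|_X^2$ are swallowed by the convexity term, and what remains has the form $-\hat g(\|T\cdot x\|_H)(\|x\|_X^q+\hat\mu(t))\,\|T\cdot h\|_H^2$, i.e. \er{Monotone}; equivalently one may first obtain \er{Monotone1111} with $f(h,t)=\|h\|_X$ and then invoke Remark \ref{gdfgdghf}.

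Once all hypotheses of Proposition \ref{premainnew} are in place (note $\mathcal A_{w_0}\neq\emptyset$ is assumed, and holds automatically when $w_0=T\cdot u_0$), that proposition directly supplies a minimizer $u\in\mathcal R_q(a,b)$ of $J_0$ with $w(a)=w_0$, and with it the facts $J_0(u)=0$ and that $u$ is the unique solution of \er{uravnllllgsnachnew}; for $\Lambda_t=F_t\circ S$ this is precisely the assertion of the theorem (existence of the zero-energy minimizer uses Theorem \ref{EulerLagrange}, uniqueness uses Proposition \ref{gftufuybhjkojutydrdkk}, and the hypotheses of both are among those checked above). I expect the main obstacle to be exactly the derivation of \er{Monotone}: the error term coming from $DF_t$ is a priori controlled only by $\|h\|_X\,\|S\cdot h\|_Z$, which does not match the required bound involving $\|T\cdot h\|_H^2$; the compact-embedding interpolation inequality is what resolves this, letting one trade $\|S\cdot h\|_Z$ for an arbitrarily small multiple of $\|h\|_X$ (absorbed by the uniform convexity of $\Psi_t$) at the price of a large multiple of $\|T\cdot h\|_H$, which is harmless because it only inflates the non-decreasing factor $\hat g$.
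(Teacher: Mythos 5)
Your plan coincides with the paper's own proof: the paper likewise feeds the theorem into Proposition \ref{premainnew} with $A_t:=F_t$ and $\Theta_t\equiv 0$, and verifies the monotonicity condition \er{Monotone} and the positivity condition \er{Monotonegnewhghghghgh} by exactly the interpolation inequality of Lemma \ref{Aplem1}, choosing $\e$ of order $1/(2C_0\,g(\|T\cdot x\|_H))$ so that the $\|h\|_X\,\|S\cdot h\|_Z$ error is absorbed by the uniform convexity \er{roststghhh77889lkagagSLD} at the cost of a nondecreasing factor $\hat g(\|T\cdot x\|_H)$ multiplying $\|T\cdot h\|_H^2$, and a fixed $\e$ for the positivity term. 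Your identification of the interpolation step as the crux, and your handling of it, are exactly what the paper does, so the proposal is correct and essentially identical in approach.
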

\begin{proof}
%Denote by $\Psi^*$ the Legendre dual of $\Psi$,
%defined by
%$$\Psi^*(\theta):=\sup\big\{<\zeta,\theta>_{Y\times Y^*}-\Phi_t(\zeta):\;\zeta\in Y\big\}\quad\quad\forall \theta\in Y^*\,.$$
Using \er{roststghhh77889lkagagSLD} and \er{roststlambdgnewSLD}, we
obtain:
\begin{multline}\label{MonotonegnewagagSLD}
\bigg<h,\Big\{D\Psi_t(x+h)-D\Psi_t(x)\Big\}+ D F_t(S\cdot
x)\cdot (S\cdot h)\bigg>_{X\times X^*}\geq\\
\frac{1}{C_0}\Big(\big\|x\big\|^{q-2}_X+1
%+\mu^{\frac{q-2}{q}}(t)
\Big)\cdot\|h\|_X^2 - g\big(\|T\cdot
x\|_H\big)\cdot\Big(\big\|x\big\|^{q-2}_X+1
%\mu^{\frac{q-2}{q}}(t)
\Big)\cdot\big\|
h\big\|_X\cdot\big\|S\cdot h\big\|_Z\quad\forall x,h\in X\;\forall
t\in[a,b]\,,
\end{multline}
On the other hand since $S$ is a compact operator, by Lemma
\ref{Aplem1} from the Appendix, there exists a nondecreasing
function $\hat g(s):[0,+\infty)\to(0,+\infty)$ such that
$$\big\|S\cdot h\big\|_Z\leq \frac{1}{2C_0g\big(\|T\cdot x\|_H\big)}\|h\|_X+\hat g\big(\|T\cdot x\|_H\big)\big\|T\cdot h\big\|_H
\quad\forall x,h\in X.$$ Plugging it into \er{MonotonegnewagagSLD}
we deduce
\begin{multline}\label{MonotonegnewagaghjgjSLD}
\bigg<h,\Big\{D\Psi_t(x+h)-D\Psi_t(x)\Big\}+ D F_t(S\cdot x)\cdot
(S\cdot h)\bigg>_{X\times
X^*}\geq\frac{1}{2C_0}\Big(\big\|x\big\|^{q-2}_X+1
%\mu^{\frac{q-2}{q}}(t)
\Big)\cdot\|h\|_X^2\\
%\frac{1}{2C_0}\Big(\big\|S\cdot y\big\|^{q-2}_Y+1\Big)\cdot\|h\|_Y^2
- g\big(\|T\cdot x\|_H\big)\hat g\big(\|T\cdot
x\|_H\big)\cdot\Big(\big\|x\big\|^{q-2}_X+1
%\mu^{\frac{q-2}{q}}(t)
\Big)\cdot\|h\|_X\cdot\big\|T\cdot h\big\|_H\\ \geq -C_0
g^2\big(\|T\cdot x\|_H\big)\hat g^2\big(\|T\cdot
x\|_H\big)\cdot\Big(\big\|x\big\|^{q-2}_X+1
%\mu^{\frac{q-2}{q}}(t)
\Big)\cdot\big\|T\cdot
h\big\|^2_H \quad\forall x,h\in X\;\forall t\in[a,b].
\end{multline}
%%%%%%%%%%%%%%%%%%%%%%%%%%%%%%%%%%%%%%%%%%%%%%%%%%%%%%%%%%%%%%%%%%%%%%%%%%%%%%%%%%%%%%%%%%%%%%%%%%%%%%%%%%%%%%%%%%%%%%%%%%%%%%%%%%%%%%%%%%%%%%%%%%%%%%%%%%%%%%%%%%%%%%%%%%%%%%%%%%%%%%%%%%%%%%%%%%%%%%%%%%%%%%%%%%%%%%%%%%%%%%%%%%%%%%%%%%%%%%%%%%%%%%%%%%%%%%%%%%%%%%%%%%%%%%%%%%%%%%%%%%%%%%%%%%%%%%%%%%%%%%%%%%%%%%%%%%%%%%%%%%%%%%%%%%%%%%%%%%%%%%%%%%%%%%%%%%%%%%%%%%%%%%%%%%%
%
Similarly, by Lemma \ref{Aplem1}, there exists a constant $K>0$ such
that
$$\big\|S\cdot x\big\|^2_Z\leq \frac{1}{2\bar C^2}\,\|x\|^2_X+K\big\|T\cdot x\big\|^2_H
\quad\forall x\in X.$$ Plugging it into
\er{MonotonegnewhghghghghSLD} we obtain
\begin{multline}\label{MonotonegnewhghghghghSLDhhh}
\Psi_t(x)+\Big<x,\Lambda_t(S\cdot x)\Big>_{X\times X^*}\geq
\frac{1}{2\bar C}\Big(2\|x\|^q_X -\|x\|^2_X\Big)
%\Big(\|x\|^r_X+1\Big)
-
%\bar C
\big(\bar\mu(t)+\bar C K
%+1
\big)
%\Big(\|x\|^r_X+1\Big)
\Big(\|T\cdot x\|^{2}_H+1\Big)\\
\geq\frac{1}{2\bar C}\,\|x\|^q_X
%\Big(\|x\|^r_X+1\Big)
-
%\bar C
\big(\bar\mu(t)+\tilde K\big)
%\Big(\|x\|^r_X+1\Big)
\Big(\|T\cdot x\|^{2}_H+1\Big)
%-\bar\mu(t)
\quad\forall x\in X,\;\forall t\in[a,b],
\end{multline}
where $\tilde K>0$ is a constant. Thus the result follows by
applying Proposition \ref{premainnew} with
\er{MonotonegnewagaghjgjSLD} substituting \er{Monotone} and
\er{MonotonegnewhghghghghSLDhhh} substituting
\er{Monotonegnewhghghghgh}.
\end{proof}

\section{An example of the application to PDE}\label{dkgfkghfhkljl}
\subsection{Notations in the present section}
For a $p\times q$ matrix $A$ with $ij$-th entry $a_{ij}$ we denote
by $|A|=\bigl(\Sigma_{i=1}^{p}\Sigma_{j=1}^{q}a_{ij}^2\bigr)^{1/2}$
the
Frobenius norm of $A$.\\
For two matrices $A,B\in\R^{p\times q}$  with $ij$-th entries
$a_{ij}$ and $b_{ij}$ respectively, we write
$A:B\,:=\,\sum\limits_{i=1}^{p}\sum\limits_{j=1}^{q}a_{ij}b_{ij}$.\\
Given a vector valued function
$f(x)=\big(f_1(x),\ldots,f_k(x)\big):\O\to\R^k$ ($\O\subset\R^N$) we
denote by $\nabla_x f$ the $k\times N$ matrix with
$ij$-th entry $\frac{\partial f_i}{\partial x_j}$.\\
For a matrix valued function $F(x):=\{F_{ij}(x)\}:\R^N\to\R^{k\times
N}$ we denote by $div\,F$ the $\R^k$-valued vector field defined by
$div\,F:=(l_1,\ldots,l_k)$ where
$l_i=\sum\limits_{j=1}^{N}\frac{\partial F_{ij}}{\partial x_j}$.
%\\
%For $u=(u_1,\ldots,u_p)\in\R^p$ and $v=(v_1,\ldots,v_q)\in\R^q$ we
%denote by $\vec u\otimes\vec v$ the $p\times q$ matrix with $ij$-th
%entry $u_i v_j$.
\subsection{A parabolic system in a divergent form}
%Let $\Omega\subset\R^N$ be a bounded open set.
Let $\Phi(A,x,t):\R_A^{k\times N}\times\R_x^N\times\R_t\to\R$ be a
nonnegative measurable function. Moreover assume that $\Phi(A,x,t)$
is $C^1$ as a function of the first argument $A$ when $(x,t)$ are
fixed, which satisfies $\Phi(0,x,t)=0$ and it is uniformly convex by
the first argument $A$
%when $(x,t)$ are fixed,
i.e. there exists a constant $\tilde C>0$ such that,
%$$\Psi\big(\alpha A_1+(1-\alpha)A_2,x,t\big)\leq\alpha\Psi(A_1,x,t)+(1-\alpha)\Psi(A_2,x,t)$$
%for every $\alpha\in[0,1]$,
$$\bigg(D_A\Phi\big(A_1,x,t\big)-D_A\Phi\big(A_2,x,t\big)\bigg):\big(A_1-A_2\big)\geq \tilde C\big|A_1-A_2\big|^2$$
for every $A_1,A_2\in\R^{k\times N}$, $x\in\R^N$ and $t\in\R$, where
$$D_A\Phi(A,x,t):=\bigg\{\frac{\partial \Phi}{\partial A_{ij}}(A,x,t)\bigg\}_{1\leq i\leq k,1\leq j\leq N}\in\R^{k\times N}\,.$$
Moreover, we assume that $\Phi$ satisfies the following growth
condition
\begin{multline}\label{roststglkjjjaplneabst} (1/C)|A|^q-|g_0(x)|\leq
\Phi(A,x,t)\leq C|A|^q+|g_0(x)| \quad\quad\forall A\in\R^{k\times
N},\;\forall x\in\R^N,\;\forall t\in\R\,,
\end{multline}
where $C>0$ is some constant, $g_0(x)\in L^1(\R^N,\R)$
%is an nonnegative function
%satisfying $g(x)\geq 0$,
and $q\in [2,+\infty)$.
%
%
%
%
\begin{comment}
Next let $\Gamma(A,x,t):\R_A^{k\times
N}\times\R_x^N\times\R_t\to\R^{k\times N}$ be a measurable function.
Moreover assume that $\Gamma(A,x,t)$ is $C^1$ as a function of the
first argument $A$ when $(x,t)$ are fixed, which satisfies,
\begin{equation}\label{jeheefkeplpl}
%\big|\Gamma(0,x,t)\big|\leq |g_1(x)|\,,
\Gamma(0,x,t)\in L^{q^*}\big(\R;L^{2}(\R^N,\R^{k\times N})\big)\,,
\end{equation}
%where
%$g_1(x)\in L^2(\R^N,\R)$ and
%$g_1(x)\in L^{q^*}(\R^N,\R)$ with
%$q^*=q/(q-1)$,
the following monotonicity condition
\begin{equation}\label{Monotoneglkjjjaplneabst}
\sum\limits_{1\leq j,n\leq N}\sum\limits_{1\leq i,m\leq k}
H_{ij}H_{mn}\frac{\partial \Gamma_{mn}}{\partial A_{ij}}(A,x,t)\geq
0 \quad\quad\forall H,A\in \R^{k\times N},\;\forall
x\in\R^N,\;\forall t\in\R\,,
\end{equation}
and the following growth condition
\begin{multline}\label{roststlambdglkjjjaplneabst}
\bigg|\frac{\partial \Gamma}{\partial A_{ij}}(A,x,t)\bigg|\leq
C\,|A|^{q-2}+ C\\ \forall A\in \R^{k\times N},\;\forall x\in
\R^N,\;\forall t\in\R,\quad\forall i\in\{1,\ldots,k\},\;\forall
j\in\{1,\ldots,N\}\,,
\end{multline}
where $C>0$ is some constant.
\end{comment}
%
%
%
%
%
Finally, let $\Xi(B,x,t):\R_B^k\times\R_x^N\times\R_t\to\R^{k\times
N}$ and $\Theta(B,x,t):\R_B^k\times\R_x^N\times\R_t\to\R^{k}$ be two
measurable functions. Moreover, assume that $\Xi(B,x,t)$ and
$\Theta(B,x,t)$ are $C^1$ as a functions of the first argument $B$
when $(x,t)$ are fixed. We also assume that $\Xi(B,x,t)$ and
$\Theta(B,x,t)$ are globally Lipschitz by the first argument $B$ and
satisfy
\begin{equation}\label{jeheefkeplplgryrtu}
%\big|\Xi(0,x,t)\big|+\big|\Theta(0,x,t)\big|\leq |g_2(x)|\,,
\Xi(0,x,t)\in L^{q^*}\big(\R;L^{2}(\R^N,\R^{k\times
N})\big),\;\;\Theta(0,x,t)\in
L^{q^*}\big(\R;L^{2}(\R^N,\R^k)\big)\,.
\end{equation}
%where $g_2(x)\in L^2(\R^N,\R)$.
\begin{proposition}\label{divform}
Let $\Phi,
%\Gamma,
\Xi,\Theta$ be as above and let $\Omega\subset\R^N$
be a bounded open set, $2\leq q<+\infty$ and $T_0>0$.
%Moreover, in the
%case of unbounded $\,\O$ we also assume that $q=2$ and that the
%functions $\Theta(u,x,t)$ and $\Xi(u,x,t)$ are independent on the
%first variable, namely $u$.
Then for every $w_0(x)\in
%L^2(\O,\R^k)
W^{1,q}_0(\O,\R^k)$ there exists unique $u(x,t)\in
L^q\big(0,T_0;W_0^{1,q}(\O,\R^k)\big)$, such that $u(x,t)\in
L^\infty\big(0,T_0;L^2(\O,\R^k)\big)\cap W^{1,q^*}\big(0,T_0;
W^{-1,q^*}(\O,\R^k)\big)$, where $q^*:=q/(q-1)$, $u(x,t)$ is
$L^2(\O,\R^k)$-weakly continuous on $[0,T_0]$, $u(x,0)=w_0(x)$ and
$u(x,t)$ is a solution to
\begin{multline}\label{uravnllllgsnachlklimjjjaplneabst}
\frac{\partial u}{\partial t}(x,t)=\Theta\big(u(x,t),x,t\big)+div_x
\Big(\Xi\big(u(x,t),x,t\big)\Big)+
%div_x\Big(\Gamma\big(\nabla_x u(x,t),x,t\big)\Big)+
div_x \Big(D_A\Phi\big(\nabla_x
u(x,t),x,t\big)\Big)\quad\text{in}\;\;\O\times(0,T_0)\,,
%\quad\text{for a.e.}\; (x,t)\in\O\times(0,T_0)\,,
\end{multline}
%%%%%where
%%%%%$$D_A\Psi(A,x,t):=\bigg\{\frac{\partial \Psi}{\partial A_{ij}}(A,x,t)\bigg\}_{1\leq i\leq k,1\leq j\leq N}\in\R^{k\times N}\,.$$
%%%Moreover
%%%if $\Psi(A,x,t)$ is a uniformly convex function by the first argument $A$ then
%%%such a solution $u$ is unique.
%Here by $\bar
%W_0^{1,q}(\O,\R^k)$ we mean the closure of $C_c^\infty(\O,\R^k)$
%with respect to the norm
%$\|\phi\|:=\big(\int_\O|\nabla\phi|^q\big)^{1/q}$ (this space differ
%from $W_0^{1,q}(\O,\R^k)$ only in the case of unbounded $\O$), $\bar
%W^{-1,q^*}(\O,\R^k)$ is the corresponding dual space and
%$$\bar W^{-1,q^*}(\O,\R^k)+L^2(\O,\R^k):=\Big\{\phi:\,\phi=\phi_1+\phi_2,\;\text{where}\;\phi_1\in \bar W^{-1,q^*}(\O,\R^k),\;\phi_2\in
%L^2(\O,\R^k)\Big\}$$ is a dual to $\bar W_0^{1,q}(\O,\R^k)\cap
%L^2(\O,\R^k)$ space.
\end{proposition}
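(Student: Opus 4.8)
The plan is to realise \er{uravnllllgsnachlklimjjjaplneabst} as an instance of the abstract evolution \er{abstprrrrcnjkghjkhggg} and to invoke \rcor{CorCorCor} (equivalently, \rprop{premainnewEx} together with \rth{EulerLagrangeInt} and \rprop{gftufuybhjkojutydrdkk}). First I set $X:=W^{1,q}_0(\O,\R^k)$, $H:=L^2(\O,\R^k)$, so that $X^*=W^{-1,q^*}(\O,\R^k)$ with $q^*:=q/(q-1)$, and I let $T:X\to H$ be the natural embedding. Since $\O$ is bounded and $q\geq 2$, the Sobolev exponent $Nq/(N-q)$ (when $q<N$) exceeds $2$, so $T$ is injective, has dense image, and is compact by Rellich--Kondrachov; thus $\{X,H,X^*\}$ is an evolution triple and the associated operator $I=\widetilde T\circ T:X\to X^*$ is simply the inclusion $W^{1,q}_0\hookrightarrow L^2\hookrightarrow W^{-1,q^*}$ induced by the $L^2$ pairing. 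Next I define $\Psi_t(u):=\int_\O\Phi\big(\nabla_x u(x),x,t\big)\,dx$ on $X$, I take $Z:=L^2(\O,\R^k)$ and $S:X\to Z$ the (compact) embedding, and I set $F_t=A_t:Z\to X^*$ by $A_t(z):=-\Theta(z,\cdot,t)-\Div_x\big(\Xi(z,\cdot,t)\big)$; then the operator $\Lambda_t$ of the abstract theory is $\Lambda_t(u)=A_t(S\cdot u)$, i.e. the remainder term $\Theta_t$ in the decomposition \er{jgkjgklhklhj} is taken to be $0$, so all weak-continuity requirements on it are vacuous.

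I then verify that $\Psi_t$ and $A_t$ satisfy the hypotheses of \rprop{premainnewEx} (note that the uniform convexity hypothesis \er{roststSLDInt}--\er{roststlambdgnewSLDInt} of \rth{THSldInt} is genuinely too strong here, since $\Phi$ is only $L^2$-uniformly convex, not $W^{1,q}$-uniformly convex, so we must check the weaker monotonicity condition \er{Monotonerrr} directly). G\^ateaux differentiability of $\Psi_t$, with $D\Psi_t(u)=-\Div_x\big(D_A\Phi(\nabla_x u,\cdot,t)\big)$, strict convexity, $\Psi_t(0)=0$ and the growth bound \er{roststrrr} follow from $\Phi(0,\cdot,t)=0$, the uniform convexity of $\Phi$ in $A$, the growth condition \er{roststglkjjjaplneabst} (with $g_0\in L^1$ supplying the additive constants), Poincar\'e's inequality and dominated convergence; Borel measurability of $(u,t)\mapsto\Psi_t(u)$ is a routine Carath\'eodory argument. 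Since $\Theta(\cdot,x,t)$ and $\Xi(\cdot,x,t)$ are globally Lipschitz and $C^1$ in $B$, the corresponding Nemytskii operators are globally Lipschitz and G\^ateaux differentiable from $L^2$ into $L^2$; composing with $\Div_x:L^2\to W^{-1,2}\hookrightarrow W^{-1,q^*}$ and with the embedding $L^2\hookrightarrow W^{-1,q^*}$ shows that $A_t:Z\to X^*$ is globally Lipschitz, G\^ateaux differentiable with $\|DA_t(z)\|_{\mathcal L(Z;X^*)}$ bounded uniformly in $(z,t)$, strongly Borel, and $A_t(0)\in L^{q^*}(0,T_0;X^*)$ by \er{jeheefkeplplgryrtu}; in particular the growth condition \er{roststlambdgnewjjjjjEx} holds with a constant $g$.

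The two conditions that use the particular structure, and which I expect to be the crux of the proof, are the monotonicity condition \er{Monotonerrr} (i.e. \er{Monotone}) and the positivity condition \er{MonotonegnewhghghghghEx}. For the monotonicity, with $\lambda=1$, the uniform convexity of $\Phi$ gives $\langle h,D\Psi_t(x+h)-D\Psi_t(x)\rangle_{X\times X^*}=\int_\O\big(D_A\Phi(\nabla_x x+\nabla_x h,\cdot,t)-D_A\Phi(\nabla_x x,\cdot,t)\big):\nabla_x h\,dx\geq \tilde C\|\nabla_x h\|_{L^2}^2$, while the global Lipschitz bounds yield $\langle h,D\Lambda_t(x)\cdot h\rangle_{X\times X^*}=-\int_\O D_B\Theta(x,\cdot,t)h\cdot h\,dx+\int_\O\big(D_B\Xi(x,\cdot,t)h\big):\nabla_x h\,dx\geq -L\|h\|_{L^2}^2-L\|h\|_{L^2}\|\nabla_x h\|_{L^2}$; absorbing the cross term by Young's inequality into $\tfrac{\tilde C}{2}\|\nabla_x h\|_{L^2}^2$ leaves a lower bound of the form $-C'\|h\|_{L^2}^2=-C'\|T\cdot h\|_H^2$, which is \er{Monotonerrr} with $\hat g$ constant and $\hat\mu\equiv 0$. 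For the positivity, \er{roststglkjjjaplneabst} and Poincar\'e give $\Psi_t(x)\geq c\|x\|_X^q-C$, and $\langle x,\Lambda_t(x)\rangle_{X\times X^*}=-\int_\O\Theta(x,\cdot,t)\cdot x\,dx+\int_\O\Xi(x,\cdot,t):\nabla_x x\,dx$ is, via the Lipschitz bounds, the continuous embedding $L^q(\O)\hookrightarrow L^2(\O)$ (here $q\geq2$) and Young's inequality, bounded below by $-\varepsilon\|x\|_X^q-\bar\mu(t)\big(\|x\|_{L^2}^2+1\big)$ with $\bar\mu\in L^1(0,T_0)$ (the $t$-dependence coming from $\|\Theta(0,\cdot,t)\|_{L^2}^{q^*}+\|\Xi(0,\cdot,t)\|_{L^2}^{q^*}$, integrable by \er{jeheefkeplplgryrtu}); choosing $\varepsilon$ small yields \er{MonotonegnewhghghghghEx}.

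Finally, since $w_0\in W^{1,q}_0(\O,\R^k)=X$ we have $w_0=T\cdot w_0$, so the admissible set $\mathcal A_{w_0}$ is nonempty (it contains the constant-in-time function $u\equiv w_0$), and \rcor{CorCorCor} provides a unique $u\in\mathcal R_q$ with $I\cdot u\in W^{1,q^*}(0,T_0;X^*)$, $I\cdot u(0)=\widetilde T\cdot w_0$, solving $\tfrac{d}{dt}(I\cdot u)+\Lambda_t(u)+D\Psi_t(u)=0$ a.e. Unwinding the identifications $\tfrac{d}{dt}(I\cdot u)=\partial_t u$ (in $W^{-1,q^*}$), $D\Psi_t(u)=-\Div_x\big(D_A\Phi(\nabla_x u,\cdot,t)\big)$ and $\Lambda_t(u)=-\Theta(u,\cdot,t)-\Div_x\big(\Xi(u,\cdot,t)\big)$ turns this equation into exactly \er{uravnllllgsnachlklimjjjaplneabst}; moreover \rlemma{lem2} gives $u\in L^\infty\big(0,T_0;L^2(\O,\R^k)\big)$ with an $L^2$-weakly continuous representative satisfying $u(\cdot,0)=w_0$, and the growth bounds on $D\Psi_t$ (via \er{rostgrad}) and on $A_t$ give $\partial_t u\in L^{q^*}\big(0,T_0;W^{-1,q^*}(\O,\R^k)\big)$, which is the asserted regularity. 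Uniqueness of $u$ in this class is part of the cited results.
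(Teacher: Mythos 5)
Your proposal follows essentially the same route as the paper's own proof: the same evolution triple $X=W^{1,q}_0(\O,\R^k)$, $H=L^2(\O,\R^k)$ with the compact embedding $T$, the same energy $\Psi_t(u)=\int_\O\Phi(\nabla_x u,x,t)\,dx$, the same lower-order operator realized as $A_t\circ T$ with $Z=H$ and $\Theta_t\equiv 0$ in the decomposition \er{jgkjgklhklhj}, and the same appeal to Corollary \ref{CorCorCor}. The only difference is one of detail: you explicitly verify the monotonicity condition \er{Monotonerrr} and the positivity condition \er{MonotonegnewhghghghghEx} (via the uniform convexity of $\Phi$, the Lipschitz bounds on $\Xi,\Theta$ and Young's inequality), steps the paper leaves implicit when it asserts that all hypotheses of the corollary are satisfied.
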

\begin{proof}
Let $X:=W_0^{1,q}(\O,\R^k)$
%$\cap L^2(\O,\R^k)$
%for $2\leq
%q<+\infty$ with the norm
%\begin{equation}\label{ghkgjgiooioioppil}
%\|\phi\|_X:=\bigg(\int_\O\big|\nabla\phi(x)\big|^qdx\bigg)^{1/q}+k_\O\bigg(\int_\O\big|\phi(x)\big|^2dx\bigg)^{1/2}\,,
%\end{equation}
(a separable reflexive Banach space), $H:=L^2(\O,\R^k)$ (a Hilbert
space) and $T\in \mathcal{L}(X;H)$ be a usual embedding operator
from $W_0^{1,q}(\O,\R^k)$ into $L^2(\O,\R^k)$. Then $T$ is an
injective inclusion with dense image. Furthermore,
$X^*=W^{-1,q^*}(\O,\R^k)$ where $q^*=q/(q-1)$ and the corresponding
operator $\widetilde{T}\in \mathcal{L}(H;X^*)$, defined as in
\er{tildet}, is a usual inclusion of $L^2(\O,\R^k)$ into
$W^{-1,q^*}(\O,\R^k)$.
%(remember that if $\O$ is unbounded then $q=2$).
Then $\{X,H,X^*\}$ is an evolution triple with the corresponding
inclusion operators $T\in \mathcal{L}(X;H)$ and $\widetilde{T}\in
\mathcal{L}(H;X^*)$, as it was defined in Definition \ref{7bdf}.
Moreover, by the Theorem about the compact embedding in Sobolev
Spaces it is well known that
%if $\O$ is bounded then
$T$ is a compact operator.

Next, for every $t\in[0,T_0]$ let $\Psi_t(x):X\to[0,+\infty)$ be
defined by
\begin{equation*}
\Psi_t(u):=\int_\O\Phi\big(\nabla u(x),x,t\big)dx
%+\frac{k_\O}{2}\int_\O|u(x)|^2dx
\quad\forall u\in
W^{1,q}_0(\O,\R^k)\equiv X\,.
\end{equation*}
%where
%\begin{equation}\label{ghkgjgiooioioppiljkjk}
%k_\O:=
%\begin{cases}
%0\quad\quad\text{if}\;\;\O\;\;\text{is bounded}\,,\\
%1\quad\quad\text{if}\;\;\O\;\;\text{is unbounded}\,.
%\end{cases}
%\end{equation}
Then
%$\Phi_t(x)$ is continuous on $t\in[0,T]$ uniformly by $x$,
$\Psi_t(x)$ is G\^{a}teaux differentiable at every $x\in X$, satisfy
$\Psi_t(0)=0$ and by \er{roststglkjjjaplneabst} it satisfies the
growth condition
%\er{roststglk}.
\begin{equation*}
%\label{roststglkjjjapljjj}
(1/C)\,\|x\|_X^q-C\leq \Psi_t(x)\leq C\,\|x\|_X^q+C\quad\forall x\in
X,\;\forall t\in[0,T]\,,
\end{equation*}
%
%
%
%
\begin{comment}
Furthermore, for every $t\in[0,T_0]$ let $\Lambda_t(x):X\to X^*$ be
defined by
\begin{equation*}
\Big<\delta,\Lambda_t(u)\Big>_{X\times X^*}:=\int_\O
\Gamma\big(\nabla u(x),x,t\big):\nabla\delta(x)\,dx\quad\forall
u,\delta\in W^{1,q}(\O,\R^k)\equiv X\,.
\end{equation*}
Then
%$\Lambda_t(x)$ is continuous on $t\in[0,T]$ uniformly by $x$,
$\Lambda_t(x):X\to X^*$ is G\^{a}teaux differentiable at every $x\in
X$, and, by \er{roststlambdglkjjjaplneabst} its derivative satisfies
the growth condition
\begin{equation*}
%\label{roststlambdglkjjjapljjj}
\|D\Lambda_t(x)\|_{\mathcal{L}(X;X^*)}\leq C\,\|x\|_X^{q-2}+
C\quad\forall x\in X,\;\forall t\in[0,T_0]\,,
\end{equation*}
for some $C>0$. Moreover, by \er{Monotoneglkjjjaplneabst},
$\Lambda_t$ satisfy the following monotonicity conditions
\begin{equation*}
%\label{Monotoneglkjjjapljjj}
\Big<h,D\Lambda_t(x)\cdot h\Big>_{X\times X^*}\geq 0
%k_0\|h\|^2_X
%-\hat C\|h\|^p_Y\cdot\|P\cdot h\|^{(2-p)}_H
\quad\quad\forall x,h\in X\;\forall t\in[0,T_0]\,.
\end{equation*}
\end{comment}
%
%
%
%
Finally, for every $t\in[0,T_0]$ let $\Lambda_t(w):H\to X^*$ be
defined by
\begin{multline}\label{jhfjggkjkjhhkhkloopll}
\Big<\delta,\Lambda_t(w)\Big>_{X\times X^*}:=\int_\O
\bigg\{\Xi\big(w(x),x,t\big):\nabla\delta(x)-
%\Big(k_\O w(x)+
\Theta\big(w(x),x,t\big)
%\Big)
\cdot\delta(x)\bigg\}dx\\
\forall w\in L^2(\O,\R^k)\equiv H,\; \forall\delta\in
W^{1,q}(\O,\R^k)\equiv X\,.
\end{multline}
Then
%$F_t(w)$ is continuous on $t\in[0,T_0]$ uniformly by $w$,
$\Lambda_t(w)$ is G\^{a}teaux differentiable at every $w\in H$, and,
since $\Xi$ and $\Theta$ are Lipshitz functions, the derivative of
$\Lambda_t(w)$ satisfy the Lipschitz condition
\begin{equation}
\label{roststlambdglkFFjjjapljjjkhkl}
\|D\Lambda_t(w)\|_{\mathcal{L}(H;X^*)}\leq C\quad\forall w\in
H,\;\forall t\in[0,T_0]\,,
\end{equation}
for some $C>0$. Thus all the conditions of
%Theorem \ref{THSldInt}
Corollary \ref{CorCorCor} are satisfied and therefore, by its
statement, there exists unique $u(t)\in L^q\big(0,T_0;X\big)$, such
that $(\widetilde T\circ T)\cdot u(t)\in
W^{1,q^*}\big(0,T_0;X^*\big)$, $T\cdot u(0)=w(0)$ and
$\frac{d}{dt}(\widetilde T\circ T)\cdot u(t)+\Lambda_t\big(T\cdot
u(t)\big)+D\Psi_t\big(u(t)\big)=0$. This completes the proof.
\end{proof}

\appendix
\section{Appendix}
\begin{lemma}\label{Aplem1}
Let $X$, $Y$ and $Z$ be three Banach spaces, such that $X$
%and $Z$
%are reflexive.
is a reflexive space. Furthermore, let $T\in \mathcal{L}(X;Y)$ and
$S\in \mathcal{L}(X;Z)$ be bounded linear operators. Moreover assume
that $S$ is an injective inclusion (i.e. it satisfies $\ker
S=\{0\}$) and $T$
%and $S$ are
is a compact operator. Then for each $\e>0$ there exists some
constant $c_\e>0$ depending on $\e$ (and on the spaces $X$, $Y$, $Z$
and on the operators $T$, $S$) such that
\begin{equation}\label{Eqetaenerap}
\big\|T\cdot h\big\|_Y\leq\e\big\|h\big\|_X+c_\e\big\|S\cdot
h\big\|_Z\quad\forall h\in X\,.
\end{equation}
%Assume that $a,b\in\R$ such that $a<b$,
%$1<q<+\infty$ and $\{u_n(t)\}\subset L^q(a,b;X)$ is a bounded in
%$L^q(a,b;X)$ sequence of functions, such that the functions
%$v_n(t):(a,b)\to Z$, defined by $v_n(t):=S\cdot\big(u_n(t)\big)$,
%belongs to $W^{1,1}(a,b;Z)$ and the sequence
%$\big\{\frac{dv_n}{dt}(t)\big\}$ is bounded in $L^1(a,b;Z)$. Then,
%up to a subsequence,
%\begin{equation}\label{staeae1glllukap}
%\big\{T\cdot\big(u_n(t)\big)\big\}\quad\text{converges strongly in }
%L^q(a,b;Y)\,.
%\end{equation}
\end{lemma}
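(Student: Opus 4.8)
The statement to prove is Lemma~\ref{Aplem1}: a compact-operator interpolation inequality. The plan is to argue by contradiction, exploiting compactness of $T$ together with injectivity of $S$ and reflexivity of $X$.

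First I would fix $\e>0$ and suppose, for contradiction, that no constant $c_\e$ works; then for every $n\in\mathbb{N}$ there exists $h_n\in X$ with $\|T\cdot h_n\|_Y>\e\|h_n\|_X+n\|S\cdot h_n\|_Z$. Since this inequality is homogeneous of degree one in $h_n$, I may normalize and set $\xi_n:=h_n/\|h_n\|_X$, so that $\|\xi_n\|_X=1$ and $\|T\cdot\xi_n\|_Y>\e+n\|S\cdot\xi_n\|_Z$ for every $n$. From $\|\xi_n\|_X=1$ we get $\|T\cdot\xi_n\|_Y\le\|T\|_{\mathcal L(X;Y)}$, hence $\|S\cdot\xi_n\|_Z<\|T\|_{\mathcal L(X;Y)}/n\to 0$, i.e. $S\cdot\xi_n\to 0$ strongly in $Z$.

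Next, since $\{\xi_n\}$ is bounded in the reflexive space $X$, up to a subsequence $\xi_n\rightharpoonup\xi$ weakly in $X$. Then $S\cdot\xi_n\rightharpoonup S\cdot\xi$ weakly in $Z$, and comparing with $S\cdot\xi_n\to 0$ we obtain $S\cdot\xi=0$; since $S$ is injective, $\xi=0$, so $\xi_n\rightharpoonup 0$ weakly in $X$. Because $T$ is a compact operator, it maps this weakly convergent sequence to a strongly convergent one: $T\cdot\xi_n\to 0$ strongly in $Y$. But from the normalized inequality we have $\|T\cdot\xi_n\|_Y>\e$ for every $n$, which contradicts $T\cdot\xi_n\to 0$. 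This contradiction proves \er{Eqetaenerap}.

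I do not anticipate a serious obstacle here; the only points requiring a little care are the reduction-by-homogeneity step (legitimate since $h_n\neq 0$, as the strict inequality forces $\|T\cdot h_n\|_Y>0$) and the use of the standard fact that a compact linear operator sends weakly convergent sequences to norm-convergent ones. Everything else is a routine contradiction argument, and all the needed ingredients (reflexivity, injectivity of $S$, compactness of $T$) are exactly the hypotheses of the lemma.
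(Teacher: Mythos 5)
Your proof is correct and follows essentially the same route as the paper's own argument in the Appendix: the identical contradiction setup, normalization $\xi_n:=h_n/\|h_n\|_X$, the deduction $S\cdot\xi_n\to 0$ strongly, weak compactness from reflexivity of $X$, injectivity of $S$ forcing $\xi=0$, and complete continuity of the compact operator $T$ yielding the contradiction with $\|T\cdot\xi_n\|_Y>\e$. Nothing to add.
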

\begin{proof}
Assume by contradiction that for some $\e>0$ such a constant $c_e$
doesn't exist. Then for every natural number $n\in\mathbb{N}$ there
exists $h_n\in X$ such that
\begin{equation}\label{Eqetaenercontrap}
\big\|T\cdot h_n\big\|_Y>\e\big\|h_n\big\|_X+n\big\|S\cdot
h_n\big\|_Z\,.
\end{equation}
We consider the sequence $\{\xi_n\}\subset X$ defined by the
normalization
\begin{equation}\label{Eqnormalkkkap}
\xi_n:=\frac{h_n}{\|h_n\|_X}\,,
\end{equation}
which satisfy $\|\xi_n\|_X=1$ and by \er{Eqetaenercontrap},
\begin{equation}\label{Eqetaenercontrnormap}
\big\|T\cdot
\xi_n\big\|_Y>\e+n\big\|S\cdot\xi_n\big\|_Z\quad\quad\forall
n\in\mathbb{N}\,.
\end{equation}
However, since $\|\xi_n\|_X=1$, we have $\|T\cdot
\xi_n\|_Y\leq\|T\|_{\mathcal{L}(X;Y)}$. So by
\er{Eqetaenercontrnormap} we deduce
$$\big\|S\cdot\xi_n\big\|_Z<\frac{1}{n}\|T\|_{\mathcal{L}(X;Y)}\quad\quad\forall n\in\mathbb{N}\,.$$
In particular
\begin{equation}\label{Eqetaenercontrnormsledap}
S\cdot\xi_n\to 0\quad\text{as}\;\;n\to+\infty\quad\text{strongly
in}\;\;Z\,.
\end{equation}
On the other hand since $\|\xi_n\|_X=1$ and since $X$ is a reflexive
space, up to a subsequence we must have $\xi_n\rightharpoonup \xi$
weakly in $X$. Thus $S\cdot\xi_n\rightharpoonup S\cdot \xi$ weakly
in $Z$ and then by \er{Eqetaenercontrnormsledap} we have $S\cdot
\xi=0$. So since $S$ is an injective operator we deduce that $\xi=0$
and thus $\xi_n\rightharpoonup 0$ weakly in $X$. Therefore, since
$T$ is a compact operator we have
%\begin{equation}\label{Eqetaenercontrnormsledkkll}
%T\cdot \xi_n\to 0\quad\text{strongly in}\;\;Y\,.
%\end{equation}
%and then, up to a subsequence,
\begin{equation}\label{Eqetaenercontrnormsledkkllnullap}
T\cdot \xi_n\to 0\quad\text{strongly in}\;\;Y\,.
\end{equation}
However, returning to \er{Eqetaenercontrnormap}, in particular we
deduce
\begin{equation}\label{Eqetaenercontrnormprtyyuap}
\big\|T\cdot \xi_n\big\|_Y>\e\quad\quad\forall n\in\mathbb{N}\,,
\end{equation}
which contradicts with \er{Eqetaenercontrnormsledkkllnullap}. So we
proved \er{Eqetaenerap}.
\end{proof}
%%%%%%%%%%%%%%%%%%%%%%%%%%%%%%%%%%%%%%%%%%%%%%%%%%%%%%%%%%%%%%%%%%%%%%%%%%%%%%%%%%%%%%%%%%%%%%%%%%%%%%%%%%%%%%%%%%%%%%%%%%%%%%%%%%%%%%%%%%%%%%%%%%%%%%%%%%%%%%%%%%%%%%%%%%%%%%%%%%%%%%%%%%%%%%%%%%%%%%%%%%%%%%%%%%%%%%%
\begin{lemma}\label{hilbcomban}
Let $X$ be a separable Banach space. Then there exists a separable
Hilbert space $Y$ and a bounded linear inclusion operator $S\in
\mathcal{L}(Y;X)$ such that $S$ is injective (i.e. $\ker S=\{0\}$),
the image of $S$ is dense in $X$ and moreover, $S$ is a compact
operator.
\end{lemma}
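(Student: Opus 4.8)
The statement to prove is Lemma \ref{hilbcomban}: for any separable Banach space $X$ there exists a separable Hilbert space $Y$ and an injective compact operator $S\in\mathcal{L}(Y;X)$ with dense image.

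\medskip

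\noindent\textbf{Plan of proof.} The plan is to build $Y$ as a weighted $\ell^2$-space over a countable dense subset of the unit ball of $X^*$, or equivalently to realize the desired compact inclusion as a composition of a Hilbert--Schmidt-type diagonal operator with a bounded embedding. Concretely, first I would fix a countable set $\{x^*_n\}_{n=1}^{\infty}\subset X^*$ with $\|x^*_n\|_{X^*}\le 1$ that is \emph{norming} for $X$, i.e. $\|x\|_X=\sup_n|\langle x,x^*_n\rangle_{X\times X^*}|$ for all $x\in X$; such a set exists because $X$ is separable (take a countable dense subset of the unit sphere of $X$ and, for each element, a norming functional via Hahn--Banach, then observe this countable family is norming). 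This gives an isometric embedding $\iota\colon X\hookrightarrow \ell^\infty$, $\iota(x)=(\langle x,x^*_n\rangle)_n$. Next I would introduce the diagonal map $D\colon \ell^2\to c_0\subset\ell^\infty$ given by $D\big((a_n)_n\big)=(2^{-n}a_n)_n$; this $D$ is a compact operator from $\ell^2$ into $\ell^\infty$ (it is the norm-limit of the finite-rank truncations, since the tails $\sup_{\|a\|_{\ell^2}\le 1}\sup_{n>N}2^{-n}|a_n|\le 2^{-N}\to 0$), and it is injective with dense range in $c_0$.

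\medskip

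\noindent The difficulty is that $D$ maps into $\ell^\infty$, not literally into the subspace $\iota(X)$, so one cannot directly compose $\iota^{-1}\circ D$. The fix I would use is the standard one: take $Y$ to be the Hilbert space $\overline{D^{-1}(\iota(X))}$? No---cleaner is to go the other way and define $S$ directly on the preimage structure. Precisely, consider the linear subspace $E:=\{(a_n)\in\ell^2:\ (2^{-n}a_n)_n\in\iota(X)\}$ and note $\iota(X)$ is a closed (indeed, a complete, since $X$ is Banach and $\iota$ isometric) subspace of $\ell^\infty$; hence $E$ is a closed subspace of $\ell^2$ by continuity of $D$, so $E$ is itself a separable Hilbert space, and on $E$ we may define $S:=\iota^{-1}\circ D|_E\colon E\to X$. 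Then $S$ is bounded (composition of bounded maps, using that $\iota$ is an isometric isomorphism onto its image and $E\subset\ell^2$), injective (both $\iota$ and $D$ are injective), and compact (restriction of the compact map $D$ followed by the isometry $\iota^{-1}$). It remains only to check the image of $S$ is dense in $X$: the range of $S$ contains $\iota^{-1}$ of all finitely supported sequences $(2^{-n}a_n)$ lying in $\iota(X)$; in particular, for each $m$ and each $x\in X$, the sequence $t\mapsto$ truncation of $\iota(x)$ rescaled... here I would argue more carefully that $\iota^{-1}$ of the dense set $D(c_{00})\cap\iota(X)$ is dense, using that $D$ has dense range in $c_0$ and $\iota(X)$ is closed. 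A slicker route for density: observe that the range of $S$ contains, for every $x \in X$, the vectors obtained by applying $D$ to suitable $\ell^2$ approximations; alternatively, simply replace $D$ by $\tilde D((a_n)) = (w_n a_n)$ with $w_n>0$, $\sum w_n^2<\infty$ not needed---only $w_n\to 0$---and note $\iota(x)\cdot$(any summable-square modification) can be approximated.

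\medskip

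\noindent\textbf{Expected main obstacle.} The genuinely delicate point is \emph{simultaneously} getting injectivity, density of the image, \emph{and} compactness with the target being all of $X$ (not a subspace of $X$): the naive composition $\iota^{-1}\circ D$ lands in $\ell^\infty$, and pulling back to $X$ forces the domain restriction to $E=D^{-1}(\iota(X))$, after which one must re-verify that $S(E)$ is still dense in $X$. I would handle density by noting that for any $x\in X$ and $\varepsilon>0$, since $D$ has dense range in $c_0$ and $\iota(x)\in c_0$ is approximable in $\ell^\infty$-norm by elements of $\operatorname{ran} D$; but approximants need not lie in $\iota(X)$, so instead I would use that truncation operators $P_N$ on $\ell^\infty$ satisfy $P_N\iota(x)\to\iota(x)$ is \emph{false} in $\ell^\infty$ in general---hence the right argument is: pick $y=(y_n)\in \ell^2$ with $D y = \iota(x)$ exactly when $\iota(x)\in\operatorname{ran} D$, i.e. when $(2^n\langle x,x^*_n\rangle)_n\in\ell^2$; such $x$ form a subspace $X_0=S(E)$, and one checks $X_0$ is dense in $X$ because it contains $\iota^{-1}$ of all finitely supported elements of $\iota(X)$ and, more robustly, because its closure is a closed subspace on which all $x^*_n$ would have to vanish if it were proper---contradicting that $\{x^*_n\}$ is norming. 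That last normedness-based argument is the clean way to finish, and it is the step I would write out in full detail.
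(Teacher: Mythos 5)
Your construction of $S:=\iota^{-1}\circ D|_E$ on $E:=D^{-1}\big(\iota(X)\big)$ does yield a bounded, injective, compact operator from a separable Hilbert space into $X$, but the density of its range is not merely the ``delicate point'' --- it genuinely fails for this construction, and both arguments you sketch for it are invalid. The range of your $S$ is exactly $X_0=\big\{x\in X:\ \sum_{n}4^{n}\,|\langle x,x^*_n\rangle|^2<\infty\big\}$. Take $X=C([0,1])$ and let $x^*_n$ be evaluation at $q_n$, where $\{q_n\}$ enumerates the rationals in $[0,1]$; this is a norming sequence. If $f\in X_0$ then $|f(q_n)|\to 0$ as $n\to\infty$, and since $\{q_n:n\geq N\}$ is dense for every $N$, continuity forces $f\equiv 0$; hence $X_0=\{0\}$, and the same happens for any weights $w_n\to 0$ in place of $2^{-n}$. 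Your first density argument fails because $\iota(X)$ need contain no nonzero finitely supported sequence (same example); your second fails because a proper closed subspace is annihilated by \emph{some} nonzero functional, not necessarily by any of the chosen $x^*_n$ --- the norming property gives no such conclusion. (A smaller inaccuracy: $\iota(X)\not\subset c_0$ in general, since for a norming sequence $\sup_n|\langle x,x^*_n\rangle|=\|x\|_X$ is typically attained in the limit.)

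The paper sidesteps all of this by working in the forward direction: choose unit vectors $\{x_n\}\subset X$ whose finite subsystems are linearly independent and whose span is dense, define $\bar S:\ell^2\to X$ by $\bar S\big(\{\alpha_n\}\big)=\sum_n \frac{\alpha_n}{n}\,x_n$ (well defined, bounded and compact via Cauchy--Schwarz against $\sum n^{-2}$), note that the range contains every finite linear combination of the $x_n$ and is therefore automatically dense, and then restore injectivity by restricting $\bar S$ to the orthogonal complement of $\ker\bar S$, which costs nothing for density or compactness. If you wish to rescue a dual construction you would need functionals $x^*_n$ adapted (e.g.\ biorthogonal) to a dense linearly independent sequence in $X$, at which point you have essentially reproduced the paper's argument.
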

\begin{proof}
If $X$ is finite dimensional then $X$ is isomorphic to $\R^k$ for
some $k$ and we are done. Otherwise since $X$ is a separable Banach
space there exists a countable sequence
$\{x_n\}_{n=1}^{+\infty}\subset X$ such that $\|x_n\|_X=1$ for every
$n$, every finite subsystem of the system $\{x_n\}_{n=1}^{+\infty}$
is linearly independent and the span of $\{x_n\}_{n=1}^{+\infty}$ is
dense in $X$. Set $\bar Y:=l^2$ where $l^2$ is a standard separable
Hilbert space defined by
\begin{equation}\label{jgggjgjgjgjgggkggggkuoujkl}
l^2:=\Big\{\bar
y=\alpha_n:\mathbb{N}\to\R:\;\sum_{n=1}^{+\infty}\alpha_n^2<+\infty\Big\}
\end{equation}
with the scalar product
\begin{equation}\label{jgggjgjgjgjgggkggggkuoujklllkk}
\big<\bar y_1,\bar y_2\big>_{\bar Y\times \bar
Y}=\sum_{n=1}^{+\infty}\alpha_n\beta_n\quad\quad\text{for}\;\; \bar
y_1=\{\alpha_n\},\;\bar y_2=\{\beta_n\}\,.
\end{equation}
Next we prove that for every $\bar y=\{\alpha_n\}$ there exists a
limit in $X$,
\begin{equation}\label{shhs8888888ghgsg}
x=\lim_{N\to +\infty}\sum_{n=1}^{N}\frac{\alpha_n}{n}\;x_n\,.
\end{equation}
Indeed since $\|x_n\|_X=1$ for every $N\in\mathbb{N}$ and
$m\in\mathbb{N}$ we have
$$\bigg\|\sum_{n=N}^{N+m}\frac{\alpha_n}{n}\;x_n\bigg\|^2_X\leq\bigg(\sum_{n=N}^{N+m}\alpha_n^2\bigg)\cdot
\bigg(\sum_{n=N}^{N+m}\frac{1}{n^2}\bigg)\leq\bigg(\sum_{n=N}^{+\infty}\alpha_n^2\bigg)\cdot
\bigg(\sum_{n=N}^{+\infty}\frac{1}{n^2}\bigg)\to
0\quad\text{as}\;\;N\to+\infty\,.$$ Thus since $X$ is a Banach space
the limit in \er{shhs8888888ghgsg} exists. Then define the linear
operator $\bar S:\bar Y\to X$ for every $\bar y=\{\alpha_n\}\in\bar
Y$ by
\begin{equation}\label{shhs8888888ghgsggh}
\bar S\cdot \bar y=\lim_{N\to
+\infty}\sum_{n=1}^{N}\frac{\alpha_n}{n}\;x_n\,.
\end{equation}
As before,
\begin{equation}\label{shhs8888888ghgsgghhuhkukl}
\big\|\bar S\cdot \bar y\big\|_X^2=\bigg\|\lim_{N\to
+\infty}\sum_{n=1}^{N}\frac{\alpha_n}{n}\;x_n\bigg\|_X^2\leq\bigg(\sum_{n=1}^{+\infty}\alpha_n^2\bigg)\cdot
\bigg(\sum_{n=1}^{+\infty}\frac{1}{n^2}\bigg)=
\bigg(\sum_{n=1}^{+\infty}\frac{1}{n^2}\bigg)\cdot\|\bar y\|_Y^2\,.
\end{equation}
Thus $\bar S$ is a bounded operator i.e. $\bar S\in
\mathcal{L}(Y;X)$. Next clearly for every finite linear combination
$z=\sum_{n=1}^{N}c_n x_n$ (where $c_n\in\R$) there exists $\bar
y\in\bar Y$ such that $\bar S\cdot\bar y=z$. So the image of $\bar
S$ is dense in $X$. We will prove now that $\bar S$ is a compact
operator. Indeed let $\bar
y_n:=\{\alpha_j^{(n)}\}_{j=1}^{+\infty}\in \bar Y$ be such that
$\bar y_n\rightharpoonup 0$
%$\bar y:=\{\alpha_j\}_{j=1}^{+\infty}$
weakly in $\bar Y$. This means $\lim_{n\to+\infty}\alpha_j^{(n)}=0$
%\alpha_j$
for every $j$ and
$\sum_{j=1}^{+\infty}\big(\alpha_j^{(n)}\big)^2\leq C$ for some
constant $C>0$. Fix some $\e>0$. Then since for every $n$
$$\bigg\|\lim_{m\to +\infty}\sum_{j=N}^{N+m}\frac{\alpha^{(n)}_j}{j}\;x_j\bigg\|^2_X\leq\bigg(\sum_{j=1}^{+\infty}\big(\alpha^{(n)}_j\big)^2\bigg)\cdot
\bigg(\sum_{j=N}^{+\infty}\frac{1}{j^2}\bigg)\leq
C\bigg(\sum_{j=N}^{+\infty}\frac{1}{j^2}\bigg)\to
0\quad\text{as}\;\;N\to+\infty\,,$$ there exists $N_0$ such that
\begin{equation}\label{epsepseps}
\bigg\|\lim_{m\to
+\infty}\sum_{j=N_0}^{N_0+m}\frac{\alpha^{(n)}_j}{j}\;x_j\bigg\|_X<\frac{\e}{2}
%\quad\text{and}\quad\bigg\|\lim_{m\to
%+\infty}\sum_{j=N_0}^{N_0+m}\frac{\alpha_j}{j}\;x_j\bigg\|_X<\frac{\e}{3}
\quad\quad\forall
n\in\mathbb{N}\,.
\end{equation}
On the other hand, since $\lim_{n\to+\infty}\alpha_j^{(n)}=0$,
%$\alpha_j$
there exist $n_0$ such that $|\alpha_j^{(n)}
%-\alpha_j
|<\e/(2N_0)$
for every $n>n_0$ and $1\leq j\leq N_0$ and thus
\begin{equation}\label{epsepsepsy7789ihk88899999999}
\bigg\|\sum_{j=1}^{N_0-1}\frac{\alpha^{(n)}_j}{j}\;x_j
%-\sum_{j=1}^{N_0-1}\frac{\alpha_j}{j}\;x_j
\bigg\|_X<\frac{\e}{2}\quad\quad\forall n>n_0\,.
\end{equation}
Plugging \er{epsepsepsy7789ihk88899999999} into \er{epsepseps} we
deduce that
$$\big\|\bar S\cdot\bar y_n
%-\bar S\cdot\bar y
\big\|_{X}<\e\quad\quad\forall n>n_0\,.$$ Therefore $S\cdot\bar
y_n\to
%S\cdot\bar y
0$ strongly in $X$ and so $\bar S$ is a compact operator. Finally,
set $Z:=\{\bar y\in\bar Y:\;\bar S\cdot\bar y=0\}$. Then $Z$ is a
close subspace of $\bar Y$. Next define $Y$ to be the orthogonally
dual to $Z$ space
$$Y:=\Big\{\bar y\in\bar Y:\;\big<\bar y,z\big>_{\bar Y\times\bar
Y}=0\quad\forall z\in Z\Big\}\,.$$ Then $Y$ is a close subspace of
$\bar Y$. Therefore $Y$ is a separable Hilbert space by itself.
Define $S\in\mathcal{L}(Y;X)$ by $S:=\bar S\mid_Y$. Then clearly $S$
is injective i.e. $\ker S=\{0\}$. Moreover, if $x=\bar S\cdot \bar
y$ where $\bar y\in\bar Y$ then we can represent $\bar y=z+y$ where
$z\in Z$ and $y\in Y$, and since $\bar S\cdot z=0$ we have $x=S\cdot
y$. Therefore since the image of $\bar S$ is dense in $X$ we deduce
that the image of $S$ is also dense in $X$. Finally, $S$ is a
compact operator. This completes the proof.
\end{proof}
\begin{lemma}\label{hilbcombanback}
Let $X$ be a separable Banach space. Then there exists a separable
Hilbert space $Y$ and a bounded linear inclusion operator $S\in
\mathcal{L}(X;Y)$ such that $S$ is injective (i.e. $\ker S=\{0\}$),
the image of $S$ is dense in $Y$ and moreover, $S$ is a compact
operator.
\end{lemma}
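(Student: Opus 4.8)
The final statement to be proved is Lemma~\ref{hilbcombanback}: given a separable Banach space $X$, there exists a separable Hilbert space $Y$ and an injective compact operator $S\in\mathcal{L}(X;Y)$ with dense image.

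\medskip

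The plan is to derive this directly from Lemma~\ref{hilbcomban} by duality, together with a reflexivization trick, since the immediately preceding lemma produces a compact dense injective operator \emph{into} a Banach space rather than out of one. First I would observe that $X$ being separable does not immediately give $X^*$ separable, so I cannot simply apply Lemma~\ref{hilbcomban} to $X^*$. Instead, the plan is as follows. Apply Lemma~\ref{hilbcomban} to the separable Banach space $X$ itself: this yields a separable Hilbert space $H_0$ and a compact injective operator $R\in\mathcal{L}(H_0;X)$ with dense image. Now pass to adjoints: $R^*\in\mathcal{L}(X^*;H_0^*)=\mathcal{L}(X^*;H_0)$ is compact (the adjoint of a compact operator is compact, by Schauder's theorem), and since $R$ has dense image, $R^*$ is injective; since $R$ is injective, $R^*$ has dense image in $H_0$. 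This gives the desired kind of operator but with $X^*$ in place of $X$. To get back to $X$, I would then use the canonical embedding $\iota\colon X\hookrightarrow X^{**}$ and compose.

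\medskip

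More concretely, here is the cleaner route I would actually carry out. Since $X$ is separable, choose a sequence $\{x_n^*\}_{n=1}^\infty\subset X^*$ that is norming for $X$, i.e. $\|x\|_X=\sup_n|\langle x,x_n^*\rangle|$ with $\|x_n^*\|_{X^*}\le 1$ (such a sequence exists because $X$ is separable: pick a dense sequence $\{x_n\}$ in the unit sphere of $X$ and, by Hahn--Banach, functionals $x_n^*$ with $\|x_n^*\|\le1$ and $\langle x_n,x_n^*\rangle=1$; density gives the norming property). Define $S\colon X\to\ell^2$ by
\begin{equation*}
S\cdot x:=\Big\{\tfrac{1}{n}\,\langle x,x_n^*\rangle\Big\}_{n=1}^{\infty}\,.
\end{equation*}
Then $\|S\cdot x\|_{\ell^2}^2\le\big(\sum_n n^{-2}\big)\|x\|_X^2$, so $S\in\mathcal{L}(X;\ell^2)$. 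Injectivity of $S$ is immediate from the norming property: if $S\cdot x=0$ then $\langle x,x_n^*\rangle=0$ for all $n$, hence $\|x\|_X=0$. For compactness, I would argue exactly as in the proof of Lemma~\ref{hilbcomban}: if $x_k\rightharpoonup 0$ weakly in $X$, then $\langle x_k,x_n^*\rangle\to0$ for each fixed $n$ while $|\langle x_k,x_n^*\rangle|\le\sup_k\|x_k\|_X=:C<\infty$; splitting the sum at an index $N_0$ with $C^2\sum_{n\ge N_0}n^{-2}<\varepsilon^2/2$ and using that the finitely many remaining terms tend to $0$, one gets $\|S\cdot x_k\|_{\ell^2}<\varepsilon$ for $k$ large. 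Since every bounded sequence in the reflexive\ldots --- here I must be careful: $X$ need not be reflexive, so I would instead invoke that a compact operator is precisely one mapping bounded sets to relatively compact sets, and verify sequential compactness on bounded sequences using that bounded sequences in a separable space have weak-$*$-type convergent subsequences when tested against the countable family $\{x_n^*\}$; alternatively, and most simply, note that $S$ factors as $S=D\circ J$ where $J\colon X\to c_0$, $J\cdot x=\{\langle x,x_n^*\rangle\}$ is bounded and $D\colon$ (the image) $\to\ell^2$ is the diagonal multiplication by $\{1/n\}$, which is a compact operator from $c_0$-type sequences to $\ell^2$. Finally, for dense image: the image $S(X)$ contains, for each finitely supported $\{\alpha_n\}$, a point whose first coordinates can be prescribed by choosing $x$ appropriately --- more carefully, I would take $Y$ to be the closure of $S(X)$ inside $\ell^2$, which is automatically a separable Hilbert space in which $S$ has dense image, and replace the target $\ell^2$ by this $Y$. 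This closure trick makes the density assertion trivial and costs nothing.

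\medskip

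The main obstacle, and the only genuine subtlety, is the compactness of $S$ when $X$ is not reflexive: the proof of Lemma~\ref{hilbcomban} used reflexivity of the \emph{domain} to extract weakly convergent subsequences, and here the domain is $X$, which may fail to be reflexive. I expect to resolve this cleanly via the factorization through the diagonal operator on a space of bounded sequences (the map $\{\beta_n\}\mapsto\{\beta_n/n\}$ is compact from $\ell^\infty$, hence from any subspace, into $\ell^2$, by a standard truncation argument that needs no weak compactness of the domain), which sidesteps reflexivity entirely. With that in hand, the remaining steps --- boundedness of $S$, injectivity from the norming property, and density by passing to the closure --- are routine. An equally valid alternative I would mention is the adjoint argument sketched above (apply Lemma~\ref{hilbcomban}, take adjoints using Schauder's theorem, and precompose with the canonical embedding $X\hookrightarrow X^{**}$), but the explicit construction via a norming sequence is shorter and more self-contained.
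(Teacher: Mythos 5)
Your final construction is correct and is essentially the paper's own proof: the paper composes an injective bounded map $P\in\mathcal{L}(X;c_0)$ (quoted from Lindenstrauss's theorem) with the compact diagonal operator $\{\alpha_n\}\mapsto\{\alpha_n/n\}$ into $\ell^2$, and then replaces the target by the closure of the image, exactly as you do. Your version is slightly more self-contained, since you build the first leg explicitly from a norming sequence and prove compactness of the diagonal factor by truncation, which correctly sidesteps the non-reflexivity of $X$ that you rightly identified as the one delicate point.
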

\begin{proof}
By the Lindenstrauss's Theorem (see \cite{Diestel}) every separable
Banach space is continuously embedded in $c_0$ where $c_0$ is a
Banach space of real sequences which tend to $0$, i.e. it is defined
by
\begin{equation}\label{c0def9999llll}
c_0:=\big\{\alpha_n:\mathbb{N}\to\R\;:\;\lim_{n\to
+\infty}a_n=0\big\}\,,\quad\quad
%\end{equation}
%endowed with the norm
%\begin{equation}\label{c0def9999lllllllkk}
\big\|\alpha_n\big\|_{c_0}:=\sup\limits_{n\in\mathbb{N}}\big|a_n\big|\,.
\end{equation}
So there exists an embedding operator $P\in \mathcal{L}(X;c_0)$
which is an injective inclusion (i.e. $\ker P=\{0\}$). Next define
$Q\in \mathcal{L}(c_0, l^2)$, where $l^2$ is the separable Hilbert
space defined in \er{jgggjgjgjgjgggkggggkuoujkl}, by the formula
\begin{equation}\label{c0def9999lllldjojod}
Q\cdot
\Big(\{\alpha_n\}_{n=1}^{+\infty}\Big)=\Big\{\frac{\alpha_n}{n}\Big\}_{n=1}^{+\infty}\in
l^2\quad\quad\forall \{\alpha_n\}_{n=1}^{+\infty}\in c_0\,.
\end{equation}
Then clearly $Q\in \mathcal{L}(c_0, l^2)$ is an injective inclusion.
Moreover we will prove now that $Q$ is a compact operator. Indeed
for every $j\in\mathbb{N}$ let
$h_j:=\{\alpha_{n}^{(j)}\}_{n=1}^{+\infty}\subset c_0$ be such that
$h_j\rightharpoonup 0$
%$h:=\{\alpha_{n}\}_{n=1}^{+\infty}\subset c_0$
weakly in $c_0$ as $j\to +\infty$. Thus in particular we have
\begin{equation}\label{c0def9999lllldjojodkoil}
\begin{cases}\lim\limits_{j\to
+\infty}\alpha_{n}^{(j)}=0
%\alpha_{n}
\quad\quad\forall
n\in\mathbb{N}\\
|\alpha_{n}^{(j)}|\leq C\quad\quad\forall n,j\in\mathbb{N}\,,
\end{cases}
\end{equation}
for some constant $C>0$. Then for every $j,m\in\mathbb{N}$ we have
\begin{multline}\label{c0def9999lllldjojodojjkjpkklklkkk}
\big\|Q\cdot h_j
%-Q\cdot h
\big\|_{l^2}=\sum\limits_{n=1}^{+\infty}\bigg(\frac{\alpha_{n}^{(j)}
%-\alpha_{n}
}{n}\bigg)^2
=\sum\limits_{n=1}^{m}\bigg(\frac{\alpha_{n}^{(j)}
%-\alpha_{n}
}{n}\bigg)^2+\sum\limits_{n=m}^{+\infty}\bigg(\frac{\alpha_{n}^{(j)}
%-\alpha_{n}
}{n}\bigg)^2
\leq\sum\limits_{n=1}^{m}\bigg(\frac{\alpha_{n}^{(j)}
%-\alpha_{n}
}{n}\bigg)^2+4C^2\sum\limits_{n=m}^{+\infty}\frac{1}{n^2}\,.
\end{multline}
Thus, since $\sum_{n=1}^{+\infty}\frac{1}{n^2}<+\infty$, for every
$\e>0$ there exists $m=m_\e\in\mathbb{N}$ such that
$4C^2\sum_{n=m}^{+\infty}\frac{1}{n^2}<\e$. Therefore, by
\er{c0def9999lllldjojodojjkjpkklklkkk} we obtain
\begin{equation}\label{c0def9999lllldjojodojjkjpkklkl}
\big\|Q\cdot h_j
%-Q\cdot h
\big\|_{l^2}\leq
\sum\limits_{n=1}^{m}\bigg(\frac{\alpha_{n}^{(j)}
%-\alpha_{n}
}{n}\bigg)^2+\e\,.
\end{equation}
Then letting $j\to +\infty$ in \er{c0def9999lllldjojodojjkjpkklkl}
and using \er{c0def9999lllldjojodkoil} we deduce
$$\limsup\limits_{j\to +\infty}\big\|Q\cdot h_j
%-Q\cdot h
\big\|_{l^2}\leq\e\,,$$ and since $\e>0$ was arbitrary we finally
infer that $Q\cdot h_j\to 0$
%$Q\cdot h$
strongly in $l^2$. So we proved that $Q$ is a compact operator. Next
define $S\in \mathcal{L}(X;l^2)$ by $S:=Q\circ P$, where $P\in
\mathcal{L}(X;c_0)$ is an injective embedding. Thus since $P$ and
$Q$ are injective, we obtain that $S$ is also an injective embedding
of $X$ to $l^2$. Moreover since $Q$ is a compact operator we obtain
that $S\in \mathcal{L}(X;l^2)$ is also a compact operator. Finally,
let $Y$ be the closure of the image of $S$ in $l^2$. Then $Y$ is a
subspace in $l^2$ and so the separable Hilbert space by itself.
Moreover $S\in(X;Y)$ is an injective compact inclusion of $X$ to $Y$
with dense in $Y$ image.
\end{proof}


\begin{thebibliography}{66}
\bibitem{Brez}
H.~Brezis and  I.~Ekeland, {\em Un principe variationnel associ\'e
\`a certaines equations paraboliques, Le cas independant du temps},
C.R. Acad. Sci. Paris S\'{e}r. A 282, A971--A974 (1976)

\bibitem{Gal}
Galdi, Giovanni P.  {\em An introduction to the Navier-Stokes
initial-boundary value problem}, Fundamental directions in
mathematical fluid mechanics, 1--70, Adv. Math. Fluid Mech.,
Birkhauser, Basel, (2000).

\bibitem{CD}
Dafermos, Constantine M. {\em Hyperbolic conservation laws in
continuum physics}, Second Edition, Springer, Heidelberg, 2005.

\bibitem{DalM}
Dal Maso, G. {\em An introduction to $\Gamma$-convergence}, Progress
in Nonlinear Differential Equations and Their Aplications, 8.
Birkh\"{a}user, Boston, 1993.

\bibitem{DeGior}
De Giorgi, E. {\em Sulla convergenza di alcune successioni
d'integrali del tipo dell'area}, Rend. Mat. (6) {\bf 8} (1975),
277-294.

\bibitem{Diestel}
Diestel, J. {\em Geometry of Banach Spaces - Selected Topics},
Lecture Notes in Math. 485, Springer (1975).

\bibitem{NG} N.~Ghoussoub, {\em Self-dual Partial Differential Systems and Their Variational
Principles}, Springer Monographs in Mathematics.

\bibitem{Gho} N.~Ghoussoub, {\em Antisymmetric Hamiltonians:
variational resolutions for Navier-Stokes and other nonlinear
evolutions},  Comm. Pure Appl. Math.  {\bf 60}  (2007), 619--653.

\bibitem{GosM}
N.~Ghoussoub and  A.~Moameni, {\em Anti-symmetric Hamiltonians (II):
Variational resolutions for Navier-Stokes and other nonlinear
evolutions}, preprint: arXiv:math/0702339v1.

\bibitem{GosM1}
N.~Ghoussoub, A.~Moameni, {\em Selfdual variational principles for
periodic solutions of Hamiltonian and other dynamical systems},
Comm. Partial Differential Equations  {\bf 32}  (2007), 771--795.

\bibitem{GosTz}
N.~Ghoussoub and  L.~Tzou, {\em A variational principle for gradient
flows}, Math. Ann. {\bf 330} (2004), 519--549.


\bibitem{Lad}
Lady\v{z}enskaja, O.A.; Solonnikov, V.A.; Ural'ceva, N.N. {\em
Linear and Quasi-linear Equations of Parabolic Type}, Translations
of Mathematical Monographs, Vol. 23, 1968.

\bibitem{Kruz}
Kruzkov, S.N. {\em First order quasilinear equations in several
independent variables}, Math USSR-Sbornik, {\bf 10}:217-243, 1970.

%\bibitem{KaL}
%Kiselev, A.A. and Ladyzhenskaya, O.A. {\em On Existence and
%Uniqueness of the Solution of the Nonstationary Problem for a
%Viscous Incompressible Fluid}, Izv. Akad. Nauk SSSR, {\bf 21}, 655,
%(1957).

%\bibitem{Ler1}
%Leray J. {\em Essai sur les Mouvements Plans d'un Liquide Visqueux
%que Limitent des Parois}, J. Math. Pures Appl., {\bf 13}, 331,
%(1934).

%\bibitem{Ler2}
%Leray J. {\em Sur les Mouvements d'un Liquide Visqueux Emplissant
%l'Espace}, Acta. Math., {\bf 63}, 193, (1934).

%\bibitem{Sh}
%Shinbrot, M. {\em Lectures on Fluid Mechanics}, Gordon and Breach,
%New York.

%\bibitem{pol2} A.~Poliakovsky,
%{\em On a variational approach to the Navier-Stokes Equations},
%preprint
%\bibitem{pol3} A.~Poliakovsky,
%{\em Upper bounds for the energies containing a non-local term},
%preprint

\bibitem{P4} A.~Poliakovsky, {\em On a
variational approach to the Method of Vanishing Viscosity for
Conservation Laws}, Advances in Mathematical Sciences and
Applications, {\bf 18} (2008), no. 2., 429--451.

%\bibitem{PI} A.~Poliakovsky, {\em Variational resolution for some general classes
%of nonlinear evolutions I}, preprint.

\bibitem{PII} A.~Poliakovsky, {\em Variational resolution for some general classes of nonlinear evolutions II},
preprint: http://arxiv.org/abs/1112.2975


\bibitem{RS1} T. Rivi\`{e}re and S. Serfaty, {\em Limiting domain wall energy for a problem
related to micromagnetics}, Comm. Pure Appl. Math., 54 No 3 (2001),
294-338.

\bibitem{RS2} T. Rivi\`{e}re and S. Serfaty, {\em Compactness, kinetic formulation and entropies for a
problem related to mocromagnetics}, Comm. Partial Differential
Equations 28 (2003), no. 1-2, 249--269.

\bibitem{Tm}
Temam, R. {\em Navier-Stokes Equations}, North Holland, (1977).


\end{thebibliography}

\begin{thebibliography}{66}
\bibitem{adm} L.~Ambrosio, C.~De Lellis and C.~ Mantegazza, {\em  Line
energies for gradient vector fields in the plane}, Calc. Var. PDE
{\bf 9 }(1999), 327--355.
\bibitem{amb} L.~Ambrosio, N.~Fusco and D.~Pallara, Functions of
Bounded Variation and Free Discontinuity Problems, Oxford
Mathematical Monographs. Oxford University Press, New York, 2000.
\bibitem{ag1} P.~Aviles and  Y.~Giga, {\em A mathematical problem related to the physical theory of liquid
 crystal configurations}, Proc. Centre Math. Anal. Austral. Nat. Univ. {\bf 12} (1987), 1--16.
\bibitem{ag2} P.~Aviles and  Y.~Giga, {\em On lower semicontinuity of a
defect energy obtained by a singular limit of the
 Ginzburg-Landau type energy for gradient
 fields}, Proc. Roy. Soc. Edinburgh Sect. A  {\bf 129} (1999), 1--17.
\bibitem{conti} S.~Conti and C.~De Lellis, {\em Sharp upper bounds for a variational problem
with singular perturbation}, preprint.
\bibitem{dav} J. D\'{a}vila and R. Ignat, Lifting of $BV$ functions
with values in $S^1$, C. R. Acad. Sci. Paris, Ser. I 337 (2003)
159-164.
\bibitem{dl} C.~De Lellis, {\em An example in the gradient theory of phase
transitions} ESAIM Control Optim. Calc. Var. {\bf 7} (2002), 285--289
(electronic).
\bibitem{otto} A.~DeSimone,  S.~M\"uller, R.V.~Kohn and F.~Otto, {\em A compactness result
in the gradient theory of phase transitions}, Proc. Roy. Soc.
Edinburgh Sect. A  {\bf 131} (2001), 833--844.
\bibitem{erc} N.M.~Ercolani, R.~Indik, A.C.~Newell and T.~Passot, {\em The
      geometry of the phase diffusion equation}, J. Nonlinear Sci. {\bf
        10}  (2000),  223--274.
\bibitem{evansbook} L.C.~Evans, Partial Differential Equations, Graduate
 Studies in Mathematics, Vol.~{\bf 19}, American Mathematical Society, 1998.
\bibitem{evans}  L.C.~Evans and   R.F.~Gariepy,  Measure Theory and Fine
Properties of Functions, Studies in Advanced Mathematics, CRC
Press, Boca Raton, FL, 1992.
\bibitem{gt} D.~Gilbarg and N.~Trudinger, Elliptic Partial Differential
 Equations of Elliptic Type, 2nd ed., Springer-Verlag,
  Berlin-Heidelberg, 1983.
\bibitem{giusti} E.~Giusti, Minimal Surfaces and Functions of Bounded
  Variation, Monographs in Mathematics, {\bf 80}, Birkh{\"a}user Verlag,
  Basel, 1984.
\bibitem{jin} W.~Jin and R.V.~Kohn, {\em Singular perturbation and
the energy of folds}, J. Nonlinear Sci. {\bf 10} (2000), 355--390.
\bibitem{polcras} A.~Poliakovsky, {\em A method for establishing upper bounds for  singular
perturbation problems}, to appear in C. R. Math. Acad. Sci. Paris.
\bibitem{pol} A.~Poliakovsky, {\em Upper bounds for  singular perturbation problems involving gradient
fields}, to appear in J.~Eur.~Math.~Soc..
\bibitem{pol1} A.~Poliakovsky,...??????????
\bibitem{RS1} T. Rivi\`{e}re and S. Serfaty, {\em Limiting domain wall energy for a problem
related to micromagnetics}, Comm. Pure Appl. Math., 54 No 3
(2001), 294-338.
\bibitem{RS2} T. Rivi\`{e}re and S. Serfaty, {\em Compactness, kinetic formulation and entropies for a
problem related to mocromagnetics}, Comm. in PDE. ????????
\bibitem{vol}  A.I.~Volpert and S.I.~Hudjaev, Analysis in Classes of Discontinuous Functions and
Equations of Mathematical Physics,  Martinus Nijhoff Publishers,
Dordrecht, 1985.
\end{thebibliography}
\end{document}